\numberwithin{equation}{section}
\newtheorem{theorem}{Theorem}[section]
\newtheorem{mthm}{Main Theorem}
\newtheorem{proposition}[theorem]{Proposition}
\newtheorem{lemma}[theorem]{Lemma}
\newtheorem{corollary}[theorem]{Corollary}
\theoremstyle{definition}
\newtheorem{definition}[theorem]{Definition}
\theoremstyle{remark}
\newtheorem{remark}[theorem]{Remark}
\newtheorem{example}[theorem]{Example}
\newcommand{\Z}{{\mathbb Z}}
\newcommand{\C}{\mathbb{C}}
\newcommand{\R}{\mathbb{R}}
\newcommand{\HH}{\mathbb{H}}
\newcommand{\p}[1]{{\mathbb{P}^{#1}}}
\newcommand{\op}[1]{{\mathcal O}_{\mathbb{P}^{#1}}}
\newcommand{\ox}{{\mathcal O}_{X}}
\newcommand{\leftgamma}{R^L}
\newcommand{\Mu}{M}  
\newcommand{\ch}{\operatorname{ch}}
\newcommand{\td}{\operatorname{td}}
\newcommand{\cala}{{\mathcal A}}
\newcommand{\calb}{{\mathcal B}}
\newcommand{\calh}{{\mathcal H}}
\newcommand{\cali}{{\mathcal I}}
\newcommand{\calm}{{\mathcal M}}
\newcommand{\calo}{{\mathcal O}}
\newcommand{\dbx}{D^{\rm b}(X)}
\newcommand{\coh}{\mathcal{C}oh}
\newcommand{\cohb}{\mathcal{B}^{\beta}}
\newcommand{\cohab}{\mathcal{A}^{\alpha,\beta}}
\newcommand{\nuab}{\nu_{\alpha,\beta}}
\newcommand{\labs}{\lambda_{\alpha,\beta,s}}
\newcommand{\free}[1]{\mathcal{F}_{#1}}
\newcommand{\tors}[1]{\mathcal{T}_{#1}}
\newcommand{\obeta}{{\overline{\beta}}}
\newcommand{\oalpha}{\overline{\alpha}}
\newcommand{\olambda}{\overline{\lambda}}
\newcommand{\inhom}{{\mathcal H}{\it om}}
\newcommand{\inext}{{\mathcal E}{\it xt}}
\newcommand{\Ext}{\operatorname{Ext}}
\newcommand{\Hom}{\operatorname{Hom}}
\newcommand{\RHom}{\operatorname{R\mathcal{H}\textit{om}}}
\DeclareMathOperator{\Pic}{{Pic}}
\newcommand{\into}{\hookrightarrow}
\newcommand{\onto}{\twoheadrightarrow}
\title{Walls and asymptotics for Bridgeland stability conditions\\ on threefolds}
\author{Marcos Jardim}
\address{IMECC - UNICAMP,  Departamento de Matem\'atica, 
  Rua S\'ergio  Buarque de Holanda, 651, 13083-970 Campinas-SP, Brazil}
\email{jardim@ime.unicamp.br}
\author{Antony Maciocia}
\address{University of Edinburgh, School of Mathematics, The King's Buildings,  Peter Guthrie Tait Road, Edinburgh,  EH9 3FD, UK}
\email{A.Maciocia@ed.ac.uk}
\begin{document}


\maketitle

\begin{prelims}

\DisplayAbstractInEnglish

\bigskip

\DisplayKeyWords

\medskip

\DisplayMSCclass

\end{prelims}


\newpage

\setcounter{tocdepth}{1}

\tableofcontents


\section{Introduction}

Bridgeland's notion of stability conditions on triangulated categories, introduced in \cite{B07} and \cite{B08}, provides a new set of tools to study moduli spaces of sheaves on smooth projective varieties. Such tools have been successfully applied by many authors first to the study of sheaves on surfaces, see for example \cite{AM, AB, BM14a, BM14b, SF16, SF17, FL, MM, YY12}, and more recently on threefolds (especially $\p3$); see for instance \cite{GHS,MS2,S,S18}. One way to study moduli spaces of sheaves using Bridgeland stability spaces is to restrict attention to the so-called geometric stability conditions parameterized by (a subset of) the upper half plane $\HH$. Once we know that the moduli space of Bridgeland-stable objects is asymptotically given by the Gieseker semistable moduli space along an unbounded path, we can try to locate all the points where the moduli space changes along this path (these isolated points are called walls) and compute the change to the moduli space. Eventually, we might reach a point where the Bridgeland space is empty, and then we can reverse our steps to reconstruct the Gieseker moduli space.

For surfaces, this is a fairly well-understood process. In that case, it is known that the geometric stability space is non-empty, that there only finitely many walls in $\HH$ away from the $\beta$-axis which are nested semicircles centered along the horizontal axis and that Bridgeland stability is asymptotic to (twisted) Gieseker stability. Furthermore, there is an effective algorithm to find all such walls for a given Chern character, and then we can carry out the process above to recover the moduli space of semistable sheaves. One approach to finding walls in this case is to observe that every wall for a given Chern character $v$ intersects a special curve which we will denote by $\Theta_v$ in this paper, given by the vanishing locus of the slope function $\nu_{\alpha,\beta}(v)$, and we can then restrict our attention to finding walls along $\Theta_v$.

The whole process becomes much more complicated for threefolds. We can still use the $2$-dimensional construction, but it does not produce full stability conditions, and it is unable to detect sufficient features of the Gieseker moduli spaces since the latter does not coincide, in general, with the asymptotic moduli space.

The first step to improve this was made possible by a number of results guaranteeing the existence of Bridgeland stability conditions on the derived category of sheaves on different types of threefolds, based on the pioneering work of Bayer, Macr\`i and Toda \cite{BMT}. Their idea is to start with the surface case and tilt again. This provides a full stability condition, and the family of moduli spaces is considerably more refined than the one provided by the first tilt. Even though there is no general result which shows that their construction works for all smooth threefolds, it is known to work for a wide variety of relevant examples: $\p3$ \cite{M-p3}, smooth quadric threefolds, \textit{cf.} \cite{S-q3}, abelian threefolds, \textit{cf.} \cite{BMS,MP}, Fano threefolds with Picard rank $1$, \textit{cf.}  \cite{Li}, more general Fano threefolds, \textit{cf.} \cite{BMSZ,Piy}, and smooth quintic threefolds, \textit{cf.} \cite{Li2}.  More precisely, the geometric stability conditions constructed by Bayer, Macr\`i and Toda via the generalized
Bogomolov--Gieseker inequality proposed in \cite{BMT} depend on three real parameters $(\alpha,\beta,s)\in\R^+\times\R\times\R^+$. For each of these, we have an abelian category $\cohab$ and a slope function $\labs$ which allows us to test the stability of objects of $\cohab$. There are, however, known counterexamples (see \cite{S17} and \cite{MSD}) where the generalized Bogomolov--Gieseker inequality fails.

The goal of this paper is to advance on the other two stages of the process outlined above, namely the understanding of the structure of walls and that of asymptotic stability. We only consider the case where $X$ is a smooth projective threefold of Picard rank $1$ over an algebraically closed field of characteristic $0$. This means that we can view our Chern classes (and their twists) as purely numerical vectors of the form $v=(v_0,v_1,v_2,v_3)$.

In order to study walls, we start by providing a uniform way to define the slope functions and their differences in terms of skew-symmetric functions. We go on to consider a number of general properties of numerical $\lambda$-walls (defined as the locus where two $\lambda$-slopes are equal) in Section~\ref{sec:nu-walls}. They are, in general, quartic curves, possibly unbounded and not connected.

In our first main result, we find a simple characterization of those numerical $\lambda$-walls which are bounded, and we show that unbounded walls satisfy a version of Bertram's nested wall theorem. Given numerical Chern characters $v$, $u$ and $u'$, we define $\delta_{01}(u,v):=u_0v_1-u_1v_0$ and an equivalence relation $u\sim_v u'$ which is essentially that the $\lambda$-walls for $v$ corresponding to $u$ and $u'$ are the same; see \eqref{v-eqv} for a precise definition. We also remark that when $v$ is a numerical Chern character satisfying the Bogomolov--Gieseker inequality $v_1^2-2v_0v_2\ge0$ and $v_0\ne0$, the curve $\Theta_v$ allows us to divide the upper half plane $\HH$ into four regions (see Figure~\ref{fig:theta} for an example and Section~\ref{regions} for details).

\begin{mthm}\label{mthm1} 
Suppose $v_0\neq0$ and $u\not\sim_v u'$.
\begin{enumerate}
\item The numerical $\lambda$-wall for $v$ corresponding to $u$ is bounded if and only if $\delta_{01}(u,v)\ne0$. 
\item If $\,\delta_{01}(u,v)=0=\delta_{01}(u',v)$, then the numerical $\lambda$-walls corresponding to $u$ and $u'$ do not intersect. 
\item If $\,\delta_{01}(u,v)\neq0$ and $\ch_{\le2}(u)=\ch_{\le2}(u')$, then the numerical $\lambda$-walls for $v$ corresponding to $u$ and $u'$ only intersect on $\Theta_v$.
\item An unbounded numerical $\lambda$-wall for $v$ does not intersect $\Theta_v$, and its
  unbounded connected components are contained in $R^0_v$. 
\end{enumerate}
\end{mthm}

The different parts of Main Theorem~\ref{mthm1} are proved in various results contained in Section~\ref{sec:walls}. 


\begin{figure}[ht] \centering
\includegraphics{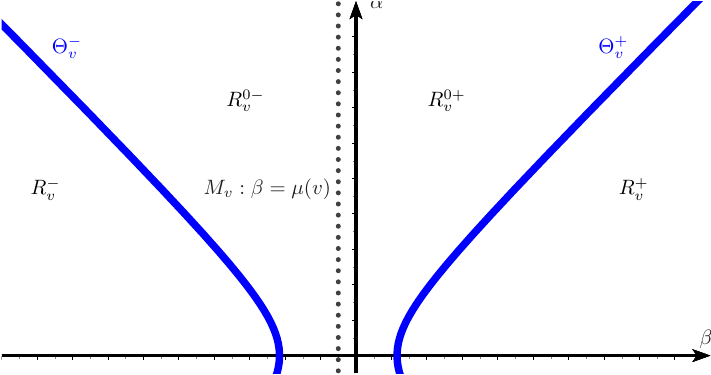}
\caption{The four regions of the plane as defined by the hyperbola $\Theta_v$ and the vertical line $\{\beta=\mu(v)\}$ when $v$ is a numerical Chern character satisfying the Bogomolov--Gieseker inequality $v_1^2-2v_0v_2\ge0$ and $v_0\ne0$. In this picture, we used $\ch_{\le2}(v)=(2,-1,-5/2)$.}
\label{fig:theta}
\end{figure}

The third stage, determining the asymptotics, is not well known. In \cite[Section 6]{BMT}, it is shown that the large volume limit as $\alpha\to\infty$ for $(\alpha,\beta)\in\HH$ gives a polynomial stability condition, but the converse and other directions were not considered. However, unlike the $2$-dimensional case, we cannot assume  the walls are bounded in all directions. In fact, it is easy to check that as $s\to 0$, the walls are unbounded, and it is theoretically possible that the number of walls is infinite. 

This means that the large volume limit is more subtle than for surfaces. We need, therefore, to be more careful about what we mean by asymptotic stability, which we define precisely in Definition~\ref{asym s-st}. We use a strong form of such asymptotic stability which effectively includes finiteness of the number of walls for a given object. We also need to be careful to specify the curve along which we are considering the asymptotics. To this end, we introduce the notion of \textit{unbounded} $\Theta^\pm$-\textit{curve}, which is essentially a curve which is asymptotically either to the left or to the right of all $\Theta$-curves; see Definition~\ref{theta-curve}. 

To help set this up, we also consider what would happen for surfaces, in Section~\ref{nuab sst}. In our case, we look at so-called $\nu$-stability for threefolds, which mimics stability for surfaces given by the first tilt on the category of coherent sheaves, by re-proving results about the large volume limit without the assumption that the walls are bounded.

In order to describe the asymptotics, we need to understand how the stability of an object varies along curves. For $\nu$-stability, it turns out that stable objects can only be destabilized once along inward-moving curves (which cross $\nu$-walls only once). It turns out that this also holds for $\lambda$-stability outside the $\nu$-wall. See Theorem~\ref{one_lambda_wall} for the details.  Accomplishing such a task requires an understanding of the geometry of $\lambda$-walls. For the $\nu$-walls, this was simple because they were circles, and the key property is that they cross the $\Theta_v$-curve at their maximum. To understand the similar properties of $\lambda$-walls, we need to understand their differential properties in a similar way. We do this is Section~\ref{sec:suf-walls}. Unlike $\nu$-walls, $\lambda$-walls need not be regular, and we carefully study when regularity fails. When $s=1/3$, it is straightforward to prove that the walls are regular except if they happen to cross a very special point (on $\Gamma_{u,1/3}$ and its $\nu$-wall), in which case there is a cusp. For other values of $s$, it is much harder. It turns out to be easier to study the differential properties of the $2$-dimensional wall where we allow $s$ to vary, which we call $\Sigma_{u,v}$, regarded as a real algebraic quartic surface in $\mathbb{R}^3$. We show that $\Sigma_{u,v}$ is regular except at some exceptional points and for exceptional $u$ and $v$ (see Theorem~\ref{regular} for the details).

Moreover, there is again a special curve, here denoted by $\Gamma_{v,s}$, which is defined as the vanishing locus of the slope function $\lambda_{\alpha,\beta,s}(v)$. When $s=1/3$, a numerical $\lambda$-wall for $v$ crosses $\Gamma_{v,1/3}$ at its maximum point (just as a $\nu$-wall crosses $\Theta_v$ at its maximum) and the associated $\nu$-wall at its minimum point. This imposes large constraints on the possible numerical $\lambda$-walls when $s=1/3$. We also show that for $s\geq1/3$, any wall existing for one value of $s$ must also exist for all $s$. When $s<1/3$, we show that any wall existing for $s$ exists for all value less than that  of $s$.

The key conclusion is the following; see also Theorem~\ref{RLvs}. The proof uses key differential-geometric information about $\Sigma_{u,v}$ such as its Gauss and mean curvatures.

\begin{mthm}\label{mthm2} 
Suppose a real numerical Chern character $v$ satisfies the Bogomolov--Gieseker inequality and $v_0\neq0$. Any connected bounded component of a numerical $\lambda$-wall in $R^{-}_{v}$ for some $s\geq1/3$ intersects $\Gamma^-_{v,s}$.
\end{mthm}

Although the same statement is not true for unbounded walls, we can describe the explicit conditions $u$ must satisfy so that the wall corresponding to $u$ intersects $\Gamma_{v,s}$.

We are then finally in position to prove, in Section~\ref{labs sst -}, that strong asymptotic stability is equivalent to Gieseker stability. Our results can be summarized as follows.

\begin{mthm}\label{mthm3} 
Let $v$ be a numerical Chern character with $v_0\ne0$ and satisfying the Bogomolov--Gieseker inequality; fix $s\geq1/3$.
\begin{enumerate}
\item If $\gamma$ is an unbounded $\Theta^-$-curve, then an object $E\in D^b(X)$ is asymptotically $\lambda_{\alpha,\beta,s}$-$($semi\/$)$stable along $\gamma$ if and only if $E$ is a Gieseker $($semi\/$)$stable sheaf.
\item If $\gamma$ be an unbounded $\Theta^+$-curve, then an object $E\in D^b(X)$ is asymptotically $\lambda_{\alpha,\beta,s}$-$($semi\/$)$stable along $\gamma$ if and only if $E^\vee$ is a Gieseker $($semi\/$)$stable sheaf.
\end{enumerate}
\end{mthm}

Duals of Gieseker semistable sheaves can be described via a technical lemma of independent interest which characterizes duals of torsion-free sheaves on threefolds (see Proposition~\ref{a=dual} for the details). We also emphasize that more is true for the special curve $\Gamma_{v,s}$, which is an example of an unbounded $\Theta^-$-curve: the first part of the previous statement holds for every $s>0$.

The study of the differential geometry of $\lambda$-walls is also useful to help locate them. Here again there is a complication. For $\nu$-walls (or walls for surfaces), a point on an actual $\nu$-wall corresponds to an actual destabilizing sub-object $F$ of an object $E$ which is $\nu$-stable on one side of the wall and $\nu$-unstable on the other. This is then always the case at all points along the numerical $\nu$-wall. In other words, if a portion of a numerical $\nu$-wall is an actual $\nu$-wall, then the whole numerical $\nu$-wall is an actual $\nu$-wall. It follows that to locate actual $\nu$-walls, it suffices to look along, say, a vertical line from the limiting centre or along $\Theta_v$; we do this in two examples, one in Section~\ref{ideal line} and the other in Lemma~\ref{N1_no_tilt}. 

The situation for $\lambda$-walls is quite different. While we have proved that numerical $\lambda$-walls to the left of $\Theta_v$ must intersect $\Gamma_{v,s}$, that need not be true for actual $\lambda$-walls. However, the largest actual $\lambda$-wall (if it exists) must cross $\Gamma_{v,s}$, and this allows us to find $\lambda$-walls by working along $\Gamma_{v,s}$ from infinity.

We work out a complete example for the case of the Chern character $v=(2,0,-1,0)$ on $\p3$. This is done for the region to the left of $\Theta_v$ in Proposition~\ref{gamma+inst}. The key step is to first find what we call \emph{pseudo-walls}. These are given by destabilizing objects which satisfy the Bogomolov--Gieseker and generalized Bogomolov--Gieseker inequalities. This approach to finding $\lambda$-walls complements Schmidt's method \cite[Theorem 6.1]{S}, which is to observe that, essentially, the $\lambda$-walls crossing $\Theta_v$ must also cross their associated $\nu$-wall at the same point. 

We close the paper with a number of examples for the case of $\p3$ in Section~\ref{sec:examples}. First we consider the case of the ideal sheaf of a line; it illustrates one of two typical situations in which the $\Gamma_{v,s}$ curve intersects~$\Theta_v$. In this case, we show that there must exist a vanishing $\lambda$-wall containing the intersection point (see Theorem~\ref{thm for q}), and we illustrate this by constructing such a wall for this example. We also consider the ideal sheaf of a point for which $\Gamma_{v,s}$ and $\Theta_v$ do not cross and  provide some additional stability information for our final example, which is the null correlation sheaves on $\p3$. In this case, we show there are no $\nu$-walls. We exhibit a number of methods to illustrate the various phenomena which can occur and to locate walls, and we show that there is effectively a single wall to the left of $\Theta_v$.  These cases are also considered from a different point of view using Bridgeland stability in \cite{SS}.

Throughout the paper we make very full use of the triangulated structure of the derived category of coherent sheaves and especially the octahedral axiom. We use a higher-dimensional variant of the octahedral axiom, which is key to understanding how objects vary in the heart $\cohab$ along paths in the upper half plane. We illustrate this in Section~\ref{2nd tilt} using Paul Balmer's description of the objects of $\cohab$.

\subsection*{Acknowledgements}
We thank Victor Pretti for his comments on preliminary versions of this paper and Arend Bayer for helpful conversations. We would also like to thank the referee for their careful reading of the paper and helpful comments to fix a few errors and suggestions to help the clarity of the text in numerous places.
We have made extensive use of the software Maxima, Geogebra and Mathematica for calculations and to produce the various pictures present in this text.

\section{Background material and notation}\label{prelim}

Let $X$ be an irreducible, non-singular projective variety of dimension $3$ over an algebraically closed field of characteristic $0$  with $\Pic(X)=\Z$. Fix an  ample generator $L$ of $\Pic(X)$, and write $\ell=c_1(L)$. Our assumptions mean that each object $A\in\dbx$ has a well-defined numerical Chern character
$$
\ch(A):=(\ch_0(A)\ell^3,\ch_1(A)\cdot \ell^2,\ch_2(A)\cdot \ell,\ch_3(A))\in \Z\times\Z\times\frac12\Z\times\frac16\Z.
$$
Abusing notation, we will simply write $\ch_i(A)$ for $\ch_i(A)\cdot \ell^{3-i}$. We will refer to an element of $\R^4=\R\otimes K_{\text{num}}(X)$ as a \emph{real numerical Chern character} and an element $v$ of $\Z\times\Z\times\frac12\Z\times\frac16\Z$ as a Chern character when there is an object $A\in\dbx$ such that $v=\ch(A)$. We write the components as $v=(v_0,v_1,v_2,v_3)$ corresponding to the Chern characters of objects so that the underlying real Chern character $v$ which is the numerical Chern character $\ch(A)$ of an object of $\dbx$ satisfies $v_i=\ch_i(A)$.

Given $\beta\in\R$, recall the definition of the \emph{twisted Chern character} $\ch^\beta(A):=\exp(-\beta)\cdot\ch(A)= (\ch_0^\beta(A),\ch_1^\beta(A),\ch_2^\beta(A),\ch_3^\beta(A))$. So
\begin{align*}
\ch_0^\beta(A) & := \ch_0(A) ; \\
\ch_1^\beta(A) & := \ch_1(A) - \beta\ch_0(A) ; \\
\ch_2^\beta(A) & := \ch_2(A) - \beta\ch_1(A) + \frac{1}{2}\beta^2\ch_0(A) ; \\
\ch_3^\beta(A) & := \ch_3(A) - \beta\ch_2(A) + \frac{1}{2}\beta^2\ch_1(A) - \frac{1}{6}\beta^3\ch_0(A).
\end{align*}

Recall that the $\mu$-slope of a coherent sheaf $E\in\coh(X)$ is defined as follows:
$$ \mu(E):= 
\begin{cases}
\ch_1(E)/\ch_0(E) &\textrm{if } E \textrm{ is torsion-free}, \\
+\infty& \textrm{otherwise}.
\end{cases} $$
In addition, we also define
$$ \mu^+(E):=\max\{\mu(F) \mid F\into E \text{ is a non-zero subsheaf } \} \quad\text{and} $$
$$ \mu^-(E):=\min\{\mu(G) \mid E\onto G \text{ is a non-zero quotient }  \}. $$
As usual, $E$ is said to be $\mu$-(semi)stable if every non-zero subsheaf $F\into E$ satisfies $\mu(F)<(\le)~\mu(E/F)$. So  $E$ is $\mu$-semistable if and only if 
$\mu^+(E)=\mu(E)$ or, equivalently, $\mu^-(E)=\mu(E)$.

\bigskip

\subsection{$\boldsymbol{\nu}$-stability}

Given $\beta\in\R$, consider the following torsion pair on $\coh(X)$:
$$ \tors\beta := \{E\in\coh(X) ~|~ \textrm{every non-zero quotient } E\onto G \textrm{ satisfies } \mu(G)>\beta \}\quad{\rm and} $$
$$ \free\beta := \{E\in\coh(X) ~|~ \textrm{every non-zero subsheaf } F\into E \textrm{ satisfies } \mu(F)\le\beta \} . $$
Tilting on $(\free\beta,\tors\beta)$, one obtains an abelian subcategory $\cohb:=\langle\free\beta[1],\tors\beta\rangle$ of $\dbx$, which is the heart of a t-structure on $\dbx$.

For $B\in\dbx$, let $\calh^{p}(B)$ denote cohomology with respect to $\coh(X)$. Observe that the objects of $\cohb$ are those $B\in\dbx$ such that: 
\begin{itemize}
\item $\calh^p(B)=0$ for $p\ne-1,0$;
\item $\calh^{-1}(B)\in\free\beta$; and
\item $\calh^{0}(B)\in\tors\beta$.
\end{itemize}
In particular, from the definition of $\free\beta$, the sheaf $\calh^{-1}(B)$ must be torsion-free.

Introducing a new parameter $\alpha\in\R^+$, one considers a group homomorphism 
$$ Z_{\alpha,\beta}^{\rm tilt} \colon K_{\rm num}(X)\to\C, $$
called a \emph{central charge}, given by
\begin{equation}\label{Z tilt}
Z_{\alpha,\beta}^{\rm tilt}(B) := -\left( \ch_2^\beta(B) - \frac{1}{2}\alpha^2\ch_0(B)\right) +
\sqrt{-1}\ch_1^\beta(B),
\end{equation}
whose corresponding slope function is
\begin{equation}\label{nu-slope}
\nu_{\alpha,\beta}(B):=\begin{cases}
\dfrac{\ch_2^\beta(B) - \alpha^2\ch_0(B)/2}{\ch_1^\beta(B)} &\textrm{ if } \ch_1^\beta(B)\ne0, \\ 
+\infty & \text{if }\ch_1^\beta(B)=0.
\end{cases} 
\end{equation}
In addition, we also define
$$ \nuab^+(B):=\max\{\nuab(F) ~|~ F\into B \textrm{ in } \cohb,\ F\neq0\} \quad{\rm and} $$
$$ \nuab^-(B):=\min\{\nuab(G) ~|~ B\onto G \textrm{ in } \cohb,\ G\neq0 \}. $$

An object $B\in\dbx$ is said to be $\nu_{\alpha,\beta}$-(semi)stable if $B\in\cohb$ and every non-zero sub-object $F\into B$ within $\cohb$ satisfies $\nu_{\alpha,\beta}(F)<(\le)~\nu_{\alpha,\beta}(B/F)$. Note that $E$ is $\nu_{\alpha,\beta}$-semistable if and only if $\nu_{\alpha,\beta}^+(E)=~\nu_{\alpha,\beta}(E)$ or, equivalently, $\nu_{\alpha,\beta}^-(E)=~\nu_{\alpha,\beta}(E)$.

Every $\mu$-semistable sheaf and every $\nuab$-semistable object $B\in\cohb$ satisfies the usual Bogomolov--Gieseker inequality, which in our situation is purely numerical, see \cite[Corollary 7.3.2]{BMT}:
\begin{equation}\label{B-ineq}
Q^{\rm tilt}(B) := \ch_1(B)^2-2\ch_0(B)\ch_2(B) \ge 0.
\end{equation}
In addition, for certain choices of $X$, every $\nuab$-semistable object $B\in\cohb$ also satisfies the following \emph{generalized Bogomolov--Gieseker inequality}: 
\begin{equation}\label{GB-ineq}\begin{split}
Q_{\alpha,\beta}(B) &= \alpha^2 Q^{\rm tilt}(B)+4(\ch^\beta_2(B))^2-6\ch^\beta_1(B)\ch^\beta_3(B)\\
&=Q^{\rm tilt}(B) (\alpha^2+\beta^2) + \left( 6\ch_0(B)\ch_3(B)-2\ch_1(B)\ch_2(B) \right)\beta +4\ch_2(B)^2 - 6\ch_1(B)\ch_3(B)\\
&\ge 0,
\end{split}
\end{equation}
originally proposed in \cite[Conjecture 1.3.1]{BMT}. This inequality was proved to hold for all Fano and abelian threefolds with Picard rank $1$, see \cite{Li} and \cite{BMS,MP}, respectively, and for the quintic threefold; see \cite{Li2}. We assume from now on that $X$ is such that the generalized Bogomolov--Gieseker inequality \eqref{GB-ineq} holds for all $\nuab$-semistable objects.

Let $\HH:=\R^+\times\R$, thought of as the upper half plane, with coordinates denoted by $(\alpha,\beta)$. We will want to consider the slope function as a function of $\alpha$ and $\beta$, and to this end it is convenient to define the following function on $(\alpha,\beta)\in\HH$:
\begin{equation}\label{rho function}
\rho_{v}(\alpha,\beta) = v_2 - v_1\beta + \frac{1}{2}v_0(\beta^2-\alpha^2),
\end{equation}
which coincides with the numerator of $\nuab(B)$ when $v=\ch(B)$. To simplify the notation, we define $\rho_B(\alpha,\beta):=\rho_{\ch(B)}(\alpha,\beta)$ for objects $B\in\dbx$. 

Note that the pair $(\cohb,Z^{\rm tilt}_{\alpha,\beta})$ is a \emph{weak stability condition} in $D^{\rm b}(X)$, in the sense of \cite[Section 2]{Tod10}, for all pairs $(\alpha,\beta)\in\R^+\times\R$. In practical terms, this gives the following. 

\begin{proposition}\label{proprank1}
Fix $\beta\in\R$. If $B\in\cohb$, then $\ch^\beta_1(B)\geq0$, with equality only if $\rho_{B}(\alpha,\beta)\geq0$ for all $\alpha>0$. 
\end{proposition}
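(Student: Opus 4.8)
The statement is exactly the positivity axiom for the weak stability condition $(\cohb, Z^{\rm tilt}_{\alpha,\beta})$ recorded just above it: since $\ch_1^\beta(B)=\Im Z^{\rm tilt}_{\alpha,\beta}(B)$ and $-\rho_B(\alpha,\beta)=\Re Z^{\rm tilt}_{\alpha,\beta}(B)$, we are asking for $\Im Z\ge 0$ on the heart, with $\Re Z\le 0$ whenever $\Im Z=0$. One could simply cite \cite{Tod10}, but I would give a self-contained argument from the torsion pair and the Bogomolov inequality \eqref{B-ineq}, as this is what the later sections use. The plan is to reduce to the two halves of the tilt. Every $B\in\cohb$ sits in a short exact sequence $0\to\calh^{-1}(B)[1]\to B\to\calh^0(B)\to 0$ in $\cohb$, with $\calh^{-1}(B)\in\free\beta$ and $\calh^0(B)\in\tors\beta$. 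Both $\ch_1^\beta$ and the assignment $B\mapsto\rho_B(\alpha,\beta)$ are additive on short exact sequences (the latter because $\rho_v$ is linear in $v$), and $\ch(\calh^{-1}(B)[1])=-\ch(\calh^{-1}(B))$, so it suffices to control these functionals separately on $\tors\beta$ and on $\free\beta$.

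For the inequality, I would pass to the Harder--Narasimhan filtration with respect to $\mu$-stability. For $T\in\tors\beta$, whose factors are torsion sheaves together with $\mu$-semistable torsion-free factors $F_j$ of slope $>\beta$, the torsion part contributes $\ch_1(T_{\mathrm{tor}})\ge 0$ and each torsion-free factor contributes $\ch_0(F_j)(\mu(F_j)-\beta)> 0$, so $\ch_1^\beta(T)\ge 0$; dually, $F\in\free\beta$ is torsion-free with all HN slopes $\le\beta$, giving $\ch_1^\beta(F)=\sum_j\ch_0(F_j)(\mu(F_j)-\beta)\le 0$. Combining these via the canonical sequence, $\ch_1^\beta(B)=\ch_1^\beta(\calh^0(B))-\ch_1^\beta(\calh^{-1}(B))\ge 0$.

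The harder part is the equality case, where I expect the real work to lie. If $\ch_1^\beta(B)=0$ then, the two contributions having opposite signs, both $\ch_1^\beta(\calh^0(B))=0$ and $\ch_1^\beta(\calh^{-1}(B))=0$. For the quotient, the sign analysis forces $\calh^0(B)$ to have no torsion-free part and its torsion to satisfy $\ch_1=0$, i.e.\ to be supported in dimension $\le 1$; then $\ch_0=\ch_1=0$ and $\rho_{\calh^0(B)}(\alpha,\beta)=\ch_2(\calh^0(B))\ge 0$ by effectivity of the class of a $\le 1$-dimensional support. For the sub, vanishing of $\ch_1^\beta$ forces every HN slope of $F:=\calh^{-1}(B)$ to equal $\beta$, so $F$ is $\mu$-semistable with $\mu(F)=\beta$; applying \eqref{B-ineq} gives $\ch_1(F)^2\ge 2\ch_0(F)\ch_2(F)$, which with $\ch_1(F)=\beta\ch_0(F)$ and $\ch_0(F)\ge 0$ rearranges to $\ch_2^\beta(F)=\ch_2(F)-\tfrac12\beta^2\ch_0(F)\le 0$. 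Hence $\rho_{\calh^{-1}(B)[1]}(\alpha,\beta)=-\ch_2^\beta(F)+\tfrac12\alpha^2\ch_0(F)\ge 0$ for every $\alpha>0$, and adding the two nonnegative contributions yields $\rho_B(\alpha,\beta)\ge 0$ for all $\alpha>0$. The main obstacles are fixing the sign conventions for $\ch_1$ and $\ch_2$ of low-dimensional torsion sheaves and correctly extracting the $\mu$-semistable slope-$\beta$ structure of $F$ so that \eqref{B-ineq} can be invoked.
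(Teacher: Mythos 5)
Your proof is correct. Note that the paper itself gives no argument for this proposition: it simply records it as the ``practical terms'' consequence of the cited fact that $(\cohb(X),Z^{\rm tilt}_{\alpha,\beta})$ is a weak stability condition in the sense of \cite[Section 2]{Tod10}. What you have written is precisely the standard verification lying behind that citation (it is essentially the argument in \cite{BMT}): decompose $B$ via the torsion pair, control $\ch_1^\beta$ on $\tors\beta$ and $\free\beta$ separately through the $\mu$-Harder--Narasimhan filtration, and in the degenerate case $\ch_1^\beta(B)=0$ use effectivity of $\ch_2$ for sheaves of dimension $\le 1$ on the $\tors\beta$ side and the classical Bogomolov inequality \eqref{B-ineq} for the $\mu$-semistable slope-$\beta$ sheaf $\calh^{-1}(B)$ on the $\free\beta$ side. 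All the steps check out: the strict positivity of $\ch_1^\beta$ on torsion-free HN factors of objects of $\tors\beta$ forces $\calh^0(B)$ to be supported in dimension $\le 1$ in the equality case; the vanishing of $\ch_1^\beta(\calh^{-1}(B))$ forces a single HN factor of slope exactly $\beta$, so \eqref{B-ineq} applies and gives $\ch_2^\beta(\calh^{-1}(B))\le 0$; and additivity of $v\mapsto\rho_v(\alpha,\beta)$ (with the sign flip under $[1]$) assembles the two nonnegative contributions. So your argument is not a different route so much as a self-contained substitute for the reference, which is a legitimate and arguably more useful way to present the proposition, given that the later sections of the paper rely on exactly these mechanisms.
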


\bigskip

\subsection{$\boldsymbol{\lambda}$-stability}

The next step is to consider the following torsion pair on $\cohb$:
\begin{gather*}
\tors{\alpha,\beta} := \{E\in\cohb \mid \text{every non-zero quotient } E\onto G \text{ satisfies } \nuab(G)>0 \}\quad  \text{ and} \\
\free{\alpha,\beta} := \{E\in\cohb \mid \text{every non-zero sub-object } F\into E \text{ satisfies } \nuab(F)\le0 \} .
\end{gather*}
Tilting on $(\free{\alpha,\beta},\tors{\alpha,\beta})$, one obtains a new abelian subcategory
$\cohab:=\langle\free{\alpha,\beta}[1],\tors{\alpha,\beta}\rangle$ of $D^{\rm b}(X)$, which
is also the heart of a t-structure on $D^{\rm b}(X)$. 

One then introduces a third parameter $s>0$ in order to define a family of central charges
$$ Z_{\alpha,\beta,s} \colon K_{\rm num}(X)\to\C $$
as follows, for $A\in \cohab$:
\begin{equation} \label{Z_abs}
Z_{\alpha,\beta,s}(A) := -\ch_3^\beta(A) + \big(s+1/6\big)\alpha^2\ch_1^\beta(A) +
\sqrt{-1}\left( \ch_2^\beta(A) - \alpha^2\ch_0(A)/2 \right) ,
\end{equation}
whose corresponding slope function is
\begin{equation}\label{lambda-slope}
\labs(A):=\begin{cases}
\dfrac{\ch_3^\beta(A) - \big(s+1/6\big)\alpha^2\ch_1^\beta(A)}{\ch_2^\beta(A) - \alpha^2\ch_0(A)/2} &\textrm{if } \ch_2^\beta(A) - \alpha^2\ch_0(A)/2\ne0, \\ 
+\infty &\textrm{if } \ch_2^\beta(A) - \alpha^2\ch_0(A)/2=0.
\end{cases}
\end{equation}

\begin{remark}
We could also consider a more general central charge, whose real part is
$$ -\ch^\beta_3(A)+b\ch^\beta_2(A)+a\ch^\beta_1(A) $$
for parameters $b\in\R$ and $a\in\R^+$; see \cite[Lemma 8.3]{BMS} and \cite{Piy}. However, we will only consider the special case where $b=0$, while $a=\alpha^2(s+1/6)$.
\end{remark}

An object $A\in\dbx$ is said to be $\labs$-(semi)stable if $A\in\cohab$ and every non-zero sub-object $F\into A$ within $\cohab$ satisfies $\labs(F)<(\le)~\labs(B/F)$.

For further reference, we define for each real numerical Chern character $v\in\R^4$ the following function on $(\alpha,\beta)\in\HH$:
\begin{equation}\label{tau function}
\tau_{v,s}(\alpha,\beta) = v_3 - v_2\beta + \frac{1}{2}v_1\beta^2 - \frac{1}{6}v_0\beta^3 - \left(s+\frac{1}{6}\right)(v_1-v_0\beta)\alpha^2,
\end{equation}
which coincides with the numerator of $\lambda_{\alpha,\beta,s}(A)$ when $v=\ch(A)$. Again, we define $\tau_{A,s}(\alpha,\beta):=\tau_{\ch(A),s}(\alpha,\beta)$ for objects $A\in\dbx$, and we will also write $\tau_v(\alpha,\beta,s)=\tau_{v,s}(\alpha,\beta)$.
Note that, when non-zero, the denominator of $\labs(A)$ is $\rho_A(\alpha,\beta)$.

Finally, a direct consequence of the generalized Bogomolov--Gieseker inequality \eqref{GB-ineq} is that $(\cohab,Z_{\alpha,\beta,s})$ is a (numerical) stability condition, in the sense of \cite[Definition 2.1.1]{BMT}, for every triple $(\alpha,\beta,s)\in\R^+\times\R\times\R^+$. In practical terms, this yields the following. 

\begin{proposition}\label{proprank2}
Fix $(\alpha,\beta)\in\HH$. If $A\in\cohab$ is non-zero, then $\rho_{A}(\alpha,\beta)\geq0$, with equality only if $\tau_{A,s}(\alpha,\beta)>0$ for every $s\in\R^+$.
\end{proposition}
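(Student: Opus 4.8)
The plan is to read off the conclusion directly from the positivity axiom of the stability condition $(\cohab(X), Z_{\alpha,\beta,s})$, whose existence for every triple $(\alpha,\beta,s)\in\R^+\times\R\times\R^+$ is precisely the content of the generalized Bogomolov inequality \eqref{GB-ineq} recalled above.

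First I would rewrite the central charge \eqref{Z_abs} in terms of the functions $\rho$ and $\tau$ introduced in \eqref{rho function} and \eqref{tau function}. Comparing the real and imaginary parts of \eqref{Z_abs} with these definitions shows that, for $A\in\cohab(X)$,
$$ Z_{\alpha,\beta,s}(A) = -\tau_{A,s}(\alpha,\beta) + \sqrt{-1}\,\rho_A(\alpha,\beta), $$
so that $\rho_A(\alpha,\beta)=\Im Z_{\alpha,\beta,s}(A)$ and $-\tau_{A,s}(\alpha,\beta)=\Re Z_{\alpha,\beta,s}(A)$. The crucial observation to record at this stage is that the heart $\cohab(X)$ and the imaginary part $\rho_A$ are both independent of the parameter $s$; only the real part depends on $s$.

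Next I would invoke the defining positivity property of a stability condition in the sense of \cite[Definition 2.1.1]{BMT}: for every non-zero object $A$ of the heart, $Z_{\alpha,\beta,s}(A)$ lies in the semi-closed upper half plane $\{re^{\sqrt{-1}\pi\phi} : r>0,\ 0<\phi\le1\}$. Explicitly this says $\Im Z_{\alpha,\beta,s}(A)\ge0$, with the additional constraint that $\Re Z_{\alpha,\beta,s}(A)<0$ whenever $\Im Z_{\alpha,\beta,s}(A)=0$. Translating through the identification above, the first part gives $\rho_A(\alpha,\beta)\ge0$ immediately. For the second part, suppose $\rho_A(\alpha,\beta)=0$; then for each fixed $s\in\R^+$ the positivity axiom applied to $(\cohab(X),Z_{\alpha,\beta,s})$ forces $\Re Z_{\alpha,\beta,s}(A)=-\tau_{A,s}(\alpha,\beta)<0$, that is $\tau_{A,s}(\alpha,\beta)>0$. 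Since $A$ remains a non-zero object of the same heart and $\rho_A=0$ independently of $s$, this argument runs uniformly over all $s>0$, yielding $\tau_{A,s}(\alpha,\beta)>0$ for every $s\in\R^+$, as required.

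There is no genuine obstacle here: the statement is a formal consequence of the positivity axiom once the real and imaginary parts of $Z_{\alpha,\beta,s}$ are matched with $-\tau$ and $\rho$. The only point requiring a moment's care --- and the analogue of the ``for all $\alpha>0$'' clause in Proposition \ref{proprank1} --- is that the quantifier over $s$ is legitimate precisely because the heart and $\rho_A$ do not see $s$, so that a single object $A$ is simultaneously a non-zero member of every $(\cohab(X),Z_{\alpha,\beta,s})$. Note also that the strictness $\tau_{A,s}(\alpha,\beta)>0$ (rather than the non-strict inequality appearing in Proposition \ref{proprank1}) reflects that $(\cohab(X),Z_{\alpha,\beta,s})$ is a full stability condition, whereas $(\cohb(X),Z^{\rm tilt}_{\alpha,\beta})$ is only a weak one.
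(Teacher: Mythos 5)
Your proof is correct and is essentially the paper's own argument: the paper states the proposition as an immediate ("in practical terms") consequence of $(\cohab(X),Z_{\alpha,\beta,s})$ being a Bridgeland stability condition in the sense of \cite[Definition 2.1.1]{BMT}, which is exactly the positivity axiom you invoke after matching $\Re Z_{\alpha,\beta,s}=-\tau_{A,s}$ and $\Im Z_{\alpha,\beta,s}=\rho_A$. Your additional observation that the quantifier over $s$ is legitimate because the heart and $\rho_A$ are independent of $s$ is a correct and worthwhile elaboration of what the paper leaves implicit.
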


The generalized Bogomolov--Gieseker inequality can also be used to prove a form of the support property for $\labs$-semistability. In the case we are considering where the Picard rank is $1$, we can state it as follows. 

\begin{proposition}[\emph{cf.} \protect{\cite[Theorem 8.7]{BMS}}]\label{support_property}
Suppose $X$ is a smooth threefold with Picard rank $1$ such that the generalized Bogomolov--Gieseker inequality \eqref{GB-ineq} holds for all $\nuab$-semistable objects. If $E\in\cohab$ is $\labs$-semistable, then $Q_{\alpha,\beta}(E)\geq0$.
\end{proposition}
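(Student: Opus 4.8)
The statement transfers the generalized Bogomolov inequality from the $\nuab$-semistable objects, where it is assumed, to the $\labs$-semistable objects of the second tilt $\cohab$; equivalently, it exhibits $Q_{\alpha,\beta}$ as a quadratic form witnessing the support property of $(\cohab,Z_{\alpha,\beta,s})$. My plan is to argue in two stages: a linear-algebraic reduction to $\labs$-stable objects, followed by a bridge from such objects back to $\nuab$-semistable building blocks, where the hypothesis applies.

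First I would pin down the signature of $Q_{\alpha,\beta}$ on $\R^4=\R\otimes K_{\rm num}(X)$. Since $Q^{\rm tilt}$ has signature $(2,1)$ in the variables $(\ch_0,\ch_1,\ch_2)$ and the cross term $-6\ch_1^\beta\ch_3^\beta$ adds a hyperbolic pair, $Q_{\alpha,\beta}$ should have signature $(2,2)$, and I would verify by a direct computation that it is negative definite on the real $2$-plane $\ker Z_{\alpha,\beta,s}$. This has two payoffs. Because $Q_{\alpha,\beta}$ is even, each building block of $\cohab$ --- a $\nuab$-semistable $T\in\tors{\alpha,\beta}$, or a shift $F[1]$ of a $\nuab$-semistable $F\in\free{\alpha,\beta}$ --- already satisfies $Q_{\alpha,\beta}\ge0$ by hypothesis, since $Q_{\alpha,\beta}(F[1])=Q_{\alpha,\beta}(-\ch(F))=Q_{\alpha,\beta}(F)$. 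And it yields the usual convexity lemma: if two classes have $Q_{\alpha,\beta}\ge0$ and their images under $Z_{\alpha,\beta,s}$ lie on a common ray, then so does their sum (on the plane they span $Z_{\alpha,\beta,s}$ drops to rank one, $Q_{\alpha,\beta}$ is negative on the kernel there, the restricted form is Lorentzian, and same-ray classes sit in a single convex component of $\{Q_{\alpha,\beta}\ge0\}$). Applying this to the Jordan--H\"older factors of a $\labs$-semistable object, which share one $\labs$-slope and hence have $Z_{\alpha,\beta,s}$-images on a common ray, reduces the claim to $\labs$-stable $E$.

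For $\labs$-stable $E$ the bridge runs through the torsion pair defining $\cohab$: there is a short exact sequence $0\to\calh^{-1}(E)[1]\to E\to\calh^0(E)\to0$ in $\cohab$, and refining by the $\nuab$-Harder--Narasimhan filtrations of $\calh^0(E)\in\tors{\alpha,\beta}$ and $\calh^{-1}(E)\in\free{\alpha,\beta}$ writes $\ch(E)=\sum_j\ch(Y_j)$ as a sum of building blocks $Y_j$, each with $Q_{\alpha,\beta}(Y_j)\ge0$ and, using Proposition \ref{proprank1}, with $\rho_{Y_j}(\alpha,\beta)=\operatorname{Im}Z_{\alpha,\beta,s}(Y_j)\ge0$. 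The main obstacle is precisely here: the $Y_j$ need not share a slope, and with signature $(2,2)$ the region $\{Q_{\alpha,\beta}\ge0\}$ is not convex --- nonnegativity of $Q_{\alpha,\beta}$ on the summands, even all lying in the closed upper half plane under $Z_{\alpha,\beta,s}$, does not by itself force $Q_{\alpha,\beta}(E)\ge0$ (it fails already for boundary classes with $\rho_{Y_j}=0$). The resolution is not formal: one must run an induction along the Harder--Narasimhan filtration, using the monotonicity of the $\nuab$-slopes and the interplay of $Q_{\alpha,\beta}$ with the imaginary and real parts of $Z_{\alpha,\beta,s}$ governed by $\rho$ and $\tau_{\cdot,s}$ (Propositions \ref{proprank1} and \ref{proprank2}) to control the partial sums, so that a hypothetical $Q_{\alpha,\beta}(E)<0$ produces a sub- or quotient object in $\cohab$ of $\labs$-slope at least that of $E$, contradicting stability. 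This slope-ordered estimate is the technical heart of the argument and is exactly the content carried out in \cite[Thm.~8.7]{BMS}.
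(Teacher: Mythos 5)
The first thing to note is that the paper contains no proof for you to match: the citation \cite[Thm.~8.7]{BMS} is built into the statement itself, so the result is imported wholesale, and the only meaningful benchmark is the argument in \cite{BMS}. Measured against that, your preparatory reductions are essentially the right ones --- they are the quadratic-form formalism of \cite[Appendix A]{BMS}: the signature of $Q_{\alpha,\beta}$ is indeed $(2,2)$; evenness of the form makes the hypothesis \eqref{GB-ineq} apply equally to both kinds of building blocks of $\cohab$ (objects of $\tors{\alpha,\beta}$ and shifts $F[1]$ with $F\in\free{\alpha,\beta}$); and the same-ray convexity lemma correctly reduces the claim to $\labs$-stable objects. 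One computational claim is false, however: $Q_{\alpha,\beta}$ is \emph{not} negative definite on $\ker Z_{\alpha,\beta,s}$. On that kernel one has $\ch_2^\beta=\tfrac{\alpha^2}{2}\ch_0$ and $\ch_3^\beta=(s+\tfrac16)\alpha^2\ch_1^\beta$, whence
\[ Q_{\alpha,\beta}\big|_{\ker Z_{\alpha,\beta,s}}=-6s\alpha^2\bigl(\ch_1^\beta\bigr)^2, \]
which is only negative \emph{semi}-definite, vanishing on the line $\ch_1^\beta=\ch_3^\beta=0$, $\ch_2^\beta=\tfrac{\alpha^2}{2}\ch_0$. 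Your convexity lemma survives semi-definiteness, but the clean Lorentzian-restriction picture you invoke does not, and this degeneracy is one reason the theorem is genuinely delicate.

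The real gap is the one you flag yourself: for a $\labs$-stable $E$ whose $\nuab$-semistable constituents have non-aligned central charges you prove nothing, and instead assert that a ``slope-ordered induction'' along the Harder--Narasimhan filtration would turn $Q_{\alpha,\beta}(E)<0$ into a destabilizing subobject, adding that this ``is exactly the content carried out in \cite[Thm.~8.7]{BMS}.'' As a standalone proof this is circular --- the crux is delegated to the very theorem being proved --- and as a guess at the missing mechanism it points in the wrong direction. The proof in \cite{BMS} does not run an induction inside the fixed heart at fixed $(\alpha,\beta,s)$; it is a deformation argument in the parameter. The hypothesis \eqref{GB-ineq} enters at the degenerate value $s=0$: for $\nuab$-semistable $F$ with $\nuab(F)=0$ it reduces to $\ch_1^\beta(F)\bigl(\alpha^2\ch_1^\beta(F)-6\ch_3^\beta(F)\bigr)\geq0$, which is precisely the positivity needed for $Z_{\alpha,\beta,0}$ to define a weak stability function on $\cohab$; from this boundary point the support-property deformation machinery of \cite[Appendix A]{BMS} propagates the inequality $Q_{\alpha,\beta}\geq0$ for semistable objects to all $s>0$, with your same-ray convexity lemma consumed in the wall-crossing induction along the path in $s$, not along a Harder--Narasimhan filtration at a fixed point. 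If you want a self-contained argument, it is that deformation step --- local finiteness of walls for classes with $Q_{\alpha,\beta}\geq0$, and the induction over wall-crossings in $s$ --- that you must supply.
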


\begin{remark}
This implies that $(\cohab,Z_{\alpha,\beta,s})$ is a (full) Bridgeland stability condition on  $\dbx$.
\end{remark}

\begin{remark}\label{notbog}
  Observe that $\labs$-semistable objects may not satisfy the usual Bogomolov--Gieseker inequality. For example, on $\p3$, the object $A:=\ox[2]\oplus\ox(1)$ is $\lambda_{1/2\sqrt{1+6s},1/2,s}$-semistable for every $s>0$, but $Q^{\rm tilt}(A)=-1$.
\end{remark}

When we come to do more detailed computations, it will also be useful to have a more uniform notation for the various functions of $v\in K_{\rm num}(X)$ introduced above; more precisely, we define the following:
\begin{equation}\label{chab}
\begin{aligned}
\ch^{\alpha,\beta}_0(v):=v_0,&\quad \ch^{\alpha,\beta}_2(v):=\rho_{v}(\alpha,\beta), \\
\ch^{\alpha,\beta}_1(v):=\ch^\beta_1(v), & \quad
\ch^{\alpha,\beta}_3(v):=\tau_{v,1/3}(\alpha,\beta).
\end{aligned}
\end{equation}

By convention, we set $\ch_i^{\alpha,\beta}(v)=0$ for $i\not\in\{0,1,2,3\}$.
Alternatively, one can also define 
\[\ch^{\alpha,\beta}(v)=\operatorname{Re}\,\bigl(\exp(-\beta-\sqrt{-1}\alpha)\cdot v\bigr).\]

The reason for setting $s=1/3$ will become clearer in Section~\ref{lambdawalls}, but one technical reason is that the partial derivatives of $\ch^{\alpha,\beta}_i(v)$ with respect to $\alpha$ and $\beta$ behave very well; more precisely, 
\begin{equation}
\partial_\alpha\ch_i^{\alpha,\beta}(v)=-\alpha\ch_{i-2}^{\alpha,\beta}(v)\quad \text{and}\quad\partial_\beta\ch_i^{\alpha,\beta}(v)=-\ch^{\alpha,\beta}_{i-1}(v).\label{dervch}
\end{equation}
Note that $\tau_{v,s}(\alpha,\beta)=\ch^{\alpha,\beta}_3(v)-\alpha^2(s-1/3)\ch^{\alpha,\beta}_1(v)$. 

We also introduce
\begin{equation}\label{Delta}
\begin{gathered}
\Delta_{ij}(\alpha,\beta) := \ch^{\alpha,\beta}_i(u)\ch^{\alpha,\beta}_j(v)-\ch^{\alpha,\beta}_j(u)\ch^{\alpha,\beta}_i(v)\quad\text{and} \\
\delta_{ij}(u,v) := \ch_i(u)\ch_j(v)-\ch_j(u)\ch_i(v) = \Delta_{ij}(0,0). \end{gathered}
\end{equation} 
In particular, note that $\Delta_{10}(\alpha,\beta)=\delta_{10}(u,v)$. 

The following is an easy exercise.

\begin{lemma}\label{delta_tech} Fix real numerical Chern characters $u$ and $v$.
\begin{enumerate}
\item\label{delta_identity} We have $\Delta_{01}\Delta_{23}+\Delta_{02}\Delta_{31}+\Delta_{12}\Delta_{03}=0$.
\item\label{delta_dervs} The partial derivatives of $\Delta_{ij}$ are given by
  \begin{gather*}
    \partial_\alpha\Delta_{ij}(\alpha,\beta)=-\alpha\bigl(\Delta_{i-2\,j}(\alpha,\beta)+\Delta_{i\, j-2}(\alpha,\beta)\bigr),\\
\partial_\beta\Delta_{ij}(\alpha,\beta)=-\bigl(\Delta_{i-1\, j}(\alpha,\beta)+\Delta_{i \, j-1}(\alpha,\beta)\bigr).\end{gather*}
\item\label{twovanish} Assume either $\ch^{\alpha,\beta}_i(u)\neq0$ or $\ch^{\alpha,\beta}_i(v)\neq0$. Then, for any $i,j,k\in\{0,1,2,3\}$, if $\Delta_{ij}(\alpha,\beta)=0=\Delta_{ik}(\alpha,\beta)$, then $\Delta_{jk}(\alpha,\beta)=0$.
\item\label{upropv} The following are equivalent: 
\begin{enumerate}[{\rm(a)}]
\item There exist $\alpha,\beta$ and $i\in\{0,1,2,3\}$ such that $\ch_i^{\alpha,\beta}(v)\neq0$ $($or $\ch_i^{\alpha,\beta}(u)\neq0)$ and for all $j\in\{0,1,2,3\}$, $\Delta_{ij}(\alpha,\beta)=0$.
\item For all $\alpha,\beta$ and for all $i,j\in\{0,1,2,3\}$, we have $\Delta_{ij}(\alpha,\beta)=0$.
\item We have $u\propto v$.
\end{enumerate}
\end{enumerate}
\end{lemma}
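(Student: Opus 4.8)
The plan is to prove the four statements of Lemma \ref{delta_tech} by treating each as a largely formal consequence of the bilinear-alternating structure of the $\Delta_{ij}$ together with the derivative rules \eqref{dervch}. Throughout I fix $\alpha,\beta$ and suppress them from the notation, writing $a_i:=\ch^{\alpha,\beta}_i(u)$ and $b_i:=\ch^{\alpha,\beta}_i(v)$, so that $\Delta_{ij}=a_ib_j-a_jb_i$.

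\emph{Part \eqref{delta_identity}.} This is the Pl\"ucker-type (Grassmann--Pl\"ucker) quadratic relation. With the substitution above, $\Delta_{01}\Delta_{23}+\Delta_{02}\Delta_{31}+\Delta_{12}\Delta_{03}$ expands into a sum of products $a_ib_ja_kb_l$, and I would simply verify that every monomial cancels. A cleaner way to see it is to regard the four pairs $(a_i,b_i)$ as the four columns of a $2\times 4$ matrix; then each $\Delta_{ij}$ is a $2\times 2$ minor, and the identity is the standard relation among the maximal minors of such a matrix, equivalently the statement that a certain $4\times 4$ determinant with two repeated rows vanishes. I expect this to be immediate once the signs are arranged correctly.

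\emph{Part \eqref{delta_dervs}.} These follow by applying the Leibniz rule to $\Delta_{ij}=a_ib_j-a_jb_i$ and substituting the derivative formulas \eqref{dervch}, namely $\partial_\alpha a_i=-\alpha\,a_{i-2}$ and $\partial_\beta a_i=-a_{i-1}$ (and likewise for $b$). For the $\beta$-derivative one gets $\partial_\beta\Delta_{ij}=-a_{i-1}b_j-a_ib_{j-1}+a_{j-1}b_i+a_jb_{i-1}=-(\Delta_{i-1\,j}+\Delta_{i\,j-1})$ after regrouping, and the $\alpha$-derivative is identical with an extra factor $-\alpha$ and indices shifted by two. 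This is a routine one-line computation in each case.

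\emph{Part \eqref{twovanish}.} Suppose without loss of generality $a_i\neq 0$ (the case $b_i\neq0$ is symmetric). The hypotheses $\Delta_{ij}=0$ and $\Delta_{ik}=0$ read $a_ib_j=a_jb_i$ and $a_ib_k=a_kb_i$; multiplying the desired quantity through by $a_i\neq0$ gives $a_i\Delta_{jk}=a_i(a_jb_k-a_kb_j)=a_j(a_ib_k)-a_k(a_ib_j)=a_j(a_kb_i)-a_k(a_jb_i)=0$, whence $\Delta_{jk}=0$. So the key device is simply to clear the nonzero component $a_i$.

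\emph{Part \eqref{upropv}.} I would prove the implications $(a)\Rightarrow(c)\Rightarrow(b)\Rightarrow(a)$. The implication $(b)\Rightarrow(a)$ is trivial once one knows some $\ch_i^{\alpha,\beta}(v)$ is nonzero, which holds unless $v=0$ (a degenerate case to dispose of separately, e.g. by noting $0\propto v$ vacuously). For $(a)\Rightarrow(c)$: given $a_i\neq0$ and $\Delta_{ij}=0$ for all $j$, part \eqref{twovanish} shows every pair of components of $u$ and $v$ is proportional with the same ratio $b_i/a_i$, so $v=(b_i/a_i)u$ at that point; since $\ch^{\alpha,\beta}$ is, up to the invertible factor $\exp(-\beta-\sqrt{-1}\alpha)$, a real-linear isomorphism recovering $v$ from its components, proportionality of the $\ch^{\alpha,\beta}$-vectors is equivalent to $u\propto v$ as numerical Chern characters. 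Finally $(c)\Rightarrow(b)$ is clear since $\ch^{\alpha,\beta}$ is linear in $v$, so $u=\lambda v$ forces $a_i=\lambda b_i$ for all $i$ and hence $\Delta_{ij}\equiv0$.

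The main obstacle is the step $(a)\Rightarrow(c)$ in part \eqref{upropv}: one must argue that proportionality of the twisted vectors $(a_0,a_1,a_2,a_3)$ and $(b_0,b_1,b_2,b_3)$ at a single point $(\alpha,\beta)$ forces proportionality of the original characters $u,v\in\R^4$. This is where I would invoke the alternative description $\ch^{\alpha,\beta}(v)=\operatorname{Re}\bigl(\exp(-\beta-\sqrt{-1}\alpha)\cdot v\bigr)$ to identify the map $v\mapsto(\ch^{\alpha,\beta}_i(v))_i$ as an invertible linear transformation of $\R^4$ (for fixed $\alpha>0,\beta$), so that a scalar relation downstairs pulls back to the same scalar relation upstairs. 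Everything else is bookkeeping with the alternating bilinear form.
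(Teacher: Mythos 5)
Your proof is correct, and since the paper states this lemma without proof (it is introduced as ``an easy exercise''), your argument --- the Grassmann--Pl\"ucker expansion of the quadratic identity in part (1), the Leibniz rule combined with the derivative formulas \eqref{dervch} in part (2), multiplying through by the nonzero twisted component in part (3), and the observation that $v\mapsto\ch^{\alpha,\beta}(v)$ is a unipotent (triangular with unit diagonal) linear isomorphism of $\R^4$ in part (4) --- is exactly the verification the authors intend. The only point worth noting is the wholly degenerate case $u=v=0$, where conditions (b) and (c) of part (4) hold while (a) fails as literally stated; you flag this edge case correctly, and the paper implicitly ignores it.
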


Note that $u\propto v$ means that $u$ and $v$ are proportional as vectors.

In what follows, we will use $\calm_{\alpha,\beta,s}(v)$ to denote the set of $\labs$-semistable objects with Chern character~$v$; Piyaratne and Toda proved in \cite{PT} that $\calm_{\alpha,\beta,s}(v)$ has the structure of an algebraic stack, locally of finite type over $\C$. Determining whether $\calm_{\alpha,\beta,s}(v)$ also has the structure of a projective variety is an important problem.

\subsection{The second tilt category}\label{2nd tilt}
We will now collect some useful facts about the objects in the second tilt category $\cohab$. Much of the following is well known and is easy to deduce in various \textit{ad hoc} ways, but we give a novel treatment using higher octahedra which is of independent interest (the idea first appeared in 
\cite{Balmer}). We will henceforth drop the $X$ from the notation $\cohab$. Recall that $\calh^i$ denotes cohomology in $\coh(X)$ and $\calh^i_{\mathcal{B}}$ denotes cohomology in $\cohb$.

Suppose $A\in\cohab$ for some $(\alpha,\beta)\in\HH$. We write $A_i=\calh^{-i}_{\mathcal{B}}(A)$ and $A_{ij}=\calh^{-j}(A_i)$. So we have three distinguished triangles:
\begin{equation} \label{3ts}
\begin{gathered}
A_{11}[1]\to A_1\to A_{10} , \\
A_{01}[1]\to A_0\to A_{00} , \\
A_1[1]\to A\to A_0.
\end{gathered}
\end{equation}

Because these triangles intersect, we can arrange them into a diagram as follows:
\begin{equation} \label{big-d}
\xygraph{ 
!~-{@{-}@[|(2.5)]} !{0;/r1.8cm/:,p+/u1cm/::}
*+{A_{01}}="a1"
& *+{A_0[-1]}="a2"
& *+{A_1[1]}="a3"  & *+{A_{10}[1]}="an" \\
& *+{A_{00}[-1]}="a12" & *+{C}="a13"  &
*+{\widetilde A[1]}="a1n" & *+{A_{01}[1]}="a011" \\
&& *+{A}="a23" & *+{C'}="a2n" &
*+{A_0}="a021"  \\
&&&*+{A_{11}[3]}="an1n" & *+{A_1[2]}="a0n11"\\
&&&&*{A_{10}[2].}="a0n1"
"a1":"a2" "a2":"a3" "a3":"an"
"a12":"a13" "a13":"a1n" "a1n":"a011" "a13":"a23" "a23":"a2n"
"a1n":"a2n" "a2n":"an1n" "a021":"a0n11" "an1n":"a0n11" 
"a011":"a021" "a0n11":"a0n1"
"a23":@{~>}"a12" 
"a2":"a12" "a3":"a13" "an":"a1n" "a1n":"a011" "a2n":"a021"  "a12":@{~>}"a1"
"a0n1":@{~>}"an1n" "an1n":@{~>}"a23"
}
\end{equation}

Here the squiggly arrows $X\rightsquigarrow Y$ mean $X\to Y[1]$. The
diagram is meant to repeat infinitely above and to the right by
shifting by $[-n]$ and $[n]$. Every square commutes, and the triangles
along the diagonal are distinguished. Furthermore, each triple of
morphisms formed by composing horizontally and then vertically and
then looping back via the repeated diagram to the right is
distinguished.  The additional objects $C$, $C'$ and $\widetilde A[1]$
are defined as cones on suitable composites. Three of the triangles
are given in display \eqref{3ts}, and the remaining seven are
\begin{equation} \label{7ts}
\begin{gathered}
A_1[1]\to C\to A_{01}[1]\to A_1[2] , \\
A_{10}[1]\to C'\to A_0\to A_{10}[2] , \\
A_{10}[1]\to \widetilde A[1]\to A_{01}[1]\to A_{10}[2] , \\
C\to A\to A_{00}\to C[1] , \\
A_{11}[2]\to A\to C'\to A_{11}[3] , \\
A_{11}[2]\to C\to \widetilde A[1]\to A_{11}[3] , \\
\widetilde A[1]\to C'\to A_{00}\to \widetilde A[2].
\end{gathered}
\end{equation}
The first two tell us what the $\cohb$-cohomologies of $C$ and $C'$ are.
The third tells us that $\widetilde A\in\coh(X)$. The fourth and fifth tell us what the $\cohab$-cohomologies of $C$ and $C'$ are, and the final two tell us what the $\coh(X)$-cohomologies of $C$ and $C'$ are. In particular, we have that
\begin{equation} \label{isom cohomology}
\calh^{-2}(A)\simeq A_{11},\quad\calh^{-1}(A)=\widetilde{A}\quad {\rm and}\quad
\calh^0(A)\simeq A_{00}.
\end{equation}

There are also five distinguished octahedra which are obtained by removing one row and column from the diagram in display \eqref{big-d} (for a more concrete example of an octahedron in this form, see the diagrams \eqref{eq:oct} in the proof of Proposition~\ref{a=dual}). The diagram can also be represented as a $4$-dimensional shape given as a truncated $5$-simplex with five octahedral and five tetrahedral faces.
Another way to express the diagram is that $A$ is filtered in $\dbx$ by
\[A_{00}[-3]\to A_{01}[-1]\to A_{10}\to A_{11}[2]\to A\]
with factors $A_0[-2]$, $\widetilde A$ and $A_1[1]$, respectively.

Observe that $A_{i1}\in\free\beta$, and these must be torsion-free sheaves, while $A_{i0}\in\tors\beta$ and, in particular, $A_{00}\in\cohab$. In fact, a slightly stronger statement is true.

\begin{lemma}\label{h-2 reflexive}
If $A\in\cohab$ for some $(\alpha,\beta)\in\HH$, then $\calh^{-2}(A)$ must be a reflexive sheaf. 
\end{lemma}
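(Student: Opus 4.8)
The plan is to reduce the statement to a slope estimate inside the first tilt $\cohb$. Write $G:=\calh^{-2}(A)$. By the identifications \eqref{isom cohomology} we have $G\simeq A_{11}=\calh^{-1}(A_1)$, where $A_1=\calh^{-1}_\beta(A)$. Since $A\in\cohab$, by construction of the second tilt $A_1$ lies in $\free{\alpha,\beta}$; and since $A_1\in\cohb$ we already know, as remarked before the lemma, that $G=A_{11}\in\free\beta$ is a torsion-free sheaf. The first triangle of \eqref{3ts}, namely $A_{11}[1]\to A_1\to A_{10}$, has all three terms in $\cohb$, so it exhibits $G[1]=A_{11}[1]$ as a sub-object of $A_1$ in the abelian category $\cohb$. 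The whole idea will be to show that if $G$ failed to be reflexive, then $A_1$ would acquire a sub-object of slope $\nuab=+\infty$, contradicting $A_1\in\free{\alpha,\beta}$.

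So suppose $G$ is not reflexive, and consider the canonical map $G\into G^{\vee\vee}$ to its reflexive hull. On the smooth threefold $X$ this map is injective with cokernel $Q$ supported in codimension $\ge 2$, so $\ch_0(Q)=\ch_1(Q)=0$, and $Q\ne 0$ by assumption. First I would verify that $G^{\vee\vee}\in\free\beta$: any nonzero subsheaf $F\into G^{\vee\vee}$ meets $G$ in a nonzero subsheaf $F\cap G$ with the same rank and first Chern class, because $F/(F\cap G)\into Q$ is supported in codimension $\ge 2$; hence $\mu(F)=\mu(F\cap G)\le\beta$, giving $\mu^+(G^{\vee\vee})\le\beta$. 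Consequently $G^{\vee\vee}[1]\in\cohb$.

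Now rotating the short exact sequence $0\to G\to G^{\vee\vee}\to Q\to 0$ by $[1]$ yields the triangle $Q\to G[1]\to G^{\vee\vee}[1]\to Q[1]$, all of whose terms lie in $\cohb$ (here $Q\in\tors\beta$ as a torsion sheaf, and $G[1],G^{\vee\vee}[1]\in\cohb$ by the previous step). It is therefore a short exact sequence $0\to Q\to G[1]\to G^{\vee\vee}[1]\to 0$ in $\cohb$, realizing $Q$ as a sub-object of $G[1]=A_{11}[1]$. Composing with the monomorphism $A_{11}[1]\into A_1$ shows that $Q$ is a sub-object of $A_1$ in $\cohb$. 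But $\ch_1^\beta(Q)=\ch_1(Q)-\beta\ch_0(Q)=0$, so $\nuab(Q)=+\infty>0$, contradicting $A_1\in\free{\alpha,\beta}$. Hence $Q=0$ and $G=\calh^{-2}(A)$ is reflexive.

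I expect the main obstacle to be the middle step, namely checking that the reflexive hull $G^{\vee\vee}$ still lies in $\free\beta$ so that its shift stays in the heart $\cohb$; this is precisely where the codimension-$\ge 2$ nature of the cokernel $Q$ is used, both to control $\mu^+(G^{\vee\vee})$ and to force $\nuab(Q)=+\infty$. Everything else is a formal manipulation of the two torsion pairs defining the tilts.
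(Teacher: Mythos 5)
Your proof is correct and follows essentially the same route as the paper's: assuming $G:=\calh^{-2}(A)\simeq A_{11}$ is not reflexive, both arguments exhibit the cokernel $Q=G^{**}/G$ as a $\cohb$-subobject of $A_1$ via $Q\into G[1]\into A_1$ and derive a contradiction from $\nuab(Q)=+\infty$ against $A_1\in\free{\alpha,\beta}$. The only differences are cosmetic: you should write $G^{**}$ rather than $G^{\vee\vee}$ for the reflexive hull (in this paper $\vee$ denotes the shifted derived dual), and your explicit check that $G^{**}\in\free{\beta}$ is a detail the paper's proof leaves implicit when asserting that $Q\to A_{11}[1]$ is a monomorphism in $\cohb$.
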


\begin{proof}
Assume $\calh^{-2}(A)\simeq A_{11}$ is not reflexive and $T:=A_{11}^{**}/A_{11}\neq0$. Then  $T\to A_{11}[1]$ is a monomorphism in $\cohb$. Since $A_{11}[1]$ is a $\cohb$-sub-object of $A_1$, it must have $\nu_{\alpha,\beta}^+(A_{11})\leq0$. However, $\nu_{\alpha,\beta}(T)=+\infty$, providing a contradiction.
\end{proof}

The following fact will also be useful later on.

\begin{lemma}\label{h-coh-cohb}
If $A\in\cohab$ for some $(\alpha,\beta)\in\HH$ satisfies $\calh^{-2}(A)=\calh^{-1}(\calh^{0}_\beta(A))=0$, then $\calh^{p}(A)\simeq \calh^{p}_\beta(A)$  for $p=-1,0$.
\end{lemma}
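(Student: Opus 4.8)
The plan is to read off both isomorphisms directly from the triangles organized by the octahedral diagram \eqref{big-d}, after translating the two vanishing hypotheses into statements about the pieces $A_{ij}$. Recall the notation $A_i=\calh^{-i}_\beta(A)$ and $A_{ij}=\calh^{-j}(A_i)$, so that $A_0=\calh^0_\beta(A)$ and $A_1=\calh^{-1}_\beta(A)$, while $A_{01}=\calh^{-1}(A_0)$ and $A_{11}=\calh^{-1}(A_1)$. By \eqref{isom cohomology} the hypothesis $\calh^{-2}(A)=0$ is exactly $A_{11}=0$, and the hypothesis $\calh^{-1}(\calh^0_\beta(A))=0$ is exactly $A_{01}=0$. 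Once this dictionary is in place, each isomorphism comes from collapsing a distinguished triangle in which one vertex vanishes.

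First I would treat $p=0$. From the second triangle in \eqref{3ts}, namely $A_{01}[1]\to A_0\to A_{00}$, the vanishing $A_{01}=0$ gives $A_0\simeq A_{00}$. Combined with the isomorphism $\calh^0(A)\simeq A_{00}$ from \eqref{isom cohomology}, this yields
\[
\calh^0(A)\simeq A_{00}\simeq A_0=\calh^0_\beta(A),
\]
as required.

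Next I would treat $p=-1$, where $\calh^{-1}(A)=\widetilde A$ by \eqref{isom cohomology}. The first triangle in \eqref{3ts}, namely $A_{11}[1]\to A_1\to A_{10}$, together with $A_{11}=0$, gives $A_1\simeq A_{10}$. The third triangle in \eqref{7ts}, namely $A_{10}[1]\to \widetilde A[1]\to A_{01}[1]\to A_{10}[2]$, together with $A_{01}=0$, gives $\widetilde A\simeq A_{10}$. Chaining these identifications,
\[
\calh^{-1}(A)=\widetilde A\simeq A_{10}\simeq A_1=\calh^{-1}_\beta(A),
\]
which completes the argument.

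There is essentially no serious obstacle here beyond correctly matching each cohomological hypothesis to the vanishing of the appropriate subquotient ($A_{01}$ and $A_{11}$); after that, both isomorphisms fall out of the two triangles in \eqref{3ts} and \eqref{7ts} above. The only point demanding care is to keep straight which cohomology functor each symbol refers to, $\coh(X)$ versus $\cohb$, and to invoke \eqref{isom cohomology} in the right places. The commutativity and naturality built into diagram \eqref{big-d} moreover ensure that the resulting isomorphisms are the canonical cohomology comparison morphisms, not merely abstract isomorphisms of sheaves.
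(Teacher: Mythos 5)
Your proof is correct and follows essentially the same route as the paper's: both arguments translate the hypotheses into $A_{11}=A_{01}=0$ and then chase the distinguished triangles of diagram \eqref{big-d}, the only cosmetic difference being that you identify $\calh^{-1}(A)=\widetilde A$ with $A_1$ via the intermediate object $A_{10}$, whereas the paper passes through $C$ instead.
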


\begin{proof}
Chasing through the seven triangles listed in display \eqref{7ts} with $A_{11}=A_{01}=0$, one concludes that $\tilde A\simeq C\simeq A_1$, which is the same as $\calh^{-1}(A)\simeq \calh^{-1}_\beta(A)$ by the isomorphisms in \eqref{isom cohomology}. The vanishing of $A_{01}$ also implies that $A_0\simeq A_{00}$ by the sequences in display \eqref{3ts}; hence $\calh^0(A)\simeq\calh^0_\beta(A)$. 
\end{proof}

We will need to consider short exact sequences in $\cohab$; we look at situations where the
middle term is in either $\cohb$ or $\cohb[1]$.

\begin{lemma}\label{specialize_ses_neg}
If $A\in\cohab\cap\cohb$ and $B\into A\onto C$ is a short exact sequence in $\cohab$, then there are in $\cohab$ a short exact sequence $D\into A\onto C'$, a quotient $C\onto C'$ and an injection $B\into D$ such that $C',D\in\cohb$.
\end{lemma}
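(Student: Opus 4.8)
The plan is to analyze the short exact sequence $B\into A\onto C$ in $\cohab$ together with the hypothesis $A\in\cohb$ by passing to the long exact sequence of $\cohb$-cohomology. Recall that objects of $\cohab$ are built from two consecutive $\cohb$-cohomologies, concentrated in $\cohb$-degrees $-1$ and $0$; an object lies in $\cohab\cap\cohb$ exactly when its $\cohb$-cohomology is concentrated in degree $0$. First I would apply the functor $\calh^\bullet_{\mathcal B}$ to the triangle $B\to A\to C\to B[1]$ and write out the long exact sequence in $\cohb$. Since $A\in\cohb$ we have $\calh^{-1}_{\mathcal B}(A)=0$, so the relevant portion reads
\begin{equation*}
0\to \calh^{-1}_{\mathcal B}(C)\to \calh^0_{\mathcal B}(B)\to A\to \calh^0_{\mathcal B}(C)\to 0,
\end{equation*}
using also that $\calh^{-1}_{\mathcal B}(B)=0$ because $B\into A$ is a $\cohab$-subobject of an object whose $\cohb$-cohomology sits in degree $0$ (here one must check that a subobject in $\cohab$ of something in $\cohb$ has no $\cohb^{-1}$-part, which follows from the torsion-pair description of $\cohab$ as a tilt of $\cohb$).

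The idea is then to read off the candidate objects from this four-term exact sequence. I would set $D:=\ker\bigl(A\onto \calh^0_{\mathcal B}(C)\bigr)$, taken inside $\cohb$; equivalently $D=\im\bigl(\calh^0_{\mathcal B}(B)\to A\bigr)$. By construction $D$ is a $\cohb$-subobject of $A\in\cohb$, so $D\in\cohb$, and the quotient $C':=A/D$ computed in $\cohb$ is isomorphic to $\calh^0_{\mathcal B}(C)$, hence also lies in $\cohb$. This immediately gives the desired short exact sequence $D\into A\onto C'$ in $\cohb$, and I would then argue that, because both $D$ and $C'$ lie in $\cohab\cap\cohb$, this is also a short exact sequence in $\cohab$ (a sequence $D\to A\to C'$ with all three terms in $\cohb$ is exact in $\cohab$ iff it is exact in $\cohb$, since $\cohb\cap\cohab$ is closed under the relevant kernels and cokernels).

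Next I would produce the two comparison maps. The surjection $\calh^0_{\mathcal B}(B)\onto D$ coming from the long exact sequence, together with the natural map $B\to\calh^0_{\mathcal B}(B)$ (which is the quotient of $B$ by its lower cohomology, an epimorphism in $\cohab$ when $\calh^{-1}_{\mathcal B}(B)=0$, i.e. an isomorphism here), yields the required quotient $B\onto D$ in $\cohab$. Dually, the inclusion $\calh^{-1}_{\mathcal B}(C)\into \calh^0_{\mathcal B}(B)$ identifies $\calh^{-1}_{\mathcal B}(C)$ with $\ker(B\onto D)$, and chasing the $\cohab$-cohomology of $C$ shows that $C\into C'$ arises as the inclusion of $C$ into the extension recorded by the sequence $\calh^{-1}_{\mathcal B}(C)[1]\to C\to \calh^0_{\mathcal B}(C)$; since $C'\simeq\calh^0_{\mathcal B}(C)$ this gives a monomorphism $C\into C'$ in $\cohab$. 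Finally I would verify compatibility of all four maps so that the snake-lemma configuration commutes.

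The main obstacle I expect is the bookkeeping of where each map is a mono or epi \emph{in $\cohab$} as opposed to merely in $\cohb$ or in $\dbx$: the two abelian structures differ by a tilt, so a short exact sequence in $\cohb$ need not be exact in $\cohab$ and conversely, and one must repeatedly invoke the torsion-pair definitions of $\tors{\alpha,\beta}$ and $\free{\alpha,\beta}$ (via the sign of $\nuab$) to promote $\cohb$-level statements to $\cohab$-level ones. In particular, proving that $D$ and $C'$ genuinely lie in $\cohb$ \emph{and} behave correctly as $\cohab$-objects, and that the map $C\into C'$ is a $\cohab$-monomorphism rather than just a morphism, is where the care is needed; everything else is a diagram chase through the long exact cohomology sequence.
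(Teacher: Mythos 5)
Your construction is exactly the paper's: apply cohomology in $\cohb$ to the triangle $B\to A\to C$, use $\calh^{-1}_\beta(A)=0$ to force $\calh^{-1}_\beta(B)=0$ and obtain the four-term exact sequence $0\to \calh^{-1}_\beta(C)\to B\to A\to \calh^{0}_\beta(C)\to 0$ in $\cohb$, then split it at $D:=\im(B\to A)$ and set $C':=\calh^0_\beta(C)$. But the one step that is not a formal chase is missing: you never prove $D\in\cohab$. Your reason for $D\in\cohb$ (that $D$ is a $\cohb$-subobject of $A$) cannot be upgraded, because $\cohab\cap\cohb=\tors{\alpha,\beta}$ is a torsion class: it is closed under $\cohb$-quotients and extensions, not under subobjects. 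The argument that works, and which is the entire content of the paper's one-line proof, uses the other presentation you wrote down: $D$ is a $\cohb$-quotient of $B$, and $B\in\cohab\cap\cohb=\tors{\alpha,\beta}$, so every nonzero $\cohb$-quotient of $D$ is a quotient of $B$ and has $\nuab>0$; hence $\nu^-_{\alpha,\beta}(D)>0$ and $D\in\tors{\alpha,\beta}\subset\cohab$. (Your parenthetical exactness criterion is also misstated: a short exact sequence in $\cohb$ with all terms in $\cohb$ need not be exact in $\cohab$ --- witness $0\to\calh^{-1}_\beta(C)\to B\to D\to 0$ itself; the correct mechanism is that a distinguished triangle whose three vertices lie in $\cohab$ induces a short exact sequence there, which is why $D\in\cohab$ must be settled first.)

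The second problem is that the tilt reverses the variance of both comparison maps, so the directions you assert, and argue for, are wrong. Put $C_1:=\calh^{-1}_\beta(C)\in\free{\alpha,\beta}$. Rotating the triangle $C_1\to B\to D$ gives the triangle $B\to D\to C_1[1]$, all of whose vertices lie in $\cohab$; hence $0\to B\to D\to C_1[1]\to 0$ is exact in $\cohab$, i.e.\ the $\cohb$-epimorphism $B\to D$ is a \emph{monomorphism} in $\cohab$ with cokernel $C_1[1]$, not an epimorphism (unless $C_1=0$). Dually, the canonical map $C\to C_0=C'$ is the \emph{epimorphism} in the exact sequence $0\to C_1[1]\to C\to C'\to 0$ in $\cohab$; $C$ is the middle term of that extension, not a subobject of $C'$. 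Indeed no monomorphism $C\into C'$ with $C'\in\cohb$ can exist when $C_1\neq0$: there are no negative-degree morphisms between objects of $\cohb$, so $\Hom(C_1[1],C')=0$, and a mono $C\into C'$ would force the composite monomorphism $C_1[1]\into C\into C'$ to vanish. The correct conclusions from this construction are an injection $B\into D$ and a surjection $C\onto C'$ in $\cohab$, compatible with the given maps (so $B\into D\into A$ is the original inclusion and $A\onto C\onto C'$ the original quotient); this mirrors Lemma \ref{specialize_ses_plus}, whose ``subobject $B'\into B$'' and ``quotient $D\onto C$'' are stated with the right variance. You inherited the opposite wording from the statement (the paper's own proof asserts it as well), but the justifications you supply for that wording are precisely the $\cohb$-versus-$\cohab$ confusion your final paragraph warns against: surjectivity of $B\to D$ holds only in $\cohb$, and the claimed inclusion $C\into C'$ holds in no category at all.
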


\begin{proof}
Apply $\mathcal{H}_{\mathcal{B}}$ to get a long exact sequence in $\cohb$:
\[0\to C_1\to B\to A\to C_0\to 0.\]
Note, in particular, that $B\in\cohb$.
Split this via $D$. Then $\nu^-_{\alpha,\beta}(D)>0$, and so $D\in\cohab$. Set $C'=C_0$. Then we have a short exact sequence $D\to A\to C'$ in $\cohab$ together with an injection $B\into D$ and a surjection $C\onto C'$ also in $\cohab$, as required.
\end{proof}

\begin{lemma}\label{specialize_ses_plus}
If $A\in\cohab\cap\cohb[1]$ and $B\into A\onto C$ is a short exact sequence in $\cohab$, then there are in $\cohab$ a short exact sequence $B'\into A\onto D$, a sub-object $B'\into B$ and a quotient $D\onto C$ with  $B',D\in\cohb[1]$.
\end{lemma}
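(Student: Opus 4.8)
The plan is to dualize the argument given for Lemma \ref{specialize_ses_neg}. Whereas that lemma handled an object $A$ lying in $\cohab\cap\cohb$ (so that $A$ is concentrated in the tilt-degree-$0$ part and one reads off cohomology via $\calh_{\mathcal B}$), here we have $A\in\cohab\cap\cohb[1]$, which means $A$ sits in the shifted piece. The symmetry between the torsion and free parts of the tilt $(\free{\alpha,\beta},\tors{\alpha,\beta})$ suggests that the roles of sub-object and quotient should be interchanged: in the negative-side lemma one extracts a quotient $B\onto D$ with $D\in\cohb$, so here one should extract a sub-object $B'\into B$ with $B'\in\cohb[1]$ and correspondingly a quotient $D\onto C$.

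First I would apply the cohomology functor $\calh_{\mathcal B}$ (cohomology with respect to the heart $\cohb$) to the short exact sequence $B\into A\onto C$ in $\cohab$. Since $A\in\cohb[1]$, its only nonzero $\cohb$-cohomology is $\calh^1_{\mathcal B}(A)=:A'\in\cohb$ (using the convention that $\cohb[1]$ has cohomology in degree $1$). The long exact sequence in $\cohb$ then reads
\[
0\to \calh^0_{\mathcal B}(C)\to \calh^1_{\mathcal B}(B)\to A'\to \calh^1_{\mathcal B}(C)\to 0,
\]
with the outer terms vanishing appropriately; the key point is that the map from $\calh^1_{\mathcal B}(B)$ to $A'=\calh^1_{\mathcal B}(A)$ has a kernel and image that I want to package into the desired sub-object. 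I would define $B'\into A$ to be the sub-object in $\cohab$ corresponding, under the shift, to the image of $\calh^1_{\mathcal B}(B)\to A'$, equivalently the piece of $A$ cut out so that $B'\in\cohb[1]$. One then splits the long exact sequence through the image term, exactly as the previous proof ``splits via $D$''.

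Having produced $B'$, I must verify that $B'$ genuinely lies in $\cohab$, and symmetrically that the quotient $D:=A/B'$ lies in $\cohb[1]\cap\cohab$. In Lemma \ref{specialize_ses_neg} the membership $D\in\cohab$ followed from the slope bound $\nu^-_{\alpha,\beta}(D)>0$, using the torsion-pair definition $\tors{\alpha,\beta}=\{\nu^->0\}$. Here the dual condition is $\nu^+_{\alpha,\beta}(B')\le 0$, placing $B'[-1]$ in $\free{\alpha,\beta}$ and hence $B'\in\cohab\cap\cohb[1]$; this is the natural slope estimate to establish, coming from the fact that $B'$ is built from a sub-object of $\calh^1_{\mathcal B}(B)$ whose $\nu_{\alpha,\beta}$-slopes are controlled by those of $A\in\free{\alpha,\beta}[1]$. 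With $B'\in\cohab$ in hand, the short exact sequence $B'\into A\onto D$ in $\cohab$ is immediate, the inclusion $B'\into B$ comes from the natural map on $\calh^1_{\mathcal B}$, and the surjection $D\onto C$ is the cokernel comparison.

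I expect the main obstacle to be the slope bookkeeping that certifies $B'\in\cohab$ and $D\in\cohb[1]$: one must track how the $\nu_{\alpha,\beta}$-slopes behave under taking $\cohb$-cohomology of a sequence in the second tilt and passing through the splitting object, ensuring the inequalities $\nu^+\le 0$ (for the sub-object side) and $\nu^-\ge 0$ (for the quotient side) are inherited correctly. The verification that $C'$ and $D$ land in the correct shifted heart, rather than a mixed object with cohomology in two tilt-degrees, is where the dualization could subtly fail, so I would check the vanishing of the stray cohomology terms carefully before concluding. Apart from this, the argument is a faithful mirror image of the proof of Lemma \ref{specialize_ses_neg}, and no new input beyond the torsion-pair definitions and the exactness of $\calh_{\mathcal B}$ should be required.
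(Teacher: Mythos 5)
You have the right skeleton---apply $\calh_{\mathcal B}$ to the triangle $B\to A\to C$, split the resulting long exact sequence at an image term, and use the torsion-pair slope conditions to place the pieces---but the long exact sequence you write down is wrong, and the error is load-bearing. Writing $X_1:=\calh^{-1}_{\mathcal B}(X)$ and $X_0:=\calh^{0}_{\mathcal B}(X)$ (your $\calh^{1}_{\mathcal B}$ and $\calh^{0}_{\mathcal B}$), the connecting map of the triangle goes $C_1\to B_0$, not $C_0\to B_1$; since $A_0=0$ the correct sequence is
\[0\to B_1\to A_1\to C_1\to B_0\to 0,\qquad C_0=0,\]
whereas you wrote $0\to C_0\to B_1\to A_1\to C_1\to 0$. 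Three facts you need are exactly the ones your version destroys. First, $B_1\to A_1$ is injective, so the image you propose to take is all of $B_1$, and $B':=B_1[1]$ really is a subobject of $B$ (via $B_1[1]\into B$); in your sequence $B'$ would be a proper quotient of $B_1$, hence only a subquotient of $B$, and the claimed inclusion $B'\into B$ fails. Second, $C_0=0$, i.e.\ $C=C_1[1]$; this is what lets one produce the surjection $D\onto C$, namely the shift of the $\cohb$-sequence $D[-1]\into C_1\onto B_0$. Third, that same sequence identifies $D[-1]=A_1/B_1$ with a \emph{subobject} of $C_1\in\free{\alpha,\beta}$, which is the actual reason that $\nu^+_{\alpha,\beta}(D[-1])\le0$ and hence that $D\in\cohb[1]\cap\cohab$.

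The third point is also where your write-up has a genuine gap independent of the sequence: you assert that the short exact sequence $B'\into A\onto D$ in $\cohab$ ``is immediate'' once $B'\in\cohab$, but it is not. The class $\free{\alpha,\beta}$ is closed under subobjects, not quotients, so knowing that $D[-1]$ is a quotient of $A_1\in\free{\alpha,\beta}$ proves nothing; if $D[-1]$ had a nonzero torsion part $T\in\tors{\alpha,\beta}$, then $T$ would be precisely the kernel of $B'\to A$ computed in $\cohab$, so $B'$ would not even be a subobject of $A$, let alone have cokernel in $\cohb[1]$. You flag exactly this as ``the main obstacle'' and defer it (``I would check the vanishing of the stray cohomology terms''), but that check is the whole content of the lemma, and the embedding $A_1/B_1\into C_1$ coming from the correct long exact sequence is what carries it out---one line in the paper's proof, absent from yours.
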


\begin{proof}
Apply $\mathcal{H}_{\mathcal{B}}$ to get a long exact sequence in $\cohb$:
\[0\to B_1\to A_1\to C_1\to B_0\to 0.\]
In particular, $C\in\cohb[1]$.
Split this via $D[-1]$. Then $\nu^+_{\alpha,\beta}(D)\leq0$, and so $D\in\cohab$. Then we have a short exact sequence $B_1[1]\into A\onto D$ in $\cohab$ together with an injection $B_1[1]\into B$ and a surjection $D\onto C_1[1]=C$ in $\cohab$, as required.
\end{proof}

When the middle term is in $\coh(X)$, we can say a lot more.

{\samepage
  \begin{proposition}\label{sheaf_subobjects}
Suppose $E\in\cohab\cap\coh(X)$ and $D\into E$ is a non-zero monomorphism in $\cohab$. Then $E\in\cohb$ and $D_{01}[2]\in\cohab$. Furthermore, 
there are an $F\in\coh(X)\cap\cohb\cap\cohab$ and monomorphisms $D\overset{\phi}\hookrightarrow F\into E$ in $\cohab$ such that one of the following 
holds for $G:=E/F$ in $\cohab$:  
\begin{enumerate}
    \item\label{sheaf_subobjects1} $G\in\coh(X)\cap\cohb$.
    \item\label{sheaf_subobjects2} $G\in\coh(X)[1]\cap\cohb[1]$, and $\phi$ is the identity.
    \item\label{sheaf_subobjects3} $G\in\coh(X)[1]\cap\cohb$.
\end{enumerate}    
\end{proposition}
}

\begin{proof}
If $A=E\in\cohab\cap\coh(X)$, then the triangles \eqref{3ts} and
\eqref{7ts} imply $C=0$, and then $A_1[1]$ is a sheaf, and so
$A_1=0$. This establishes $E\in\cohb$.

Applying cohomology in $\cohb$ to the triangle $D\to E\to B$, where
$B=E/D$ in $\cohab$, we have that $D_1=0$ and have a $\cohb$ long exact
sequence
\begin{equation}\label{eq:bseq}0\to B_1\to D\to E\to B_0\to0.\end{equation}
So $D\in\cohb\cap\cohab$. Split \eqref{eq:bseq} in the middle via
$Q\in\cohb$, say. From the $\cohb$ short exact sequence $D\to Q\to
B_1[1]$, we see that $Q\in\cohb\cap\cohab$ and $D\to Q$ injects in
$\cohab$. Applying cohomology in $\coh(X)$ to the triangle $Q\to E\to
B_0$, we see that $Q\in\coh(X)$. Applying it to $B_1\to D\to Q$, we get
\[B_{11}\cong D_{01}\quad\text{and}\quad 0\to B_{10}\to D_{00}\to Q\to 0\]
in $\coh(X)$. From this, it follows that $D_{01}\in\cohab[-2]$.

If $D_{01}\neq0$, we set $F=Q$, and then we repeat the above argument
with $Q$ replacing $D$. So we can assume $D_{01}=0$. Suppose
$B_0=0$. Then $Q=E$.  Now set $F=D_{00}=D$. Then we have case~\eqref{sheaf_subobjects2} as
$G:=B_{10}[1]\in\cohb[1]\cap\cohab$.

Otherwise, $B_0\neq0$.  Note that $D\into Q$ in $\cohab$, and so to
find $F$, we may assume $B_1=0$ (by replacing $D$ by $Q$ in the
above). Applying cohomology in $\coh(X)$, we have a long exact
sequence
\[0\to B_{01}\to Q\to E\to B_{00}\to0.\]
If $B_{00}=0$, then we take $F=Q$, and we have case~\eqref{sheaf_subobjects3}. Otherwise, we
split the sequence via $F$. From $Q\to F\to B_{01}[1]$, we see that
$F\in\cohab\cap\cohb$. This is case~\eqref{sheaf_subobjects1}. Note that the triangles $Q\to
F\to B_{01}[1]$ and $B_{01}[1]\to B\to B_{00}$ imply that
$B_{01}[1]\in\cohab$, and so $Q\to F$ injects (and so $D\to F$ also
injects).

The general case can be summarized in a higher octahedron:
\begin{equation}
\xygraph{ 
!~-{@{-}@[|(2.5)]} !{0;/r1.8cm/:,p+/u1cm/::}
*+{B_{00}[-1]}="a1"
& *+{F}="a2"
& *+{\widetilde B[1]}="a3"  & *+{B_{01}[1]}="an" \\
& *+{E}="a12" & *+{B}="a13"  &
*+{B_0}="a1n" & *+{B_{00}}="a011" \\
&& *+{D[1]}="a23" & *+{Q[1]}="a2n" &
*+{F[1]}="a021"  \\
&&&*+{B_{10}[2]}="an1n" & *+{\widetilde B[2]}="a0n11"\\
&&&&*{B_{01}[2].}="a0n1"
"a1":"a2" "a2":"a3" "a3":"an"
"a12":"a13" "a13":"a1n" "a1n":"a011" "a13":"a23" "a23":"a2n"
"a1n":"a2n" "a2n":"an1n" "a021":"a0n11" "an1n":"a0n11" 
"a011":"a021" "a0n11":"a0n1"
"a23":@{~>}"a12" 
"a2":"a12" "a3":"a13" "an":"a1n" "a1n":"a011" "a2n":"a021"  "a12":@{~>}"a1"
"a0n1":@{~>}"an1n" "an1n":@{~>}"a23"
}\vspace{-\baselineskip}
\end{equation}
\end{proof}

\subsection{Duals of semistable sheaves}\label{sst sheaves}

Given an object $A\in\dbx$, we denote its derived dual by $A^\vee:=\RHom(A,\ox)[2]$. For a sheaf $E$, its derived dual $E^\vee$ satisfies $\calh^{j}(E^\vee) = \inext^{j+2}(E,\ox)$ for $j=-2,-1,0,1$ and $\calh^{j}(E^\vee)=0$ otherwise. If $E$ is torsion-free, then we have the following short exact sequence in $\coh(X)$: 
\begin{equation} \label{std sqc}
 0 \to E \to E^{**} \to Q_E \to 0,\quad Q_E:=E^{**}/E,
\end{equation}
where $E^*:=\inhom(E,\ox)$. Note that $\dim Q_E\le1$; letting $Z_E$ be the maximal $0$-dimensional subsheaf of $Q_E$, define $T_E:=Q_E/Z_E$. Let $E'$ be the kernel of the composed epimorphism $ E^{**} \onto Q_E \onto T_E$; it fits into the following short exact sequence in $\coh(X)$:
\begin{equation} \label{e' sqc}
 0 \to E \to E' \to Z_E \to 0 .
\end{equation}
We then have that $\calh^{j}(E^\vee)=0$ for $j\ne-2,-1,0$, and 
$$ \calh^{-2}(E^\vee) = E^*\simeq E'^*,\quad \calh^{-1}(E^\vee) = \inext^1(E,\ox)\simeq\inext^1(E',\ox)\quad {\rm and} $$
$$ \calh^{0}(E^\vee) = \inext^2(E,\ox) \simeq \inext^3(Z_E,\ox) . $$
Moreover, $\inext^1(E,\ox)\simeq\inext^1(E',\ox)$ fits into the short exact sequence
$$ 0 \to \inext^1(E^{**},\ox) \to \inext^1(E,\ox) \to \inext^2(T_E,\ox) \to 0 . $$

Clearly, $E^*\in\free{\beta}$ for $\beta>\mu^+(E^*)$, while $\inext^1(E,\ox)\in\tors{\beta}$ for every $\beta$; it follows that $E'^\vee[-1]\in\cohb$ for every $\beta>\mu^+(E^*)$. In addition, $Z_E^\vee[1]$ is a $0$-dimensional sheaf, so $Z_E^\vee[1]\in\tors{\beta}\subset\cohb$ for every $\beta$. Comparing with the triangle
\begin{equation} \label{tri z_e}
E'^\vee \to E^\vee \to Z_E^\vee[1] 
\end{equation}
obtained from dualizing the sequence in display \eqref{e' sqc},  we have proved the following. 

\begin{proposition}\label{B-cohom-dual}
If $E$ is a torsion-free sheaf and
\[0\to E\to E'\to Z_E\to 0\]
is the sequence \eqref{e' sqc}, where $Z_E$ is the maximal $0$-dimensional subsheaf of $E^{**}/E$, then
\begin{equation} \label{isom-E}
\calh_\beta^{-1}(E^\vee)\simeq E'^\vee[-1] \quad\text{and}\quad
\calh_\beta^{0}(E^\vee)\simeq Z_E^\vee[1] \simeq \inext^2(E,\ox)
\end{equation}
whenever $\beta>\mu^+(E^*)$.
\end{proposition}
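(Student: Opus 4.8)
The plan is to read off the two stated cohomology isomorphisms directly from the triangle \eqref{tri z_e}, namely $E'^\vee \to E^\vee \to Z_E^\vee[1]$, by applying the cohomology functor $\calh^\bullet_\beta$ (cohomology in $\cohb$) and using the two key inputs already assembled just before the statement: first, that $E'^\vee[-1]\in\cohb$ for every $\beta>\mu^+(E^*)$, and second, that $Z_E^\vee[1]\in\tors\beta\subset\cohb$ for every $\beta$. The whole point of having isolated these two facts is that the two outer terms of the triangle, after suitable shifts, live in a single heart, so that the long exact sequence in $\cohb$-cohomology collapses.

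Concretely, I would first record the $\cohb$-cohomology of each outer term. Since $E'^\vee[-1]\in\cohb$, we have $\calh^{-1}_\beta(E^\vee)$ governed by $E'^\vee$ sitting in degree $1$; more precisely $E'^\vee\in\cohb[1]$, so $\calh^{0}_\beta(E'^\vee)=0$ and $\calh^{-1}_\beta(E'^\vee)\simeq E'^\vee[-1]$. Since $Z_E^\vee[1]\in\cohb$ is concentrated in $\cohb$-degree $0$, we have $\calh^{-1}_\beta(Z_E^\vee[1])=0$ and $\calh^0_\beta(Z_E^\vee[1])\simeq Z_E^\vee[1]$. Then I would write out the long exact sequence in $\cohb$ associated to the triangle \eqref{tri z_e}:
\[
\cdots\to\calh^{-1}_\beta(E'^\vee)\to\calh^{-1}_\beta(E^\vee)\to\calh^{-1}_\beta(Z_E^\vee[1])\to\calh^0_\beta(E'^\vee)\to\calh^0_\beta(E^\vee)\to\calh^0_\beta(Z_E^\vee[1])\to\cdots
\]
Feeding in the four computed terms, the arrows into and out of $\calh^{-1}_\beta(E^\vee)$ force $\calh^{-1}_\beta(E^\vee)\simeq E'^\vee[-1]$, while the vanishing of $\calh^0_\beta(E'^\vee)$ together with $\calh^0_\beta(Z_E^\vee[1])\simeq Z_E^\vee[1]$ forces $\calh^0_\beta(E^\vee)\simeq Z_E^\vee[1]$. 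This yields both isomorphisms in \eqref{isom-E}. The final identification $Z_E^\vee[1]\simeq\inext^2(E,\ox)$ I would obtain from the cohomology-of-the-dual computation stated in the lead-up, namely $\calh^0(E^\vee)=\inext^2(E,\ox)\simeq\inext^3(Z_E,\ox)$, combined with the fact that for the $0$-dimensional sheaf $Z_E$ one has $Z_E^\vee[1]=\inext^3(Z_E,\ox)$ placed in the correct degree.

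I expect the only genuinely delicate point to be bookkeeping the degree shifts consistently: one must be careful that $E'^\vee[-1]\in\cohb$ is the same statement as $E'^\vee\in\cohb[1]$, and that the triangle \eqref{tri z_e} is not itself a short exact sequence in any single heart (its outer terms sit in different $\cohb$-degrees), so the long exact sequence genuinely has content rather than degenerating trivially. The substantive mathematical work has already been done in establishing the two membership claims $E'^\vee[-1]\in\cohb$ and $Z_E^\vee[1]\in\cohb$ for $\beta>\mu^+(E^*)$; once these are in hand, the proposition is a formal consequence of the long exact sequence, so the proof should be short.
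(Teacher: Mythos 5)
Your proposal is correct and is essentially the paper's own argument: the paper proves the proposition precisely by establishing the two membership facts $E'^\vee[-1]\in\cohb$ (for $\beta>\mu^+(E^*)$) and $Z_E^\vee[1]\in\tors\beta\subset\cohb$, and then ``comparing with the triangle'' \eqref{tri z_e}, which is exactly the long exact sequence collapse you spell out. Making the degree bookkeeping and the identification $Z_E^\vee[1]\simeq\inext^3(Z_E,\ox)\simeq\inext^2(E,\ox)$ explicit is a fair elaboration, not a different route.
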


We can also formulate a converse to this construction.  Let
$\coh(X)_d$ denote the category of coherent sheaves on $X$ of
dimension at most $d$. We want to be able to characterize when an
object $A\in D^b(X)$ which has cohomology in three consecutive places
$A^i$, $A^{i+1}$ and $A^{i+2}$ such that $A^i$ is reflexive and
$A^{i+j}\in\coh(X)_{2-j}$ for $j=1,2$ is the shift of the dual of a
torsion-free sheaf. We will make use of this when we analyze the
asymptotics for $\beta\gg 0$; we will express them in the form of
lifting properties. First we need a technical lemma as an intermediate
step. The equivalent statement in dimension $2$ is an easy exercise:
if $A$ has cohomology in two places with $A^i$ locally free and
$A^{i+1}\in\coh(X)_0$ such that any subsheaf $S\to A^{i+1}$ does not
lift to $A$, then $A$ is the shift of the dual of a torsion-free sheaf.

\begin{lemma}\label{cohom_dual}
An object $A\in D^b(X)$ satisfies the  conditions
\begin{enumerate}
\item $\calh^i(A)=0$, $i\neq-2,-1,0$,
\item $F:=\calh^{-2}(A)$ is a reflexive sheaf,
\item $G:=\calh^{-1}(A)\in\coh(X)_1$,
\item $S:=\calh^0(A)\in\coh(X)_0$,
\item\label{third_part} the induced map $F^*\to \inext^2(G,\ox)$ surjects, with kernel $K$, say,
\item\label{second_part} the induced map $\inext^1(F,\ox)\to \inext^3(G,\ox)$ is an isomorphism, and
\item\label{first_part} the induced map $f\colon K\to\inext^3(S,\ox)$ surjects
\end{enumerate}
if and only if $E:=A^\vee\cong\ker f$ is a torsion-free sheaf.
\end{lemma}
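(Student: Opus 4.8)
The plan is to compute $A^\vee=\RHom(A,\ox)[2]$ by means of the second hyperext spectral sequence
\[
E_2^{p,q}=\inext^p(\calh^{-q}(A),\ox)\ \Longrightarrow\ \calh^{p+q}(\RHom(A,\ox)),
\]
bearing in mind that $\calh^{j}(A^\vee)=\calh^{j+2}(\RHom(A,\ox))$. Both implications will fall out of a single analysis of this sequence: the content of the statement is precisely that, under (1)--(4), the object $A^\vee$ is a torsion free sheaf sitting in degree $0$ if and only if every $E_\infty^{p,q}$ with $p+q\neq2$ dies, and that (5)--(7) are exactly the conditions forcing this.

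First I would use (1)--(4) to pin down the $E_2$-page. Since $\calh^{-q}(A)$ is nonzero only for $q=0,1,2$, equal to $S,G,F$ respectively, and since $F$ is reflexive on a smooth threefold (so $\inext^p(F,\ox)=0$ for $p\ge2$ and $\inext^1(F,\ox)$ is $0$-dimensional), $G\in\coh(X)_1$ (so $\inext^p(G,\ox)=0$ for $p\le1$) and $S\in\coh(X)_0$ (so $\inext^p(S,\ox)=0$ for $p\neq3$), the only nonzero terms are
\[
\begin{gathered}
E_2^{0,2}=F^*,\quad E_2^{1,2}=\inext^1(F,\ox),\quad E_2^{2,1}=\inext^2(G,\ox),\\
E_2^{3,1}=\inext^3(G,\ox),\quad E_2^{3,0}=\inext^3(S,\ox).
\end{gathered}
\]
The decisive step is to match the three a priori nonzero differentials with the induced maps of the statement: $d_2\colon E_2^{0,2}\to E_2^{2,1}$ is the map $F^*\to\inext^2(G,\ox)$ of (5); $d_2\colon E_2^{1,2}\to E_2^{3,1}$ is the map $\inext^1(F,\ox)\to\inext^3(G,\ox)$ of (6); and, writing $K=E_3^{0,2}=\ker\bigl(F^*\to\inext^2(G,\ox)\bigr)$, the secondary differential $d_3\colon E_3^{0,2}\to E_3^{3,0}$ is exactly the map $f\colon K\to\inext^3(S,\ox)$ of (7). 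All remaining differentials vanish because their source or target lies off this five-term support, so the sequence degenerates at $E_4$.

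Reading off $E_\infty$ then gives $\calh^0(A^\vee)=\ker f$, while $\calh^1(A^\vee)$ has associated graded the three terms $\ker(\inext^1(F,\ox)\to\inext^3(G,\ox))$, $\coker(F^*\to\inext^2(G,\ox))$ and $\coker f$, and $\calh^2(A^\vee)=\coker(\inext^1(F,\ox)\to\inext^3(G,\ox))$; all other cohomologies vanish automatically. Hence, granting (1)--(4), the complex $A^\vee$ is concentrated in degree $0$ precisely when $\inext^1(F,\ox)\to\inext^3(G,\ox)$ is an isomorphism, $F^*\to\inext^2(G,\ox)$ is surjective, and $f$ is surjective, which are conditions (6), (5) and (7). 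In that case $A^\vee\cong\ker f$, and since $\ker f\subseteq K\subseteq F^*$ with $F^*$ reflexive, hence torsion free, $A^\vee$ is automatically torsion free. This settles the direction (1)--(7) $\Rightarrow$ $A^\vee$ torsion free.

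For the converse I would start from $E:=A^\vee$ torsion free and apply biduality $A\cong(A^\vee)^\vee=E^\vee$ together with the description of duals of torsion free sheaves recalled before Proposition \ref{B-cohom-dual}; this produces (1)--(4) with $F=E^*$ reflexive, $G=\inext^1(E,\ox)\in\coh(X)_1$ and $S=\inext^2(E,\ox)\in\coh(X)_0$. Since $\RHom(A,\ox)=A^\vee[-2]=E[-2]$ is concentrated in degree $2$, every $E_\infty^{p,q}$ with $p+q\neq2$ must vanish, and by the computation above this forces precisely (5)--(7). The step I expect to be the main obstacle is the identification of the abstract differentials $d_2$ and $d_3$ with the concrete induced maps of (5)--(7): this is where the truncation triangles of $A$ have to be dualized and compared (equivalently, where the octahedral/filtration machinery of Section \ref{2nd tilt} enters), and it is also what makes the very definition of $f$ on $K=\ker d_2$ meaningful. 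By contrast the vanishing pattern of the $E_2$-page and the convergence are routine, resting only on the standard homological characterizations of reflexive sheaves and of sheaves of dimension $\le1$ and $\le0$ on a smooth threefold.
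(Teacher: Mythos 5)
Your proof is correct and is essentially the paper's: the paper likewise defines the induced maps in (5)--(7) by dualizing the truncation triangles $F[2]\to A\to\tilde F$ and $G[1]\to\tilde F\to S$, and then deduces the lemma from the same spectral sequence $\inext^{p}(\calh^{-q}(A),\ox)\Rightarrow\inext^{p+q}(A,\ox)$, which under (5)--(7) degenerates to the single surviving entry $\ker f$, torsion free because it sits inside the reflexive sheaf $F^*$. Your explicit tabulation of the $E_2$-page and $E_\infty$-terms, and the biduality argument recovering (1)--(4) in the converse direction, merely flesh out steps the paper leaves implicit.
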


\begin{proof}
To see where these induced maps come from, consider the triangles
\begin{gather*}F[2]\to A\to \tilde F\to F[3],\\
G[1]\to\tilde F\to S\to G[2].\end{gather*}
Then we have maps
\[\inhom(F,\ox)\to \RHom(\tilde F[-3],\ox)\to\RHom(G[-2],\ox)\cong\inext^2(G,\ox)\]
and $\inext^1(F,\ox)\to \RHom(\tilde F[-4],\ox)\to\inext^3(G,\ox)$. The final one arises because there is a short exact sequence of sheaves
\[0\to\inext^3(S,\ox)\to \inext^3(\tilde F,\ox)\to\inext^2(G,\ox)\to0,\]
and so  $K\to\inext^3(\tilde F,\ox)$ lifts to $K\to \inext^3(S,\ox)$.

The lemma follows immediately from the spectral sequence 
\[\inext^{-p}(\calh^q(A),\ox)\Rightarrow \calh^{q-p+2}(A^\vee),\]
which converges on the third page to a single entry at
$(p,q)=(0,-2)$. Note that $E$ is torsion-free because it is the
subsheaf of a reflexive sheaf (namely $F^*$). We also have that
$E'\cong K$, $Z_E\cong\inext^3(S,\ox)$, $Q_E\cong\inext^2(G,\ox)$ and
$T_E\cong\inext^3(G,\ox)$.
\end{proof}

We can give a more categorical description of the conditions as follows. 

\begin{proposition}\label{a=dual}
Suppose $A\in D^b(X)$. Then $A$ satisfies the  conditions
\begin{enumerate}
\item\label{a=dual1} $\calh^i(A)=0$, $i\neq-2,-1,0$,
\item\label{a=dual2} $F:=\calh^{-2}(A)$ is a reflexive sheaf,
\item\label{a=dual3} $G:=\calh^{-1}(A)\in\coh(X)_1$,
\item\label{a=dual4} $S:=\calh^0(A)\in\coh(X)_0$,
\item\label{a=dual5} no monomorphism $S'\to S$ in $\coh(X)$ lifts to $A$, and
\item\label{a=dual6} no monomorphism $G'\to G$ in $\coh(X)$ lifts to $A[-1]$
\end{enumerate}
if and only if $E:=A^\vee$ is a torsion-free sheaf.
\end{proposition}

\begin{proof}
It is easy to see that the final two conditions are necessary because otherwise we would have maps $E\to \inext^3(S',\ox)[-3]$ or $E\to\inext^2(G',\ox)[-2]$ or $E\to\inext^3(G',\ox)[-3]$, which are impossible.

For the converse, we show that \eqref{a=dual5} implies Lemma~\ref{cohom_dual}\eqref{first_part} while \eqref{a=dual6} implies Lemma~\ref{cohom_dual}\eqref{third_part} and Lemma~\ref{cohom_dual}\eqref{second_part} by considering the contrapositives. For the proof, we consider the octahedron
\begin{equation}\label{eq:oct}
\vcenter{\vbox{\xymatrix@=1.5pc{F[2]\ar@{=}[r]\ar[d]&F[2]\ar[d]\\
B[1]\ar[r]\ar[d]&A\ar[r]\ar[d]&S\ar@{=}[d]\\
G[1]\ar[r]&\tilde F\ar[r]&S}}}\text{\qquad or}
\vcenter{\vbox{\xygraph{ 
!~-{@{-}@[|(2.5)]} !{0;/r1.8cm/:,p+/u1cm/::}
*+{S[-1]}="a1"
& *+{B[1]}="a2"
& *+{G[1]}="a3"  \\
& *+{A}="a12" & *+{\widetilde{F}}="a13"  &
*+{S}="a1n" \\
&& *+{F[3]}="a23" & *+{B[2]}="a2n"  \\
&&&*+{G[2].}="an1n"
"a1":"a2" "a2":"a3" 
"a12":"a13" "a13":"a1n"  "a13":"a23" "a23":"a2n"
"a1n":"a2n" "a2n":"an1n" 
"a23":@{~>}"a12" 
"a2":"a12" "a3":"a13"  "a12":@{~>}"a1"
"an1n":@{~>}"a23"
}}}\end{equation}
Observe that the triangle $F^\vee[-2]\to G^\vee\to B^\vee$ shows that when Lemma~\ref{cohom_dual} holds, $B^\vee=K[1]$.

First suppose $K\to\inext^3(S,\ox)$ fails to surject. Let the quotient be denoted by $T$; this is in $\coh(X)_0$. Let $S'$ be $\inext(T,\ox)$. Then $S'\to S$, and the composite with $S\to B[2]$ vanishes and so lifts to $A$, as required.

Now suppose item~\eqref{third_part} or~\eqref{second_part} of Lemma~\ref{cohom_dual} fails. Applying cohomology to the left vertical triangle of our octahedron and abbreviating $\mathcal{H}^i(B)$ to $B^i$, we have
\[0\to B^{-1}\to F^*\to\inext^2(G,\ox)\to B^0\to\inext^1(F,\ox)\to\inext^3(G,\ox)\to B^1\to0.\]
Then the cone on $B^{-1}[1]\to B$ is the dual of an object $G'$ supported in dimension $1$, and there is a map $G'\to G$ which lifts to $G'\to B$. But there are no morphisms $G'[1]\to T[-1]$, and so this lifts to a non-zero map $G'[1]\to A$, as required.
\end{proof}

\begin{remark}
We can rephrase item~\eqref{a=dual5}  in Proposition~\ref{a=dual} to say that if $K\to A$ is a map from a sheaf in $\coh(X)_0$, then the induced map $K\to S$ must be zero. Similarly, item~\eqref{a=dual6} becomes the statement that if $K[1]\to A$ is a map with $K\in\coh(X)_1$, then the induced map $K\to G$ must be zero. In practice, these are easier to test, but we will make more use of the conditions stated in Proposition~\ref{a=dual}. It is also clear how to extend the statement to higher dimensions.
\end{remark}

\subsection{Refining notions of stability for sheaves}
We complete this section by introducing a stability condition in $\coh(X)$ which interpolates between $\mu$-stability and Gieseker stability; it will play a role in some of the proofs below.

The Hilbert polynomial of a coherent sheaf $E$ on $X$ with respect to the polarization $L$ is
$$ P_E(t) := \chi(E\otimes L^{\otimes t}) = \ch_0(E)\chi(L^{\otimes t}) + \ch_1(E) x_2(t) + \ch_2(E) x_1(t) + \ch_3(E) ,$$
where 
$$ x_2(t) := \left( \frac{1}{2}t^2 L^2 + t \td_1(X)\cdot L + \td_2(X) \right), \quad  
x_1(t):= \left( tL +\td_1(X) \right), $$
and the $\td_j(X)$ denote the Todd classes of $X$. If $F$ is another coherent free sheaf on $X$, we define, following \cite[Section 2]{R}, 
$$ \Lambda(E,F):=\bigl(\delta_{10}(E,F),\delta_{20}(E,F),\delta_{30}(E,F),\delta_{21}(E,F),\delta_{31}(E,F),\delta_{32}(E,F)\bigr), $$
where $\delta_{ij}(E,F):=\delta_{ij}(\ch(E),\ch(F))$ following the notation introduced in display \eqref{Delta}.

We remark that a coherent sheaf $E$ is Gieseker (semi)stable if and only if every proper, non-trivial subsheaf $F\into E$ satisfies $\Lambda(E,F)>~(\geq)~0$ in the lexicographic order. For instance, assume  $E$ is torsion-free, and let $F\into E$ be a proper subsheaf; letting $p_E(t)$ denote the reduced Hilbert polynomial of the sheaf $E$, we have
$$ p_E(t) - p_F(t) = \dfrac{1}{\ch_0(E)\ch_0(F)} \bigl( \delta_{10}(E,F) x_2(t) + \delta_{20}(E,F) x_1(t) + \delta_{30}(E,F)  \bigr), $$
so $E$ is Gieseker (semi)stable if and only if
\[\bigl(\delta_{10}(E,F),\delta_{20}(E,F),\delta_{30}(E,F)\bigr)>~(\geq)~0\] in the lexicographic order. Similarly, a torsion-free sheaf $E$ is Gieseker (semi)stable if and only if every quotient sheaf $E\onto G$ satisfies \[\bigl(\delta_{10}(E,G),\delta_{20}(E,G),\delta_{30}(E,G)\bigr)<~(\leq)~0\]
in the lexicographic order.

In what follows, it will be important to consider another notion of stability for torsion-free sheaves, which is equivalent to the notion of stability in the category $\coh_{3,1}(X)$ in the sense of \cite[Definition 1.6.3]{HL}.

\begin{definition}
A torsion-free sheaf $E$ on $X$ is said to be \emph{$\mu_{\le2}$-$($semi$)$stable} if every proper, non-trivial subsheaf $F\into E$ satisfies $(\delta_{10}(E,F),\delta_{20}(E,F))>~(\geq)~0$ in the lexicographic order.
\end{definition}

For the sheaf $E'$ defined as in display \eqref{e' sqc}, we observe that:
\begin{enumerate}[{\rm(i)}]
\item $E$ is $\mu$-(semi)stable if and only if $E'$ is $\mu$-(semi)stable;
\item $E$ is $\mu_{\le2}$-(semi)stable if and only if $E'$ is $\mu_{\le2}$-(semi)stable.
\end{enumerate}
The first claim follows from the fact that $E'^*=E^*$. For the second claim, note that $\ch_{\le2}(E')=\ch_{\le2}(E)$ since $E'/E$ is $0$-dimensional; in addition, any subsheaf $F'\into E'$ will induce a subsheaf $F\into E$ such that $\ch_{\le2}(F')=\ch_{\le2}(F)$.

Clearly, one has the following chains of implications:
$$ \mu\textrm{-stable} ~~\Rightarrow~~ \mu_{\le2}\textrm{-stable} ~~\Rightarrow~~ \textrm{Gieseker stable} ~~\Rightarrow $$
$$ \Rightarrow~~ \textrm{Gieseker semistable} ~~\Rightarrow~~ \mu_{\le2}\textrm{-semistable} ~~\Rightarrow~~ \mu\textrm{-semistable}. $$

\begin{example}
It is not hard to find explicit examples that show that the reverse implications do not hold in general. Indeed, for $X=\p3$, let $S$ be a rank $2$ reflexive sheaf given as an extension of an ideal sheaf of a line $L\subset\p3$ by $\op3$; note that $\ch(S)=(2,0,-1,1)$. Let $C\subset\p3$ be curve, and consider an epimorphism $\varphi\colon S\onto \mathcal{O}_C(k)$; define $E_\varphi:=\ker\varphi$.
\begin{enumerate}
\item If $C$ is a line not intersecting $L$ and $k>0$, then $E_\varphi$ is $\mu_{\le2}$-semistable but not Gieseker semistable.
\item If $C$ is a conic not intersecting $L$ and $k=0$, then $E_\varphi$ is $\mu_{\le2}$-stable but not $\mu$-stable.
\item If $C$ is a line intersecting $L$ in a single point, then $E_\varphi$ is Gieseker stable but not $\mu_{\le2}$-stable.
\end{enumerate}
\end{example}

Finally, we also recall the following notion of stability for sheaves of dimension $2$; compare with \cite[Definition 1.6.8]{HL}.

\begin{definition}\label{muhat-stab}
A torsion sheaf $T\in\coh(X)_2$ on $X$ is said to be $\hat{\mu}$-(semi)stable if it is pure and every subsheaf $U\subset T$ satisfies
$$ \hat{\mu}(U) := \dfrac{\ch_2(U)}{\ch_1(U)} <~(\leq)~ \dfrac{\ch_2(T)}{\ch_1(T)}=\hat{\mu}(T). $$
\end{definition}

Setting $\Lambda_2(E,F):=(\delta_{10}(E,F),\delta_{20}(E,F),\delta_{21}(E,F))$, we introduce the following version of the Harder--Narasimhan filtration, which will be useful later on.

\begin{lemma}\label{hn-filtration}
Every sheaf $\,E$ admits a filtration $0=E_0\subseteq E_1\subset \cdots \subset E_n=E$ satisfying the following conditions:
\begin{enumerate}
\item $\dim E_1\le1$, and $\dim E_k\ge2$ for every $k\ge2$.
\item Each factor $G_k:=E_k/E_{k-1}$ for $k\ge2$ is either $\hat{\mu}$-semistable 
$($if $\dim E_k/E_{k-1}=2)$ or $\mu_{\le2}$-semistable $($if $ E_k/E_{k-1}$ is torsion-free$)$, and
$$ \Lambda_2(G_k,G_{k+1}) > 0\quad \text{ for } k=1,\dots, n-1 . $$
\end{enumerate}
\end{lemma}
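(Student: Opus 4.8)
The plan is to construct the filtration by a two-stage refinement of the usual Harder--Narasimhan machinery, adapted to the mixed stability notions ($\hat\mu$-stability for the pure 2-dimensional factors and $\mu_{\le2}$-stability for the torsion-free factor). First I would peel off the torsion: let $E_1\subseteq E$ be the maximal subsheaf of dimension $\le 1$. This is the torsion-in-dimension-$\le1$ part, it is canonical, and the quotient $E/E_1$ has no subsheaf of dimension $\le1$, so its torsion is pure of dimension $2$. This gives condition (1) and sets up a sheaf $E/E_1$ whose torsion filtration splits cleanly into a pure 2-dimensional torsion subsheaf and a torsion-free quotient.

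\smallskip

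Next I would produce the higher factors. The key observation is that $\Lambda_2(E,F)=(\delta_{10}(E,F),\delta_{20}(E,F),\delta_{21}(E,F))$ is, up to the positive normalizing factor $\ch_0\ch_0'$ (or $\ch_1\ch_1'$ in the 2-dimensional case), exactly the lexicographic comparison of the leading reduced Hilbert-polynomial coefficients that governs $\mu_{\le2}$-stability on the torsion-free piece and $\hat\mu$-stability on the pure 2-dimensional piece. So I would build a Harder--Narasimhan filtration for the associated slope function whose value is read off by $\Lambda_2$: on $E/E_1$ one first separates the pure 2-dimensional torsion $T$ from the torsion-free quotient $E/(E_1 + T)$ (these have different leading behaviour in the $\delta_{10},\delta_{20}$ entries, so the ordering is automatic), then runs the standard HN filtration for $\hat\mu$-stability on $T$ and for $\mu_{\le2}$-stability on the torsion-free quotient. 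Concatenating the torsion layer $E_1$, the $\hat\mu$-HN factors of $T$, and the $\mu_{\le2}$-HN factors of the torsion-free quotient produces the claimed filtration; each factor $G_k$ is semistable for the appropriate notion, and consecutive factors satisfy the strict inequality $\Lambda_2(G_k,G_{k+1})>0$ because an HN filtration has strictly decreasing slopes.

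\smallskip

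The one genuinely delicate point, and the step I expect to be the main obstacle, is verifying that the single lexicographic comparison $\Lambda_2$ simultaneously encodes the three distinct regimes and that the slopes are \emph{strictly} decreasing \emph{across} the regime boundaries (torsion-of-dimension-$\le1$ versus pure 2-dimensional versus torsion-free). Within a fixed regime this is just the standard existence and uniqueness of Harder--Narasimhan filtrations applied to a slope determined by the leading $\Lambda_2$ entries. The subtlety is the comparison between factors of different dimensions: one must check that $\delta_{10}$ (which compares ranks) dominates, so that a factor with $\ch_0>0$ never sits below a torsion factor, and that among torsion factors the dimension-$2$ comparison via $\delta_{20}$ and $\delta_{21}$ is consistent with the lexicographic order. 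I would handle this by treating $\Lambda_2$ as a slope valued in a totally ordered group and checking that the three filtration stages produce strictly $\Lambda_2$-decreasing factors by direct inspection of the leading terms, using that $E_1$ contributes $\ch_0=\ch_1=0$ while the 2-dimensional factors contribute $\ch_0=0,\ch_1>0$ and the torsion-free factors contribute $\ch_0>0$, so the three stages are already separated at the level of the first nonzero $\Lambda_2$-coordinate.
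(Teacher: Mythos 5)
Your construction is correct and follows the same two-step skeleton as the paper's proof: first strip off the maximal subsheaf $E_1$ of dimension $\le 1$, then filter the quotient. The difference lies in how the second step is executed. The paper takes the Harder--Narasimhan cofiltration of $F:=E/E_1$ as an object of the quotient category $\coh_{3,1}(X)$, citing \cite[Theorem 1.6.7]{HL}; semistability in that category is exactly $\hat\mu$-semistability on pure $2$-dimensional sheaves and $\mu_{\le2}$-semistability on torsion-free ones, and the factors are ordered by the lexicographic $\Lambda_2$-comparison, so the mixed regimes are handled in one stroke. Thus the ``genuinely delicate point'' you isolate --- that $\Lambda_2$ encodes the different regimes and decreases strictly across their boundaries --- is precisely what the appeal to HL's quotient-category HN theorem buys the paper for free, and what you instead verify by hand: you split $E/E_1$ into its (automatically pure) $2$-dimensional torsion $T$ and the torsion-free quotient, run the two standard HN filtrations separately, and check the interface inequalities on leading terms (e.g.\ $\delta_{10}(G_k,G_{k+1})=\ch_1(G_k)\ch_0(G_{k+1})>0$ when a $2$-dimensional factor precedes a torsion-free one). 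Your route is more elementary and self-contained, at the cost of needing the standard saturation arguments to ensure the $\hat\mu$-HN factors of $T$ stay pure and the $\mu_{\le2}$-HN factors stay torsion free; the paper's proof is three lines but leans entirely on HL. One caveat, common to both arguments and really an imprecision in the statement rather than in either proof: the $k=1$ inequality $\Lambda_2(G_1,G_2)>0$ holds when $\dim E_1=1$, but fails when $E_1$ is non-zero and $0$-dimensional, since then $\ch_1(E_1)=\ch_2(E_1)=0$ and $\Lambda_2(E_1,G_2)=(0,0,0)$.
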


\begin{proof}
Let $E_1$ be the maximal subsheaf of $E$ of dimension at most $1$, so that $F:=E/T$ has dimension at least~$2$. Note that this $E_1$ might be the zero sheaf.

Next, let $F=F_0\twoheadrightarrow F_1\twoheadrightarrow \cdots \subset F_n=0$ be the Harder--Narasimhan cofiltration of $F$ as an object in the category $\coh_{3,1}(X)$, in the sense of \cite[Theorem 1.6.7]{HL}; each $F_k$ is a sheaf of dimension at least $2$, either $\hat{\mu}$- or $\mu_{\le2}$-semistable. Composing with $E\twoheadrightarrow F$ provides our required cofiltration and hence a filtration in the usual way.
\end{proof}

\section{Regions of \texorpdfstring{$\boldsymbol{\HH}$}{H}}\label{regions}

Let $v$ be a real numerical Chern character with $v_0\ne0$ satisfying $Q^{\rm tilt}(v)\ge0$. It follows that the curve
$$ \Theta_v:=\{\rho_v(\alpha,\beta)=0\} $$
is a hyperbola in $\HH$, centered around the vertical line $\Mu_v:=\{\beta=\mu(v)\}$; explicitly,
\begin{equation}\label{hyperbola}
\rho_{v}(\alpha,\beta)=0 ~~ \Longleftrightarrow ~~
\left(\beta-\mu(v)\right)^2-\alpha^2 = \frac{Q^{\rm tilt}(v)}{v_0^2}.
\end{equation}
When $v$ fails the Bogomolov--Gieseker inequality, we occasionally still consider $\Theta_v$, but now it is a single-branch hyperbola cutting the $\alpha$-axis.

The hyperbola $\Theta_v$ divides $\HH$ into three regions $R^-\sqcup R^0\sqcup R^+$ defined as follows:
\begin{align*}
R^-_v &= \{(\alpha,\beta)\mid \text{$\rho_v(\alpha,\beta)>0$ and $\beta<\mu(v)$}\} , \\
R^0_v &= \{(\alpha,\beta)\mid \text{$\rho_v(\alpha,\beta)<0$ or $\rho_v(\alpha,\beta)=0$ and $\beta<\mu(v)$}\},\quad{\rm and}\\
R^+_v &= \{(\alpha,\beta)\mid \text{$\rho_v(\alpha,\beta)\geq0$ and $\beta>\mu(v)$}\}.
\end{align*}
Notice that $R^-_v$ does not include any of the $\Theta_v$, whereas $R^0$ and $R^+_v$ include the branch of $\Theta_v$ to their left. In addition, $R^0_v$ is split in half by the vertical line $\{\beta=v_1/v_0\}$; we shall denote these regions by $R^{0+}_v$ and $R^{0-}_v$ (to include the vertical line) to the left and right, respectively. These regions are illustrated in Figure~\ref{fig:theta}. 
\begin{remark} 
The various cases which arise in Proposition~\ref{sheaf_subobjects} can be rewritten usefully in terms of these regions. In the notation of the statement, we have $F\in R^-_{\ch(F)}$, and then the cases are:  
\begin{enumerate}
\item $G\in R^-_{\ch(G)}$,
\item $G\in R^{0+}_{\ch(G)}$,
\item $G\in R^{0-}_{\ch(G)}$.
\end{enumerate}
\end{remark}

The categories $\cohab$ along the curve $\Theta_v$ satisfy some strong conditions. These form the basis for Theorem~\ref{schmidt} (see also \cite[Lemmas 6.2 and 6.4]{S}).

\begin{proposition}\label{nuzero}
If $A\in\cohab$ with $\ch(A)=v$ and $(\alpha,\beta)\in \Theta_v$, then, for every $s>0$, $A$ is $\labs$-semistable, $\calh^0_\beta(A)\in\coh(X)_0$, and $\calh^{-1}_\beta(A)$ is $\nuab$-semistable.

Conversely, if $B\in\cohb$ with $\ch_{\le2}(B)=\ch_{\le2}(v)$ is $\nuab$-semistable for some $(\alpha,\beta)\in \Theta_v$ and $P\in\coh(X)_0$, then every non-trivial extension $A\in\Ext^1(P,B[1])$ with $\ch(A)=v$ belongs to $\cohab$ and is $\labs$-semistable for every $s>0$.
\end{proposition}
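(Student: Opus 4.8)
The plan is to reduce both assertions to a single structural remark: \emph{any non-zero $A\in\cohab$ with $\rho_A(\alpha,\beta)=0$ is $\labs$-semistable for every $s>0$.} The point is that $\rho_A=0$ makes the denominator of $\labs(A)$ vanish, so $\labs(A)=+\infty$, equivalently $\operatorname{Im}Z_{\alpha,\beta,s}(A)=0$; Proposition~\ref{proprank2} then gives $\tau_{A,s}(\alpha,\beta)>0$, so $Z_{\alpha,\beta,s}(A)\in\R_{<0}$ and $A$ sits in the maximal phase of the heart $\cohab$. As every non-zero object of $\cohab$ has phase in $(0,1]$, no sub-object can exceed the phase of $A$, and semistability is immediate. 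Since $(\alpha,\beta)\in\Theta_v$ forces $\rho_A=\rho_v=0$ in the direct statement, this already yields the $\labs$-semistability of $A$ there; I will recover it for the converse at the end.

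Next I would extract the structure of $A$ in the direct statement from the canonical sequence of the second tilt,
\[0\to A_1[1]\to A\to A_0\to0,\qquad A_1:=\calh^{-1}_\beta(A)\in\free{\alpha,\beta},\quad A_0:=\calh^0_\beta(A)\in\tors{\alpha,\beta}.\]
Additivity of $\rho$ together with $\rho_A=0$ give $\rho_{A_0}=\rho_{A_1}$. The torsion pair pins down the signs: $A_0\in\tors{\alpha,\beta}$ forces $\rho_{A_0}\ge0$ (if $\ch^\beta_1(A_0)>0$ then $\nuab(A_0)\ge\nuab^-(A_0)>0$; if $\ch^\beta_1(A_0)=0$ then $\rho_{A_0}\ge0$ by the equality clause of Proposition~\ref{proprank1}), while $A_1\in\free{\alpha,\beta}$ forces $\rho_{A_1}\le0$. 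Hence $\rho_{A_0}=\rho_{A_1}=0$. From $\rho_{A_1}=0$ and $\nuab^+(A_1)\le0$ one sees that $A_1$ is either $0$ or has $\ch^\beta_1(A_1)>0$ with $\nuab(A_1)=0=\nuab^+(A_1)$, i.e. $\nuab$-semistable; this is the claim on $\calh^{-1}_\beta(A)$.

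The main obstacle is to promote $A_0$ to a $0$-dimensional sheaf. From $\rho_{A_0}=0$ and $A_0\in\tors{\alpha,\beta}$ I first get $\ch^\beta_1(A_0)=0$ (otherwise $\nuab(A_0)=0$ contradicts $\nuab^-(A_0)>0$). Proposition~\ref{proprank1} then gives $\rho_{A_0}(\alpha',\beta)\ge0$ for all $\alpha'>0$; writing $\rho_{A_0}(\alpha',\beta)=\ch^\beta_2(A_0)-\ch_0(A_0)\alpha'^2/2$, a non-negative affine function of $\alpha'^2$ vanishing at the interior point $\alpha'^2=\alpha^2$, I conclude $\ch_0(A_0)=0$ and $\ch^\beta_2(A_0)=0$, hence $\ch_{\le2}(A_0)=0$. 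Finally I identify such an object of $\cohb$ as a $0$-dimensional sheaf: applying cohomology over $\coh(X)$ to the triangle $\calh^{-1}(A_0)[1]\to A_0\to\calh^0(A_0)$ and using $\ch^\beta_1\ge0$ on $\cohb$ (Proposition~\ref{proprank1}) forces $\ch^\beta_1(\calh^0(A_0))=0$, so the sheaf $\calh^0(A_0)\in\tors\beta$ is torsion with $\ch_1=0$; then $\ch_0(A_0)=0$ forces $\ch_0(\calh^{-1}(A_0))=0$ and hence $\calh^{-1}(A_0)=0$, so $A_0=\calh^0(A_0)$ is a torsion sheaf with $\ch_1=\ch_2=0$, i.e. $A_0\in\coh(X)_0$. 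This sheaf-theoretic description of $\ker Z^{\rm tilt}\cap\cohb$ is the technical heart of the argument.

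For the converse, the hypotheses give $\rho_B=\rho_v=0$, and $\nuab$-semistability with $\nuab(B)=0$ says exactly that every $\cohb$-sub-object of $B$ has non-positive $\nu$-slope, i.e. $B\in\free{\alpha,\beta}$, so $B[1]\in\cohab$; likewise $P\in\coh(X)_0\subset\tors{\alpha,\beta}\subset\cohab$. As $\cohab$ is the heart of a $t$-structure it is extension-closed, so any $A\in\Ext^1(P,B[1])$ lies in $\cohab$, and additivity of $\rho$ gives $\rho_A=\rho_P-\rho_B=0$. The opening remark then applies verbatim to give $\labs$-semistability for every $s>0$, finishing the proof.
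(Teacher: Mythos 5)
Your proposal is correct and takes essentially the same route as the paper's own proof: semistability follows from the slope being $+\infty$ on $\Theta_v$ (via the positivity in Proposition \ref{proprank2}), the structure of $A$ is read off the canonical sequence $0\to A_1[1]\to A\to A_0\to 0$ with the same sign analysis on $\rho$, and the converse is the identical torsion-pair membership argument ($B\in\free{\alpha,\beta}$, $P\in\tors{\alpha,\beta}$, extension-closure). The only difference is one of detail: your derivation that $A_0\in\coh(X)_0$ (via $\ch_1^\beta(A_0)=0$, then Proposition \ref{proprank1} to force $\ch_{\le2}(A_0)=0$, then sheaf cohomology to kill $\calh^{-1}(A_0)$) is a careful expansion of the step the paper compresses into the single assertion that $\rho_{A_0}=0$ contradicts $\nuab^-(A_0)>0$ unless $A_0\in\coh(X)_0$.
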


\begin{proof}
Suppose $F\into A$ is a sub-object in $\cohab$. Since $\rho_A(\alpha,\beta)=0$, we have that $\rho_F(\alpha,\beta)=0$ as well; thus $\labs(F)=\infty$. The analogous statement holds for quotient objects. Hence, $E$ is $\labs$-semistable for every $s>0$.

Now write $A$ as 
\[0\to A_1[1]\to A\to A_0\to 0\]
in $\cohab$. As in the argument above, we also conclude that $\rho_{A_0}(\alpha,\beta)=0$. However, this contradicts $\nuab^-(A_0)>0$ unless $A_0\in\coh(X)_0$. Since $A_1\in\free{\alpha,\beta}$, we have $\nuab^+(A_1)\le0=\nuab(A_1)$, and so $A_1$ is $\nuab$-semistable.

For the second statement, note that every sub-object $F\into B$ within $\cohb$ satisfies $\nuab(F)\le \nuab(B)=0$ since $B$ is $\nuab$-semistable and $(\alpha,\beta)\in \Theta_v$. It follows that $B\in\free{\alpha,\beta}$; thus $B[1]\in\cohab$. Since $P\in\coh(X)_0$, then $P\in\tors{\alpha,\beta}$ for every $(\alpha,\beta)\in\HH$; thus any extension $A\in\Ext^1(P,B[1])$ does belong to $\cohab$. For every $s>0$, we have $\labs(A)=+\infty$ because $(\alpha,\beta)\in \Theta_v$; thus $A$ must be $\labs$-semistable.
\end{proof}

\subsection{Variation of $\boldsymbol{\cohab}$ along paths}\label{ss:variation_along_paths}
Fix a numerical Chern character $v$,  a point $(\alpha,\beta)\in R^-_v$ and some object $A$ of $\cohab$ with $\ch(A)=v$. Suppose $\gamma\colon [0,b)\to\HH$ is a path in the upper half plane with $\gamma(0)=(\alpha,\beta)$. Let $\pi_i\colon \HH\to\mathbb{R}$ be the projection maps (so that $\pi_1$ picks out the $\alpha$-value and $\pi_2$ the $\beta$-value). First assume  $\beta(t):=\pi_2(\gamma(t))$ is constant and $\alpha(t):=\pi_1(\gamma(t))$ is monotonic increasing. As before we write $A_0$ and $A_1$ for the $\mathcal{B}^\beta$-cohomology of $A$.  Observe that the Harder--Narasimhan factors of $A_0$ and $A_1$ with respect to $\nu_{\alpha,\beta}$ are locally constant (they change only on $\lambda$-walls corresponding to the factors). Let $A^-_0$ be the Harder--Narasimhan factor of $A_0$ with $\nu_{\alpha,\beta}(A^-_0)=\nu_{\alpha,\beta}^-(A_0)$. Then as we move along $\gamma$ (upwards), $A$ remains in $\cohab$ until we cross $\Theta^-_{\ch(A^-_0)}$ (in a Harder--Narasimhan chamber), at which point $\mathcal{H}^1_{\mathcal{A}}(A)=A^-_0$. But since $\nu^+_{\alpha,\beta}(A_1)<0$ and we are traversing the path upwards, for any Harder--Narasimhan factor $A'_1$ of $A_1$ with $\ch(A')_1=v'$, we start in the region $R^0_{v'}$. Hence we can only cross $\Theta^+_{\ch(A_1')}$, but this does not take $A$ out of $\mathcal{A}^{\gamma(t)}$. The reverse happens if $\alpha(t)$ is monotonic decreasing. In this case, if $A^+_1$ is the Harder--Narasimhan factor corresponding to $\nu_{\gamma(t)}^+(A_1)$, then as we cross $\Theta^+_{\ch(A^+_1)}$, $A$ moves out of $\mathcal{A}^{\gamma(t)}$ with $\mathcal{H}_\mathcal{A}^{-1}(A)=A^+_1$. Otherwise, $A$ remains in $\mathcal{A}^{\gamma(t)}$.

This also applies to horizontal paths, but now we need to consider what happens when we cross vertical lines $M_u=\{\beta=\mu(u)\}$ corresponding to $u$ equal to Chern characters of $\mu$-Harder--Narasimhan factors of $A_{00}$, $A_{10}$, $A_{01}$ and $A_{11}$. In this case, we see that crossing these lines does not affect $A$.  To see this, consider one of the cases where $\mu_-(A_{10})=\mu(B)$ and $\widetilde A_{10}=\ker(A_{10}\to B)$ in $\coh(X)$. Then we have an octahedron
\[\xygraph{ 
!~-{@{-}@[|(2.5)]} !{0;/r1.8cm/:,p+/u1cm/::}
*+{A_1}="a1"
& *+{A_{10}}="a2"
& *+{B}="a3"  \\
& *+{A_{11}[2]}="a12" & *+{Q[1]}="a13"  &
*+{A_1[1]}="a1n" \\
&& *+{\widetilde A_{10}[1]}="a23" & *+{A_{10}[1]}="a2n"  \\
&&&*+{B[1]}="an1n"
"a1":"a2" "a2":"a3" 
"a12":"a13" "a13":"a1n"  "a13":"a23" "a23":"a2n"
"a1n":"a2n" "a2n":"an1n" 
"a23":@{~>}"a12" 
"a2":"a12" "a3":"a13"  "a12":@{~>}"a1"
"an1n":@{~>}"a23"
}\]
for some new object $Q$. From this it follows that $Q\in\cohb$ and that
$\nu^*_{\alpha,\beta}(Q)\leq0$ as $Q$ is a $\cohb$-sub-object of $A_1$. So $Q[1]\in\cohab$.
Now we add the triangle $A_1[1]\to A\to A_0$ to the octahedra by inserting a second column and
second row to form a higher octahedron (see Section~\ref{2nd tilt})
\begin{equation}
\xygraph{ 
!~-{@{-}@[|(2.5)]} !{0;/r1.8cm/:,p+/u1cm/::}
*+{A_0[-2]}="a1"
& *+{A_1}="a2"
& *+{A_{10}}="a3"  & *+{B}="an" \\
& *+{A[-1]}="a12" & *+{C'[-1]}="a13"  &
*+{D[-1]}="a1n" & *+{A_{0}[-1]}="a011" \\
&& *+{A_{11}[2]}="a23" & *+{Q[1]}="a2n" &
*+{A_1[1]}="a021"  \\
&&&*+{\widetilde A_{10}[1]}="an1n" & *+{A_{10}[1]}="a0n11"\\
&&&&*{B[1]}="a0n1"
"a1":"a2" "a2":"a3" "a3":"an"
"a12":"a13" "a13":"a1n" "a1n":"a011" "a13":"a23" "a23":"a2n"
"a1n":"a2n" "a2n":"an1n" "a021":"a0n11" "an1n":"a0n11" 
"a011":"a021" "a0n11":"a0n1"
"a23":@{~>}"a12" 
"a2":"a12" "a3":"a13" "an":"a1n" "a1n":"a011" "a2n":"a021"  "a12":@{~>}"a1"
"a0n1":@{~>}"an1n" "an1n":@{~>}"a23"
}
\end{equation}
with a new object $D$,  where $C'$ is from \eqref{7ts}. The triangle $B[1]\to D\to A_0$ implies that $D\in\cohab$. We also see
that $Q[1]\to A\to D$ is a triangle and so $A\in\cohab$. Hence, $A$ remains in
$\cohab$ through $M_u$. The other cases follow similarly.

In conclusion, we have proved the following. 

\begin{theorem}\label{t:stays_in_A}
Suppose $(\alpha_0,\beta_0)\in\HH$ and $A\in\mathcal{A}^{\alpha_0,\beta_0}$. Let $A^-_0$ be the Harder--Narasimhan factor of $A_0\in\mathcal{B}^{\beta_0}$ with least $\nu_{\alpha_0,\beta_0}$ and $A^+_1$ the Harder--Narasimhan factor of $A_1$ with the greatest $\nu_{\alpha_0,\beta_0}$. Also suppose  $\Theta^+_{\ch(A^+_1)}$ does not contain $(\alpha_0,\beta_0)$. Then there is an open neighbourhood $U$ of $(\alpha_0,\beta_0)$ such that $A\in\cohab$ for all $(\alpha,\beta)\in U$. Furthermore, any boundary of $U$ consists of $\,\Theta^+_w$-curves associated to sub-objects in $\cohb$ or $\Theta^-_w$-curves of quotient objects.
\end{theorem}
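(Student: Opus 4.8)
The plan is to reduce membership of $A$ in $\cohab$ to two sign conditions on the extremal Harder--Narasimhan factors, and then to show that, away from the curves named in the statement, both conditions are open and are unaffected by the changes that the $\cohb$-cohomology and the $\nuab$-filtrations undergo. Recall that $A\in\cohab$ is equivalent to $A_0\in\tors{\alpha,\beta}$ and $A_1\in\free{\alpha,\beta}$, i.e.\ to $\nuab^-(A_0)>0$ and $\nuab^+(A_1)\le0$. Since the extremal factors $A^-_0$ and $A^+_1$ realise these slopes and have $\ch^\beta_1>0$ by Proposition \ref{proprank1} (their $\nu$ is finite), the sign of each slope agrees with the sign of the corresponding numerator, so membership becomes the pair of conditions $\rho_{A^-_0}(\alpha,\beta)>0$ and $\rho_{A^+_1}(\alpha,\beta)\le0$. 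The first is already strict because $\tors{\alpha,\beta}$ is defined with a strict inequality, hence it is an open condition holding on a neighbourhood of $(\alpha_0,\beta_0)$. For the second, the hypothesis $(\alpha_0,\beta_0)\notin\Theta^+_{\ch(A^+_1)}$ is exactly what upgrades $\rho_{A^+_1}\le0$ to the strict, open inequality $\rho_{A^+_1}<0$: as the variation analysis of Subsection \ref{ss:variation_along_paths} records, an $A_1$-factor, being a subobject in $\free{\beta_0}$, sits to the right of $\Mu_{\ch(A^+_1)}$, so its vanishing locus there is the right branch $\Theta^+_{\ch(A^+_1)}$, which is excluded.

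Granting that the two sign functions were defined by fixed data, I would then read off the boundary directly: the only way to exit the open locus is for one of $\rho_{A^-_0}$, $\rho_{A^+_1}$ to reach $0$. By the same region analysis, because $A^-_0$ is a quotient of $A_0\in\tors{\beta_0}$ it lies to the left of $\Mu_{\ch(A^-_0)}$, so the locus $\{\rho_{A^-_0}=0\}$ to be crossed is the left branch $\Theta^-_{\ch(A^-_0)}$; dually $\{\rho_{A^+_1}=0\}$ is $\Theta^+_{\ch(A^+_1)}$. This is precisely the assertion that the boundary consists of $\Theta^+_w$ curves of subobjects in $\cohb$ and $\Theta^-_w$ curves of quotients.

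The genuine obstacle is that $A_0$, $A_1$ and their $\nuab$-Harder--Narasimhan factors are \emph{not} literally constant as $(\alpha,\beta)$ moves, so the two numerators $\rho_{A^-_0}$ and $\rho_{A^+_1}$ are built from data that can jump; one must verify that no such jump can eject $A$ from $\cohab$ before an extremal $\rho$ vanishes. There are two sources of change, both already controlled in Subsection \ref{ss:variation_along_paths}. When $\beta$ crosses a line $\Mu_u$ for $u$ a $\mu$-Harder--Narasimhan factor of one of the sheaves $A_{ij}$, the $\cohb$-cohomology of $A$ changes, but the higher-octahedron computation there shows $A$ nonetheless remains in $\cohab$, so these lines are not boundaries of $U$. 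Off the $\nuab$-walls the $\nuab$-filtrations of $A_0$ and $A_1$ are locally constant, and across such a wall the extremal slopes $\nuab^-(A_0)$ and $\nuab^+(A_1)$ vary continuously, only the factorisation realising them changing, so both sign conditions persist until an extremal slope genuinely reaches $0$. I would therefore take $U$ to be the connected component of $(\alpha_0,\beta_0)$ in the open locus where $\rho_{A^-_0}>0$ and $\rho_{A^+_1}<0$ both hold; the content of the theorem is exactly the bookkeeping of the preceding paragraph, namely that neither the $\Mu_u$-crossings nor the $\nuab$-wall-crossings remove $A$ from $\cohab$, so that $\partial U$ can only be assembled from the advertised $\Theta^\pm_w$ curves. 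Making this last step airtight, rather than the openness in the first paragraph, is where the real work lies.
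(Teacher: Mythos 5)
Your overall strategy is the same as the paper's (the paper's proof of this theorem \emph{is} the variation analysis of Subsection \ref{ss:variation_along_paths} that you repeatedly cite), but the step where you claim to derive the strict, open inequality $\rho_{A^+_1}<0$ from the stated hypothesis is genuinely wrong. The factor $A^+_1$ is a subobject of $A_1$ in $\mathcal{B}^{\beta_0}$; it is not a sheaf in $\free{\beta_0}$, and it can perfectly well have \emph{positive} rank (it is then a sheaf lying in $\tors{\beta_0}$). In that case $\ch_1^{\beta_0}(A^+_1)>0$ forces $\beta_0<\mu(\ch(A^+_1))$, i.e.\ the point sits to the \emph{left} of $\Mu_{\ch(A^+_1)}$, so the nearby branch of $\{\rho_{A^+_1}=0\}$ is $\Theta^-_{\ch(A^+_1)}$, which the hypothesis does not exclude. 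Concretely, take $A=I_L[1]$ on $\p3$ and $(\alpha_0,\beta_0)=(1,-\sqrt{3})\in\Theta^-_{\ch(I_L)}$, which lies outside the unique actual $\nu$-wall for $I_L$ computed in Section \ref{ideal line}; then $A_1=A^+_1=I_L$ is $\nu_{\alpha_0,\beta_0}$-stable with slope $0$, so $A\in\mathcal{A}^{\alpha_0,\beta_0}$ and the hypothesis (the point avoids the right branch $\Theta^+_{\ch(I_L)}$) holds, yet every neighbourhood of the point contains points where $\nuab(I_L)>0$, hence where $I_L\in\tors{\alpha,\beta}$ and $I_L[1]\notin\cohab$. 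So strictness cannot be \emph{derived} from the hypothesis as stated; it has to be assumed, which is exactly what the paper's proof does when it writes $\nu^+_{\alpha,\beta}(A_1)<0$, in effect reading the hypothesis as excluding both branches of $\Theta_{\ch(A^+_1)}$. The same rank-sign confusion undermines your boundary bookkeeping: which branch a factor contributes is governed by the sign of $\ch_0$ of that factor, not by whether it lives on the sub- or quotient side. For instance, for $A=\op3(-2)[1]$ (so $A_0^-=A$, of negative rank, and $A_1=0$) the heart-region is bounded by $\Theta^+_{\ch(A_0^-)}$, a $\Theta^+$ curve attached to a \emph{quotient} factor, the opposite pairing to the one you argue for.

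Separately, the part that you yourself flag as ``where the real work lies'' is not actually done. The persistence of $A\in\cohab$ across the lines $\Mu_u$ is quoted from ``the higher-octahedron computation there,'' and the behaviour across $\nu$-walls is handled by the bare assertion that only the factorisation realising $\nuab^{\pm}$ changes while the extremal slopes vary continuously. But ``there'' is Subsection \ref{ss:variation_along_paths}, whose concluding statement is the theorem being proved: the octahedral argument for $\Mu_u$-crossings and the wall-crossing control are precisely the content of the paper's proof. As a self-contained argument your proposal therefore supplies only the openness/boundary bookkeeping (and that with the flaw above), while outsourcing the essential technical steps to the result itself.
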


\subsection{The $\boldsymbol{\Gamma}$-curve}
The next step is to understand the vanishing locus of $\lambda$-slope for a given numerical Chern vector $v$ and to study its relative position with respect to $\Theta_v$.

\begin{definition} \label{Gammas}
For any real numerical Chern character $v$ and $s\ge 0$, we define the curve
$$ \Gamma_{v,s} := \{(\alpha,\beta)\in\HH ~\mid~
\tau_{v,s}(\alpha,\beta)=0\}. $$
In addition, we set $\Gamma_{v,s}^\bullet:=\Gamma_{v,s}\cap R_v^\bullet$, with $\bullet=-,0,+$.
\end{definition}

The curve $\Gamma_{v,s}$ is given explicitly by 
\begin{equation}\label{eq:alphagamma}
v_0\beta ^3 - 3v_1\beta^2 + \left( 6v_2-(6s+1)v_0\alpha^2 \right)\beta + \left( (6s+1)v_1\alpha^2 - 6v_3 \right) = 0 .
\end{equation}
If $v_0\ne0$, equation \eqref{eq:alphagamma} is cubic on $\beta$; hence $\Gamma_{v,s}$ has at most three connected components. In fact, there exists an $\alpha_0>0$ such that
$$ \Gamma_{v,s} \cap \{(\alpha,\beta)\in\HH ~|~ \alpha>\alpha_0 \} $$
has exactly three connected components. Two of these components are asymptotic to lines $\alpha\sqrt{6s+1}=\pm(\beta-v_1/v_0)$ and therefore lie in the regions $R^\pm_{v}$; these are the curves $\Gamma_{v,s}^\pm$ in Definition~\ref{Gammas} above. The third component, namely $\Gamma_{v,s}^0$, either coincides with or is asymptotic to the vertical line $\beta=v_1/v_0$.

\begin{example}\label{m0n0}
When $v=(m,0,-n,0)$ for some $m,n>0$,  equation \eqref{eq:alphagamma} simplifies to
$$ \left(m\beta ^2 - (6s+1) m\alpha^2 - 6n \right)\beta = 0, $$
so $\Gamma_{v,s}$ consists of exactly three components: $\Gamma^{\pm}_{v,s}$ are the two branches of the hyperbola 
$\beta ^2 -(6s+1)\alpha^2=6n/m$, and $\Gamma^0_{v,s}$ coincides with the vertical axis $\{\beta = 0\}$.
\end{example}

In the limit $s\to0$, also assuming  that the Bogomolov--Gieseker inequality $v_1^2\geq2v_0v_2$ holds for $v$, the curves $\Gamma^{\pm}_{v,s=0}$ are co-asymptotic with the two branches of the hyperbola $\Theta_v$.

It turns out that the position of the different branches of the cubic curve $\Gamma_{v,s}$ relative to the two branches of the hyperbola $\Theta_v$ has an important geometric meaning, which is unravelled in the following two statements.

\begin{proposition}
Let $v$ be a real numerical Chern character with $v_0\ne0$  satisfying the Bogomolov--Gieseker inequality \eqref{B-ineq}.
If, for each $(\alpha,\beta)\in\HH$, there is an object $E\in\cohab$ with $\ch(E)=v$, then $\Gamma_{v,s}$ consists of exactly three connected components:
\[ \Gamma^-_{v,s}\sqcup\Gamma^0_{v,s}\sqcup\Gamma^+_{v,s} \]
and
\begin{equation}\label{newbog}
q(E):=3v_1^2v_2^2-6v_1^3v_3+18v_0v_1v_2v_3-8v_0v_2^3-9v_0^2v_3^2\geq0.
\end{equation}
\end{proposition}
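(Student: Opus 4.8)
The plan is to analyze the structure of $\Gamma_{v,s}$ by viewing equation~\eqref{eq:alphagamma} as a cubic in $\beta$ whose coefficients depend on $\alpha$, and to extract the inequality~\eqref{newbog} as a discriminant condition forced by the geometric hypothesis that objects of $\cohab(X)$ with $\ch(E)=v$ exist everywhere. The claim has two parts, and I would handle them in sequence. For the first part (three connected components), I would argue that the existence of $E\in\cohab(X)$ for \emph{every} $(\alpha,\beta)$ means that Proposition~\ref{proprank2} can be invoked along $\Theta_v$: at points of $\Theta_v$ we have $\rho_E(\alpha,\beta)=0$, hence $\tau_{E,s}(\alpha,\beta)>0$ there, which pins down the sign of $\tau_{v,s}$ along the hyperbola. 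This sign information, combined with the asymptotic analysis already recorded in the excerpt (the three branches asymptotic to $\alpha\sqrt{6s+1}=\pm(\beta-v_1/v_0)$ and to $\beta=v_1/v_0$), should force the cubic to have three genuinely distinct real roots for all $\alpha>0$, giving exactly the three components $\Gamma^-_{v,s}\sqcup\Gamma^0_{v,s}\sqcup\Gamma^+_{v,s}$.

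\textbf{Extracting the inequality.} The decisive observation is that three distinct real roots of the cubic in $\beta$ for \emph{all} $\alpha>0$ is equivalent to the cubic discriminant being nonnegative for all $\alpha$. I would compute the discriminant $\mathrm{disc}_\beta$ of~\eqref{eq:alphagamma} as a polynomial in $\alpha^2$; since the leading behavior is controlled by the $\alpha^2$ terms (encoding the $\Gamma^\pm$ branches), the binding constraint comes from the constant term in $\alpha^2$, i.e.\ from setting $\alpha=0$. Evaluating the discriminant at $\alpha=0$ should, after simplification, reproduce precisely the quantity $q(E)=3v_1^2v_2^2-6v_1^3v_3+18v_0v_1v_2v_3-8v_0v_2^3-9v_0^2v_3^2$ up to a positive constant factor. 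Thus the requirement that $\Gamma_{v,s}$ genuinely splits into three real branches near $\alpha=0$ (which is where the branches are closest to coalescing, given the Bogomolov inequality controlling the $\Theta_v$ geometry) yields $q(E)\geq0$.

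\textbf{The main obstacle} I expect is twofold. First, the discriminant of a general cubic is a lengthy symmetric expression, and verifying that its $\alpha=0$ specialization collapses exactly to $q(E)$ requires careful bookkeeping; I would organize this using the $\delta_{ij}$ and the identities in Lemma~\ref{delta_tech}, particularly~\eqref{delta_identity}, to keep the algebra tractable rather than expanding blindly. Second, and more delicate, is justifying \emph{why} the geometric hypothesis forces three real roots rather than one: the existence of $E\in\cohab(X)$ must be translated into a statement that $\tau_{v,s}$ changes sign the maximal number of times as $\beta$ varies with $\alpha$ fixed. Here I would lean on Proposition~\ref{proprank2} together with the sign of $\rho_v$ in the three regions $R^-_v$, $R^0_v$, $R^+_v$ defined in Section~\ref{regions}: the slope $\lambda_{\alpha,\beta,s}$ must be finite and the object stable, which constrains $\tau_{v,s}$ to vanish once in each region, giving the three components and simultaneously the non-coalescence that the discriminant inequality encodes.

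\textbf{A cross-check} I would perform is Example~\ref{m0n0}: for $v=(m,0,-n,0)$ one computes $q=-8v_0v_2^3=8mn^3>0$ directly, and the cubic~\eqref{eq:alphagamma} indeed factors with three real branches, confirming both the structural claim and the sign of~\eqref{newbog} in a case where all the algebra is transparent.
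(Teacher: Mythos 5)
Your proposal follows essentially the same route as the paper's proof: the existence hypothesis combined with Proposition \ref{proprank2} forces $\tau_{v,s}>0$ along $\Theta_v$, so the three asymptotic branches of $\Gamma_{v,s}$ can never cross the hyperbola and persist in their respective regions all the way down to $\alpha=0$, giving the three components; the inequality \eqref{newbog} is then exactly the $\alpha=0$ specialization of the discriminant of \eqref{eq:alphagamma} regarded as a cubic in $\beta$, which must be nonnegative as the limit of positive values. The only step the paper carries out that you defer is the explicit algebra: it records the discriminant $q_\alpha(E)$ as a cubic in $\alpha^2$ whose constant term is $27\,q(E)$ and checks that $dq_\alpha/d\alpha^2$ is a positive multiple of a perfect square, so the constraint at $\alpha=0$ is indeed the binding one, which is precisely the bookkeeping you anticipated.
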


\begin{proof}
The hypotheses imply that the three asymptotic branches $\Gamma^{\bullet}_{v,s}$ remain in their respective regions $R^{\bullet}_v$ for $\bullet=+,-,0$ all the way down to the horizontal axis $\{\alpha=0\}$, and so we have three distinct irreducible components, as given. 

Recall that a cubic equation admits three distinct real roots exactly when its discriminant is positive. Regarding equation \eqref{eq:alphagamma} as a cubic in $\beta$, an easy computation gives its discriminant as ($108$ times)
\begin{equation}
\begin{split}
q_\alpha(E):=&\; \alpha ^6 v_0^4 (6 s+1)^3+9 \alpha ^4 v_0^2(6 s+1)^2 \left(v_1^2-2 v_0v_1\right)\\
&+27 \alpha^2  (6 s+1) \left(v_1^2-2v_0v_1\right)^2\\  &+27\left(3v_1^2v_2^2-6v_1^3v_3+18v_0v_1v_2v_3-8v_0v_2^3-9v_0^2v_3^2\right).
\end{split}
\end{equation}
Setting $\alpha=0$ yields \eqref{newbog}. Finally, we need to know that $q_\alpha$ can only change sign once as $\alpha$ varies. To see that, observe that
\[\frac{d q_\alpha(E)}{d\alpha^2}=3(6s+1)\bigl(v_0^2(6s+1)\alpha^2+3(v_1^2-2v_0v_2)\bigr)^2>0.\]
So as a cubic in $\alpha^2$, $q_\alpha$ has a single point of inflection and so can change sign at most once. Note that, since $v_1^2\geq 2v_0v_1$, the point of inflection does not occur for any $\alpha>0$. 
\end{proof}
We also make use of the following. 

\begin{definition}\label{RL}
Suppose $v_0\neq0$ and $Q^{\rm tilt}(v)\geq0$. We define the region $\leftgamma_{v,s}$ to be
\[\{(\alpha,\beta)\in\HH\mid \alpha<\alpha_0\text{ whenever } (\alpha_0,\beta)\in\Gamma^-_{v,s}\}\cap R^-_v.\]
Alternatively,
\[ \leftgamma_{v,s} = \{(\alpha,\beta)\in\HH\mid v_0\tau_{v,s}(\alpha,\beta)\geq0\}\cap R^-_{v}. \]
\end{definition}

\begin{figure}[ht]
  \centering
\includegraphics{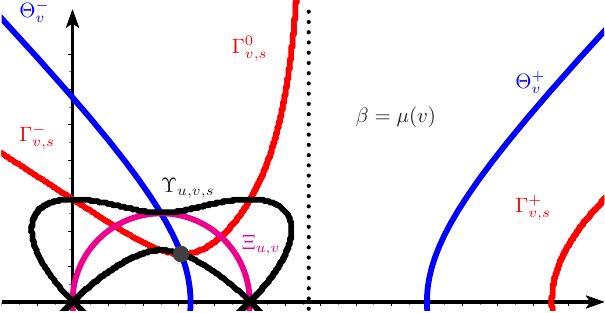}
\caption{This graph contains an example of the situation described in Proposition~\ref{left-cap} for $v=(3,4,2,2/3)$, which is the Chern character of the tangent bundle of $\p3$. We set $s=1/3$.
The curve $\Gamma_{v,s}$ (in red) intersects the left branch of the hyperbola $\Theta_v$ (in blue) at the point $(\alpha,\beta)\approx(0.27,0.62)$. In addition, $\Gamma_{v,s}^0$ is asymptotic to the vertical line $\Mu_v$, while the component $\Gamma_{v,s}^+$ does not intersect $\Theta_v^+$. We also illustrate Theorem~\ref{thm for q} here, with the vanishing $\nu$- and $\lambda$-walls represented by the curves $\Xi_{u,v}$ (in magenta) and $\Upsilon_{u,v,s}$ (in black), respectively, where $u=\ch(\op3(1))$. Finally, Lemma~\ref{l-l} is also represented since both of these walls cross $\Theta^-_v$ at the same point. }
 \label{line-fig-1}
\end{figure}

Note that if $q(v)<0$, then $\Gamma_{v,s}$ intersects $\Theta_{v}$. This is illustrated in Figure~\ref{line-fig-1}. On the other hand, away from $\Theta_v$ and $\Mu_v$, the curve $\Gamma^-_{v,1/3}$ is strictly monotonic. This is because $\partial_\beta\ch^{\alpha,\beta}_3(v)=-\ch^{\alpha,\beta}_2(v)$. It follows that if $q(v)\geq0$, we have $\Gamma_{v,s}\cap\Theta_v=\varnothing$, and otherwise this intersection is a single point at which $\Gamma_{v,s}$ has a minimum turning point, as illustrated in Figure~\ref{line-fig-1}.

\begin{proposition}\label{left-cap}
Let $v$ be a real numerical Chern character with $v_0\ne0$ satisfying the Bogomolov--Gieseker inequality \eqref{B-ineq}.
If $q(v)<0$, then $\Gamma_{v,s}$ intersects the left branch of $\,\Theta_v$ if and only if
\[ v_0^2v_3>v_0 v_1 v_2-\frac{1}{3}v_1^3. \]
\end{proposition}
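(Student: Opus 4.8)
The plan is to collapse the two--variable intersection problem to the sign of a single cubic in $\beta$ obtained by restricting the slope numerator $\tau_{v,s}$ to $\Theta_v$. Throughout I take $v_0>0$, the positive--rank case for which the region picture of Section~\ref{regions} is drawn; when $v_0<0$ the slope of $R$ below reverses and the same argument produces the oppositely--signed inequality (consistently with the symmetry $v\mapsto -v$, which fixes both curves but flips the displayed quantity). Along $\Theta_v$ the relation $\rho_v=0$ gives $\alpha^2=A(\beta):=\beta^2-\tfrac{2v_1}{v_0}\beta+\tfrac{2v_2}{v_0}$, whose roots are exactly the vertices $\beta_\pm:=\mu(v)\pm\sqrt{Q^{\rm tilt}(v)}/|v_0|$; the left branch is the arc $\{\beta<\beta_-\}$ (on which $\alpha=\sqrt{A(\beta)}>0$) and the right branch is $\{\beta>\beta_+\}$, so an intersection with a branch is precisely a real zero of $F(\beta):=\tau_{v,s}|_{\Theta_v}$ in the corresponding interval. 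Writing $\tau_{v,s}=\tau_{v,s}(0,\beta)+(s+\tfrac16)(v_0\beta-v_1)\alpha^2$ and substituting $\alpha^2=A(\beta)$, I would first establish the factorisation
\[ F(\beta)=s\,(v_0\beta-v_1)A(\beta)+R(\beta),\qquad R(\beta):=v_3-\frac{v_1v_2}{3v_0}+\frac{Q^{\rm tilt}(v)}{3v_0}\,\beta. \]
The content of the computation is that the cubic and quadratic terms of $\tau_{v,s}(0,\beta)+\tfrac16(v_0\beta-v_1)A(\beta)$ cancel, leaving the affine remainder $R$; this is exactly where the shift $s+\tfrac16$ in \eqref{tau function} earns its keep. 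Evaluating at the centre gives $v_0^2R(\mu(v))=v_0^2v_3-v_0v_1v_2+\tfrac13v_1^3$, so the claimed inequality is literally $R(\mu(v))>0$.

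Two elementary observations then drive everything. First, $R$ is affine with slope $Q^{\rm tilt}(v)/(3v_0)\ge0$ by the Bogomolov inequality, hence nondecreasing, and $F(\beta_\pm)=R(\beta_\pm)$ because $A$ vanishes at the vertices. Second, on the left branch $\{\beta<\beta_-\}$ one has $A(\beta)>0$ and $v_0\beta-v_1=v_0(\beta-\mu(v))<0$, so $s(v_0\beta-v_1)A(\beta)<0$ and therefore $F<R$ there; symmetrically, on the right branch $\{\beta>\beta_+\}$ both factors are positive, so $F>R$.

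With these in hand the equivalence splits cleanly. For the forward implication, if $\Gamma_{v,s}$ meets the left branch at some $\beta_1<\beta_-$ then $F(\beta_1)=0$, and since $F<R$ there we get $R(\beta_1)>0$; as $R$ is nondecreasing and $\beta_1<\mu(v)$ this gives $R(\mu(v))\ge R(\beta_1)>0$. For the converse, assume $R(\mu(v))>0$. On the right branch $F>R\ge R(\beta_+)\ge R(\mu(v))>0$ (using monotonicity and $\beta_+\ge\mu(v)$), so $F$ has no zero with $\beta\ge\beta_+$ and $\Gamma_{v,s}$ cannot meet the right branch. But the hypothesis $q(v)<0$ guarantees, by the remark immediately preceding the statement, that $\Gamma_{v,s}$ does meet $\Theta_v$; that intersection must therefore lie on the left branch, as required.

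The main computational obstacle is the factorisation of $F$ together with the verification that its top two terms cancel to leave $R$ affine — once that is in place, the rest is the sign bookkeeping above resting on the monotonicity of $R$. The only external input is the already--noted fact that $q(v)<0$ forces a nonempty intersection $\Gamma_{v,s}\cap\Theta_v$; this is precisely what excludes the a priori scenario in which all real zeros of $F$ fall in the gap $(\beta_-,\beta_+)$, where $A(\beta)<0$ and they correspond to no genuine point of $\HH$. I would also record the degenerate case $Q^{\rm tilt}(v)=0$, where $\beta_-=\beta_+=\mu(v)$ and $R$ is constant: every inequality used survives verbatim, so no separate treatment is needed.
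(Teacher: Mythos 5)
Your proof is correct, and it takes a genuinely different route from the paper's. The paper collapses the problem to the boundary $\{\alpha=0\}$: there $\tau_{v,s}(0,\beta)$ is an $s$-independent cubic which, by $q(v)<0$, has a unique real root, and the proof hinges on the observation that this root lies to the right of $\mu(v)$ precisely when $\Gamma_{v,s}$ meets the left branch of $\Theta_v$ --- a branch-connectivity statement (which pair of asymptotic branches $\Gamma^{\pm},\Gamma^0$ must join up) that the paper asserts rather tersely; the numerical condition then falls out of evaluating the cubic at $\beta=\mu(v)$. You instead restrict $\tau_{v,s}$ to $\Theta_v$ itself and exploit the factorisation $F(\beta)=s\,(v_0\beta-v_1)A(\beta)+R(\beta)$ with $R$ affine and nondecreasing (by Bogomolov), noting $v_0^2R(\mu(v))$ is exactly the displayed quantity; the sign comparison of $F$ against $R$ on the two branches plus monotonicity of $R$ finishes the argument. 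Your computation checks out (I verified the cancellation of the cubic and quadratic terms and the identity $v_0^2R(\mu(v))=v_0^2v_3-v_0v_1v_2+\tfrac13v_1^3$). Each approach buys something: the paper's is shorter and works verbatim for $s\ge0$ since $\tau_{v,s}(0,\beta)$ is $s$-independent, but it leans on the unjustified connectivity claim; your forward implication needs no hypothesis on $q(v)$ at all (so it is slightly stronger: \emph{any} left-branch intersection forces the inequality), your converse uses $q(v)<0$ only through the previously recorded nonemptiness of $\Gamma_{v,s}\cap\Theta_v$, and you get for free that the inequality excludes right-branch intersections. You also correctly flag the implicit normalisation $v_0>0$: the displayed inequality is anti-invariant under $v\mapsto-v$ while the geometric condition is invariant, a point the paper's proof (whose sign analysis as $\beta\to-\infty$ also presumes $v_0>0$) glosses over. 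The only caveats are that your strict-inequality bookkeeping requires $s>0$ (harmless, given the paper's standing assumptions) and that intersections at the vertices $\beta_\pm$, where $\alpha=0\notin\HH$, are a boundary case neither proof addresses --- but your handling of the degenerate case $Q^{\rm tilt}(v)=0$ is a nice touch the paper omits.
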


\begin{proof}
This is easiest to see by considering the sign of $\labs(v)$ for very small $\alpha$ (we can just take $\alpha=0$). Observe that for $\beta\to-\infty$, the sign is positive, and the sign changes when we cross either $\Gamma_{v,s}$ or $\Theta_v$. If $q(v)<0$, then $\Gamma_{v,s}$ crosses the $\beta$ axis exactly once. Since $\rho_v(\alpha,\beta)>0$ for $\beta\to-\infty$, it follows that $\tau_{v,s}(\alpha,\beta)>0$  to the left of this point. Observe that $\lambda_{0,\beta,s}(v)=0$ for $\beta>\mu(v)$ if and only if $\Gamma_{v,s}$ intersects the left branch of $\Theta_v$.  Write $\tau_{v,s}(0,\beta)$ as
\[6v_3-\frac{6 v_1 v_2}{v_0}+\frac{2 v_1^3}{v_0^2}+\left(\beta -\frac{v_1}{v_0}\right) \left(\frac{3 v_1^2}{v_0}-6 v_2\right)-v_0 \left(\beta -\frac{v_1}{v_0}\right)^3.\]
Then the numerical condition is the positivity of the constant term in this cubic in $\beta-v_1/v_0$.
\end{proof}

The following is an easy exercise.

\begin{proposition}
The quartic form $q$ in equation \eqref{newbog} is invariant under $E\mapsto E\otimes\calo_X(k)$ and $E\mapsto E^\vee$.
\end{proposition}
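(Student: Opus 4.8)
The plan is to treat the two operations separately, reducing each to an elementary invariance of the numerical vector $v=(v_0,v_1,v_2,v_3)$. For the dual, I would first record that the derived dual $E^\vee=\RHom(E,\ox)[2]$ has numerical Chern character $\ch(E^\vee)=(v_0,-v_1,v_2,-v_3)$: the functor $R\mathcal{H}\textit{om}(-,\ox)$ negates the odd graded pieces and the shift by $[2]$ contributes an even sign. It then suffices to check term by term that each of the five monomials of $q$ is fixed by the substitution $(v_0,v_1,v_2,v_3)\mapsto(v_0,-v_1,v_2,-v_3)$. The monomials $3v_1^2v_2^2$, $-8v_0v_2^3$ and $-9v_0^2v_3^2$ involve only even powers of $v_1$ and of $v_3$, while $-6v_1^3v_3$ and $18v_0v_1v_2v_3$ each carry exactly one factor $v_1$ and one factor $v_3$, hence pick up $(-1)^2=1$. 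Thus $q(E^\vee)=q(E)$.

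For the twist, I would identify $q$ with a discriminant. Setting $\alpha=0$ in the defining equation \eqref{eq:alphagamma} of $\Gamma_{v,s}$ gives the cubic
$$ v_0\beta^3-3v_1\beta^2+6v_2\beta-6v_3=-6\,\ch_3^\beta(v), $$
and the computation of $q_\alpha(E)$ carried out above shows that $q(v)$ is a fixed positive multiple (namely $1/108$) of the discriminant in $\beta$ of this cubic. The key structural fact is the semigroup property of twisting, $\ch^{\beta'}(\ch^{\beta_0}(v))=\ch^{\beta'+\beta_0}(v)$, which follows from $\exp(-\beta'L)\exp(-\beta_0L)=\exp(-(\beta'+\beta_0)L)$. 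Since $\ch(E\otimes\ox(k))=\ch^{-k}(v)$, the polynomial $\beta'\mapsto\ch_3^{\beta'}(\ch(E\otimes\ox(k)))$ is precisely the polynomial $\beta'\mapsto\ch_3^{\beta'}(v)$ with its variable translated by $-k$. Because the discriminant of a polynomial is unchanged under a translation of the variable (the leading coefficient $-v_0/6$ and all differences of roots are preserved), the discriminant, and hence $q$, is invariant under tensoring by $\ox(k)$.

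The only points demanding care are the sign conventions in $\ch(E^\vee)$ and the precise identification of $q$ with the discriminant of the $\alpha=0$ cubic; both are routine given the expression for $q_\alpha(E)$ already established. I therefore expect no genuine obstacle, consistent with the assertion that this is an easy exercise. As a sanity check one can alternatively verify the twist invariance infinitesimally, showing $\tfrac{d}{d\beta_0}q(\ch^{\beta_0}(v))\equiv0$ by means of the derivative relations \eqref{dervch}, but the discriminant argument is cleaner and avoids the computation.
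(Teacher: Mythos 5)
Your proof is correct. The paper itself gives no argument for this proposition (it is dismissed as an easy exercise), so there is nothing to compare step by step; what you wrote is a legitimate completion, and it dovetails with the framework the paper has already set up: the proof of the proposition in which \eqref{newbog} is introduced computes the discriminant of the cubic \eqref{eq:alphagamma} in $\beta$ and sets $\alpha=0$, which is precisely the identification you exploit for the twist. Your normalization is also right: expanding $18abcd-4b^3d+b^2c^2-4ac^3-27a^2d^2$ with $a=v_0$, $b=-3v_1$, $c=6v_2$, $d=-6v_3$ gives exactly $108\,q(v)$, so $q$ is $1/108$ times the discriminant of the $\alpha=0$ cubic (the constants displayed in the paper's $q_\alpha$ seem to carry a typographical factor, but this is immaterial, since only positivity and invariance of the discriminant ever matter). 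The twist argument is then airtight: $\ch(E\otimes\calo_X(k))=\ch^{-k}(v)$, the semigroup property of $\ch^\beta$ turns the twisted cubic into a translate of the original one, and a translation preserves both the leading coefficient $-v_0/6$ and all differences of roots, hence the discriminant. For the dual, $\ch(E^\vee)=(v_0,-v_1,v_2,-v_3)$ is the correct sign rule, and each monomial of $q$ has even total degree in the odd variables $(v_1,v_3)$ (these degrees are $2,4,2,0,2$), hence is fixed. One small slip of wording: $-6v_1^3v_3$ does not carry ``exactly one factor $v_1$'' --- it carries three --- but since $(-1)^3(-1)^1=1$ the parity conclusion, and therefore the proof, is unaffected.
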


\section{Walls}\label{sec:walls}

\subsection{$\boldsymbol{\nu}$-walls}\label{sec:nu-walls}

Given a real numerical Chern character $v$, a curve $\Xi_{u,v}\subset\R^+\times\R$ is called a \emph{numerical $\nu$-wall} for $v$ if there is a real numerical Chern character $u$ such that
$$ \Xi_{u,v} := \{t_{u,v}(\alpha,\beta)=0\},\quad{\rm where} $$
\begin{equation}\label{defdelta} 
t_{u,v}(\alpha,\beta) := \rho_{u}(\alpha,\beta)\ch_1^\beta(v) - \rho_v(\alpha,\beta)\ch_1^\beta(u) = \Delta_{21}(\alpha,\beta);
\end{equation}
compare with the notation introduced in display \eqref{Delta}. Note that this coincides with the numerator of the difference $\nuab(u)-\nuab(v)$, so its vanishing locus is precisely where it changes sign. Observe that numerical $\nu$-walls are invariant as subsets of the upper half plane under the changes of coordinates $u\mapsto\kappa v+u$, $u\mapsto \zeta u$ and $(u,v)\mapsto (v,u)$ for any real constants $\kappa$ and $\zeta\neq0$.

\begin{lemma}\label{empty_tilt}
Suppose  $v_0>0$,  $v$ satisfies the Bogomolov--Gieseker inequality and $u=(0,1,x,y)$ for real $x$ and $y$. Then the following are equivalent: 
\begin{enumerate}
\item\label{empty_tilt1} $\Xi_{u,v}$ is empty.
\item\label{empty_tilt2} For all real $\kappa$, $\kappa v+u$ satisfies the Bogomolov--Gieseker inequality.
\item\label{empty_tilt3} $(x-\mu(v))^2 \leq \mu(v)^2 - 2v_2/v_0$. 
\item \label{eq:no_tilt} $\delta_{02}(u,v)^2\leq 4\delta_{01}(u,v)\delta_{12}(u,v)$.
\end{enumerate}
\end{lemma}

\begin{proof}
The Bogomolov--Gieseker inequality for $\kappa v+u$ as a polynomial in $\kappa$ is
\begin{equation}\label{bogkvu}
\kappa^2Q^{\mathrm{tilt}}(v)+2\kappa(v_1-v_0x)+1.
\end{equation}
The condition in~\eqref{empty_tilt3} is the discriminant condition for \eqref{bogkvu} divided by $v_0^2$. This shows that~\eqref{empty_tilt2} and~\eqref{empty_tilt3} are equivalent. The condition that $\Xi_{u,v}=\varnothing$ is that $\Delta_{21}(0,\beta)=0$ has at most one solution. Explicitly, this is
\begin{equation}\label{tilt_kappa}
\beta^2 v_0/2-\beta v_0x+v_1x-v_2, 
\end{equation}
and the discriminant condition for this is $v_0^2x^2\leq2v_0(v_1x-v_2)$, which is equivalent to the condition in~\eqref{empty_tilt3}.  Finally, $\delta_{02}(u,v)^2-4\delta_{01}(u,v)\delta_{12}(u,v)=v_0^2x^2-2v_0(v_1x-v_2)$ establishes the equivalence of~\eqref{eq:no_tilt}.
\end{proof}

The structure of numerical $\nu$-walls was described in \cite{M} for the case of projective surfaces, but the same results also hold for projective threefolds; see for instance \cite[Theorem 3.3]{S} and \cite[Remark~9.2]{MS}. Precisely, numerical $\nu$-walls are non-intersecting semicircles centered along the horizontal axis and cross the hyperbola $\Theta_v$ at their maximum point or the vertical line $\Mu_v$. The semicircles tend to a fixed point $(0,C_0)$ on the $\beta$-axis as the radius tends to zero.

\begin{definition}
A numerical $\nu$-wall $\Xi_{u,v}$ is called an \emph{actual $\nu$-wall} if for some $(\alpha_0,\beta_0)\in\Xi_{u,v}$, there are an object $B\in\mathcal{B}^{\beta_0}$ with $\ch(B)=v$ and a $\nu_{\alpha_0,\beta_0}$-semistable sub-object $F\into B$ with $\ch(F)=u$ such that the quotient $E/F$ is also $\nu_{\alpha_0,\beta_0}$-semistable.
\end{definition}

It then follows that the same property holds for every point of $\Xi_{u,v}$; see \cite[Theorem 3.3]{S}. Actual $\nu$-walls are \emph{locally finite} (that is, any compact subset of the upper half plane intersects only finitely many numerical $\nu$-walls), see \cite[Lemma 6.23]{MS}, and there are $C_{\rm max}\in\R$ and $R_{\rm max}>0$ such that every numerical $\nu$-wall is contained in the semicircle centered at $(0,C_{\rm max})$ with radius $R_{\rm max}$. In particular, there is an $\oalpha>0$ for which there are only finitely many actual $\nu$-walls above the horizontal line $\{\alpha=\oalpha\}$. However, actual $\nu$-walls may accumulate towards the point $\Theta_v\cap\{\alpha=0\}$; see \cite{Mea} and \cite{YY12} for examples on abelian surfaces.

Another observation is that, since $\nu$-walls are nested, there can be at most two vanishing $\nu$-walls for a given Chern character $v$, one on each side of the vertical line $\Mu_v$. 

The following technical result, which will be useful in Section 8, follows immediately from Lemma~\ref{empty_tilt}.
For a Chern character $v$, we let 
\[Y_v=\{\gamma\mid \gamma=\delta_{02}(u,v)/\delta_{01}(u,v)\text{ for some actual $\nu$-wall given by $u$}\}.\]

\begin{proposition}\label{no_tilt_condition}
Suppose $\ch(E)=v$ and $\ch(F)$ are two numerical Chern characters which satisfy the Bogomolov--Gieseker inequality, $v_0>0$ and $\delta_{01}(u,v)\neq0$.
If $F\into E\onto G$ is a short exact sequence in $\cohb$ for $\beta<\mu(E)$, then if $\delta_{02}(u,v)/\delta_{01}(u,v)\not\in Y_v$, we have 
\[\delta_{01}(u,v)\delta_{12}(u,v)\geq0\quad\text{and}\quad
|\delta_{02}(u,v)|\leq 2\sqrt{\delta_{01}(u,v)\delta_{12}(u,v)}.\]
\end{proposition}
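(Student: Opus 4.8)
The plan is to recognise the two displayed inequalities as a single geometric statement, namely that the numerical $\nu$-wall $\Xi_{u,v}$ is \emph{empty}, and then to prove emptiness using the hypothesis on $Y_v$. First note that, since $\delta_{02}(u,v)^2\ge0$, the bound $|\delta_{02}(u,v)|\le 2\sqrt{\delta_{01}(u,v)\delta_{12}(u,v)}$ already forces $\delta_{01}(u,v)\delta_{12}(u,v)\ge0$, so it suffices to prove the second inequality. Expanding \eqref{defdelta} shows that, when $\delta_{01}(u,v)\ne0$, the equation $t_{u,v}=0$ describes a circle centred on the $\beta$-axis at $\beta=\delta_{02}(u,v)/\delta_{01}(u,v)$, which is exactly the quantity recorded in $Y_v$. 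Because $\Xi_{u,v}$ and the combinations $\delta_{01}\delta_{12}$, $\delta_{02}^2$ and the ratio $\delta_{02}/\delta_{01}$ are all invariant (respectively homogeneous) under the substitutions $u\mapsto\kappa v+u$ and $u\mapsto\zeta u$, and since $v_0=a\ne0$ together with $\delta_{01}(u,v)\ne0$ lets us normalise $u$ to the shape $(0,1,x,y)$, Lemma~\ref{empty_tilt} applies and yields precisely the equivalence: the two inequalities hold if and only if $\Xi_{u,v}=\emptyset$. This reduction is purely numerical and does not involve the objects $E,F,G$.

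It remains to prove $\Xi_{u,v}=\emptyset$, which I would establish by contradiction. Suppose instead that $\Xi_{u,v}$ is a genuine semicircle, with centre $c=\delta_{02}(u,v)/\delta_{01}(u,v)$. Since all numerical $\nu$-walls for $v$ are nested semicircles through a common point on $\{\beta=\mu(v)\}$ with $\alpha\le0$, the wall $\Xi_{u,v}$ does not meet $\{\beta=\mu(v)\}$ inside $\HH$; as the hypothesis furnishes the sub-object at $\beta<\mu(E)=\mu(v)$, the relevant wall is the left-hand one, lying in $\{\beta<\mu(v)\}$. The heart $\cohb$ is independent of $\alpha$, so the sequence $F\into E\onto G$ survives along the whole vertical ray over the given $\beta$; propagating the sub-object downward in $\beta$ to enter the $\beta$-range of $\Xi_{u,v}$, I obtain a point $(\alpha_0,\beta_0)\in\Xi_{u,v}$ at which $F\into E$ is an honest sub-object in $\mathcal{B}^{\beta_0}$ with $\ch(F)=u$ and $\nu_{\alpha_0,\beta_0}(F)=\nu_{\alpha_0,\beta_0}(E)$. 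Refining $F$ and $G=E/F$ to their $\nu$-semistable constituents at $(\alpha_0,\beta_0)$, and using Proposition~\ref{sheaf_subobjects} with its description of the quotient via the regions $R^-_{\ch(G)}$ and $R^{0\pm}_{\ch(G)}$ to control the sheaf structure and to keep the relevant $\delta_{01}$ non-zero, exhibits a $\nu_{\alpha_0,\beta_0}$-semistable sub-object and quotient on the wall. Hence $\Xi_{u,v}$ is an \emph{actual} $\nu$-wall.

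Finally, numerical $\nu$-walls for a fixed $v$ are mutually non-intersecting, so the actual wall just produced, sharing the point $(\alpha_0,\beta_0)$ with $\Xi_{u,v}$, coincides with $\Xi_{u,v}$ and therefore has centre $c=\delta_{02}(u,v)/\delta_{01}(u,v)$; thus $c\in Y_v$, contradicting the standing hypothesis $\delta_{02}(u,v)/\delta_{01}(u,v)\notin Y_v$. This contradiction gives $\Xi_{u,v}=\emptyset$, and the two inequalities follow. I expect the genuine difficulty to be the upgrade in the second paragraph: turning an honest sub-object lying on the numerical wall into an \emph{actual} $\nu$-wall with the \emph{same} centre, i.e.\ controlling the Harder--Narasimhan refinement so that the resulting semistable destabiliser defines $\Xi_{u,v}$ itself and not a neighbouring, differently-centred wall. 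The $\alpha$-independence of $\cohb$, the nestedness (non-intersection) of numerical $\nu$-walls, and Proposition~\ref{sheaf_subobjects} are the tools I would lean on to close this gap.
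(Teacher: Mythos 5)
Your first paragraph is sound and is exactly the route the paper intends: the paper records no proof of this proposition at all beyond the sentence that it follows by ``combining Lemma \ref{empty_tilt} with Proposition \ref{sheaf_subobjects}'', and your reduction --- both inequalities are invariant under $u\mapsto\phi v+\psi u$, so after normalising $u$ to $(0,1,\delta_{02}/\delta_{01},\delta_{03}/\delta_{01})$ Lemma \ref{empty_tilt} converts them into emptiness of $\Xi_{u,v}$ --- is the numerical half of that combination. (One caveat you inherit from the paper: computing the radius directly from \eqref{defdelta} shows that emptiness of the semicircle is $\delta_{02}^2\le 2\,\delta_{01}\delta_{12}$, matching items (1)--(3) of Lemma \ref{empty_tilt} but not item (4) as printed; this constant does not affect what follows.)

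The genuine gaps are all in your second paragraph, and they are fatal rather than technical. First, ``propagating the sub-object downward in $\beta$'' is unjustified: $\mathcal{B}^{\beta}$ changes with $\beta$, and neither membership of $F,E,G$ in the heart nor exactness of the sequence persists; the paper's persistence results (Theorem \ref{t:stays_in_A}, Corollary \ref{ses_in_A}) concern $\cohab$ along special curves and give nothing here. Second, your assertion that $\Xi_{u,v}$ lies in $\{\beta<\mu(v)\}$ is false in general: $\delta_{01}(u,v)\neq0$ only guarantees the semicircle misses $\Mu_v$; it may sit entirely in $\{\beta>\mu(v)\}$, where it can never be reached from the given sequence. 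Indeed, under your reading of the hypothesis (the sequence exists at \emph{some} $\beta_0<\mu(E)$) no argument can close these gaps, because the statement is then false: take $X=\p3$, $E=\op3(3)\oplus\cali_Z(-3)$ with $Z$ a curve of degree $10$, so $v=\ch(E)=(2,0,-1,k)$ and $Y_v=\emptyset$ by Lemma \ref{N1_no_tilt}; let $F=\op3(3)$, $G=\cali_Z(-3)$. For any $\beta_0<-3$ all three sheaves lie in $\tors{\beta_0}$, so $F\into E\onto G$ is exact in $\mathcal{B}^{\beta_0}$ with $\beta_0<\mu(E)=0$, and every hypothesis holds ($u=(1,3,9/2,9/2)$ and $v$ satisfy Bogomolov, $a=2>0$, $\delta_{01}(u,v)=-6\neq0$, $\delta_{02}/\delta_{01}=5/3\notin\emptyset$); yet $\delta_{02}(u,v)=-10$, $\delta_{12}(u,v)=-3$, so $|\delta_{02}|=10>2\sqrt{18}=2\sqrt{\delta_{01}\delta_{12}}$, and $\Xi_{u,v}$ is the nonempty semicircle of centre $5/3$ and radius $4/3$ contained in $\{\beta>0\}$. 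A correct proof must therefore use a stronger form of the hypothesis --- e.g.\ that the sequence persists for all $\beta<\mu(E)$, or that it witnesses equality of $\nu$-slopes at a point of $\Xi_{u,v}$, as happens for the destabilising sequences of actual $\lambda$-walls to which the paper applies this proposition in Lemma \ref{lambda_nu} --- and your argument never invokes one. Finally, even granting a point of $\Xi_{u,v}$ carrying the sequence, your upgrade step works only when $E$ is $\nu$-semistable there: then a Jordan--H\"older refinement gives a semistable sub-object with semistable quotient of equal slope, and non-intersection of numerical walls (Theorem \ref{inter walls}) forces the resulting actual wall to coincide with $\Xi_{u,v}$, hence to have centre $\delta_{02}/\delta_{01}$. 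When $E$ is $\nu$-unstable at that point, its Harder--Narasimhan destabilisers define walls nested strictly outside $\Xi_{u,v}$ with different centres, so no contradiction with $\delta_{02}/\delta_{01}\notin Y_v$ results; this is precisely the gap you acknowledge, and neither nestedness nor Proposition \ref{sheaf_subobjects} repairs it.
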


For example, if a particular Chern character $v$ admits no actual $\nu$-walls, then any Chern character of a sub-object in $\cohb$ must satisfy those inequalities.

\subsection{$\boldsymbol{\lambda}$-walls}\label{lambdawalls}

Given in a real numerical Chern character $v$, a curve $\Upsilon_{u,v,s}\subset\R^+\times\R$ is called a \emph{numerical $\lambda$-wall} for $v$ if there is a real numerical Chern character $u$ such that
$$ \Upsilon_{u,v,s} := \{ f_{u,v,s}(\alpha,\beta)=0 \},\quad{\rm where} $$
\begin{equation} \label{fuvs}
f_{u,v,s}(\alpha,\beta):=\tau_{u,s}(\alpha,\beta)\rho_v(\alpha,\beta)-\tau_{v,s}(\alpha,\beta)\rho_{u}(\alpha,\beta).
\end{equation}
Note that this coincides with the numerator of the difference $\labs(u)-\labs(v)$, so its vanishing locus is precisely where it changes sign. Comparing with the notation introduced in display \eqref{Delta}, we remark that
$$ f_{u,v,s}(\alpha,\beta)=\Delta_{32}-\alpha^2(s-1/3)\Delta_{21}; $$
thus, in particular, $f_{u,v,1/3}(\alpha,\beta)=\Delta_{32}(\alpha,\beta)$.

More explicitly, using the notation from display \eqref{Delta}
$$ \delta_{ij}=\delta_{ij}(u,v)=\ch_i(u)\ch_j(v)-\ch_j(u)\ch_i(v), $$
we have
\begin{equation}\label{num-walls}
\begin{split}
f_{u,v,s}(\alpha,\beta) = & \;\dfrac{6s+1}{12}\delta_{10}\alpha^4  \\
&+ \left( \dfrac{3s-1}{6}\delta_{10}\beta^2 - \dfrac{3s-1}{3}\delta_{20}\beta + \dfrac{6s+1}{6}\delta_{21} - \dfrac{1}{2}\delta_{30}\right)\alpha^2  \\
&+ \left( \dfrac{1}{12}\delta_{10}\beta^4-\dfrac{1}{3}\delta_{20}\beta^3+\dfrac{1}{2}(\delta_{30}+\delta_{21})\beta^2-\delta_{31}\beta+\delta_{32} \right).
\end{split}
\end{equation}

Numerical $\lambda$-walls that are bounded as subsets of $\HH$ have a simple numerical characterization.

\begin{proposition}\label{bounded walls 1}
Let $v$ be a real numerical Chern character with $v_0\ne0$ and satisfying the Bogomolov--Gieseker inequality. A numerical $\lambda$-wall $\Upsilon_{u,v,s}$ for $v$ is bounded if and only if $\delta_{10}(u,v)\ne0$.
\end{proposition}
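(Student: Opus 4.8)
The plan is to read the boundedness of $\Upsilon_{u,v,s}$ directly off the explicit quartic \eqref{num-walls}, by analysing its leading behaviour as $(\alpha,\beta)\to\infty$ in $\HH$. The key observation is that the coefficient of the top-degree term $\alpha^4$ is $\frac{6s+1}{12}\delta_{10}(u,v)$, and since $s>0$ the factor $6s+1$ never vanishes; thus the $\alpha^4$ term is present precisely when $\delta_{10}(u,v)\neq0$. So the two implications of the biconditional should be separated according to whether this leading term survives.

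First I would prove the ``only if'' direction in its contrapositive form: if $\delta_{10}(u,v)=0$, then $\Upsilon_{u,v,s}$ is unbounded. When $\delta_{10}=0$, the expression \eqref{num-walls} loses its $\alpha^4$ term and becomes, in $\alpha^2$, at most quadratic with $\alpha^2$-coefficient $-\frac{3s-1}{3}\delta_{20}\beta+\frac{6s+1}{6}\delta_{21}-\frac12\delta_{30}$ (the $\delta_{10}\beta^2$ piece having vanished), while the $\beta$-only part degenerates to a cubic $-\frac13\delta_{20}\beta^3+\dots$. For a fixed large $|\beta|$ one can then solve $f_{u,v,s}=0$ for $\alpha^2$ as a rational function of $\beta$ whose numerator grows like $\beta^3$ and whose denominator grows like $\beta$, so $\alpha^2\to\infty$; this produces points of $\Upsilon_{u,v,s}$ escaping to infinity, hence the wall is unbounded. (A degenerate subcase where $\delta_{20}$ also vanishes is handled the same way using the next surviving coefficient; I would note that the case $u\propto v$, where the wall is empty or everything, is excluded by the standing hypothesis $u\not\sim_v u'$ feeding into these statements, or can be dismissed directly via Lemma~\ref{delta_tech}\eqref{upropv}.)

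For the ``if'' direction I would show that $\delta_{10}\neq0$ forces boundedness. Here the dominant term is $\frac{6s+1}{12}\delta_{10}\alpha^4$, and the natural strategy is to bound $\alpha$ in terms of $\beta$ and then bound $\beta$ as well. Writing $f_{u,v,s}$ as a polynomial in $\alpha^2$ with leading coefficient $c\,\delta_{10}$ ($c>0$), the other two coefficients are polynomials in $\beta$ of degrees $2$ and $4$ respectively. The sharper route, which I expect to be cleaner, is to pass to the homogeneous (top-degree-$4$) part of $f_{u,v,s}$ in the variables $(\alpha,\beta)$: its terms are $\frac{6s+1}{12}\delta_{10}\alpha^4+\frac{3s-1}{6}\delta_{10}\alpha^2\beta^2+\frac1{12}\delta_{10}\beta^4$, i.e. $\frac{\delta_{10}}{12}\bigl((6s+1)\alpha^4+2(3s-1)\alpha^2\beta^2+\beta^4\bigr)$. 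A wall can be unbounded only if this quartic form has a real zero along a ray into $\HH$ (a nonzero $(\alpha,\beta)$ with $\alpha\ge0$); so I would check that the binary quartic $(6s+1)t^2+2(3s-1)t+1$ in $t=\alpha^2/\beta^2$ is strictly positive for all $t\ge0$ when $s>0$. Its discriminant is $4(3s-1)^2-4(6s+1)=36s^2-36s=36s(s-1)$, which is negative for $0<s<1$ and nonnegative otherwise; when negative the form has no real roots and is positive, and when $s\ge1$ one checks the roots $t$ are negative (their product is $1/(6s+1)>0$ and their sum is $-2(3s-1)/(6s+1)<0$), so again no root with $t\ge0$. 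Hence the degree-$4$ part is definite on the closed upper half plane, the highest-order behaviour dominates along every ray, and $f_{u,v,s}$ cannot vanish outside a compact set; the wall is bounded.

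The main obstacle I anticipate is the ``if'' direction, specifically the step from ``the leading form is nonvanishing on rays'' to ``the full quartic's zero set is compact.'' A form being ray-positive controls $f_{u,v,s}$ only to leading order, and the lower-order terms could in principle still admit an unbounded branch along a curve that is not asymptotically a straight ray (for a quartic this is a genuine worry). I would close this gap by making the ray estimate quantitative: since the degree-$4$ part $P_4(\alpha,\beta)$ is a positive-definite binary form on $\{(\alpha,\beta):\alpha\ge0\}$ times the nonzero constant $\tfrac{\delta_{10}}{12}$, there is $c_0>0$ with $|P_4(\alpha,\beta)|\ge c_0(\alpha^2+\beta^2)^2$ on that region, while the remaining terms of $f_{u,v,s}$ are bounded by $C(\alpha^2+\beta^2)^{3/2}$ in absolute value; for $\alpha^2+\beta^2$ large the quartic term strictly dominates and $f_{u,v,s}\ne0$. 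This uniform lower bound is exactly what upgrades ray-positivity to compactness of the vanishing locus, completing the proof.
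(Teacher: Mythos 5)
Your boundedness direction ($\delta_{10}\neq0$ implies bounded) is correct and is essentially the paper's own argument: both isolate the degree-four homogeneous part of \eqref{num-walls}, namely $\tfrac{\delta_{10}}{12}\bigl((6s+1)\alpha^4+2(3s-1)\alpha^2\beta^2+\beta^4\bigr)$, and show it has no nontrivial real zeros; your quantitative step upgrading ray-positivity to compactness of the zero locus is a legitimate (and more careful) substitute for the paper's projective-closure argument. One arithmetic slip: the discriminant of $(6s+1)t^2+2(3s-1)t+1$ is $4\bigl((3s-1)^2-(6s+1)\bigr)=12s(3s-4)$, not $36s(s-1)$; this is harmless, since your sum/product-of-roots argument disposes of the case of real roots wherever the sign change actually occurs.

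The unboundedness direction, however, has a genuine gap: the asymptotic you rely on has the wrong sign when $s>1/3$. Writing $f_{u,v,s}(\alpha,\beta)=a_2(\beta)\alpha^2+a_0(\beta)$ when $\delta_{10}=0$, with $a_2(\beta)=-\tfrac{3s-1}{3}\delta_{20}\beta+\tfrac{6s+1}{6}\delta_{21}-\tfrac12\delta_{30}$ and $a_0(\beta)=-\tfrac13\delta_{20}\beta^3+\cdots$, one finds for $s\neq1/3$ and $|\beta|\to\infty$ that
\[
\alpha^2=-\frac{a_0(\beta)}{a_2(\beta)}=-\frac{\beta^2}{3s-1}+O(|\beta|).
\]
For $s>1/3$ this is negative, so there are \emph{no} points of $\Upsilon_{u,v,s}$ with $|\beta|$ large; in fact Lemma \ref{bounded walls 2} of the paper shows the wall is then contained in a horizontal strip $\R^+\times[-\beta_{\rm max},\beta_{\rm max}]$, directly contradicting your claim that wall points escape to infinity in $\beta$. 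The wall is still unbounded for $s>1/3$, but in the $\alpha$-direction: it is asymptotic to a vertical line $\{\beta=\beta'\}$ at a root of $a_2$ (Lemma \ref{verticalwalls}). The clean fix, uniform in $s$, is to exchange the roles of the variables: for each fixed $\alpha>0$, $f_{u,v,s}(\alpha,\cdot)$ is a cubic polynomial in $\beta$ with leading coefficient $-\delta_{20}/3\neq0$, hence has a real root, so $\Upsilon_{u,v,s}$ meets every horizontal line and is therefore unbounded for every $s>0$ (this is exactly the observation the paper records immediately after the proposition). Finally, the degenerate subcase $\delta_{10}=\delta_{20}=0$ is not handled "the same way with the next coefficient": there, assuming $u\not\propto v$ and using $v_0\neq0$, one has $u\sim_v(0,0,0,1)$, so $f_{u,v,s}$ is a nonzero constant multiple of $\rho_v$ and the wall coincides with the hyperbola $\Theta_v$, which is unbounded.
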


Notice that if $\Upsilon_{u,v,s}$ is bounded for some value of the parameter $s$, then it is bounded for every value of $s$.

\begin{proof}
Assuming $\delta_{10}(u,v)\ne0$, turn $f_{u,v,s}(\alpha,\beta)$ into a homogeneous polynomial
of degree 4 by adding a new variable $\gamma$; let us denote this new function by
$\overline{f_{u,v,s}}(\alpha,\beta,\gamma)$. The set
$\{\overline{f_{u,v,s}}(\alpha,\beta,\gamma)=0\}$ can be regarded as a hypersurface in $\R\p2$,
and $\Upsilon_{u,v,s}$ is bounded if and only if
$\{\overline{f_{u,v,s}}(\alpha,\beta,\gamma)=0\}$ does not intersect the line at infinity
$\{\gamma=0\}$. 

Comparing with equation \eqref{num-walls}, note that
$$ \overline{f_{u,v,s}}(\alpha,\beta,0)=
\dfrac{\delta_{10}}{12}\left( 6s\alpha^2(\alpha^2+\beta^2)+(\alpha^2-\beta^2)^2\right). $$
Since the right-hand side vanishes if and only if $\alpha=\beta=0$, we conclude that $\{\overline{f_{u,v,s}}(\alpha,\beta,\gamma)=0\}$ does not intersect the line at infinity $\{\gamma=0\}$; hence $\Upsilon_{u,v,s}$ is bounded.

Next, if $\delta_{10}(u,v)=0$ and $\delta_{20}(u,v)\ne0$, then the homogenisation of $f_{u,v,s}(\alpha,\beta)$ yields a polynomial $\overline{f_{u,v,s}}(\alpha,\beta,\gamma)$ of degree 3. Comparing with equation \eqref{num-walls}, note that
\begin{equation}\label{eq:f_unbounded}
\overline{f_{u,v,s}}(\alpha,\beta,0)=
-\dfrac{\delta_{20}}{3}\left( (3s-1)\alpha^2 + \beta^2\right)\beta, 
\end{equation}
so $\overline{f_{u,v,s}}(1,0,0)=0$; thus $\Upsilon_{u,v,s}$ is not bounded.

Finally, if $\delta_{10}(u,v)=\delta_{20}(u,v)=0$, then $f_{u,v,s}(\alpha,\beta)$ becomes a constant multiple of $\rho_u(\alpha,\beta)=\rho_v(\alpha,\beta)$; hence the curve $\Upsilon_{u,v,s}$ coincides with the hyperbola $\Theta_v$, which is not bounded.
\end{proof}

Next, we observe an interesting relation between a numerical $\lambda$-wall $\Upsilon_{u,v,s}$ and its associated numerical $\nu$-wall $\Xi_{u,v}$: if one intersects the hyperbola $\Theta_v$, then so does the other.

\begin{lemma}\label{l-l}
Let $v$ be a real numerical Chern character satisfying the Bogomolov--Gieseker inequality and $v_0\ne0$. If a numerical $\lambda$-wall $\,\Upsilon_{u,v,s}$ intersects $\Theta_v$ at a point $(\alpha_0,\beta_0)$ for which an object $A$ with $\ch(A)=v$ exists in $\mathcal{A}^{\alpha_0,\beta_0}$, then the associated numerical $\nu$-wall $\,\Xi_{u,v}$ also passes through $(\alpha_0,\beta_0)$.

Conversely, if a numerical $\nu$-wall $\,\Xi_{u,v}$ intersects $\Theta_v$ at a point $(\alpha_0,\beta_0)$, then the associated numerical $\lambda$-wall $\Upsilon_{u,v,s}$ also passes through $(\alpha_0,\beta_0)$.
\end{lemma}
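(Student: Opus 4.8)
The plan is to work directly with the two defining functions and to exploit that both degenerate in a parallel way along $\Theta_v$. Recall from \eqref{defdelta} that the numerical $\nu$-wall is cut out by $t_{u,v}=\rho_u\ch_1^\beta(v)-\rho_v\ch_1^\beta(u)$, while by \eqref{fuvs} the numerical $\lambda$-wall is cut out by $f_{u,v,s}=\tau_{u,s}\rho_v-\tau_{v,s}\rho_u$. The key observation is that on $\Theta_v$ we have $\rho_v(\alpha,\beta)=0$, so at any point $(\alpha_0,\beta_0)\in\Theta_v$ these collapse to
\[ t_{u,v}(\alpha_0,\beta_0)=\rho_u(\alpha_0,\beta_0)\,\ch_1^{\beta_0}(v),\qquad f_{u,v,s}(\alpha_0,\beta_0)=-\,\tau_{v,s}(\alpha_0,\beta_0)\,\rho_u(\alpha_0,\beta_0). \]
Both expressions carry the common factor $\rho_u(\alpha_0,\beta_0)$, so the whole lemma reduces to showing that membership in either wall forces this common factor to vanish; the two directions then differ only in how one rules out the complementary factor.

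I would argue the converse direction first, as it is cleaner. Suppose $(\alpha_0,\beta_0)\in\Xi_{u,v}\cap\Theta_v$, so $\rho_u(\alpha_0,\beta_0)\,\ch_1^{\beta_0}(v)=0$. The point here is the geometric fact that $\ch_1^\beta(v)=v_1-\beta v_0$ cannot vanish anywhere on $\Theta_v\cap\HH$: by \eqref{hyperbola} a point of $\Theta_v$ with $\beta=\mu(v)$ would force $-\alpha^2=Q^{\rm tilt}(v)/v_0^2\geq0$, which is impossible for $\alpha_0>0$. Hence $\ch_1^{\beta_0}(v)\neq0$, so $\rho_u(\alpha_0,\beta_0)=0$, and then $f_{u,v,s}(\alpha_0,\beta_0)=-\tau_{v,s}\rho_u=0$, i.e. $(\alpha_0,\beta_0)\in\Upsilon_{u,v,s}$.

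For the forward direction, suppose $(\alpha_0,\beta_0)\in\Upsilon_{u,v,s}\cap\Theta_v$, so $\tau_{v,s}(\alpha_0,\beta_0)\,\rho_u(\alpha_0,\beta_0)=0$. Here the role of the hypothesized object $A\in\cohab$ with $\ch(A)=v$ becomes essential: since $(\alpha_0,\beta_0)\in\Theta_v$ gives $\rho_A(\alpha_0,\beta_0)=\rho_v(\alpha_0,\beta_0)=0$, Proposition \ref{proprank2} forces $\tau_{v,s}(\alpha_0,\beta_0)=\tau_{A,s}(\alpha_0,\beta_0)>0$ strictly. Therefore the vanishing must come from the other factor, $\rho_u(\alpha_0,\beta_0)=0$, whence $t_{u,v}(\alpha_0,\beta_0)=\rho_u\ch_1^{\beta_0}(v)=0$ and $(\alpha_0,\beta_0)\in\Xi_{u,v}$.

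The main obstacle, and the reason the forward direction needs the extra hypothesis while the converse does not, is precisely this last step. Purely numerically, $\Upsilon_{u,v,s}$ could meet $\Theta_v$ at a point where $\tau_{v,s}=0=\rho_v$, i.e. a point of $\Theta_v\cap\Gamma_{v,s}$, without $\rho_u$ vanishing and hence without $\Xi_{u,v}$ passing through it; the assumption that an object of class $v$ lives in the heart at $(\alpha_0,\beta_0)$ is exactly the device that excludes such $\Gamma_{v,s}$-intersection points, through the strict positivity clause of Proposition \ref{proprank2}. I would close by recording the conceptual content: since $\rho_u$ is the numerator of the slope $\nuab(u)$, the common condition $\rho_u(\alpha_0,\beta_0)=0$ says exactly that the point also lies on $\Theta_u$, so the lemma asserts that $\Theta_u$, $\Theta_v$, $\Xi_{u,v}$ and $\Upsilon_{u,v,s}$ all meet at the same point.
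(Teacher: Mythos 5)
Your proof is correct and takes essentially the same route as the paper: on $\Theta_v$ both reduce to the factorizations $f_{u,v,s}=-\tau_{v,s}\rho_u$ and $t_{u,v}=\rho_u\ch_1^{\beta_0}(v)$, the forward direction invoking Proposition \ref{proprank2} through the object $A$ to get $\tau_{v,s}(\alpha_0,\beta_0)>0$ and hence $\rho_u(\alpha_0,\beta_0)=0$, and the converse using $\ch_1^{\beta_0}(v)\neq0$. The only addition is that you explicitly justify $\ch_1^{\beta_0}(v)\neq0$ on $\Theta_v$ via the hyperbola equation \eqref{hyperbola} and the Bogomolov inequality, a point the paper's proof asserts without comment.
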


\begin{proof}
Since $\rho_v(\alpha_0,\beta_0)=0$, we have $\rho_u(\alpha_0,\beta_0)\tau_{v,s}(\alpha_0,\beta_0)=0$. However, Proposition~\ref{proprank2} guarantees that $\tau_{v,s}(\alpha_0,\beta_0)>0$ for every $s$, thus $\rho_u(\alpha_0,\beta_0)=0$. It follows that $t_{u,v}(\alpha_0,\beta_0)=0$, as desired.

For the second statement, note that $\Theta_v$ never intersects $\Mu_v$ because $v$ is assumed to satisfy the Bogomolov--Gieseker inequality and so $\ch_1^{\beta_0}(v)\neq0$.  So, if $t_{u,v}(\alpha_0,\beta_0)=\rho_v(\alpha_0,\beta_0)=0$, then also $\rho_u(\alpha_0,\beta_0)=0$, thus $f_{u,v}(\alpha_0,\beta_0)=0$.
\end{proof}

\begin{lemma}\label{nodal point}
If the curves $\Theta_v$ and $\Gamma_{v,s}$ intersect, then every numerical $\lambda$-wall for $v$ passes through the point of intersection.
\end{lemma}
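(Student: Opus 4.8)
The plan is to exploit the bilinear structure of the defining function $f_{u,v,s}$ directly at the intersection point, making this essentially a one-line substitution. Let $(\alpha_0,\beta_0)$ be a point where $\Theta_v$ and $\Gamma_{v,s}$ meet. By Definition \ref{Gammas} and the definition of $\Theta_v$, this means precisely that
\[
\rho_v(\alpha_0,\beta_0)=0 \qquad\text{and}\qquad \tau_{v,s}(\alpha_0,\beta_0)=0
\]
hold simultaneously.

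First I would recall from \eqref{fuvs} that any numerical $\lambda$-wall for $v$ is cut out by
\[
f_{u,v,s}(\alpha,\beta)=\tau_{u,s}(\alpha,\beta)\,\rho_v(\alpha,\beta)-\tau_{v,s}(\alpha,\beta)\,\rho_u(\alpha,\beta),
\]
for the Chern character $u$ to which the wall corresponds. The key observation is that, for fixed $v$ and $s$, this is a linear combination of the two functions $\rho_v$ and $\tau_{v,s}$, with coefficients $\tau_{u,s}$ and $-\rho_u$ that depend on $u$. Substituting $(\alpha_0,\beta_0)$ and using that both $\rho_v$ and $\tau_{v,s}$ vanish there, each of the two summands vanishes, so $f_{u,v,s}(\alpha_0,\beta_0)=0$ regardless of the choice of $u$. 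Hence $(\alpha_0,\beta_0)$ lies on $\Upsilon_{u,v,s}$ for every $u$, which is exactly the claim.

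I do not expect any real obstacle here: the statement is formal once one notes that the simultaneous vanishing of $\rho_v$ and $\tau_{v,s}$ means the central charge $Z_{\alpha_0,\beta_0,s}(v)$ of \eqref{Z_abs} itself vanishes, since its imaginary and real parts are $\rho_v$ and $-\tau_{v,s}$ respectively. At such a degenerate point the slope $\lambda_{\alpha_0,\beta_0,s}(v)$ is undefined and the numerator $f_{u,v,s}$ of every difference $\lambda_{\alpha,\beta,s}(u)-\lambda_{\alpha,\beta,s}(v)$ necessarily degenerates to zero. The only point worth flagging is that the parameter $s$ appearing in $\Gamma_{v,s}$ must be the same one used to define the walls $\Upsilon_{u,v,s}$, which is the case by hypothesis; this parallels the role played by $\Theta_v$ for $\nu$-walls in Lemma \ref{l-l}.
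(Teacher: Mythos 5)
Your proof is correct and is essentially identical to the paper's own argument: both simply observe that $f_{u,v,s}=\tau_{u,s}\,\rho_v-\tau_{v,s}\,\rho_u$ vanishes at any point where $\rho_v$ and $\tau_{v,s}$ vanish simultaneously, independently of $u$. Your closing remark that the parameter $s$ in $\Gamma_{v,s}$ must agree with the $s$ defining the walls is a sensible precision (indeed the paper's phrase ``for every $s>0$'' is only valid because the intersection point itself moves with $s$), but it does not change the substance.
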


\begin{proof}
Let $(\alpha_0,\beta_0)$ be the point of intersection, so that $\rho_v(\alpha_0,\beta_0)=\tau_{v,s}(\alpha_0,\beta_0)=0$. By the expression in display \eqref{fuvs}, it follows that $f_{u,v,s}(\alpha_0,\beta_0)=0$ for every real numerical Chern character $u$ and every $s>0$.
\end{proof}

Let us now analyse unbounded numerical $\lambda$-walls. Note that if $\delta_{10}(u,v)=0$, then the expression in equation \eqref{num-walls} reduces to a cubic polynomial in $\beta$, with coefficients depending on $\alpha^2$, so it always has a real root; in other words, unbounded numerical $\lambda$-walls intersect every horizontal line. Furthermore, unbounded numerical $\lambda$-walls with different values of the parameter $s$ can look very different; see Figure~\ref{vert walls fig}. In particular, an unbounded numerical $\lambda$-wall might not be connected, and one of its connected components may be bounded; see Figure~\ref{un-bounded}. The main properties of unbounded walls are described in the following series of lemmas. First, we consider the case $s>1/3$.

\begin{figure}[ht] \centering
\includegraphics{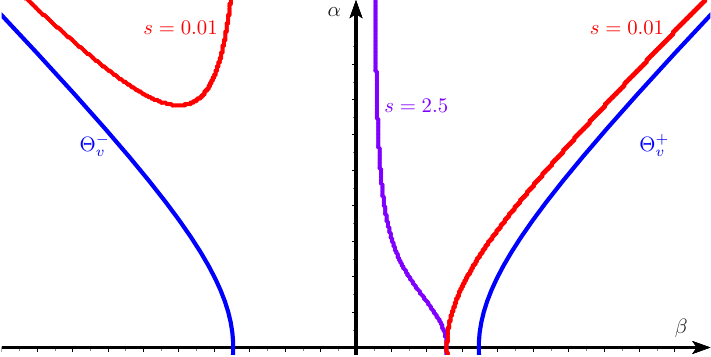}
\caption{The numerical $\lambda$-wall $\Upsilon_{u,v,s}$ for $v=(2,0,-3,0)$ and $u=(0,0,-1,1)$ is plotted for $s=0.01$ (red curve), where the wall has two separate connected components, and for $s=2.5$ (purple curve), where the wall has a single connected component. Both curves are asymptotic to the vertical line $\Mu_v=\{\beta=\mu(v)=0\}$, while the red curve is also asymptotic to both branches of $\Theta_v$ (in blue).} \label{vert walls fig}
\end{figure}

\begin{lemma} \label{bounded walls 2}
Let $s>1/3$. If $u$ and $v$ are real numerical Chern characters satisfying $\delta_{10}(u,v)=0$ and $\delta_{20}(u,v)\ne0$, then there exists a $\beta_{\rm max}>0$ $($depending on $u$, $v$ and $s)$ such that $\Upsilon_{u,v,s}\subset \R^+\times[-\beta_{\rm max},\beta_{\rm max}]$.
\end{lemma}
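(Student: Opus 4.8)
The plan is to exploit the explicit expansion in \eqref{num-walls}. Setting $\delta_{10}(u,v)=0$ collapses the quartic-in-$\alpha$ structure: the $\alpha^4$ term vanishes and $f_{u,v,s}$ becomes \emph{linear} in $\alpha^2$. Concretely I would write
\[
f_{u,v,s}(\alpha,\beta)=A(\beta)\,\alpha^2+B(\beta),
\]
where
\[
A(\beta)=-\frac{3s-1}{3}\delta_{20}\beta+\frac{6s+1}{6}\delta_{21}-\frac{1}{2}\delta_{30},
\qquad
B(\beta)=-\frac{1}{3}\delta_{20}\beta^3+\frac{1}{2}(\delta_{30}+\delta_{21})\beta^2-\delta_{31}\beta+\delta_{32}.
\]
Because $s>1/3$ gives $3s-1>0$ and by hypothesis $\delta_{20}=\delta_{20}(u,v)\neq0$, the function $A(\beta)$ is genuinely of degree one with nonzero leading coefficient $-\tfrac{3s-1}{3}\delta_{20}$, while $B(\beta)$ is a genuine cubic with leading coefficient $-\tfrac13\delta_{20}$.

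Next I would reduce boundedness in $\beta$ to a sign analysis. A point $(\alpha,\beta)\in\Upsilon_{u,v,s}$ with $\alpha>0$ forces $A(\beta)\alpha^2=-B(\beta)$, so whenever $A(\beta)\neq0$ we must have $\alpha^2=-B(\beta)/A(\beta)>0$. The key computation is the leading behaviour of the product: as $\beta\to\pm\infty$,
\[
A(\beta)\,B(\beta)=\frac{3s-1}{9}\,\delta_{20}^2\,\beta^4+O(\beta^3),
\]
and since $3s-1>0$ and $\delta_{20}^2>0$ the leading coefficient is strictly positive. Hence there is $\beta_{\max}>0$ with $A(\beta)B(\beta)>0$ for all $|\beta|>\beta_{\max}$. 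For such $\beta$ we have $A(\beta)\neq0$ and $A(\beta),B(\beta)$ share a sign, so $-B(\beta)/A(\beta)<0$; thus no $\alpha>0$ solves $f_{u,v,s}(\alpha,\beta)=0$. This yields exactly $\Upsilon_{u,v,s}\subset\R^+\times[-\beta_{\max},\beta_{\max}]$, as claimed. (One verifies that the finitely many $\beta$ with $|\beta|\le\beta_{\max}$ cause no trouble, since the containment only asserts an upper bound on $|\beta|$; the wall is permitted to be unbounded in the $\alpha$-direction.)

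This argument is essentially a direct computation rather than a deep obstacle, but the one point demanding care — and the place where the hypothesis genuinely enters — is the sign of the leading coefficient of $A(\beta)B(\beta)$. It is precisely $s>1/3$ that makes $3s-1$ positive and forces $\alpha^2<0$ at large $|\beta|$; for $s\le1/3$ the sign degenerates and the conclusion fails, consistent with the unbounded vertical behaviour illustrated in Figure \ref{vert walls fig}. I would therefore foreground the role of $s>1/3$ and of $\delta_{20}\neq0$ (which guarantees both $A$ and $B$ have the expected degrees) as the substantive content, the rest being the bookkeeping above.
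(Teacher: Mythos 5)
Your proposal is correct and follows essentially the same route as the paper: the hypothesis $\delta_{10}=0$ reduces \eqref{num-walls} to an expression linear in $\alpha^2$, and for $s>1/3$ the linear coefficient (degree one in $\beta$, leading term proportional to $(3s-1)\delta_{20}$) and the constant term (cubic in $\beta$, leading term proportional to $\delta_{20}$) have the same sign once $|\beta|$ is large, forcing $\alpha^2<0$ and hence excluding wall points there. Your product formulation $A(\beta)B(\beta)=\tfrac{3s-1}{9}\delta_{20}^2\beta^4+O(\beta^3)$ is just a slightly more explicit packaging of the paper's "both coefficients have the same sign" step, with the minor virtue of handling the sign of $\delta_{20}$ uniformly.
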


\begin{proof}
If $\delta_{10}=0$, then equation \eqref{num-walls} reduces, after dividing by $\delta_{20}$, to
\begin{equation}\label{nw-alpha2}
\underbrace{\left( (s-1/3)\beta+\cdots \right)}_{a_2}\alpha^2 + \underbrace{\left( \beta^3/3 +\cdots \right)}_{a_0} = 0
\end{equation}
If $s>1/3$ and either $\beta\ll0$ or $\beta\gg0$, then both coefficients $a_2$ and $a_0$ in equation \eqref{nw-alpha2} have the same sign; hence equation \eqref{nw-alpha2} does not admit any solutions. 
\end{proof}

\begin{lemma}\label{verticalwalls}
Let $s\ne1/3$. If $u$ and $v$ are real numerical Chern characters satisfying $\delta_{10}(u,v)=0$ and $\delta_{20}(u,v)\ne0$, then one of the connected components of $\,\Upsilon_{u,v,s}$ is asymptotic to the vertical line $\{\beta=\beta'\}$, where
$$ \beta' := \dfrac{3}{(6s-2)\delta_{20}(u,v)}\left( \dfrac{6s+1}{3}\delta_{21}(u,v) - \delta_{30}(u,v) \right). $$
\end{lemma}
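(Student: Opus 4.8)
The plan is to exploit the fact that, once $\delta_{10}(u,v)=0$, the defining function $f_{u,v,s}$ from \eqref{num-walls} becomes \emph{linear} in $\alpha^2$, so that the wall can be presented as the graph of $\alpha^2$ as a rational function of $\beta$, and the asserted vertical asymptote appears precisely at the pole of this function. First I would specialize \eqref{num-walls} to $\delta_{10}=0$ and write
\[
f_{u,v,s}(\alpha,\beta)=a_2(\beta)\,\alpha^2+a_0(\beta),
\]
where
\[
a_2(\beta)=-\dfrac{3s-1}{3}\delta_{20}\beta+\dfrac{6s+1}{6}\delta_{21}-\dfrac{1}{2}\delta_{30}
\quad\text{and}\quad
a_0(\beta)=-\dfrac{1}{3}\delta_{20}\beta^3+\dfrac{1}{2}(\delta_{30}+\delta_{21})\beta^2-\delta_{31}\beta+\delta_{32}.
\]
Since $s\neq1/3$ and $\delta_{20}\neq0$, the leading coefficient $-\tfrac{3s-1}{3}\delta_{20}$ of the linear polynomial $a_2$ is nonzero, so $a_2$ has a unique real root; solving $a_2(\beta')=0$ and clearing denominators gives exactly the value $\beta'$ in the statement.

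Next, for $\beta\neq\beta'$ the equation $f_{u,v,s}=0$ is equivalent to $\alpha^2=-a_0(\beta)/a_2(\beta)$, so, away from $\beta=\beta'$, the wall $\Upsilon_{u,v,s}$ is the graph $\alpha=\sqrt{-a_0(\beta)/a_2(\beta)}$ over the set of $\beta$ where this quantity is positive. In the nondegenerate case $a_0(\beta')\neq0$, near $\beta'$ the numerator $-a_0(\beta)$ is continuous and bounded away from $0$, while $a_2(\beta)=k(\beta-\beta')$ with $k=-\tfrac{3s-1}{3}\delta_{20}$ changes sign through $0$; hence on exactly one of the two one-sided intervals abutting $\beta'$ the ratio $-a_0/a_2$ is positive and tends to $+\infty$ as $\beta\to\beta'$. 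This produces an arc of $\Upsilon_{u,v,s}$ along which $\alpha\to+\infty$ and $\beta\to\beta'$, i.e.\ a connected component asymptotic to the vertical line $\{\beta=\beta'\}$, as claimed.

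The only genuine subtlety, and the step I expect to need the most care, is the degenerate possibility $a_0(\beta')=0$. In that case $\beta'$ is a common root of $a_2$ and $a_0$, so $f_{u,v,s}(\alpha,\beta')=0$ for every $\alpha$ and the whole vertical line $\{\beta=\beta'\}$ already lies on $\Upsilon_{u,v,s}$; the conclusion then holds trivially with this line as the relevant component. I would dispatch this case by factoring $(\beta-\beta')$ out of $a_0$ and noting that the ratio $-a_0/a_2$ then extends continuously across $\beta'$, so the pole is replaced by the embedded line and no other candidate for a vertical asymptote is introduced. Finally I would remark that this is consistent with Lemma \ref{bounded walls 2}: for $s>1/3$ this vertical asymptote is the \emph{only} route to infinity, whereas for $s<1/3$ the same rational expression also yields $\alpha^2\sim\beta^2/(1-3s)$ as $\beta\to\pm\infty$, accounting for the additional unbounded hyperbola-like branches visible in Figure \ref{vert walls fig}.
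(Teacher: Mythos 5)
Your proof is correct and takes essentially the same route as the paper: once $\delta_{10}(u,v)=0$, solve $f_{u,v,s}=0$ for $\alpha^2$ as a rational function of $\beta$ whose denominator $a_2(\beta)$ is linear with unique root $\beta'$, so $\alpha\to\infty$ as $\beta\to\beta'$. You are in fact somewhat more careful than the paper's two-line argument, since you check that $-a_0/a_2$ is positive on one side of $\beta'$ (so the branch is real) and you dispose of the degenerate case $a_0(\beta')=0$, where the vertical line itself lies on the wall; the paper leaves both points implicit.
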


\begin{proof}
If $s\ne1/3$, then the equation $f_{u,v,s}(\alpha,\beta)=0$ can be rewritten in the following way:
$$ \alpha^2 = \dfrac{\dfrac{1}{3}\delta_{20}\beta^3-\dfrac{1}{2}(\delta_{30}+\delta_{21})\beta^2+\delta_{31}\beta-\delta_{32}}{-\dfrac{3s-1}{3}\delta_{20}\beta + \dfrac{6s+1}{6}\delta_{21} - \dfrac{1}{2}\delta_{30}}. $$
Note that the denominator vanishes at the value $\beta=\beta'$ given above; therefore, $\alpha$ goes to infinity as $\beta$ approaches~$\beta'$. 
\end{proof}

\begin{lemma} \label{bounded walls 3}
Let $s\le1/3$. If $u$ and $v$ are numerical Chern characters satisfying $\delta_{10}(u,v)=0$ and $\delta_{20}(u,v)\ne0$, then there exists an $\alpha_{\rm max}>0$ $($depending on $u$, $v$ and $s)$ such that
$$ \Upsilon_{u,v,s}\cap\{(\alpha,\beta)\in\HH ~|~ \alpha>\alpha_{\rm max}\}\subset \{(\alpha,\beta)\in\HH ~|~ \rho_v(\alpha,\beta)<0\}. $$
\end{lemma}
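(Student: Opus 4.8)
The plan is to reduce the statement to the single claim that $\rho_v\to-\infty$ as one travels out to $\alpha=\infty$ along the wall, since this immediately produces a finite $\alpha_{\rm max}$ beyond which $\rho_v<0$ on $\Upsilon_{u,v,s}$. Throughout I use the standing assumption $v_0>0$ (this is genuinely needed: for $v_0<0$ the large-$\alpha$ region is $\{\rho_v>0\}$, so the roles of $R^0_v$ and its complement are reversed and the statement as written would fail). Since $\delta_{10}(u,v)=0$, equation \eqref{num-walls} presents the wall as the zero locus of
\[ f_{u,v,s}(\alpha,\beta)=a_2(\beta)\,\alpha^2+a_0(\beta), \]
where $a_2(\beta)=-\tfrac{3s-1}{3}\delta_{20}\beta+\tfrac{6s+1}{6}\delta_{21}-\tfrac12\delta_{30}$ (of degree $1$ in $\beta$ when $s\neq1/3$, constant when $s=1/3$) and $a_0(\beta)=-\tfrac13\delta_{20}\beta^3+\tfrac12(\delta_{30}+\delta_{21})\beta^2-\delta_{31}\beta+\delta_{32}$ is cubic with leading coefficient $-\tfrac13\delta_{20}\neq0$.

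First I would dispose of the case in which $\beta$ stays bounded while $\alpha\to\infty$ along the wall: here $\rho_v(\alpha,\beta)=\tfrac{v_0}{2}(\beta^2-\alpha^2)-v_1\beta+v_2\to-\infty$ directly from $v_0>0$, with no use of the wall equation. It then remains to control wall points on which $|\beta|\to\infty$. On such points $a_2(\beta)\neq0$, so the wall equation solves to $\alpha^2=-a_0(\beta)/a_2(\beta)$, and substituting into $\rho_v$ gives
\[ \rho_v=\tfrac{v_0}{2}\beta^2-v_1\beta+v_2+\tfrac{v_0}{2}\,\frac{a_0(\beta)}{a_2(\beta)}. \]
When $s=1/3$ with $a_2$ a nonzero constant, $a_0/a_2$ grows cubically and dominates, and the constraint $\alpha^2>0$ forces $a_0/a_2<0$, so $\rho_v\sim-\tfrac{v_0}{2}\alpha^2\to-\infty$; if instead $a_2\equiv0$ (i.e. $\delta_{21}=\delta_{30}$) the wall equation degenerates to $a_0(\beta)=0$, confining $\beta$ to a finite set, so this sub-case cannot have $|\beta|\to\infty$ and is already covered above. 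When $s<1/3$ both $\tfrac{v_0}{2}\beta^2$ and $\tfrac{v_0}{2}a_0/a_2$ are quadratic in $\beta$; since $a_0/a_2\sim-\beta^2/(1-3s)$, the surviving coefficient is $\tfrac{v_0}{2}\bigl(1-\tfrac{1}{1-3s}\bigr)=\tfrac{v_0}{2}\cdot\tfrac{-3s}{1-3s}$, strictly negative for $0<s<1/3$, whence $\rho_v\to-\infty$ again.

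The heart of the argument, and the step most in need of care, is exactly this cancellation in the range $s<1/3$: the dominant $\beta^2$ contributions of $\tfrac{v_0}{2}\beta^2$ and of $\tfrac{v_0}{2}a_0/a_2$ cancel to leading order, so the sign of $\rho_v$ cannot be read off from either term alone, and one must verify that the residual coefficient $\tfrac{-3s}{1-3s}$ is negative — which is precisely where the hypothesis $s\le1/3$ (together with $s>0$) enters. Once the limiting behaviour is established in every case, a routine contradiction argument yields the uniform threshold: were there wall points with arbitrarily large $\alpha$ and $\rho_v\ge0$, the dichotomy above (bounded versus unbounded $\beta$) would force $\rho_v\to-\infty$ along them, a contradiction, so some $\alpha_{\rm max}$ works.
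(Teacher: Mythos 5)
Your proof is correct and takes essentially the same approach as the paper's: with $\delta_{10}(u,v)=0$ the wall equation becomes $a_2(\beta)\alpha^2+a_0(\beta)=0$, and both arguments proceed by asymptotic analysis of this equation with the same case split ($s<1/3$; $s=1/3$ with $\delta_{21}\neq\delta_{30}$; the degenerate vertical-line case $\delta_{21}=\delta_{30}$), your residual coefficient $\rho_v\sim\tfrac{v_0}{2}\cdot\tfrac{-3s}{1-3s}\,\beta^2$ being exactly the analytic form of the paper's geometric observation that the wall's asymptotic slope $\sqrt{1-3s}$ is strictly smaller than the slope $1$ of the asymptotes of $\Theta_v$. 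If anything yours is slightly more careful: your bounded-$\beta$ case catches the wall branch asymptotic to the vertical line $\beta=\beta'$ (which exists for $s<1/3$ by Lemma \ref{verticalwalls} but is not mentioned in the paper's two-line treatment of that case), and you make explicit the hypothesis $v_0>0$, which the paper leaves implicit although the conclusion $\rho_v<0$ genuinely requires it.
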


\begin{proof}
If $s<1/3$, then equation \eqref{nw-alpha2} does admit solutions, which are asymptotically of the form
$$ \alpha\sqrt{1/3-s}=\pm(\beta-\beta') $$
for some $\beta'\in\mathbb{R}$ given explicitly in Lemma~\ref{verticalwalls} above.
In other words, there is an $\oalpha>0$ such that we have $\Upsilon_{u,v,s}\cap\{(\alpha,\beta)\in\HH ~|~ \alpha>\oalpha\}\subset R^0_v$.

If $s=1/3$ and $\delta_{21}-\delta_{30}\ne0$, then the equation $f_{u,v,s}(\alpha,\beta)=0$ reduces to
$$ \alpha^2 + \left(\dfrac{2}{3}\dfrac{\delta_{20}}{\delta_{21}-\delta_{30}}\beta^3 +\cdots\right)=0. $$
Depending on the sign of $\delta_{20}/(\delta_{21}-\delta_{30})$, such an equation does admit solutions either for $\beta\ll0$ or for $\beta\gg0$; in both cases, $\alpha$ grows like $|\beta|^{3/2}$, so $\Upsilon_{u,v,s}$ will lie in the region $R^0_v$ once $\alpha$ is sufficiently large. The same conclusion holds when $\delta_{30}=\delta_{21}$ since then $f_{u,v,s}(\alpha,\beta)$ becomes just a cubic polynomial on $\beta$, not depending on $\alpha$. 
\end{proof}

\begin{remark}
Note that there are no $\nu$-walls corresponding to unbounded $\lambda$-walls, and so by continuity the unbounded component remains in $R^0_{v}$ as it cannot intersect $\Theta_v$.

However, an unbounded $\lambda$-wall for $v$ may have a bounded connected component contained in $R^-_v$, as pictured in Figure~\ref{un-bounded}.
\end{remark}

\begin{figure}[ht] \centering
\includegraphics{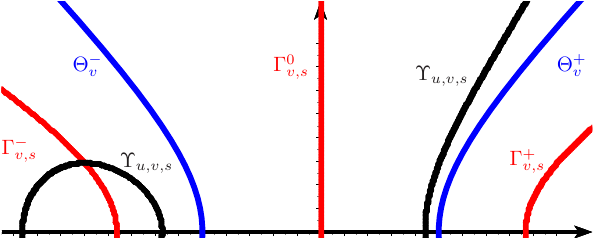}
\caption{The numerical $\lambda$-wall $\Upsilon_{u,v,s}$ for $v=(2,0,-1,0)$ and $u=(1,0,-1,1)$, so that $\delta_{01}(u,v)=0$, is plotted for $s=1/3$ (black curve). It has two connected components:
one is bounded and fully contained in $R_v^-$; the other is unbounded and fully contained in $R_v^{0+}$. The bounded component crosses $\Gamma_{v,s}^-$. The curve $\Theta_v$ and the other connected components of $\Gamma_{v,s}$, namely $\Gamma_{v,s}^0$ and $\Gamma_{v,s}^+$, are also shown, with the former coinciding with the $\alpha$-axis.} \label{un-bounded}
\end{figure}

We now show that intersection between numerical $\lambda$-walls and the curve $\Gamma_{v,s}$ is
independent of $s$. 

\begin{lemma}\label{indep of s}
If a numerical $\lambda$-wall $\Upsilon_{u,v,\overline{s}}$ crosses $\Gamma_{v,\overline{s}}$ for some $\overline{s}\ge0$ away from $\Theta_v$, then $\Upsilon_{u,v,s}$ crosses $\Gamma_{v,s}$ for every $s\ge0$ at points whose $\beta$-coordinate does not depend on $s$.
\end{lemma}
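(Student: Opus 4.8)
The plan is to reduce the crossing condition to an intersection of the two curves $\Gamma_{u,s}$ and $\Gamma_{v,s}$ and then to remove the dependence on $s$ by a change of variable. First I would pin down where $\Upsilon_{u,v,s}$ meets $\Gamma_{v,s}$. Since $f_{u,v,s}=\tau_{u,s}\rho_v-\tau_{v,s}\rho_u$, at any point of $\Gamma_{v,s}$ we have $\tau_{v,s}=0$, hence $f_{u,v,s}=\tau_{u,s}\rho_v$. As the crossing occurs away from $\Theta_v$, we have $\rho_v\neq0$ there, so $f_{u,v,s}=0$ forces $\tau_{u,s}=0$. Thus, away from $\Theta_v$, the locus $\Upsilon_{u,v,s}\cap\Gamma_{v,s}$ coincides with $\Gamma_{u,s}\cap\Gamma_{v,s}$, and it suffices to analyse the latter.

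Next I would trivialise the $s$-dependence. Writing $P_v(\beta):=v_3-v_2\beta+\tfrac12 v_1\beta^2-\tfrac16 v_0\beta^3$, the definition \eqref{tau function} reads $\tau_{v,s}(\alpha,\beta)=P_v(\beta)-(s+\tfrac16)\ch_1^\beta(v)\,\alpha^2$, and likewise for $u$. Introducing the single variable $w:=(s+\tfrac16)\alpha^2$, the equations $\tau_{v,s}=0$ and $\tau_{u,s}=0$ become $P_v(\beta)=\ch_1^\beta(v)\,w$ and $P_u(\beta)=\ch_1^\beta(u)\,w$, both independent of $s$. For each fixed $s\ge0$ the map $\alpha\mapsto w$ is a bijection $\R^+\to\R^+$, because $s+\tfrac16>0$.

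I would then transport the hypothesised crossing to every $s$. The pair of equations above cuts out a locus of points $(w,\beta)$ that does not depend on $s$; eliminating $w$ shows its admissible $\beta$-coordinates are exactly the roots of the $s$-independent polynomial $\ch_1^\beta(v)P_u(\beta)-\ch_1^\beta(u)P_v(\beta)$. The crossing of $\Upsilon_{u,v,\overline s}$ with $\Gamma_{v,\overline s}$ at a point $(\alpha_0,\beta_0)\in\HH$ yields one such point, with $\beta$-coordinate $\beta_0$ and $w_0:=(\overline s+\tfrac16)\alpha_0^2$. For any $s\ge0$, setting $\alpha:=\sqrt{w_0/(s+\tfrac16)}$ recovers a point $(\alpha,\beta_0)$ at which $\tau_{u,s}(\alpha,\beta_0)=\tau_{v,s}(\alpha,\beta_0)=0$, so it lies on $\Gamma_{u,s}\cap\Gamma_{v,s}\subseteq\Upsilon_{u,v,s}\cap\Gamma_{v,s}$; its $\beta$-coordinate is $\beta_0$ for all $s$, as required.

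The one point needing care is positivity: I must verify $w_0>0$ so that the transported point stays in $\HH$, which holds because the original crossing is a genuine point of the upper half plane and $\overline s+\tfrac16>0$. The only degenerate case is $\ch_1^{\beta_0}(v)=0$, i.e. $\beta_0=\mu(v)$, where $\Gamma^0_{v,s}$ is the vertical line $\Mu_v$; there $\tau_{v,s}=0$ collapses to $P_v(\mu(v))=0$ for all $\alpha$ and $s$, and applying the same substitution to $\tau_{u,s}$ alone fixes $\alpha^2$ while leaving $\beta_0=\mu(v)$ untouched, so the conclusion still holds.
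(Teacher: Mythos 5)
Your proof is correct and takes essentially the same route as the paper: you deduce $\tau_{u,s}=0$ from $\rho_v\neq0$ so that the crossing point lies on $\Gamma_{u,s}\cap\Gamma_{v,s}$, and then rescale $\alpha$ so that $(s+1/6)\alpha^2$ stays constant — your substitution $w=(s+1/6)\alpha^2$ is exactly the paper's choice of $\alpha_s$ satisfying $(s+1/6)\alpha_s^2=(\overline{s}+1/6)\alpha_0^2$. Your explicit treatment of the degenerate case $\beta_0=\mu(v)$ is a minor refinement the paper glosses over (it divides by $\ch_1^{\beta_0}(u)$ and $\ch_1^{\beta_0}(v)$ without comment), but the argument is otherwise identical.
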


\begin{proof}
  Assume  $(\alpha_0,\beta_{0})\in\HH$ satisfies $f_{u,v,\overline{s}}(\alpha_0,\beta_{0})=\tau_{v,\overline{s}}(\alpha_0,\beta_{0})=0$ and  $\rho_{v}(\alpha_0,\beta_{0})\ne0$; in other words, $(\alpha_0,\beta_{0})\in \Upsilon_{u,v,\overline{s}}\cap\Gamma_{v,\overline{s}}$ lies away from the hyperbola $\Theta_v$. It follows that $\tau_{u,\overline{s}}(\alpha_0,\beta_{0})=0$; \textit{i.e.}
  the curve $\Gamma_{u,s}$ also passes through $(\alpha_0,\beta_0)$, so that
$$ \ch_3^{\beta_0}(u) =
\left( \overline{s} + \dfrac{1}{6} \right) \ch_1^{\beta_0}(u) \alpha_0^2. $$
Since $\tau_{v,\overline{s}}(\alpha_0,\beta_0)=0$ as well, we conclude that
\begin{equation}\label{eqn for b_0}
\dfrac{\ch_3^{\beta_0}(u)}{\ch_1^{\beta_0}(u)} = \dfrac{\ch_3^{\beta_0}(v)}{\ch_1^{\beta_0}(v)} = \left(\overline{s}+\dfrac{1}{6}\right)\alpha_0^2. 
\end{equation}
For any $s\ge0$, there is an $\alpha_s$ such that $(s+1/6)\alpha_s^2=(\overline{s}+1/6)\alpha_0^2$; substituting back into equation \eqref{eqn for b_0}, we conclude that $\tau_{u,s}(\alpha_s,\beta_0)=\tau_{v,s}(\alpha_s,\beta_0)=0$, thus $f_{u,v,s}(\alpha_s,\beta_0)=0$, meaning that $(\alpha_s,\beta_0)\in\Upsilon_{u,v,s}\cap\Gamma_{v,s}\cap\Gamma_{u,s}$,
which is independent of the parameter $s\ge0$.
\end{proof}

Given $v\in K_{\rm num}(X)$, we define an equivalence relation in $K_{\rm num}(X)$ as follows:
\begin{equation}\label{v-eqv}
u\sim_v u'\text{ if there exist real numbers }\psi\neq0\text{ and }\phi\text{ such that }u'=\phi v+\psi u.    
\end{equation}
Note that $u\sim_v u'$ implies $\delta_{ij}(u',v)=\psi\delta_{ij}(u,v)$ and thus $\Upsilon_{u,v,s}=\Upsilon_{u',v,s}$. In other words, the set of numerical $\lambda$-walls for a fixed $v\in K_{\rm num}(X)$ only depend on the equivalence class under $\sim_v$.

\begin{lemma}
Suppose $u$ and $v$ are real numerical Chern characters and $v_0\ne0$. Then:
\begin{enumerate}
\item 
$u\sim_v(0,1,\delta_{02}(u,v)/\delta_{01}(u,v),\delta_{03}(u,v)/\delta_{01}(u,v))$ if $\,\delta_{01}(u,v)\neq0$ and
\item $u\sim_v(0,0,1,\delta_{03}(u,v)/\delta_{02}(u,v))$ if $\delta_{01}(u,v)=0$ and $\delta_{02}(u,v)\neq0$.
\end{enumerate}
\end{lemma}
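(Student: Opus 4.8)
The plan is to work directly from the definition \eqref{v-eqv}: to prove $u\sim_v u'$ it suffices to exhibit $\psi\ne0$ and $\phi$ with $u'=\phi v+\psi u$, and since $v_0\ne0$ there is exactly enough freedom to pin down $\phi$ and $\psi$ by matching the leading coordinates and then read off the rest from the definition of the $\delta_{0j}$.

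The first step is to form the canonical representative of $u$ modulo $v$, namely $u-(u_0/v_0)v$, which has vanishing zeroth coordinate. A one-line computation shows its remaining coordinates are $u_j-(u_0/v_0)v_j=(u_jv_0-u_0v_j)/v_0=-\delta_{0j}(u,v)/v_0$ for $j=1,2,3$, so that
\[
u-\frac{u_0}{v_0}\,v = -\frac{1}{v_0}\bigl(0,\delta_{01}(u,v),\delta_{02}(u,v),\delta_{03}(u,v)\bigr).
\]
Since every element of the $\sim_v$-class of $u$ with zeroth coordinate $0$ is of the form $\psi\bigl(u-(u_0/v_0)v\bigr)$ with $\psi\ne0$, the class is represented, up to nonzero scalar, by $(0,\delta_{01},\delta_{02},\delta_{03})$. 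The argument then reduces to rescaling this vector to the requested normal form, which is where the two hypotheses enter.

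For part (1), under $\delta_{01}(u,v)\ne0$ I rescale the displayed representative by $\psi=-v_0/\delta_{01}(u,v)$ so that its second coordinate becomes $1$; this yields exactly $(0,1,\delta_{02}/\delta_{01},\delta_{03}/\delta_{01})$, i.e. the claimed $u'$, and $\psi\ne0$ since $v_0\ne0$. For part (2), the hypothesis $\delta_{01}(u,v)=0$ kills the second coordinate, leaving $(0,0,\delta_{02},\delta_{03})$, and $\delta_{02}(u,v)\ne0$ lets me rescale by $\psi=-v_0/\delta_{02}(u,v)$ to normalize the third coordinate to $1$, giving $(0,0,1,\delta_{03}/\delta_{02})=u'$ with $\psi\ne0$. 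Equivalently, bypassing the quotient viewpoint, one solves the defining equations for $(\phi,\psi)$ directly: in part (1) impose $\phi v_0+\psi u_0=0$ and $\phi v_1+\psi u_1=1$ and verify the remaining coordinates equal $\delta_{0j}/\delta_{01}$; in part (2) impose the homogeneous pair $\phi v_0+\psi u_0=0=\phi v_1+\psi u_1$, whose solvability with $\psi\ne0$ is precisely the hypothesis $\delta_{01}=0$, together with $\phi v_2+\psi u_2=1$.

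There is essentially no obstacle here; the computation is pure linear algebra. The only point requiring any care is to confirm in each case that the rescaling factor $\psi$ is genuinely nonzero, so that the relation obtained is $\sim_v$ rather than a degenerate one, and this is guaranteed by $v_0\ne0$ together with $\delta_{01}\ne0$ in (1) and $\delta_{02}\ne0$ in (2). As a consistency check one may note that the resulting $u'$ satisfies $\delta_{ij}(u',v)=\psi\,\delta_{ij}(u,v)$, in agreement with the identity recorded after \eqref{v-eqv}.
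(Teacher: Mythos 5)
Your proof is correct and amounts to the same argument as the paper's: the paper simply exhibits the scalars $\phi=u_0/\delta_{01}(u,v)$, $\psi=-v_0/\delta_{01}(u,v)$ (respectively $\phi=u_0/\delta_{02}(u,v)$, $\psi=-v_0/\delta_{02}(u,v)$), which are exactly the values your normalization procedure produces. Your derivation via the representative $u-(u_0/v_0)v$ is just a more systematic way of arriving at the same $(\phi,\psi)$.
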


\begin{proof}
If $\delta_{01}(u,v)\ne0$, take $\phi=u_0/\delta_{01}(u,v)$ and $\psi=-v_0/\delta_{01}(u,v)$. If $\delta_{01}(u,v)=0$, take $\phi=u_0/\delta_{02}(u,v)$ and $\psi=-v_0/\delta_{02}(u,v)$.
\end{proof}

It follows that equivalence classes for $\sim_v$ come in three types when $v_0\ne0$, according to Table~\ref{table-eqv} below ($x$ and $y$ are arbitrary rational numbers). In the degenerate case, the numerical $\lambda$-wall $\Upsilon_{u,v,s}$ coincides with $\Theta_v$.

\begin{table}[ht]
\begin{tabular}{||c|c|c||}
\hline
\textbf{Type} & \textbf{Characterization}                       & \textbf{Canonical element} \\ \hline
Bounded       & $\delta_{01}\neq0$                              & (0,1,x,y)                  \\ \hline
Unbounded     & $\delta_{01}=0$, $\delta_{02}\neq0$             & (0,0,1,x)                  \\ \hline
Degenerate    & $\delta_{01}=\delta_{02}=0$, $\delta_{03}\neq0$ & (0,0,0,1)                  \\ \hline
\end{tabular}
\medskip
\caption{Equivalence classes $\sim_v$ when $v_0\ne0$.} \label{table-eqv}
\end{table}

This gives us a quick way to see that various families of numerical $\lambda$-walls do not intersect each other, and it provides a form of Bertram's nested wall theorem.

\begin{theorem}\label{inter walls}
Suppose $v_0\neq0$ and $u\not\sim_v u'$.
\begin{enumerate}
\item\label{inter walls1} The numerical $\nu$-walls corresponding to $u$ and $u'$ do not intersect away from $\Mu_v$.
\item\label{inter walls2} If $\delta_{01}(u,v)=0=\delta_{01}(u',v)$, then the numerical $\lambda$-walls corresponding to $u$ and $u'$ do not intersect. 
\item\label{inter walls3} If $\delta_{01}(u,v)\neq0$ and $\ch_{\le2}(u)=\ch_{\le2}(u')$, then the numerical $\lambda$-walls for $v$ corresponding to $u$ and $u'$ only intersect on $\Theta_v$.
\end{enumerate}
\end{theorem}

\begin{proof}
We prove~\eqref{inter walls3} and leave~\eqref{inter walls1} and \eqref{inter walls2} as similar exercises. We simply observe that the equation for the numerical $\lambda$-wall $\Upsilon_{u,v,s}$ can be written in the following manner: 
\[ u_3\rho_v(\alpha,\beta) = g(\alpha,\beta,v,\ch_{\le2}(u)) \]
for some function $g$. Hence, if $(\alpha,\beta)$ is not on $\Theta_v$ but lies on $\Upsilon_{u,v,s}\cap\Upsilon_{u',v,s}$, then we must have $u_3=\ch_3(u')$.
\end{proof}

We conclude that distinct unbounded numerical $\lambda$-walls never intersect one another. On the other hand, it is interesting to observe that, in contrast with numerical $\nu$-walls, two distinct bounded numerical $\lambda$-walls for $\Upsilon_{u,v,s}$ and $\Upsilon_{u',v,s}$ can intersect both along $\Theta_v$ (if $\ch_{\le2}(u)=\ch_{\le2}(u')$, as illustrated in item~\eqref{inter walls3} of Theorem~\ref{inter walls}), and away from it; see Figure~\ref{cross num walls}. 

\begin{figure}[ht] \centering
\includegraphics{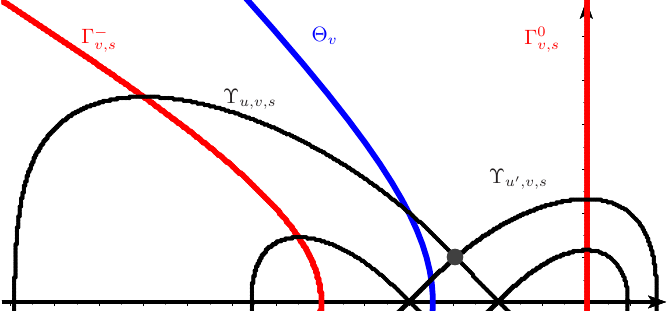}
\caption{The numerical $\lambda$-walls $\Upsilon_{u,v,s}$ and $\Upsilon_{u',v,s}$ for $v=(2,0,-3,0)$ with $u=(1,-1,1/2,-1/6)$ and $u'=(1,-2,2,-4/3)$ are plotted for $s=1/3$; they cross at the point $(\alpha=0.5,\beta=-1.48)$. The curves $\Gamma_{v,1/3}$ (in red) and $\Theta_v$ (in blue) are also shown.} \label{cross num walls}
\end{figure}

Our normal form for $u$ can also be used to give sufficient conditions for when a numerical $\lambda$-wall for $v$ intersects $\Gamma_{v,s}$.

\begin{proposition}
Let $u$ and $v$ be real numerical Chern characters such that $v$ satisfies the Bogomolov--Gieseker inequality, $\ch_0(v)\ne0$ and $\delta_{01}(u,v)\neq0$. If $\delta_{01}(u,v)\delta_{03}(u,v)\ge\delta_{02}(u,v)^2/2$, then the numerical $\lambda$-wall $\Upsilon_{u,v,s}$ intersects $\Gamma_{v,s}^0$. If $\delta_{01}(u,v)\delta_{03}(u,v)\le\delta^2_{02}/2$, then $\Upsilon_{u,v,s}$ intersects both $\Gamma_{v,s}^{+}$ and $\Gamma_{v,s}^-$.
\end{proposition}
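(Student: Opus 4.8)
The plan is to reduce the statement to a question about the relative position of the two plane curves $\Gamma_{u,s}$ and $\Gamma_{v,s}$, and then to settle that question by a continuity/sign argument.

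First I would put $u$ in normal form. Since $\delta_{01}(u,v)\neq0$, the normal-form lemma preceding Table \ref{table-eqv} gives $u\sim_v(0,1,x,y)$ with $x=\delta_{02}(u,v)/\delta_{01}(u,v)$ and $y=\delta_{03}(u,v)/\delta_{01}(u,v)$; as $\Upsilon_{u,v,s}$ and all the $\delta_{ij}(\cdot,v)$ depend only on the $\sim_v$-class up to a common scalar, I may assume $u=(0,1,x,y)$. Substituting into \eqref{eq:alphagamma} shows that $\Gamma_{u,s}$ is the conic
\[ (6s+1)\alpha^2=3\bigl((\beta-x)^2+(2y-x^2)\bigr), \]
and the governing quantity is $2y-x^2=\bigl(2\delta_{01}(u,v)\delta_{03}(u,v)-\delta_{02}(u,v)^2\bigr)/\delta_{01}(u,v)^2$, whose sign is exactly the dichotomy in the statement. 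Thus $\Gamma_{u,s}$ is a single connected upward ``cup'' when $2y-x^2\ge0$, and a pair of left/right branches meeting the $\beta$-axis at $\beta=x\pm\sqrt{x^2-2y}$ when $2y-x^2\le0$.

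Second, I would reduce to intersections of the two $\Gamma$-curves. By \eqref{fuvs} we have $f_{u,v,s}=\tau_{u,s}\rho_v-\tau_{v,s}\rho_u$; on $\Gamma_{v,s}$ the term $\tau_{v,s}$ vanishes, so away from $\Theta_v$ (where $\rho_v\neq0$) a point of $\Gamma_{v,s}$ lies on $\Upsilon_{u,v,s}$ if and only if it also lies on $\Gamma_{u,s}$. Hence it suffices to show $\Gamma_{u,s}\cap\Gamma_{v,s}$ meets $R^0_v$ in the first case and meets both $R^+_v$ and $R^-_v$ in the second. Eliminating $\alpha^2$ between the two defining equations, these points are governed by the cubic $N(\beta):=\ch_3^\beta(u)\ch_1^\beta(v)-\ch_1^\beta(u)\ch_3^\beta(v)$, which has leading coefficient $-\tfrac13 v_0$ and is manifestly $s$-independent, consistent with Lemma \ref{indep of s}; a root $\beta_0$ of $N$ gives an honest intersection precisely when the common value $(6s+1)\alpha_0^2=3\bigl((\beta_0-x)^2+(2y-x^2)\bigr)$ is positive, automatic in the cup case. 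To place the roots I would compare asymptotic directions as $\alpha\to\infty$: the arms of $\Gamma_{u,s}$ have slope $\pm\sqrt{(6s+1)/3}$, strictly between the slope $0$ of $\Gamma^0_{v,s}$ (asymptotic to $\beta=\mu(v)$) and the slopes $\pm\sqrt{6s+1}$ of $\Gamma^\pm_{v,s}$. In the cup case, $\Gamma^0_{v,s}$ runs up to $\alpha=\infty$ between the two arms and so eventually lies in the interior of the cup, while near the $\beta$-axis it lies outside; a continuity argument along $\Gamma^0_{v,s}$ then yields a crossing, automatically in $R^0_v$. In the two-branch case I run the same comparison branch by branch: the right branch of $\Gamma_{u,s}$ and $\Gamma^+_{v,s}$ both ascend to the right but the former strictly shallower, so their orderings at $\alpha=\infty$ and near the $\beta$-axis are reversed, forcing a crossing on $\Gamma^+_{v,s}\subset R^+_v$, and symmetrically on $\Gamma^-_{v,s}$.

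The main obstacle is making the ``low-$\alpha$ end'' step rigorous without invoking the object-existence hypothesis used earlier for the clean three-component picture of $\Gamma_{v,s}$. I expect to replace the pure topology by sign analysis of $N$: evaluating $N$ at $\beta=\mu(v)$ (where $N=-\ch_3^{\mu(v)}(v)$ since $\ch_1^{\mu(v)}(v)=0$), at the feet $\beta=x\pm\sqrt{x^2-2y}$ (where $N=-\ch_3^{\beta}(v)$ since $\ch_1^\beta(u)=1$ and $\ch_3^\beta(u)$ vanishes there), and as $\beta\to\pm\infty$, then combining these with the validity constraint $(\beta_0-x)^2+(2y-x^2)>0$ to trap a root in the required $\beta$-range. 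The secondary subtlety is that region membership is read off from the sign of $\rho_v(\alpha_0,\beta_0)$, which depends on $s$ through $\alpha_0$ even though $\beta_0$ does not; this is resolved by observing that a sufficient condition for $R^0_v$ is $\ch_2^{\beta_0}(v)=\rho_v(0,\beta_0)\le0$ (because $\rho_v(\alpha_0,\beta_0)<\ch_2^{\beta_0}(v)$ when $v_0>0$), i.e. that $\beta_0$ lies between the two $\beta$-intercepts of $\Theta_v$, which is exactly what the root-location analysis is designed to deliver.
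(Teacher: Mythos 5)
Your proposal follows essentially the same route as the paper's proof: put $u$ in the normal form $(0,1,x,y)$, identify $\Gamma_{u,s}$ as the hyperbola $(\beta-x)^2/2-(s+1/6)\alpha^2=x^2/2-y$, observe that the stated inequalities on the $\delta_{ij}$ are exactly the dichotomy of whether this hyperbola meets the $\beta$-axis, and compare its asymptotic slopes $\beta=x\pm\sqrt{2(s+1/6)}\,\alpha$ with the asymptotes of $\Gamma^0_{v,s}$ and $\Gamma^{\pm}_{v,s}$ to force intersections of $\Gamma_{u,s}$ with $\Gamma_{v,s}$, which automatically lie on $\Upsilon_{u,v,s}$. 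The paper's proof stops exactly where yours begins to elaborate --- it simply asserts that the asymptote comparison guarantees the stated intersections --- so your continuity argument and the sign analysis of the cubic $N(\beta)$ are a legitimate filling-in of details the paper leaves implicit, not a different method.
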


\begin{proof}
Writing $u$ in the canonical form $(0,1,x,y)$, observe that $\Gamma_{u,s}$ is given by the hyperbola
\[(\beta-x)^2/2-\alpha^2(s+1/6)=x^2/2-y.\]
The conditions in the statement about $\delta_{ij}$ are equivalent to the hyperbola crossing the $\beta$-axis or not. The hyperbolae are asymptotic to $\beta=x\pm\sqrt{2(s+1/6)}\alpha$, while $\Gamma_{v,s}$ is asymptotic either to $\Mu_v$ or to $\beta=\mu\pm\sqrt{6(s+1/6)}\alpha$.
Then the hypotheses guarantee that $\Gamma_{u,s}$ and $\Gamma_{v,s}$ intersect as stated. 
\end{proof}

\begin{remark}
Observe that an unbounded numerical $\lambda$-wall $\Upsilon_{u,v,s}$ associated to $u\sim_v(0,0,1,x)$ intersects $\Gamma_{v,s}$ (necessarily away from $\Theta_v$ since unbounded $\lambda$-walls for $v$ never cross $\Theta_v$) at a point with coordinates $(\alpha,x)$ for some uniquely determined $\alpha$ so long as $x$ is the $\beta$-coordinate of some point on $\Gamma_{v,s}$. Since the wall $\Upsilon_{u,v,s}$ can only cross $\Theta_v$ at its intersection with $\Gamma_{v,s}$ and its asymptotes are in $R^0_v$, it follows that, for $|x|$ sufficiently large, there is a component of the wall which is bounded and which intersects $\Gamma^{\pm}_{v,s}$.
\end{remark}

\subsection{Actual, pseudo and vanishing $\boldsymbol{\lambda}$-walls}

\begin{definition}
An \emph{actual $\lambda$-wall} $W_{u,v,s}$ is the subset of points $(\alpha,\beta)$ in $\Upsilon_{u,v,s}$ for which there are an object $E\in\cohab$ with $\ch(E)=v$ and a path $\gamma\colon (-1,1)\to \R_{>0}\times \R$ such that $\gamma(0)=(\alpha,\beta)$ and $E\in\cala^{\gamma(t)}$ is $\lambda$-stable for $t<0$ and $\lambda$-unstable for $t>0$. 
\end{definition}

This is a rather stronger condition than the usual one, which asks simply that $A$ be properly $\labs$-semistable at the point $(\alpha,\beta)$. Our definition ensures that actual $\lambda$-walls are $1$-dimensional and avoids degenerate situations like the one described in Remark~\ref{notbog}. In addition, it allows us to state the following. 

\begin{lemma}\label{lem:actual wall}
An actual $\lambda$-wall $W_{u,v,s}$ is a union of segments of arcs within the underlying numerical $\lambda$-wall $\Upsilon_{u,v,s}$ whose endpoints lie on another actual $\lambda$-wall for $u$ or $v$, or on $\Theta_u\cap\Theta_v$, or on the $\beta$-axis.
\end{lemma}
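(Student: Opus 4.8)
The plan is to track a single destabilizing subobject along the numerical wall and to exploit the fact that equality of $\lambda$-slopes is \emph{forced} on $\Upsilon_{u,v,s}$ by construction, so that an actual destabilization can only cease through one of a short list of degenerations. First I would fix a point $(\alpha_0,\beta_0)\in W_{u,v,s}$ and record a witnessing sequence: by definition there is $E\in\cohab$ with $\ch(E)=v$ that is $\labs$-stable just inside the wall and $\labs$-unstable just outside, so at $(\alpha_0,\beta_0)$ the object $E$ is strictly $\labs$-semistable and admits a $\labs$-semistable subobject $F\into E$ in $\cohab$ with $\ch(F)=u$ and $\labs(F)=\labs(E)$ (take a Jordan--H\"older subobject realizing the destabilization). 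Writing $0\to F\to E\to G\to 0$ in $\cohab$, we have $\ch(G)=v-u$.

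Next I would establish persistence together with openness. Since $\labs$ depends only on the Chern character, the equality $\labs(F)=\labs(E)$ holds at \emph{every} point of $\Upsilon_{u,v,s}$, not merely at $(\alpha_0,\beta_0)$. Hence, as long as $E$ and $F$ remain in $\cohab$, the map $F\to E$ remains a monomorphism in $\cohab$, and $F$ remains $\labs$-semistable, the object $E$ stays strictly $\labs$-semistable along the wall, and a transversal path witnesses membership in $W_{u,v,s}$. Each of these is an open condition on $\HH$ by the support property (Proposition \ref{support_property}) together with the resulting local finiteness of the wall-and-chamber decomposition. Consequently $W_{u,v,s}$ is relatively open in $\Upsilon_{u,v,s}$ away from the degeneracy locus, which exhibits it as a union of open arcs of the numerical wall.

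The crux is then to identify the closure points at which the configuration breaks down and to match them with the three allowed endpoint types. Moving along $\Upsilon_{u,v,s}$, the open conditions can fail only as follows: (i) $\alpha\to0$, landing on the $\beta$-axis; (ii) the common denominator $\rho_E=\rho_F$ vanishes, i.e. $\rho_v(\alpha,\beta)=0$, which on the wall forces $\rho_u(\alpha,\beta)=0$ as well, since $f_{u,v,s}=\tau_{u,s}\rho_v-\tau_{v,s}\rho_u$ and $\tau_{v,s}>0$ on $\Theta_v$ by Proposition \ref{proprank2}, so the endpoint lies on $\Theta_u\cap\Theta_v$; (iii) $F$ loses $\labs$-semistability, placing the endpoint on an actual $\lambda$-wall for $u$, or the subobject structure of $E$ changes (a new destabilizer appears, equivalently $G$ is destabilized), placing the endpoint on another actual $\lambda$-wall for $v$. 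In the last case I would use the symmetry $\Upsilon_{v-u,v,s}=\Upsilon_{u,v,s}$, coming from $\delta_{ij}(v-u,v)=-\delta_{ij}(u,v)$, to read quotient-destabilization as a wall for $v$.

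I expect the main obstacle to be controlling the failure of category membership inside case (iii): a priori $E$ or $F$ could leave $\cohab$ along the wall without any slope equality being violated, producing a spurious endpoint. To rule this out I would invoke the variation analysis of Section \ref{ss:variation_along_paths} and Theorem \ref{t:stays_in_A}, by which the boundary of the locus $\{E\in\cohab\}$ consists of curves $\Theta^\pm_w$ attached to the Harder--Narasimhan factors $w$ of the $\cohb$-cohomologies of $E$. Crossing such a curve changes exactly one Harder--Narasimhan factor and hence is itself one of the wall crossings already accounted for in (iii), so no endpoint outside the three listed types can occur, completing the identification.
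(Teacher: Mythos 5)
Your route genuinely differs from the paper's: you argue openness/persistence of the destabilizing sequence along $\Upsilon_{u,v,s}$, whereas the paper first rules out isolated points of $W_{u,v,s}$ by a purely topological argument (an isolated point could be circumvented by a path meeting no actual wall, forcing the moduli spaces on the two sides to agree, a contradiction), and then applies the same perturbation argument at arc endpoints. Parts of your argument are sound; in particular your case (ii), that $\rho_v=0$ on the wall forces $\rho_u=0$ via $f_{u,v,s}=\tau_{u,s}\rho_v-\tau_{v,s}\rho_u$ and Proposition \ref{proprank2}, is exactly the paper's mechanism (compare Lemma \ref{l-l}). The genuine gap is in your resolution of what you yourself flag as the main obstacle: loss of membership in $\cohab$. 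By Theorem \ref{t:stays_in_A} the boundary of $\{E\in\cohab\}$ consists of curves $\Theta^{\pm}_w$ attached to Harder--Narasimhan factors, but these curves are \emph{not} actual $\lambda$-walls: on $\Theta^{+}_w$ one has $f_{w,v,s}=\tau_{w,s}\rho_v$, so a point of $\Theta^{+}_w$ lies on the numerical wall $\Upsilon_{w,v,s}$ only if it also lies on $\Gamma_{w,s}$ or on $\Theta_v$, which is not the generic situation. Hence your assertion that crossing such a curve ``is itself one of the wall crossings already accounted for in (iii)'' is false, and with it the exclusion of spurious endpoints collapses. What is needed instead --- and this is the crux of the paper's proof --- is a slope blow-up argument: if $E$, $F$ or $G$ were to leave $\cohab$ at a closure point $q\notin\Theta_v$ of an arc of $W_{u,v,s}$, the responsible sub- or quotient-object $S$ satisfies $\rho_S\to0$ with $\tau_{S,s}$ positive near $q$ (Proposition \ref{proprank2}), so $\labs(S)\to\pm\infty$; this destabilizes $E$ on \emph{both} sides of $\Upsilon_{u,v,s}$ in a whole neighbourhood of $q$, contradicting the fact that $E$ is $\labs$-stable on one side at points of $W_{u,v,s}$ arbitrarily close to $q$. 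So such an arc must terminate earlier, on the actual wall where $S$ switches on as a destabilizer, and category-exit can only produce an endpoint when $q\in\Theta_v$ --- which, by your own case (ii), is exactly how the $\Theta_u\cap\Theta_v$ endpoints arise.

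A secondary weakness: your openness step rests on ``local finiteness of the wall-and-chamber decomposition'' deduced from the support property. That implication is standard for a continuous family of full stability conditions, but neither the continuity of the family in the space of stability conditions nor local finiteness of actual $\lambda$-walls is established in this paper --- the authors explicitly avoid finiteness hypotheses for $\lambda$-stability (see the discussion following Theorem \ref{a nu sst}) --- and the paper's path-perturbation argument is designed precisely to avoid this input. So even the ``union of arcs'' half of your proof is not self-contained in the paper's framework, whereas the paper's is.
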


\begin{proof}
Note that the wall will have an endpoint if $\alpha\to0$ along the curve; we take that to be an endpoint by convention.\footnote{In fact, we will see in Section 6 than the wall crosses $\alpha=0$ transversely or is singular there.}

First assume $p\in W_{u,v,s}$ is an isolated point; that is, there exists an open neighbourhood $B$ of $p$ on which there are no other actual $\lambda$-walls. In contrast with the path $\gamma$ going through $p$ along which the moduli space $\calm_{\gamma(t),s}(v)$ changes as explained above, one can find a path $\gamma'\colon [0,1]\to B$ such that $\gamma'(0)=\gamma(0)$ and $\gamma'(1)=\gamma(1)$ which does not cross any actual $\lambda$-wall, implying that $\calm_{\gamma(0),s}(v)=\calm_{\gamma(1),s}(v)$, and thus providing a contradiction.

It follows that $W_{u,v,s}$ is a union of segments of arcs within $\Upsilon_{u,v,s}$. The same argument shows that if an endpoint $(\alpha_0,\beta_0)$ of such an arc does not lie on another actual $\lambda$-wall for a short exact sequence $0\to A\to E\to B\to0$ in $\cohab$  corresponding to the wall (so $\ch(E)=v$ and either $\ch(A)=u$ or $\ch(B)=u$), then $E$, $A$ or $B$ must go out of the category. But if $A$ or $B$ go out of the category, it is because they have a sub-object or quotient object which has a $\Theta$-curve going through that point. But then the $\lambda$-slope goes to
infinity, and this would contradict the existence of the wall in a small neighbourhood of the
point. Consequently, the wall can only end on $\Theta_{\ch(E)}$ and so also on $\Theta_{\ch(A)}$.
\end{proof}

In another direction, if an object is strictly destabilized along a curve, then it can only become stable if  the curve crosses either an actual $\lambda$-wall or a $\Theta$-curve. Note that if an actual $\lambda$-wall $W_{u,v}$ crosses $\Theta_u$, then it must also cross $\Theta_v$.

\begin{proposition}\label{theta_or_wall}
Fix $s>0$.
Suppose $\gamma\colon (-a,b)\to\HH$ is a curve for some $a,b>0$ and $A\in\mathcal{A}^{\gamma(t)}$ for all $-a<t<b$. Suppose $B\into A\onto C$ is a destabilizing sequence for $-a<t<0$.
\begin{enumerate}
\item If $A$ is not $\lambda_{\gamma(t),s}$-semistable for $0<t<b$, then there exist $\epsilon,\epsilon'>0$ and some destabilizing sequence $K\into A\onto Q$ defined for all $-\epsilon'<t<\epsilon$ with $\lambda_{\gamma(t),s}(K)\geq\lambda_{\gamma(t),s}(B)$, respectively $\lambda_{\gamma(t),s}(Q)\leq\lambda_{\gamma(t),s}(C)$, with equalities if and only if $C=Q$, respectively $B=K$.
\item If $A$ is $\lambda_{\gamma(t),s}$-stable for $0<t<b$, then there is an actual $\lambda$-wall corresponding to $B$ containing $\gamma(0)$.
\end{enumerate}
\end{proposition}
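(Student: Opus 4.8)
The plan is to combine three standard features of the Bridgeland stability condition $(\cohab,Z_{\alpha,\beta,s})$ — openness of stability, closedness of semistability (both consequences of the support property, Proposition \ref{support_property}), and local finiteness of actual $\lambda$-walls — with the variation-along-paths analysis of Section \ref{ss:variation_along_paths}, which controls when a subobject of $A$ persists in the moving heart $\cohab^{\gamma(t)}$. The unifying device is the purely numerical function $f_{\ch(B),\ch(A),s}(\gamma(t))$. Since $A\in\cohab^{\gamma(t)}$ throughout, Proposition \ref{proprank2} gives $\rho_A,\rho_B>0$ away from $\Theta$, so $\lambda_{\gamma(t),s}(B)-\lambda_{\gamma(t),s}(A)=f_{\ch(B),\ch(A),s}(\gamma(t))/(\rho_B\rho_A)$ has the same sign as $f_{\ch(B),\ch(A),s}(\gamma(t))$, a continuous function of $t$. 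The destabilizing hypothesis for $t<0$ forces $f_{\ch(B),\ch(A),s}(\gamma(t))\ge0$ on $(-a,0)$, hence $f_{\ch(B),\ch(A),s}(\gamma(0))\ge0$ by continuity. (If $\gamma(0)\in\Theta_{\ch(A)}$ the scenario is degenerate, since then $A$ is automatically semistable by Proposition \ref{nuzero}, so we may assume we are away from $\Theta$.)

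For part (2), I would first apply closedness of semistability on the $t>0$ side to conclude that $A$ is $\lambda_{\gamma(0),s}$-semistable, and openness of stability to rule out $A$ being $\lambda_{\gamma(0),s}$-stable — otherwise $A$ would be stable on a whole neighbourhood of $\gamma(0)$, contradicting instability for $t<0$. Thus $A$ is strictly semistable at $\gamma(0)$. To pin the wall to $B$, suppose for contradiction that $f_{\ch(B),\ch(A),s}(\gamma(0))>0$; then $f_{\ch(B),\ch(A),s}>0$ on a neighbourhood, in particular for small $t>0$, so $\lambda_{\gamma(t),s}(B)>\lambda_{\gamma(t),s}(A)$ there. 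Provided $B\into A$ survives into $\cohab^{\gamma(t)}$ for small $t>0$, this contradicts stability of $A$; hence $f_{\ch(B),\ch(A),s}(\gamma(0))=0$ and $\gamma(0)\in\Upsilon_{\ch(B),\ch(A),s}$. Reversing the orientation of $\gamma$ then exhibits $A$ as an object that is $\lambda$-stable on one side of $\gamma(0)$ and $\lambda$-unstable on the other, which is exactly the defining property of a point of the actual wall $W_{\ch(B),\ch(A),s}$ through $\gamma(0)$.

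For part (1), I would work with Harder--Narasimhan filtrations at $\gamma(0)$. Local finiteness of walls guarantees that on each side of $0$ the maximal destabilizing subobject of $A$ has locally constant Chern character, and that $A$ admits a maximal destabilizing subobject $K$ at $\gamma(0)$ which is $\lambda_{\gamma(0),s}$-semistable with $\lambda_{\gamma(0),s}(K)\ge\lambda_{\gamma(0),s}(A)$. Because $K$ is maximal at $\gamma(0)$ it dominates $B$ there, $\lambda_{\gamma(0),s}(K)\ge\lambda_{\gamma(0),s}(B)$; continuity then propagates $\lambda_{\gamma(t),s}(K)\ge\lambda_{\gamma(t),s}(B)$, and dually $\lambda_{\gamma(t),s}(Q)\le\lambda_{\gamma(t),s}(C)$ for $Q:=A/K$, to a two-sided neighbourhood $(-\epsilon',\epsilon)$. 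When $A$ is genuinely unstable at $\gamma(0)$ the strict destabilizing inequality is open and keeps $K\into A\onto Q$ a destabilizing sequence across $t=0$; when $A$ happens to be strictly semistable at $\gamma(0)$, the borderline identities of factors are exactly what the equality clauses (equality of $\lambda$-values iff $C=Q$, resp.\ $B=K$) are recording.

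The main obstacle in both parts is the same \emph{persistence} issue: a priori the destabilizer ($B$ in part (2), or $K$ in part (1)) lies in $\cohab^{\gamma(t)}$ only on one side and could leave the heart exactly at $t=0$. I would handle this exactly as in the proof of Lemma \ref{lem:actual wall} together with Theorem \ref{t:stays_in_A}: a subobject can exit $\cohab$ only when one of its own sub- or quotient factors crosses its $\Theta$ curve, at which moment the associated $\lambda$-slope diverges to $\pm\infty$. Such divergence is incompatible with the finite, continuous wall-crossing picture of $A$ in a neighbourhood of $\gamma(0)$, where $A$ itself remains in the heart with bounded slope; so, after passing to the relevant Harder--Narasimhan factor, one may assume the destabilizer survives across $t=0$, which closes the argument. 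Making this exclusion rigorous, rather than the slope bookkeeping, is where the real work lies.
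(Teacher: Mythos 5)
Your diagnosis that persistence of the destabilizer across $t=0$ is ``where the real work lies'' is exactly right, but your proposed way of securing it fails, and this is a genuine gap. When $B$ leaves the heart at $t=0$, what diverges is the slope of a sub- or quotient factor of $B$ whose $\Theta$ curve passes through $\gamma(0)$; this is in no way incompatible with $A$ staying in $\mathcal{A}^{\gamma(t)}$ with bounded slope, so no contradiction is available, and $B$ genuinely can fail to be a subobject of $A$ for $t>0$. In that event the numerical inequality $\lambda_{\gamma(t),s}(B)>\lambda_{\gamma(t),s}(A)$ that your continuity argument produces for small $t>0$ destabilizes nothing, and your proof of part (2) stalls; likewise ``passing to the relevant Harder--Narasimhan factor'' is not a construction that produces a subobject of $A$ living in the hearts on both sides. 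The paper uses the divergence constructively rather than for a contradiction: take cohomology of the triangle $B\to A\to C$ with respect to the hearts $\mathcal{A}^{\gamma(t)}$ for $0<t<\epsilon$, obtaining the long exact sequence
\[0\to C^{-1}\to B^0\to A\to C^0\to B^1\to 0,\]
and split it at $A$ to define $K:=\im(B^0\to A)$ and $Q:=A/K$. This sequence $K\into A\onto Q$ is exact in $\mathcal{A}^{\gamma(t)}$ on both sides of $t=0$, and since $C^{-1}[1],B^1[-1]\in\mathcal{A}^{\gamma(t)}$ for $t<0$ near $0$ with $\lambda_{\gamma(t),s}(C^{-1}[1])\to+\infty$ and $\lambda_{\gamma(t),s}(B^1[-1])\to-\infty$ as $t\nearrow0$, the sequences $C^{-1}[1]\into C\onto C^0$ and $B^0\into B\onto B^1[-1]$ yield precisely the comparisons $\lambda_{\gamma(t),s}(K)\geq\lambda_{\gamma(t),s}(B)$ and $\lambda_{\gamma(t),s}(Q)\leq\lambda_{\gamma(t),s}(C)$, with equality tied to $C^{-1}=0$, respectively $B^1=0$. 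That construction \emph{is} part (1); part (2) then follows from it by your squeeze/continuity argument applied to $K$, together with a short case analysis when only one equality holds (then $\gamma(0)$ lies on $\Theta_{\ch(B)}$ or $\Theta_{\ch(C)}$, hence on $\Theta_{\ch(A)}$, and the original sequence still defines an actual wall).

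Two further problems. First, your part (1) leans on openness of stability, closedness of semistability, local finiteness of actual $\lambda$-walls, and local constancy of Harder--Narasimhan factors along $\gamma$; none of this is established in the paper for $\labs$-stability (the paper explicitly remarks, in its treatment of asymptotic $\nu$-stability, that the finiteness statements available for $\nu$-walls are not known for $\lambda$-stability), so as written you are importing machinery at least as strong as the statement you are proving. Second, even granting an abstract maximal destabilizing subobject $K$ of $A$ at $\gamma(0)$, nothing in that construction yields the clauses ``equality if and only if $C=Q$, respectively $B=K$'' in the statement, which tie the new destabilizing sequence to the original one; the cohomological truncation above delivers the two-sided persistence and these equality clauses simultaneously, and that is the idea your proposal is missing.
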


\begin{proof}
In either case, we have a long exact sequence for $0<t<\epsilon$ in $\mathcal{A}^{\gamma(t)}$:
\[0\to C^{-1}\to B^0\to A\to C^0\to B^1\to 0.\]
This is because as we cross $t=0$, the phase changes continuously and the  $\mathcal{A}^{\gamma(t)}$-cohomology of any object in $\mathcal{A}^{\gamma(t)}$ for $-\epsilon'<t<0$ in nearby $\gamma(t)$ is concentrated in positions $-1$, $0$ and/or $1$. Split the sequence via $B^0\to K\to A\to Q\to C^0$, and note that, for $-1\ll t<0$, $C^{-1}[1],B^{1}[-1]\in\mathcal{A}^{\gamma(t)}$ and $\lim_{t\nearrow 0}\lambda_{\gamma(t),s}(C^{-1}[1])=+\infty$ and $\lim_{t\nearrow0}\lambda_{\gamma(t),s}(B^{1}[-1])=-\infty$. We also have short exact sequences for $-1\ll t<0$:
\[C^{-1}[1]\to C\to C^0 \text{ and } B^0\to B\to B^{1}[-1].\]
It follows that $K\to A\to Q$ is short exact in $\mathcal{A}^{\gamma(t)}$ for $-\epsilon<t<\epsilon$ and that, for $-1\ll t<0$, $\lambda_{\gamma(t),s}(K)\geq\lambda_{\gamma(t),s}(B)$, with equality only if $C^{-1}=0$. Similarly, $\lambda_{\gamma(t),s}(Q)\leq\lambda_{\gamma(t),s}(C)$, with equality only if $B^1=0$. 

If $A$ is $\lambda_{\gamma(t),s}$-stable beyond $t=0$, then we must have equality in one of these. If both are equalities, then the sequence $0\to B\to A\to C\to0$ provides an actual $\lambda$-wall at $\gamma(0)$. If only one is an equality, then $\gamma(0)\in\Theta_{u}$, where $u=\ch(B)$ or $u=\ch(C)$. But then, $\lambda_{\gamma(0),s}(A)=+\infty$ and so $\gamma(0)\in\Theta_{\ch(A)}$ as well, and so, again, this sequence provides an actual $\lambda$-wall.
\end{proof}

If our aim is to find all actual $\lambda$-walls, then it is easier to first consider a list of necessary numerical conditions in order to reduce the possibilities to a small list of examples (this is exactly what we will do in the example presented in Proposition~\ref{gamma+inst} below).

\begin{definition}
By a \emph{pseudo $\lambda$-wall} $W_{u,v,s}$, we mean the subset of points $(\alpha,\beta)$ of a numerical $\lambda$-wall $\Upsilon_{u,v,s}$ for which:  
\begin{enumerate}
\item\label{pseudowall1} there are objects $E,F,G\in\cohab$ satisfying $\ch(E)=v$, $\ch(F)=u$, $\ch(G)=v-u$;
\item\label{pseudowall2} $Q_{\alpha,\beta}(u)\geq0$, $Q_{\alpha,\beta}(v-u)\geq0$, $Q_{\alpha,\beta}(u)+Q_{\alpha,\beta}(v-u)\leq Q_{\alpha,\beta}(v)$.
\end{enumerate}
\end{definition}

The support property, see Proposition~\ref{support_property}, implies that an actual
$\lambda$-wall is also a pseudo $\lambda$-wall. We shall see an example in \eqref{weird_wall} in Proposition~\ref{gamma+inst} of a pseudo-wall which is not an actual wall but for which
there is a destabilizing sequence.

\begin{remark}
In practice, we would start by replacing \eqref{pseudowall1} with the necessary conditions that $u\in\Z\times\Z\times\Z/2\times\Z/6$ and $\chi(u)\in\Z$. The third inequality in~\eqref{pseudowall2} is an observation of Schmidt (\cite[Lemma 2.7]{S}) and follows from the fact that the bilinear form satisfies $Q_{\alpha,\beta}(v,v)=Q_{\alpha,\beta}(v)\geq0$.
\end{remark}

Unlike actual $\nu$-walls, distinct actual $\lambda$-walls for a given Chern character $v$ do intersect. This was first noticed by Schmidt in \cite{S}, where he establishes a refinement of Lemma~\ref{l-l} providing a relationship between actual $\nu$- and $\lambda$-walls for $v$ along $\Theta_v^-$; more precisely, he proves the following statement.

\begin{theorem}[\emph{cf.} \protect{\cite[Theorem~6.1]{S}}]
\label{schmidt}
Let $v$ be a Chern character satisfying the Bogomolov--Gieseker inequality and $v_0\ne0$, and let $(\alpha_0,\beta_0)$ be a point $\Theta_v^-$.
\begin{itemize}
\item If there is an actual $\lambda$-wall for $v$ containing $(\alpha_0,\beta_0)$ given by the exact sequence $0\to F\to A \to G\to 0$ in $\mathcal{A}^{\alpha_0,\beta_0}$, then there is an actual $\nu$-wall containing $(\alpha_0,\beta_0)$ given by the exact sequence $0\to F[-1]\to A[-1] \to G[-1]\to 0$ in $\mathcal{B}^{\beta_0}$.
\item Conversely, if there is an actual $\nu$-wall for $v$ containing $(\alpha_0,\beta_0)$ given by the sequence $0\to F\to A \to G\to 0$ in $\mathcal{B}^{\beta_0}$, then there is an actual $\lambda$-wall for $v$ which is defined by the same sequence in $\mathcal{A}^{\alpha_0,\beta_0}$.
\end{itemize}
\end{theorem}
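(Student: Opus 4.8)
The plan is to reduce everything to the behaviour at the single point $p=(\alpha_0,\beta_0)\in\Theta_v^-$ and to use Proposition \ref{nuzero} as a dictionary between the two hearts. The crucial numerical input is that on $\Theta_v$ one has $\rho_v(p)=0$, so that $\nuab(A)=0$ (resp. $\labs(A)=+\infty$) for any object of class $v$, and Lemma \ref{l-l} already guarantees that $\Xi_{u,v}$ passes through $p$ exactly when $\Upsilon_{u,v,s}$ does. Proposition \ref{nuzero} matches objects of class $v$ in the two hearts up to a shift: an $A\in\cohab$ with $\rho_A(p)=0$ has $\calh^0_\beta(A)\in\coh(X)_0$ and $\calh^{-1}_\beta(A)$ is $\nuab$-semistable of slope $0$, while conversely a $\nuab$-semistable $B$ of slope $0$ lies in $\free{\alpha,\beta}$, so $B[1]\in\cohab$ is $\labs$-semistable. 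Thus the two pictures are interchanged, modulo sheaves in $\coh(X)_0$ (which have $\rho\equiv0$ and leave $\ch_{\le2}$, hence both $\Xi_{u,v}$ and $\Upsilon_{u,v,s}$, unchanged), by $A\mapsto\calh^{-1}_\beta(A)$ and $B\mapsto B[1]$.

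First I would treat the direction $\lambda\Rightarrow\nu$. Given the destabilising sequence $0\to F\to A\to G\to0$ in $\cohab$ at $p$, additivity of $\rho$ together with the nonnegativity $\rho_F,\rho_G\ge0$ from Proposition \ref{proprank2} forces $\rho_F(p)=\rho_G(p)=0$, so Proposition \ref{nuzero} applies to all three terms. Taking $\cohb$-cohomology gives a long exact sequence $0\to F_1\to A_1\to G_1\to F_0\to A_0\to G_0\to0$ with $F_i=\calh^{-i}_\beta(F)$, etc.; the degree $-1$ terms are $\nuab$-semistable of slope $0$ and the degree $0$ terms lie in $\coh(X)_0$. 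Splitting off the connecting map (whose image is a $0$-dimensional subsheaf of $F_0$) produces a genuine short exact sequence $0\to F_1\to A_1\to G_1'\to0$ in $\cohb$ in which $G_1'$ is a subsheaf of the $\nuab$-semistable $G_1$ with the same $\ch_{\le2}$, hence itself $\nuab$-semistable of slope $0$. This exhibits $A_1\cong A[-1]$ (mod $\coh(X)_0$) as properly $\nuab$-semistable at $p$ with $\nuab$-semistable sub and quotient, i.e. $p$ lies on the actual $\nu$-wall $\Xi_{u,v}$; by \cite[Theorem 3.3]{S} this property propagates to all of $\Xi_{u,v}$, which completes this direction.

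For the converse $\nu\Rightarrow\lambda$ I would run the dictionary backwards: the $\nuab$-semistable terms of $0\to F\to A\to G\to0$, all of slope $0$, lie in $\free{\alpha,\beta}$, so $0\to F[1]\to A[1]\to G[1]\to0$ is a short exact sequence in $\cohab$ of $\labs$-semistable objects, giving a properly $\labs$-semistable configuration at $p$ on $\Upsilon_{u,v,s}$. The substance is to upgrade this to an actual $\lambda$-wall in the strong sense of the paper. Here I would work with the object $E$ of class $v$ in $\cohab$ on the $R^-_v$ side of $\Theta_v^-$ whose first tilt cohomology $\calh^{-1}_\beta(E)$ recovers $A$ (up to $\coh(X)_0$ and sign), take a path $\gamma$ in $R^-_v$ meeting $\Upsilon_{u,v,s}$ transversally at $p$, and note $\labs(E)\to+\infty$ as $\gamma\to p$, since $\rho_v\to0^+$ while $\tau_{v,s}(p)>0$ by Proposition \ref{proprank2}. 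Instability on one side is immediate: the subobject coming from $F$ has $\labs$ crossing that of $E$ precisely along $\Upsilon_{u,v,s}=\{f_{u,v,s}=0\}$, and on the appropriate side this slope is the larger.

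The main obstacle is stability on the other side, i.e. ruling out unexpected $\labs$-destabilisers of $E$. The key observation is that any $\cohab$-subobject $K\into E$ destabilising arbitrarily close to $p$ must have $\labs(K)\to+\infty$, hence $\rho_K(p)=0$; Proposition \ref{nuzero} then forces $\calh^{-1}_\beta(K)$ to be $\nuab$-semistable of slope $0$ and to inject into $\calh^{-1}_\beta(E)\cong A$ in $\cohb$, so that the slope comparison transfers and $K$ would $\nuab$-destabilise $A$ on the corresponding side, contradicting $\nuab$-stability of $A$ just off the (two-sided) actual $\nu$-wall. Making this correspondence of destabilisers precise near $\Theta_v$ — in effect showing that $\labs$-stability of $E$ and $\nuab$-stability of $\calh^{-1}_\beta(E)$ agree in a neighbourhood of $p$, using the structural description in Proposition \ref{sheaf_subobjects} to control which subobjects survive the tilt — is the technical heart of the argument and the step I expect to require the most care.
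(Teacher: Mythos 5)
First, a point of comparison: the paper does not prove Theorem \ref{schmidt} at all --- it is quoted from Schmidt \cite[Thm. 6.1]{S}, with Proposition \ref{nuzero} identified as ``the basis'' for it (that proposition is the analogue of Schmidt's Lemmas 6.2 and 6.4). So your proposal can only be judged against Schmidt's argument and on its own merits. Your use of Proposition \ref{nuzero} as the dictionary between the two hearts on $\Theta_v$ is exactly the right starting point, and your first direction ($\lambda\Rightarrow\nu$) is essentially sound: $\rho_F(p)=\rho_G(p)=0$ follows from additivity and Proposition \ref{proprank2}, then the long exact sequence of $\cohb$-cohomology and the truncation $G_1'=\ker(G_1\to F_0)$ do give a short exact sequence in $\cohb$ with $\nuab$-semistable outer terms ($G_1'$ is semistable because its $\cohb$-subobjects are subobjects of $G_1$ of the same slope). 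Two details still need care: (i) that the $\cohb$-image of the connecting map is genuinely $0$-dimensional is not automatic --- it uses that every map $G_1\to F_0$ factors through $\calh^0(G_1)$ (since $\Hom(\calh^{-1}(G_1)[1],F_0)=0$) and that a positive-rank piece of the image would violate membership in $\tors\beta$; and (ii) what you produce is a destabilizing sequence for $\calh^{-1}_\beta(A)$, whose Chern character differs from $-v$ by a $0$-dimensional class, not literally the sequence $0\to F[-1]\to A[-1]\to G[-1]\to 0$ of the statement; to match the statement you must show the $\calh^0_\beta$'s vanish, or argue the discrepancy is harmless because $\nu$-walls depend only on $\ch_{\le2}$.

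The genuine gap is in the converse direction, and you have flagged it yourself. Under the paper's deliberately strong definition, an actual $\lambda$-wall requires an object of class $v$ that is $\lambda$-stable on one side of a path through $p$ and unstable on the other. Running Proposition \ref{nuzero} backwards only yields a properly $\labs$-semistable configuration at the point $p$ itself, where every relevant slope is $+\infty$; it says nothing about stability at nearby points, where the heart is different. Your proposed repair --- transporting a hypothetical destabilizer $K\into E$ at nearby points to a $\nuab$-destabilizer at $p$ --- is precisely the hard step: $K$ lives in $\cala^{q}$ for $q\ne p$, so Proposition \ref{nuzero} does not apply to it, and one must control how subobjects and short exact sequences deform as the heart varies; this is the kind of analysis the paper carries out in Proposition \ref{sheaf_subobjects}, Theorem \ref{t:stays_in_A} and Proposition \ref{theta_or_wall}, and it occupies the bulk of Schmidt's own proof. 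The same issue undercuts your claim that instability on one side is ``immediate'': for $F$ (or $F[1]$) to destabilize near $p$ it must remain a subobject in the varying hearts, and on one side of $\Theta_{\ch(F)}$ it leaves the heart altogether. Until this neighbourhood analysis is supplied, what you have established is the semistable correspondence on $\Theta_v$ --- which is Proposition \ref{nuzero}, already in the paper --- rather than the wall correspondence asserted by the theorem.
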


We also provide a concrete example of two distinct actual $\lambda$-walls for the Chern character $v:=(2,0,-1,0)$ which intersect away from $\Theta_v$, in Section~\ref{sec:instantons} below.

Furthermore, an actual $\lambda$-wall $W_{u,v,s}$ is called a \emph{vanishing $\lambda$-wall} for $v$ if for each $p\in W_{u,v,s}$ there exists a path $\gamma\colon (-1,1)\to\HH$ crossing $W_{u,v,s}$ transversely at $\gamma(0)=p$ for some $w\in(0,1)$ such that $\calm_{\gamma(t),s}(v)\ne\emptyset$ for $t<0$, while $\calm_{\gamma(t),s}(v)=\emptyset$ and $t>0$.

The meaning of the quartic function $q(v)$ on $K_{\rm num}(X)$ defined in equation \eqref{newbog} can now be expressed in the following theorem using the geometry of the $\Gamma$-curves we defined in Section 3. Recall that $q(E)$ is the discriminant of the cubic defining $\Gamma$ whose roots are the $\beta$-coordinates of  $\Gamma_{v,s}\cap\{\alpha=0\}$.

\begin{theorem}\label{thm for q}
Let $v$ be a Chern character for which there exists a Gieseker semistable sheaf $\,E$ with $\ch(E)=v$. If $q(v)<0$, then there must exist vanishing $\nu$- and $\lambda$-walls for $v$.
\end{theorem}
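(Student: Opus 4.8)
The plan is to exploit the hypothesis $q(v)<0$ to produce a point of $\HH$ at which the Bridgeland moduli space is forced to be empty, and then to combine this with nonemptiness in the large-volume limit to force a wall-crossing where the moduli space vanishes. First I would invoke the observation preceding Proposition \ref{left-cap}: since $q(v)<0$, the curve $\Gamma_{v,s}$ meets $\Theta_v$, so I may fix a point $p=(\alpha_0,\beta_0)\in\Theta_v\cap\Gamma_{v,s}$. Using Proposition \ref{left-cap} together with the invariance $q(v)=q(v^\vee)$ (and twisting by $\calo_X(k)$), I would arrange, after possibly replacing $v$ by a twist of $v^\vee$, that $p$ lies on the left branch $\Theta_v^-$; this is the branch along which Schmidt's correspondence (Theorem \ref{schmidt}) between actual $\nu$- and $\lambda$-walls is available.

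The heart of the argument is that the moduli space is empty at $p$. For the $\lambda$-side this is immediate from Proposition \ref{proprank2}: any nonzero $A\in\cohab$ with $\ch(A)=v$ must have $\rho_v(p)\geq0$ with equality only if $\tau_{v,s}(p)>0$; but $p\in\Theta_v$ gives $\rho_v(p)=0$ while $p\in\Gamma_{v,s}$ gives $\tau_{v,s}(p)=0$, a contradiction, so no such $A$ exists. For the $\nu$-side I would argue by the same mechanism after one tilt: if $B\in\cohb$ were $\nuab$-semistable with $\ch(B)=v$, then $\nuab(B)=\rho_v(p)/\ch_1^{\beta_0}(v)=0$, so every subobject has nonpositive slope and hence $B\in\free{\alpha_0,\beta_0}$, giving $B[1]\in\cohab$ with $\ch(B[1])=-v$; applying Proposition \ref{proprank2} to $B[1]$ forces $\tau_{v,s}(p)<0$, again contradicting $p\in\Gamma_{v,s}$. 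Thus both the $\nu$- and $\lambda$-moduli spaces are empty at $p$. The degenerate case $\ch_1^{\beta_0}(v)=0$, which on $\Theta_v$ only occurs at the vertex with $Q^{\rm tilt}(v)=0$, is handled separately.

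On the other hand, since $E$ is Gieseker semistable it is $\mu$-semistable, hence $\nuab$-semistable for $\alpha\gg0$, and, along the unbounded $\Theta^-$-curve $\Gamma_{v,s}$, asymptotically $\labs$-semistable by Main Theorem \ref{mthm3} (whose first part holds for every $s>0$ on $\Gamma_{v,s}$); so both moduli spaces are nonempty far out. Finally I would combine emptiness at $p$ with this asymptotic nonemptiness. Along $\Theta_v^-$, Proposition \ref{nuzero} shows every class-$v$ object of $\cohab$ is $\labs$-semistable, while Proposition \ref{proprank2} shows such objects exist only where $\tau_{v,s}>0$; since $p\in\Gamma_{v,s}$, the sign of $\tau_{v,s}$ changes at $p$ along $\Theta_v^-$, so the moduli space passes from nonempty to empty there. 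Transporting a semistable class-$v$ object from the nonempty side toward $p$ within $R^-_v$, it cannot leave $\cohab$ before reaching $\Theta_v$, so by Proposition \ref{theta_or_wall} it must cross an actual $\lambda$-wall; by Lemma \ref{nodal point} every numerical $\lambda$-wall passes through $p$, and Lemma \ref{lem:actual wall} identifies the resulting arc, which bounds an empty region and is therefore a vanishing $\lambda$-wall. Schmidt's Theorem \ref{schmidt} then converts it into a vanishing $\nu$-wall through $p$ on $\Theta_v^-$.

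I would expect the main obstacle to be this last extraction step: turning the pointwise emptiness at $p$, together with the sign change of $\tau_{v,s}$ along $\Theta_v^-$, into a genuine vanishing wall---that is, an actual $\lambda$-wall across which the moduli space is empty on one whole side rather than merely at the single point $p$. Making this precise requires the local finiteness of actual walls, the concentration of all numerical $\lambda$-walls at $p$ (Lemma \ref{nodal point}), and a careful analysis of the region of $R^-_v$ adjacent to the $\{\tau_{v,s}<0\}$ portion of $\Theta_v^-$, together with the matching of $\nu$- and $\lambda$-walls via Theorem \ref{schmidt}.
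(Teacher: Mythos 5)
Your proposal follows the same backbone as the paper's proof: $q(v)<0$ forces $\Gamma_{v,s}$ to cross $\Theta_v$; at (and below) the crossing point, Proposition \ref{proprank2} rules out any object of class $v$ in $\cohab$, since $\rho_v=0$ there while $\tau_{v,s}\le0$; Gieseker semistability of $E$ gives nonemptiness in the large-volume regime; and the empty/nonempty transition forces vanishing walls. Your $\nu$-side emptiness argument (tilting a putative $\nu$-semistable $B$ to $B[1]\in\cohab$ of class $-v$ and applying Proposition \ref{proprank2} again) is a nice explicit version of what the paper gets implicitly from Proposition \ref{nuzero}. Where you diverge is in the nonemptiness and extraction steps: the paper stays on $\Theta_v$ throughout --- high up, $E$ lies above every actual $\nu$-wall and is $\nuab$-semistable, and the second half of Proposition \ref{nuzero} converts this into $\labs$-semistable objects of class $v$ on $\Theta_v$ itself --- so emptiness and nonemptiness are compared along the single curve $\Theta_v$, with no need for Main Theorem \ref{mthm3}, Proposition \ref{theta_or_wall}, Lemma \ref{lem:actual wall}, or Schmidt's Theorem \ref{schmidt}. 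Your route is not circular (the proofs in Section \ref{labs sst -} do not use Theorem \ref{thm for q}), but it is a forward reference in the paper's logical order and considerably heavier than needed. Both arguments leave the final step (pointwise emptiness together with the sign change of $\tau_{v,s}$ implies an actual wall bounding an empty region) somewhat informal; you flag this honestly, while the paper simply asserts it.

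One step of yours is genuinely problematic, though fortunately removable: the reduction to the left branch by ``replacing $v$ by a twist of $v^\vee$.'' The hypothesis is the existence of a Gieseker semistable \emph{sheaf} of class $v$, and this does not transport to $v^\vee=\ch(E^\vee)$: the derived dual of a Gieseker semistable torsion-free sheaf is in general a genuine two-term complex, and the sheaf dual $E^*$ has $\ch(E^*)\neq(v_0,-v_1,v_2,-v_3)$ in general (the sheaves $\inext^i(E,\ox)$ contribute), so no Gieseker semistable sheaf of class $v^\vee$ is guaranteed; moreover any vanishing walls produced for $v^\vee$ would still have to be transported back to $v$. The paper needs no branch selection at all: Propositions \ref{proprank2} and \ref{nuzero} are branch-agnostic, so the emptiness-below/nonemptiness-above comparison works on whichever branch $\Gamma_{v,s}$ happens to cross. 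You only introduced the reduction in order to quote Theorem \ref{schmidt} and Main Theorem \ref{mthm3} verbatim; replacing those dependencies with Proposition \ref{nuzero} dissolves the issue and brings your argument in line with the paper's.
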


\begin{proof}
Since $E$ is Gieseker semistable, $v$ satisfies the Bogomolov--Gieseker inequality, and so the curve $\Theta_v$ divides the plane into three regions, as explained in Section~\ref{regions} above. 

Note that $\Gamma_{v,s}$ is a smooth cubic curve, and so when $q(E)<0$,  there is an open neighbourhood of $\alpha=0$ such that also $q_\alpha(E)<0$. It follows that there is some $\alpha_0>0$ such that $\Gamma_{v,s}\cap\{\alpha=\alpha_0\}$ consists of a single point. Thus, either the asymptotic components $\Gamma^-_{v,s}$ and $\Gamma^0_{v,s}$, or $\Gamma^+_{v,s}$ and $\Gamma^0_{v,s}$, must belong to the same connected component of $\Gamma_{v,s}$, which must then cross the hyperbola $\Theta_v$.

Since Proposition~\ref{proprank2} fails for $(\alpha,\beta)\in\Theta_v$ below  the point of intersection $(\tilde{\alpha},\tilde{\beta}):=\Gamma_{v,s}\cap\Theta_v$, there cannot exist any objects with Chern character equal to $v$ in $\cohab$ for $(\alpha,\beta)\in\Theta_v$ with $\alpha\le\tilde{\alpha}$.

Fix $\beta$ and $\alpha$ sufficiently large so that $(\alpha,\beta)$  lies above any $\nu$-wall for $v$. Then $E$ is $\nuab$-semistable, and Proposition~\ref{nuzero} guarantees the existence of $\labs$-semistable objects in $\cohab$ for $(\alpha,\beta)\in\HH$. It follows that there must exist a vanishing $\nu$-wall for $v$ crossing $\Theta_v$ above the point of intersection $\{(\tilde{\alpha},\tilde{\beta})\}:=\Gamma_{v,s}\cap\Theta_v$. But then there is also a vanishing $\lambda$-wall through the same point $(\tilde{\alpha},\tilde{\beta})$.
\end{proof}

We will see an example of the situation described in Theorem~\ref{thm for q} in Section~\ref{ideal line} below, where we study in detail the ideal sheaf of a line in $\p3$.

\section{Asymptotic \texorpdfstring{$\boldsymbol{\nuab}$}{nu\textunderscore\{alpha,beta\}}-stability}\label{nuab sst}

One of the main goals of this paper is to characterize which objects in $\dbx$ are $\nuab$- and $\labs$-semistable \emph{at infinity}, that is, for large values of the parameters $\alpha$, $\beta$. More formally, let $\gamma\colon [0,\infty)\to\HH$ be an unbounded path; we consider the following definition. The dual situation is also considered in \cite[Proposition~3.2]{Piy} but from a different perspective. 

  {\samepage
\begin{definition}\label{asym nu s-st}
An object $A\in\dbx$ is \emph{asymptotically $\nuab$-$($semi$)$stable along $\gamma$} if the following two conditions hold: 
\begin{enumerate}[(i)]
\item There is a $t_0>0$ such that $A\in\mathcal{B}^{\gamma(t)}$ for every $t>t_0$.
\item There is a $t_1>t_0$ such that, for every $t>t_1$, every sub-object $F\into A$ in $\mathcal{B}^{\gamma(t)}$ satisfies $\nu_{\gamma(t)}(F) <$ $(\le)$ $\nu_{\gamma(t)}(A)$.
\end{enumerate}
\end{definition}
}

In this section we characterize asymptotically $\nuab$-semistable objects. More precisely, we establish the following results.

\begin{theorem} \label{a nu sst}
Let $B\in\dbx$ be an object with $\ch_0(B)\ne0$.
\begin{enumerate}
\item\label{a nu sst 1} $B$ is asymptotically $\nuab$-$($semi\/$)$stable along a path $\gamma(t)=(\alpha(t),\beta(t))$ such that $\beta(t)<\mu(B)$ for $t\gg0$ if and only if $B$ is a $\mu_{\le2}$-$($semi\/$)$stable sheaf.
\item\label{a nu sst 2} $B$ is asymptotically $\nuab$-$($semi\/$)$stable  along a path $\gamma(t)=(\alpha(t),\beta(t))$ such that $\beta(t)>\mu(B)$ for $t\gg0$ if and only if $S:=B^\vee[-1]$ is a $\mu_{\le2}$-$($semi\/$)$stable sheaf such that $S^{**}/S$ either is empty or has pure dimension~$1$.
\end{enumerate}
\end{theorem}

Theorem~\ref{a nu sst} is not new, though its statement and the second part, especially, are different from previous versions. The proof of item~\eqref{a nu sst 1} is essentially the same as \cite[Proposition 14.2]{B08}, which covers the case of $K3$ surfaces; also compare  with \cite[Proposition 2.5]{S18}. Part~\eqref{a nu sst 2}  is to be compared with \cite[Proposition~2.3]{AM} in the case of surfaces and \cite[Proposition 5.1.3]{BMT}. Since we will use Theorem~\ref{a nu sst} and the arguments in its proof in the subsequent sections, we include a full proof here. Our proof is longer than previous ones for two reasons. Firstly, we have a stronger form of asymptotic stability, and secondly we do not assume  there are only finitely many $\nu$-walls above a certain horizontal line. Although the latter fact is true, we want to illustrate how it can be avoided, since it is not known for $\lambda$-stability. If we do assume  the $\nu$-walls are bounded above, then it follows that the $t_1$ of the definition of asymptotic $\nu$-stability can be chosen uniformly in $E$, that is, $t_1$ only depends on $\ch_{\le2}(E)$. 

The study of asymptotic $\nuab$-stability is considerably simplified by the following observation. 

\begin{lemma}\label{asymp_B_is_sheaf}\quad
  
\begin{enumerate}
\item If $B\in\cohb$ for all $\beta\ll0$, then $B\in\coh(X)$.
\item\label{one_nu_wall} Suppose $E$ is a $\mu$-semistable sheaf, and suppose there is some $(\alpha,\beta)$ with $\beta<\mu(E)$ which lies on an actual $\nu$-wall so that there are a $\nuab$-semistable object $F$ and a monomorphism $F\into E$ in $\cohb$. Then this is the only $\nu$-wall for $\beta<\mu(E)$ at which $E$ is destabilized. 
\end{enumerate}
\end{lemma}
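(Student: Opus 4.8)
The plan is to handle the two parts by quite different means.

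\emph{Part (1).} Here I would argue directly from the description of $\cohb$ recalled after its definition: an object $B$ lies in $\cohb$ exactly when $\calh^p(B)=0$ for $p\neq-1,0$ (a condition not involving $\beta$), $\calh^{-1}(B)\in\free\beta$ and $\calh^0(B)\in\tors\beta$. Since $\calh^0(B)\in\tors\beta\subset\coh(X)$ is automatically a genuine sheaf, everything reduces to showing $G:=\calh^{-1}(B)=0$. Recall that $G$ is torsion free, being an object of $\free\beta$. The hypothesis says $G\in\free\beta$ for all $\beta\ll0$; applying the defining property of $\free\beta$ to the subsheaf $G\into G$ gives $\mu(G)\le\beta$ for every such $\beta$. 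As $\mu(G)$ is a fixed finite number, choosing $\beta<\mu(G)$ forces $G=0$, whence $B\simeq\calh^0(B)\in\coh(X)$.

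\emph{Part (2).} The heart of the matter is geometric, and I would organise it around a single vertical line $\{\beta=\beta_0\}$ together with the nested-semicircle structure of $\nu$-walls. Since $E$ is $\mu$-semistable and $\beta_0<\mu(E)$, we have $E\in\tors{\beta_0}\subset\mathcal{B}^{\beta_0}$ and $\ch_1^{\beta_0}(E)=\ch_0(E)(\mu(E)-\beta_0)>0$. Let $F\into E$ in $\mathcal{B}^{\beta_0}$ be the $\nu$-semistable destabilising subobject supplied at the point where the given actual wall meets this line (an actual $\nu$-wall carries such a subobject at each of its points). The crucial structural point is that $\mathcal{B}^{\beta_0}$ depends on $\beta_0$ alone and not on $\alpha$; hence $F\into E$ persists as a subobject for every $\alpha>0$, and whether $E$ is destabilised by $F$ is controlled purely by the sign of $\nu_{\alpha,\beta_0}(F)-\nu_{\alpha,\beta_0}(E)=\Delta_{21}(\alpha,\beta_0)/(\ch_1^{\beta_0}(F)\ch_1^{\beta_0}(E))$, where $\Delta_{21}$ is taken for the pair $(\ch(F),v)$.

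Next I would fix the direction of destabilisation. Because the wall is an \emph{actual} $\nu$-wall it is a genuine semicircle, so its equation carries a nonzero $\alpha^2$-coefficient; equivalently $\delta_{01}(\ch(F),v)\neq0$, and $\Delta_{21}(\alpha,\beta_0)=-\tfrac12\delta_{01}(\ch(F),v)\,\alpha^2+c(\beta_0)$ is affine and strictly monotone in $\alpha^2$, vanishing at a single $\alpha$ on the line. To see which side is unstable I would show $\delta_{01}(\ch(F),v)>0$. From the exact sequence $0\to F\to E\to Q\to0$ in $\mathcal{B}^{\beta_0}$ and $\calh^{-1}(E)=0$ one gets $\calh^{-1}(F)=0$, so $F$ is a sheaf and $F\to E$ is a sheaf map with kernel $\calh^{-1}(Q)\in\free{\beta_0}$; writing $I:=\im(F\to E)$ we have $\ch(F)=\ch(I)+\ch(\calh^{-1}(Q))$. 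By $\mu$-semistability of $E$ we have $\mu(I)\le\mu(E)$, so $\delta_{01}(I,E)\ge0$; and $\calh^{-1}(Q)\in\free{\beta_0}$ is torsion free with slope $\le\beta_0<\mu(E)$, so $\delta_{01}(\calh^{-1}(Q),E)\ge0$ as well. Adding and invoking the non-degeneracy just noted gives $\delta_{01}(\ch(F),v)>0$. Consequently $\nu_{\alpha,\beta_0}(F)<\nu_{\alpha,\beta_0}(E)$ for $\alpha$ above the wall and $\nu_{\alpha,\beta_0}(F)>\nu_{\alpha,\beta_0}(E)$ strictly for every $\alpha$ strictly below it; that is, along $\{\beta=\beta_0\}$ the object $E$ is destabilised precisely on the inner arc and remains so all the way down.

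For uniqueness I would appeal to nesting. Suppose there were a second actual destabilising wall in $\{\beta<\mu(E)\}$; by Theorem \ref{inter walls}(1) and the fact that $\nu$-walls are nested, non-intersecting semicircles, the two walls are nested, say $W_2$ inside $W_1$. Take a vertical line through a point $p_2=(\alpha_2,\beta_0)\in W_2$; it meets $W_1$ at some $\alpha_1>\alpha_2$. Applying the previous paragraph to the destabiliser $F_1$ of $W_1$ on this line gives $\nu_{p_2}(F_1)>\nu_{p_2}(E)$ strictly, since $\alpha_2<\alpha_1$, so $E$ is $\nu_{p_2}$-unstable. But $p_2$ lies on the actual wall $W_2$, where $E$ must be $\nu_{p_2}$-semistable, a contradiction; hence the destabilising wall is unique. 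I expect the main obstacle to be exactly this direction step: extracting the strict sign $\delta_{01}(\ch(F),v)>0$ forces one to pass from the $\mathcal{B}^{\beta_0}$-subobject $F$ to the honest subsheaf image $I$ and its complementary kernel in $\free{\beta_0}$, and it is precisely there that the $\mu$-semistability hypothesis is used; once the inner arc is identified as the unstable region, everything else is forced by the semicircle geometry.
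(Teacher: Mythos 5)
Your proof is correct and follows essentially the same route as the paper's: part (1) is the same slope/$\ch_1^\beta$-positivity argument applied to $\calh^{-1}(B)\in\free\beta$, and in part (2) your key step — splitting the four-term $\coh(X)$-sequence of a destabilizing subobject into the image $I$ and the kernel $\calh^{-1}(Q)\in\free{\beta_0}$, then using $\mu$-semistability of $E$ to pin down the sign of $\delta_{01}$ and hence the monotonicity of $\nuab(F)-\nuab(E)$ in $\alpha$ along vertical lines, so that destabilization can only occur inward — is exactly the paper's computation. Your explicit treatment of strictness ($\delta_{01}\neq0$ because a wall meeting $\{\beta<\mu(E)\}$ cannot be the vertical line $\Mu_v$) and the nesting contradiction for uniqueness merely spell out details that the paper leaves terse.
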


\begin{proof}
If $\calh^{-1}(B)\ne0$, then
$$ \ch_1^\beta(\calh^{-1}(B)[1]) = -\ch_1(\calh^{-1}(B)) + \ch_0(\calh^{-1}(B))\beta < 0 \quad\text{for } \beta\ll0, $$
so $B$ cannot lie in $\cohb$ for any $\beta\ll0$.

For the second item, the idea is that $E$ must be $\nuab$-stable above the $\nu$-wall defined by the exact sequence. The local finiteness of the actual $\nu$-walls ensures that $E$ is $\nuab$-stable either immediately below or immediately above this wall. But we can show that the latter holds, as follows. First observe that $\delta_{01}(E,F)\leq0$. This is because if we split the $\coh(X)$ sequence
\[0\to\calh^{-1}(G)\to F\to E\to\calh^0(G)\to0,  \]
where $G=E/F$ in $\cohb$, via $K\to E$, then $\mu(K)\leq\mu(E)$ by the $\mu$-semistability of $E$ while $\mu(\calh^{-1}(G))\leq\beta<\mu(F)$ and so $\mu(F)<\mu(K)\leq\mu(E)$.  We use Lemma~\ref{delta_tech}\eqref{delta_dervs} to compute the partial derivatives, which satisfy 
\[\partial_\alpha \bigl(\nuab(E)-\nuab(F)\bigr)=\partial_\alpha\frac{\Delta_{21}(E,F)}{\ch^\beta_1(E)\ch^\beta_1(F)}=-\frac{\alpha\delta_{01}(E,F)}{\ch_1^\beta(E)\ch_1^\beta(F)}\geq0\]
as $E$ is $\mu$-semistable. Consequently, $E$ cannot be destabilized on another wall outside of this $\nu$-wall because it must be stable immediately above any wall and unstable immediately below the wall. But on a path between two adjacent walls, it cannot be both stable and unstable. 

To complete the proof, observe that $F\into E\onto G$ remains a short exact sequence in $\cohb$ at all points inside the $\nu$-wall corresponding to $F$.
\end{proof}

We will see that a similar statement holds for $\beta>\mu(E)$. So, for a given $E$, there is at most one actual $\nu$-wall destabilizing it on each side of the vertical line $\Mu_v$. We can then reduce the proof of Theorem~\ref{a nu sst} to the study of asymptotic $\nuab$-stability along horizontal lines. To this end, we define
\begin{equation} \label{h-lines}
\begin{aligned}
\Lambda_{\oalpha}^-&:= \{ (\alpha,\beta)\in\HH ~|~ \alpha=\oalpha,\beta<0 \}
\quad\text{and}\quad \\
\Lambda_{\oalpha}^+&:= \{ (\alpha,\beta)\in\HH ~|~ \alpha=\oalpha,\beta>0 \}.
\end{aligned}
\end{equation}

\subsection{Asymptotics along $\boldsymbol{\Lambda_{\oalpha}^-}$}

The first part of Theorem~\ref{a nu sst} is proved in two separate lemmas.

\begin{lemma}\label{asymp_nu_is_mu2}
If $B\in\dbx$ is asymptotically $\nuab$-semistable along the horizontal half-line $\Lambda_{\oalpha}^-$, then $B$ is a $\mu_{\le2}$-semistable sheaf.   
\end{lemma}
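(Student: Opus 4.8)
The plan is to read $\mu_{\le2}$-semistability of $B$ directly off the asymptotic slope inequality $\nuab(F)\le\nuab(B)$ as $\beta\to-\infty$, by expanding the numerical $\nu$-wall function $\Delta_{21}$ of \eqref{defdelta} in powers of $\beta$. Along $\Lambda_{\oalpha}^-$ we have $\alpha=\oalpha$ fixed and $\beta\to-\infty$, so condition (i) of Definition \ref{asym nu s-st} says $B\in\cohb$ for all $\beta\ll0$, and Lemma \ref{asymp_B_is_sheaf}(1) then gives that $B$ is a coherent sheaf. Since the target notion lives among torsion-free sheaves (and $\ch_0(B)\ne0$), the first genuine step is to rule out torsion.

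For the torsion-free claim I would take the maximal torsion subsheaf $T\subseteq B$ and note $T\in\tors\beta\subseteq\cohb$ for all $\beta$, while for $\beta\ll0$ both $B$ and $B/T$ lie in $\tors\beta$, so $T\into B$ is a monomorphism in $\cohb$. If $\ch_1(T)=0$ then $T$ is supported in dimension $\le1$, whence $\ch_1^\beta(T)=0$ and $\nuab(T)=+\infty$; this contradicts asymptotic semistability because $\nuab(B)$ is finite for $\beta\ll0$ (as $\ch_0(B)\ne0$). If $\ch_1(T)>0$, then the leading term computed below has $\delta_{10}(\ch(T),\ch(B))=\ch_1(T)\ch_0(B)>0$, again forcing $\nuab(T)>\nuab(B)$ for $\beta\ll0$. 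Either way $T=0$.

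The core is the expansion of $\Delta_{21}$. Writing $u=\ch(F)$ and $v=\ch(B)$, the derivative rules of Lemma \ref{delta_tech}(\ref{delta_dervs}) (or a direct expansion) give
\[\Delta_{21}(\alpha,\beta)=\tfrac12\delta_{10}(u,v)\,\beta^2-\delta_{20}(u,v)\,\beta+\bigl(\delta_{21}(u,v)+\tfrac12\alpha^2\delta_{10}(u,v)\bigr).\]
For any proper nonzero subsheaf $F\subseteq B$, $F$ is torsion-free of positive rank, so for $\beta\ll0$ both $\ch_1^\beta(F)$ and $\ch_1^\beta(B)$ are strictly positive; hence the sign of $\nuab(F)-\nuab(B)=\Delta_{21}/(\ch_1^\beta(F)\ch_1^\beta(B))$ equals that of $\Delta_{21}$. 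As $\beta\to-\infty$ this sign is governed first by $\tfrac12\delta_{10}(u,v)\beta^2$ and, when $\delta_{10}(u,v)=0$, by $-\delta_{20}(u,v)\beta$, whose sign is that of $\delta_{20}(u,v)$ since $-\beta>0$.

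Finally I would assemble the conclusion. For $\beta\ll0$ each such $F$ becomes a genuine subobject of $B$ in $\cohb$ (all of $F$, $B$, $B/F$ lie in $\tors\beta$ once $\beta$ is small enough), so asymptotic $\nuab$-semistability yields $\nuab(F)\le\nuab(B)$, i.e. $\Delta_{21}\le0$, for all $\beta\ll0$. By the sign analysis this forces $\delta_{10}(u,v)\le0$, and whenever $\delta_{10}(u,v)=0$ it forces $\delta_{20}(u,v)\le0$; equivalently $(\delta_{10}(B,F),\delta_{20}(B,F))\ge0$ in the lexicographic order, which is precisely the defining condition of $\mu_{\le2}$-semistability. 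I expect the main obstacle to be bookkeeping rather than depth: one must confirm that every subsheaf $F$ (not just the $\nuab$-semistable sheaf subobjects) does become an honest $\cohb$-subobject once $\beta$ is sufficiently negative, and that checking only the two-term lexicographic condition on subsheaves suffices — so that the higher-order terms of $\Delta_{21}$ and the general, possibly non-sheaf, $\cohb$-subobjects never need to be invoked.
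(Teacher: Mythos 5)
Your proof is correct and takes essentially the same route as the paper's: both reduce to a sheaf via Lemma \ref{asymp_B_is_sheaf}, rule out torsion with the same slope asymptotics, and then extract $\delta_{10}(B,F)\ge 0$, followed by $\delta_{20}(B,F)\ge 0$ when $\delta_{10}(B,F)=0$, from the behaviour of $\nuab(F)-\nuab(B)$ as $\beta\to-\infty$, after observing that every subsheaf $F\subseteq B$ becomes a $\cohb$-subobject for $\beta\ll0$. The only difference is presentational: you read the signs off the explicit quadratic-in-$\beta$ expansion of $\Delta_{21}$, whereas the paper computes the normalized limits $\lim_{\beta\to-\infty}\bigl(\nu_{\oalpha,\beta}(F)-\nu_{\oalpha,\beta}(B)\bigr)$ and $\lim_{\beta\to-\infty}(-\beta)\bigl(\nu_{\oalpha,\beta}(F)-\nu_{\oalpha,\beta}(B)\bigr)$, which is the same computation packaged differently.
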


\begin{proof}
By Lemma~\ref{asymp_B_is_sheaf}, we may assume $B\in\coh(X)\cap\cohb$ for all $\beta\ll0$.

Let $T\into B$ be a torsion subsheaf of $B$; if $\ch_1(T)\ne0$, then
$$ \lim_{\beta\to-\infty}\dfrac{-1}{\beta}(\nu_{\oalpha,\beta}(T)-\nu_{\oalpha,\beta}(B)) = \dfrac{1}{2}, $$
contradicting asymptotic $\nuab$-semistability. If $\ch_1(T)=0$, then
$$ \nuab(B)<\nuab(T)=+\infty $$ 
for every $(\alpha,\beta)\in\HH$, again contradicting asymptotic $\nuab$-semistability. So $B$ must be a torsion-free sheaf.

Let $F$ be a subsheaf of $B$; since $F\in\cohb$ for $\beta<\mu^-(F)$, $F$ is also a sub-object of $B$ within $\cohb$ for $\beta<\min\{\mu^-(F),\mu^-(B)\}$. Note that
$$ \lim_{\beta\to-\infty}\left(\nu_{\oalpha,\beta}(F)-\nu_{\oalpha,\beta}(B) \right) = -\dfrac{1}{2}\dfrac{\delta_{10}(B,F)}{\ch_0(F)\ch_0(B)}\le0 $$
by asymptotic $\nuab$-semistability. It follows that $\delta_{10}(B,F)\ge0$; thus $B$ is $\mu$-semistable.

If $\delta_{10}(B,F)=0$, then 
$$ \lim_{\beta\to-\infty}(-\beta)(\nu_{\oalpha,\beta}(F)-\nu_{\oalpha,\beta}(B)) = -\dfrac{\delta_{20}(B,F)}{\ch_0(F)\ch_0(B)}\le0 $$
by asymptotic $\nuab$-semistability; thus $\delta_{20}(B,F)\ge0$, as desired.
\end{proof}

Before going on to look at the converse, we can make an interesting deduction from this. 

\begin{proposition}
If $E$ is a $\mu$-semistable sheaf which is not $\mu_{\leq2}$-semistable, then $E$ is not $\nuab$-semistable for any $(\alpha,\beta)\in\HH$.
\end{proposition}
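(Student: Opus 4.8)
The plan is to extract an explicit destabilising subobject from the failure of $\mu_{\le2}$-semistability, and then to show that it destabilises $E$ at \emph{every} point below the line $\{\beta=\mu(E)\}$, while $E$ simply fails to lie in $\cohb$ on and above that line. First I would record the numerical content of the hypotheses. Here $E$ is torsion free (as required for $\mu_{\le2}$-stability to make sense), so $\ch_0(E)=\rk(E)>0$, and $\mu$-semistability says every nonzero subsheaf $F\into E$ satisfies $\mu(F)\le\mu(E)$, equivalently $\delta_{10}(E,F)\ge0$. The failure of $\mu_{\le2}$-semistability yields a proper, nontrivial subsheaf $F$ with $(\delta_{10}(E,F),\delta_{20}(E,F))<0$ in the lexicographic order; together with $\delta_{10}(E,F)\ge0$ this forces
\[
\delta_{10}(E,F)=0 \qquad\text{and}\qquad \delta_{20}(E,F)<0.
\]
The vanishing $\delta_{10}(E,F)=0$ means $\mu(F)=\mu(E)=:\mu$, and since $\mu^+(F)\le\mu^+(E)=\mu=\mu(F)$, the sheaf $F$ is itself $\mu$-semistable with $\mu^-(F)=\mu$.

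Next I would dispose of the region $\beta\ge\mu$ immediately: there $\mu^-(E)=\mu\le\beta$, so $E\notin\tors\beta$ and hence $E\notin\cohb$. Since $\nuab$-semistability requires membership in $\cohb$, $E$ cannot be $\nuab$-semistable at any such point. The substantive case is $\beta<\mu$. Now $\mu^-(F)=\mu^-(E)=\mu>\beta$, so $F,E\in\tors\beta\subset\cohb$ and, exactly as in the proof of Lemma~\ref{asymp_nu_is_mu2}, $F$ is a genuine subobject of $E$ in $\cohb$. Writing $u:=\ch(F)$, $v:=\ch(E)$, a direct expansion of the $\nu$-wall function $t_{u,v}=\Delta_{21}$ from \eqref{defdelta} gives
\[
\Delta_{21}(\alpha,\beta)=\tfrac12\,\delta_{10}(F,E)(\alpha^2+\beta^2)-\delta_{20}(F,E)\,\beta+\delta_{21}(F,E).
\]

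The crucial simplification is that $\delta_{10}(F,E)=0$ together with $\ch_1=\mu\,\ch_0$ for both $F$ and $E$ forces $\delta_{21}(F,E)=\mu\,\delta_{20}(F,E)$, whence
\[
\Delta_{21}(\alpha,\beta)=\delta_{20}(F,E)\,(\mu-\beta),
\]
with $\delta_{20}(F,E)=-\delta_{20}(E,F)>0$. Thus $\Delta_{21}(\alpha,\beta)>0$ for every $(\alpha,\beta)$ with $\beta<\mu$. Since $\ch_1^\beta(F)=\ch_0(F)(\mu-\beta)>0$ and $\ch_1^\beta(E)=\ch_0(E)(\mu-\beta)>0$ there, and $\Delta_{21}$ is the numerator of $\nuab(F)-\nuab(E)$, I conclude $\nuab(F)>\nuab(E)$; so $F$ strictly destabilises $E$ at every point with $\beta<\mu$. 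Combining the two cases shows $E$ is $\nuab$-semistable for no $(\alpha,\beta)\in\HH$.

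The one conceptual subtlety, which I expect to be the only real obstacle, is that the numerical $\nu$-wall attached to $F$ is degenerate: because $\delta_{10}(F,E)=0$ it is not a nested semicircle but the vertical line $\{\beta=\mu\}$, and a priori such a wall could destabilise $E$ on only part of $\{\beta<\mu\}$. What makes the argument close up cleanly is the identity $\delta_{21}(F,E)=\mu\,\delta_{20}(F,E)$, which pins this line exactly at $\beta=\mu(E)$ — precisely the threshold below which $E$ lies in $\cohb$ and on or above which it does not. Thus the two regimes dovetail and leave no point of $\HH$ uncovered, so no case analysis on $\alpha$ is needed.
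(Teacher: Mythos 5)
Your proof is correct, but it follows a genuinely different route from the paper's. The paper disposes of this proposition in two lines using machinery established immediately before it: if $E$ were $\nuab$-semistable at one point, then Lemma \ref{asymp_B_is_sheaf}(2) — a $\mu$-semistable sheaf is destabilized along at most one actual $\nu$-wall in $\{\beta<\mu(E)\}$, whose proof invokes local finiteness of actual walls plus a derivative computation — implies $E$ is asymptotically $\nuab$-semistable, and Lemma \ref{asymp_nu_is_mu2} then forces $\mu_{\le2}$-semistability, a contradiction. You instead produce the destabilizer explicitly: $\mu$-semistability plus failure of $\mu_{\le2}$-semistability forces a subsheaf $F$ with $\delta_{10}(E,F)=0$ and $\delta_{20}(E,F)<0$, hence $\mu(F)=\mu(E)=\mu$ with $F$ itself $\mu$-semistable, so $F\into E$ is a subobject in $\cohb$ whenever $\beta<\mu$; the identity $\delta_{21}(F,E)=\mu\,\delta_{20}(F,E)$ (valid precisely because both slopes equal $\mu$) collapses $\Delta_{21}$ to $\delta_{20}(F,E)(\mu-\beta)>0$ on all of $\{\beta<\mu\}$, while for $\beta\ge\mu$ one has $E\notin\tors\beta$, hence $E\notin\cohb$. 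All of these steps check out, including the algebra behind your wall formula. What your route buys: it is self-contained (no appeal to local finiteness of actual $\nu$-walls or to the asymptotic characterization), it exhibits a single fixed subsheaf that destabilizes $E$ at every point rather than deriving an abstract contradiction, and it makes the geometry transparent — the numerical wall of the pair $(F,E)$ degenerates to the vertical line $\{\beta=\mu(E)\}$, which is exactly the boundary of the region where $E$ lies in $\cohb$, so the two regimes fit together with no gap. What the paper's route buys is economy: both lemmas it quotes were needed anyway for the asymptotic theory, so the proposition becomes a free corollary. One small caveat: the proposition as stated does not say ``torsion free''; since $\mu_{\le2}$-semistability is only defined for torsion free sheaves, your standing assumption that $E$ is torsion free is indeed the intended reading, but it is worth flagging explicitly rather than treating as automatic.
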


\begin{proof}
Suppose otherwise. Then by item~\eqref{one_nu_wall} in Lemma~\ref{asymp_B_is_sheaf}, it follows that $E$ is asymptotically $\nuab$-semistable, and then by Lemma~\ref{asymp_nu_is_mu2} it must be $\mu_{\leq2}$-semistable, which yields a contradiction.
\end{proof}

Now we consider the converse to Lemma~\ref{asymp_nu_is_mu2}. 

\begin{lemma}\label{h nu-ss}
Fix $\oalpha>0$. If $E$ is $\mu_{\le2}$-semistable, then there is a $\beta_0<0$ $($depending only on $\ch_{\le2}(E)$ and $\oalpha)$ such that $E$ is $\nu_{\oalpha,\beta}$-semistable for every $\beta<\beta_0$.   
\end{lemma}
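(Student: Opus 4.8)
The plan is to test the $\nu_{\oalpha,\beta}$-semistability of $E$ directly against sub-objects of $\cohb$ and to show that, for $\beta$ below a threshold controlled only by $\ch_{\le2}(E)$ and $\oalpha$, no such sub-object can destabilize. Since $E$ is $\mu$-semistable and torsion free, $E\in\tors\beta\subset\cohb$ for all $\beta<\mu(E)$ (cf. Lemma \ref{asymp_B_is_sheaf}), so this is the right place to look. Given a short exact sequence $0\to F\to E\to G\to0$ in $\cohb$, the long exact sequence of $\coh(X)$-cohomology forces $\calh^{-1}(F)=0$; hence $F$ is a torsion-free sheaf sitting in a short exact sequence of sheaves $0\to M\to F\to F'\to0$, where $M:=\calh^{-1}(G)\in\free\beta$ (so $\mu^+(M)\le\beta$) and $F'\into E$ is a genuine subsheaf. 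A sub-object with $\ch_1^\beta(F)=0$ can be discarded at the outset: by Proposition \ref{proprank1} it would have $\rho_F\ge0$, hence $\nu_{\oalpha,\beta}(F)=+\infty$ and $F$ would be $\nu_{\oalpha,\beta}$-semistable, but substituting $\beta=\mu(F)$ into $\rho_F$ gives $2\ch_0(F)\rho_F=-Q^{\rm tilt}(F)-\ch_0(F)^2\oalpha^2$, forcing $Q^{\rm tilt}(F)<0$ and contradicting \eqref{B-ineq}. So every relevant $F$ has finite slope.

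Next I would compute the numerator $t_{F,E}$ of $\nu_{\oalpha,\beta}(F)-\nu_{\oalpha,\beta}(E)$ from \eqref{defdelta}. Expanding in $\beta$ gives
\[ t_{F,E}(\oalpha,\beta)=\tfrac12\delta_{10}(F,E)\,\beta^2-\delta_{20}(F,E)\,\beta+\delta_{21}(F,E)+\tfrac12\oalpha^2\delta_{10}(F,E), \]
and, since $t_{\cdot,E}$ is linear in its first argument, $t_{F,E}=t_{M,E}+t_{F',E}$. As $M\in\free\beta$ and $F'\into E$ with $E$ $\mu$-semistable, both $\delta_{10}(M,E)=\ch_0(M)\ch_0(E)(\mu(M)-\mu(E))$ and $\delta_{10}(F',E)$ are $\le0$, so $\delta_{10}(F,E)\le0$; this already kills the only term that could push $t_{F,E}$ to $+\infty$ as $\beta\to-\infty$. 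If $\delta_{10}(F,E)=0$ then $M=0$ and $\mu(F)=\mu(E)$, so $F=F'$ is a subsheaf of equal slope; the $\mu_{\le2}$-semistability of $E$ yields $\delta_{20}(F,E)\le0$, and the identity $\delta_{21}(F,E)=\mu(E)\,\delta_{20}(F,E)$ (which holds whenever $\delta_{10}(F,E)=0$) gives $t_{F,E}=-\delta_{20}(F,E)\,(\beta-\mu(E))\le0$ for every $\beta<\mu(E)$. Thus the borderline case costs nothing and imposes no lower constraint on $\beta$.

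The remaining, and decisive, case is $\delta_{10}(F,E)<0$. Here $\delta_{10}(F,E)$ is a negative integer, so $\delta_{10}(F,E)\le-1$; this integrality is important, since it prevents the slopes $\mu(F)$ from accumulating at $\mu(E)$ and hence prevents the parabola $t_{F,E}(\oalpha,\cdot)$ from degenerating. For each such $F$, $t_{F,E}(\oalpha,\cdot)$ is a downward parabola in $\beta$ whose leading behaviour is $\nu_{\oalpha,\beta}(F)-\nu_{\oalpha,\beta}(E)\to\tfrac12(\mu(F)-\mu(E))<0$, so it is negative for $\beta$ sufficiently small; the threshold is its smaller root $\beta_1(F)$. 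The entire content of the lemma is that $\beta_1(F)$ is bounded below \emph{uniformly} in $F$, and I expect this uniform bound to be the main obstacle.

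I would extract it from a boundedness argument rather than from any a priori finiteness of the $\nu$-walls (which is exactly what the section sets out to avoid). It suffices to bound those $F$ arising as the maximal $\nu_{\oalpha,\beta}$-destabilizing sub-object; such an $F$ is $\nu_{\oalpha,\beta}$-semistable, so $Q^{\rm tilt}(F)\ge0$ by \eqref{B-ineq} (equivalently, by the support property, Proposition \ref{support_property}). Comparing imaginary parts $0\le\ch_1^\beta(F)\le\ch_1^\beta(E)$ confines $\ch_0(F)$ to the finite range $0\le\ch_0(F)\le\ch_0(E)$, the inequality $\mu(F)\le\mu(E)$ bounds $\ch_1(F)$ from above, and $Q^{\rm tilt}(F)\ge0$ then bounds $\ch_2(F)$ from above; the delicate step is to combine these with the analogous inequalities for the complementary factor $G=E/F$ so as to confine $\ch_{\le2}(F)$ to a finite set depending only on $\ch_{\le2}(E)$ and $\oalpha$. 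Granting this, $\beta_0:=\min\{\mu(E),\ \min_F\beta_1(F)\}$ taken over the finite set of destabilizing classes is the required threshold, and by construction it depends only on $\ch_{\le2}(E)$ and $\oalpha$.
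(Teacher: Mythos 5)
Your reduction and the two easy cases are sound: $E\in\cohb$ for $\beta<\mu(E)$; every sub-object $F\into E$ in $\cohb$ is a sheaf containing $M=\calh^{-1}(E/F)\in\free\beta$ with quotient a genuine subsheaf of $E$; the expansion $t_{F,E}=\tfrac12\delta_{10}\beta^2-\delta_{20}\beta+\delta_{21}+\tfrac12\oalpha^2\delta_{10}$ and the sign $\delta_{10}(F,E)\le0$ are correct; and your treatment of the borderline case $\delta_{10}(F,E)=0$ via the identity $\delta_{21}(F,E)=\mu(E)\delta_{20}(F,E)$ is complete and usefully explicit about where $\mu_{\le2}$-semistability enters. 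The problem is exactly where you flagged it, and it is fatal as written: the uniform lower bound on the smaller root $\beta_1(F)$ in the case $\delta_{10}(F,E)<0$ is the entire content of the lemma, and you only ``grant'' it. Worse, the one concrete step you do assert toward it is false: at a fixed $(\oalpha,\beta)$ the inequalities $0\le\ch_1^\beta(F)\le\ch_1^\beta(E)$ do \emph{not} confine $\ch_0(F)$ to $[0,\ch_0(E)]$, because $F$ contains $M=\calh^{-1}(E/F)$, whose rank is unbounded, and the constraint on $\ch_1^\beta(F)$ can be met by $\ch_1(F)$ close to $\beta\ch_0(F)$. The paper's own Section \ref{ideal line} provides a counterexample: $E=I_L$ has rank $1$, yet the (maximal) destabilizing sub-object on its unique actual $\nu$-wall is $\op3(-1)^{\oplus2}$, of rank $2$, with quotient $\op3(-2)[1]$; the imaginary-part inequalities hold there. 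Integrality of $\delta_{10}$ does not rescue the argument either, since $\delta_{20}(F,E)$ and $\delta_{21}(F,E)$ can grow without bound relative to $\delta_{10}(F,E)$, driving $\beta_1(F)\to-\infty$ over the candidate classes.

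What is missing is a genuine boundedness theorem, and there are two standard ways to get it. One is the convexity route: on the locus $\nuab(F)=\nuab(E)$, with $F$ and $G=E/F$ both $\nuab$-semistable, one has $Q^{\rm tilt}(F)\ge0$, $Q^{\rm tilt}(G)\ge0$, and a convexity inequality of the shape $Q^{\rm tilt}(F)+Q^{\rm tilt}(G)\le Q^{\rm tilt}(E)$; these three facts combined with $0\le\ch_1^\beta(F)\le\ch_1^\beta(E)$ do bound $\ch_{\le2}(F)$ --- but this is precisely the proof of nestedness/local finiteness of $\nu$-walls, i.e.\ the theorem you set out to avoid invoking. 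The other is the paper's route, which is much shorter: it quotes the known structure of numerical $\nu$-walls (non-intersecting semicircles, all contained in a fixed semicircle depending only on $\ch_{\le2}(E)$, with actual walls locally finite) and applies Lemma \ref{asymp_B_is_sheaf}(2), whose monotonicity computation $\partial_\alpha\bigl(\nuab(E)-\nuab(F)\bigr)\ge0$ (valid because $\delta_{01}(E,F)\le0$ for $\mu$-semistable $E$) shows that a $\mu$-semistable sheaf can be destabilized at most at one actual $\nu$-wall in $\{\beta<\mu(E)\}$; taking $\beta_0$ where that unique wall meets $\{\alpha=\oalpha\}$ finishes the proof. Your proposal, as it stands, establishes neither of these and so does not prove the lemma.
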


\begin{proof}
First, note that $E$ is $\mu$-semistable; thus $E\in\tors\beta\subset\cohb$ whenever $\beta<\mu(E)$.
Then Lemma~\ref{asymp_B_is_sheaf}\eqref{one_nu_wall} implies that there is at most one actual $\nu$-wall for $E$ containing the point $(\oalpha,\beta_0)$  (set $\beta_0=\mu(E)$ if the wall does not exist). It follows that $E$ is $\nu_{\oalpha,\beta}$-stable for $\beta<\beta_0$.
\end{proof}

This completes the proof of the first part of Theorem~\ref{a nu sst}. 

\begin{remark}\label{h nu-ss rk0}
Recall the definition of $\hat\mu$-stability from Definition~\ref{muhat-stab}.
We will also need the following version of Lemma~\ref{h nu-ss} for torsion sheaves: given any fixed $\oalpha>0$, if $T\in\coh(X)_2$ is $\hat{\mu}$-semistable, then there is a $\beta_0<0$ such that $T$ is $\nu_{\oalpha,\beta}$-semistable for every $\beta<\beta_0$. The proof of this claim is similar to the proof of Lemma~\ref{h nu-ss}.
\end{remark}

We can now describe an \emph{asymptotic Harder--Narasimhan filtration} for unstable objects as well.

\begin{proposition}\label{asymp_HN}
Let $\gamma(t)=(\alpha(t),\beta(t))$ be a path satisfying $\lim_{t\to\infty}\beta(t)=-\infty$. If $B\in\dbx$ is an object for which there is a $t_0>0$ such that $B\in\calb^{\beta(t)}$ for all $t>t_0$, then $B$ admits a filtration in $\calb^{\beta(t)}$ 
$$ 0=B_0\subseteq B_1 \subset B_2 \subset \cdots \subset B_n=B $$
whose factors $G_k:=B_k/B_{k-1}$ are asymptotically $\nuab$-semistable and satisfy 
\begin{enumerate}
\item\label{asymp_HN1} $\nu_{\gamma(t)}(B_1)=+\infty$ for every $t>0$ if $B_1\ne0$;
\item\label{asymp_HN2} for each $k=2,\dots,n$, there is a $t_k>t_0$ such that $\nu_{\gamma(t)}(G_k)-\nu_{\gamma(t)}(G_{k+1})>0$ for every $t>t_k$.
\end{enumerate}
\end{proposition}

\begin{proof}
The first item of Lemma~\ref{asymp_B_is_sheaf} implies that $B$ must be a sheaf, so it admits a filtration as described in Lemma~\ref{hn-filtration}; this is the filtration we are looking for.

Indeed, each factor in the filtration of Lemma~\ref{hn-filtration} belongs to $\cohb$ for $\beta\ll0$, according to Lemma~\ref{h nu-ss} and Remark~\ref{h nu-ss rk0}. The property in item~\eqref{asymp_HN1}  is clear since $B_1\in\coh_1(X)$. Item~\eqref{asymp_HN2} is a consequence of the following claim: given $E,F\in\coh(X)$ with $\ch_{\le1}(E),\ch_{\le1}(F)\ne0$, we have $\nu_{\gamma(t)}(E)>\nu_{\gamma(t)}(F)$ for $t\gg0$ if and only if $\Lambda_{2}(E,F)>0$. This can be explicitly checked for the path $\gamma(t)=(\oalpha,-t)$ using the limits calculated in the proof of Lemma~\ref{asymp_nu_is_mu2}; the verification for more general paths is similar.
\end{proof}

\subsection{Unbounded $\boldsymbol{\Theta^*}$-curves}
It is tempting to think that if $B\in\cohb$ (respectively, $A\in\cohab$) for some $\alpha$, $\beta$, then $B^\vee\in\calb^{-\beta}$ (respectively, $A^\vee\in\mathcal{A}^{\alpha,-\beta}$). The problem is that the duals of objects $B$ in $\mathcal{F}_{\beta}$ are not necessarily in $\mathcal{T}_{-\beta}$ because it might be that $\mu^+(B)=\beta$ and then $\mu^-(B^\vee)=-\beta$. The analogous statement holds for objects $A$ in $\mathcal{F}_{\alpha,\beta}$. However, an \emph{asymptotic} version of this statement does hold. 

First we prove a technical lemma which allows us to turn Proposition~\ref{asymp_HN} about the existence of asymptotic Harder--Narasimhan filtrations into a more precise bound on $\nuab^-$ so long as we constrain the unbounded curve we move along. To this end, we introduce the following definition.

\begin{definition}\label{theta-curve}
A path $\gamma\colon (0,\infty)\to\HH$ with $\gamma(t)=(\alpha(t),\beta(t))$ is called \emph{an unbounded $\Theta^-$-curve} if $\lim_{t\to\infty}\beta(t)=-\infty$ and 
for all $v$ such that $v_{\leq1}\neq(0,0)$, $\nu_{\gamma(t)}(v)>0$ for all $t$ sufficiently large. Equivalently, the curve is asymptotically bounded by $\Theta^-_v$; \textit{i.e.}
\[\lim_{t\to\infty}\frac{\dot\alpha(t)}{\dot\beta(t)}>-1.\]
We define the \emph{dual curve} $\gamma^*$ by $\gamma^*(t)=(\alpha(t),-\beta(t))$.
Similarly, we define $\gamma$ to be \emph{an unbounded $\Theta^+$-curve} if $\lim_{t\to\infty}\beta(t)=+\infty$ and for all $v$ such that $v_{\leq1}\neq(0,0)$, $\nu_{\gamma(t)}(v)<0$ for all $t$ sufficiently large.  
\end{definition}

In particular, $\gamma$ is an unbounded $\Theta^-$-curve if and only if $\gamma^*$ is an unbounded $\Theta^+$-curve.

\begin{remark}
If $v_0=0$ while $v_1\neq0$, then the first condition implies the second; indeed, we have $\nu_{\gamma(t)}(v)=v_2/v_1-\beta(t)$, and the first condition implies that this is positive for all $t\gg0$. 

When  $v_0\neq0$, the condition on $\nu_{\gamma(t)}$ is satisfied, for example, if there is an $\epsilon>0$ such that for all $t\gg0$, $\alpha(t)^2<\beta(t)^2(1-\epsilon)$: we have
\[ \lim_{t\to\infty} \dfrac{\nu_{\gamma(t)}(v)}{-\beta(t)} > \lim_{t\to\infty} \frac{v_2-\beta(t) v_1+\beta(t)^2\epsilon v_0/2}{-v_1\beta(t)+v_0\beta(t)^2} = \dfrac{\epsilon}{2}>0. \]
So again this is positive for all $t\gg0$.
\end{remark}

\begin{example}\label{Gamma_is_gamma}
Note that, since $s>0$, $\alpha^2<\beta^2(1-\epsilon)$ sufficiently far along
$\Gamma^-_{v,s}$, and so $\Gamma^-_{v,s}$ is an unbounded $\Theta^-$-curve.
\end{example}

\begin{lemma}\label{l34}
Let $\gamma$ be an unbounded $\Theta^-$-curve. If $B\in\mathcal{B}^{\beta(t)}$  for all $t\gg0$ and $\ch_{\leq1}(B)\neq(0,0)$, then there exists a $t_0>0$ such that $\nu^-_{\gamma(t)}(B)>0$ for all $t>t_0$. 
\end{lemma}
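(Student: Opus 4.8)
The plan is to reduce the statement to a claim about the finitely many factors of a Harder--Narasimhan-type filtration of $B$ and then invoke the defining inequality of an unbounded $\Theta^-$-curve. First, since $\lim_{t\to\infty}\beta(t)=-\infty$, the object $B$ lies in $\cohb$ for values $\beta=\beta(t)\to-\infty$, so Lemma~\ref{asymp_B_is_sheaf}(1) shows that $B$ is a genuine coherent sheaf. I would then apply Lemma~\ref{hn-filtration} (equivalently, extract it from the proof of Proposition~\ref{asymp_HN}) to obtain a filtration $0=B_0\subseteq B_1\subset\cdots\subset B_n=B$ in $\coh(X)$ whose factors $G_k:=B_k/B_{k-1}$ consist of a sheaf $G_1$ of dimension at most $1$ together with factors $G_k$ ($k\ge2$) that are either $\hat\mu$-semistable of dimension $2$ or $\mu_{\le2}$-semistable and torsion free, hence of dimension at least $2$. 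Because $\mu^-(B_k)$ is finite (or $+\infty$) for each of the finitely many $B_k$, for $t$ large we have $\beta(t)<\min_k\mu^-(B_k)$, so every $B_k$ lies in $\tors{\beta(t)}\subset\calb^{\beta(t)}$ and the filtration becomes a filtration in $\calb^{\beta(t)}$.

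Next I would use the standard see-saw estimate: for a short exact sequence $0\to B'\to B\to B''\to0$ in $\calb^{\beta(t)}$ one has $\nu^-_{\gamma(t)}(B)\ge\min\bigl(\nu^-_{\gamma(t)}(B'),\nu^-_{\gamma(t)}(B'')\bigr)$, obtained by pushing an arbitrary quotient $B\onto Q$ onto $\im(B'\to Q)$ and its cokernel. Iterating over the filtration gives
\[\nu^-_{\gamma(t)}(B)\ge\min_{1\le k\le n}\nu^-_{\gamma(t)}(G_k),\]
so it suffices to bound each factor's $\nu^-$ below by a positive number for all $t\gg0$. The factor $G_1$ has $\ch_1^{\beta}(G_1)=0$, so every quotient of it has infinite $\nu$-slope and $\nu^-_{\gamma(t)}(G_1)=+\infty$.

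For the remaining factors the key point --- and the main obstacle --- is to upgrade the \emph{asymptotic} semistability supplied by Proposition~\ref{asymp_HN} to \emph{genuine} $\nu_{\gamma(t)}$-semistability at each fixed large $t$, with a threshold uniform over the infinitely many possible quotients. Here I would use that all actual $\nu$-walls for the fixed class $\ch(G_k)$ are contained in a bounded band of $\beta$-values (they are nested semicircles centred on the $\beta$-axis, all lying inside a single fixed semicircle). Since $\beta(t)\to-\infty$, for $t$ large the point $\gamma(t)$ lies to the left of this entire band \emph{regardless of the value of} $\alpha(t)$, hence below all walls for $\ch(G_k)$; by Lemma~\ref{h nu-ss} and Remark~\ref{h nu-ss rk0} the sheaf $G_k$ is then genuinely $\nu_{\gamma(t)}$-semistable, giving $\nu^-_{\gamma(t)}(G_k)=\nu_{\gamma(t)}(G_k)$. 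Finally, each $G_k$ with $k\ge2$ has $\ch_{\le1}(G_k)\ne(0,0)$, so the defining property of an unbounded $\Theta^-$-curve in Definition~\ref{theta-curve} yields $\nu_{\gamma(t)}\bigl(\ch(G_k)\bigr)>0$ for all $t$ sufficiently large. Taking $t_0$ to be the maximum of the finitely many thresholds produced above, the displayed inequality gives $\nu^-_{\gamma(t)}(B)>0$ for all $t>t_0$, as required; the hypothesis $\ch_{\le1}(B)\ne(0,0)$ guarantees that at least one genuine factor of dimension $\ge2$ is present, so that the resulting bound is nontrivial.
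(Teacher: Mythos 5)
Your proof is correct, and its skeleton is the same as the paper's: both reduce to the filtration of Lemma \ref{hn-filtration} (which is exactly what Proposition \ref{asymp_HN} packages), and then apply the defining property of an unbounded $\Theta^-$-curve to the Chern characters of the finitely many factors, the dimension $\le1$ piece contributing slope $+\infty$. The real point of comparison is how the uniformity-in-$t$ issue is resolved. The paper's proof is terse: it cites Proposition \ref{asymp_HN} to assert that a single \emph{fixed} Harder--Narasimhan factor $B_0$ computes $\nu^-_{\gamma(t)}(B)$ for all large $t$, and then applies the $\Theta^-$-condition to $B_0$ alone; the genuine (not merely asymptotic) semistability of the factors that this implicitly requires comes from Lemma \ref{asymp_B_is_sheaf}(2): a $\mu$-semistable sheaf has a unique actual $\nu$-wall left of $\Mu_v$, a semicircle, so it is $\nu_{\alpha,\beta}$-semistable for all $\beta$ to the left of that semicircle, uniformly in $\alpha$. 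You instead run a see-saw inequality over all factors (which has the small advantage of never needing the filtration to be an actual Harder--Narasimhan filtration at finite $t$) and close the semistability gap by invoking the global boundedness of actual $\nu$-walls inside a fixed semicircle of radius $R_{\rm max}$. That fact is indeed stated in Section \ref{sec:nu-walls}, so your argument is legitimate; but note the paper deliberately avoids leaning on such global wall-structure statements in this part of the paper (see the discussion following the statement of Theorem \ref{a nu sst}), since their analogues are unknown for $\lambda$-stability, and the single-wall statement of Lemma \ref{asymp_B_is_sheaf}(2) already yields the same uniformity with lighter input.
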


\begin{proof}
Using Remark~\ref{asymp_HN}, there is a $t_0$ large enough so that there is a fixed Harder--Narasimhan factor of $B$, say $B_0\in\mathcal{B}^{\beta(t)}$, which satisfies $\nu_{\gamma(t)}(B_0)=\nu^-_{\gamma(t)}(B)$ for all $t>t_0$. If $\ch_0(B_0)=\ch_1(B_0)=0$, then $\nu_{\gamma(t)}(B_0)=\infty$ for all $t\gg0$,  and we are done. Otherwise, $\ch_{\leq1}(B_0)\neq(0,0)$. Then $\nu_{\gamma(t)}(B_0)$ is asymptotically positive by the assumption on $\gamma(t)$; thus $\nu_{\gamma(t)}^-(B)>0$ for $t\gg0$, as desired. 
\end{proof}

Combining this with Proposition~\ref{B-cohom-dual}, we deduce the following. 

\begin{proposition}\label{dual_in_A}
Consider an unbounded $\Theta^-$-curve $\gamma$ as above. Then the following are equivalent for an object $E\in D^b(X)$:
\begin{enumerate}
\item\label{dual_in_A1} $E\in\mathcal{A}^{\gamma(t)}$ for all $t\gg0$.
\item\label{dual_in_A2} $E\in\mathcal{B}^{\beta(t)}$ for all $t\gg0$.
\item\label{dual_in_A3} $E\in\coh(X)$.
\end{enumerate}
If $E\in\coh(X)$ contains no subsheaf of dimension $0$, then there is a $t_0>0$ such that for all $t>t_0$, $E^\vee\in\mathcal{A}^{\gamma^*(t)}$.
\end{proposition}

\begin{proof}
Lemma~\ref{l34} implies that~\eqref{dual_in_A2} implies~\eqref{dual_in_A1} and a similar a similar but simpler argument for $\mu^-(E)$ shows that~\eqref{dual_in_A3} implies~\eqref{dual_in_A2}. The idea for the converses is that if $E_1[1]\to E\to E_0$ is short exact in $\mathcal{A}^{\gamma(t_0)}$, then Lemma~\ref{l34} again shows that $\nu^-_{\gamma(t)}(E_1)$ becomes positive as $t$ increases, and then $E_1[1]\in\calb^{\beta(t)}$ for $t$ sufficiently large and so $E\in\calb^{\gamma(t)}$ for the same range of $t$. A similar argument shows~\eqref{dual_in_A2} implies~\eqref{dual_in_A3}.

Now assume $E$ is a coherent sheaf.
When $E$ is torsion-free, the claim follows immediately from Proposition~\ref{B-cohom-dual} and the observation that $\nu^+_{\gamma^*(t)}(E'^\vee)=-\nu^-_{\gamma(t)}(E')\to-\infty$ as $t\to\infty$. Otherwise, let $T\subset E$ be its maximal torsion subsheaf. Then the hypothesis implies that $T^\vee\in\cala^{\gamma^*(t)}$ for all $t\gg0$, and so dualizing $T\to E\to E/T$, we deduce the last part.
\end{proof}

\begin{corollary}\label{ses_in_A}
Suppose $\gamma$ is an unbounded $\Theta^-$-curve $\gamma$ and $E$ is a sheaf in $\mathcal{A}^{\gamma(t)}$ for all $t\geq0$.
A short exact sequence $\,0\to F\to E\to G\to 0$  in $\mathcal{A}^{\gamma(0)}\cap\coh(X)$  remains a short exact sequence in $\mathcal{A}^{\gamma(t)}$ for all $t\geq t_0$ for some $t_0\geq0$.
\end{corollary}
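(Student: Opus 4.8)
The plan is to obtain this as a direct consequence of Proposition \ref{dual_in_A}, combined with the standard fact that a distinguished triangle whose three vertices all lie in the heart of a bounded t-structure is automatically a short exact sequence in that heart. First I would record that, by hypothesis, $F$, $E$ and $G$ are all coherent sheaves, since they lie in $\mathcal{A}^{\gamma(0)}\cap\coh(X)$. Because $\gamma$ is an unbounded $\Theta^-$-curve, the equivalence of conditions (1) and (3) in Proposition \ref{dual_in_A} applies to each of them separately: for an object of $D^b(X)$, being a coherent sheaf is equivalent to lying in $\mathcal{A}^{\gamma(t)}$ for all $t\gg0$. Hence there are thresholds $t_F,t_G>0$ with $F\in\mathcal{A}^{\gamma(t)}$ for $t>t_F$ and $G\in\mathcal{A}^{\gamma(t)}$ for $t>t_G$; since $E\in\mathcal{A}^{\gamma(t)}$ for every $t\ge0$ by assumption, setting $t_0:=\max\{t_F,t_G\}$ guarantees that all three objects lie in $\mathcal{A}^{\gamma(t)}$ simultaneously whenever $t\ge t_0$.

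Next I would observe that the short exact sequence $0\to F\to E\to G\to 0$ of sheaves determines a single distinguished triangle $F\to E\to G\to F[1]$ in $D^b(X)$, and that this triangle is entirely independent of $t$: the objects and the morphisms are fixed, and only the t-structure, equivalently the heart $\mathcal{A}^{\gamma(t)}$, varies with $t$. For $t\ge t_0$ the three vertices of this fixed triangle all lie in $\mathcal{A}^{\gamma(t)}$. Applying the cohomology functors of the corresponding t-structure and using that $F$, $E$, $G$ each have cohomology concentrated in degree $0$, the long exact sequence of the triangle collapses to $0\to F\to E\to G\to 0$, which is therefore exact in $\mathcal{A}^{\gamma(t)}$, with the same maps as the original sheaf sequence. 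This is the required short exact sequence in $\mathcal{A}^{\gamma(t)}$.

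The only point requiring genuine care is the \emph{simultaneous} membership of all three objects in the heart. An individual sheaf such as $F$ may perfectly well leave $\mathcal{A}^{\gamma(t)}$ for intermediate values of $t$ (indeed the discussion in Section \ref{ss:variation_along_paths} describes exactly how objects can cross $\Theta^\pm$ curves), so one cannot hope for the sequence to stay exact for every $t\ge0$. This is precisely why the conclusion is asserted only for $t\ge t_0$: the argument sidesteps the difficulty by appealing to the purely asymptotic conclusion of Proposition \ref{dual_in_A} and choosing $t_0$ large enough to reabsorb $F$, $E$ and $G$ into the heart at once. Once this simultaneous membership is secured, the passage from the distinguished triangle to a short exact sequence in the heart is formal, so no further estimates are needed.
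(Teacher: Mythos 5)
Your proof is correct, and it is a legitimate derivation of the corollary from Proposition \ref{dual_in_A}; the triangle-with-vertices-in-the-heart argument at the end is the standard formal fact and is applied properly. It is, however, not quite the route the paper takes. The paper's proof never invokes Proposition \ref{dual_in_A} termwise: it first observes that the sequence is exact in $\mathcal{B}^{\beta(0)}$ and stays exact in $\mathcal{B}^{\beta(t)}$ for all $t>0$ (sheaves in $\tors{\beta}$ remain in $\tors{\beta'}$ as $\beta'$ decreases), then treats the two outer terms asymmetrically: for the quotient $G$ it uses the monotonicity $\nu^-_{\gamma(t)}(G)\geq\nu^-_{\gamma(t)}(E)>0$, valid because any $\cohb$-quotient of $G$ is a quotient of $E$, to conclude $G\in\mathcal{A}^{\gamma(t)}$ for \emph{all} $t>0$; only for the subobject $F$ does it need the asymptotic positivity of $\nu^-$ along an unbounded $\Theta^-$-curve (Lemma \ref{l34}). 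Your version is more modular — you black-box Proposition \ref{dual_in_A} and treat $F$ and $G$ symmetrically — but both arguments ultimately run on the same engine, since Proposition \ref{dual_in_A} is itself proved via Lemma \ref{l34}. What the paper's hands-on variant buys is slightly sharper information: the quotient never obstructs exactness, so the threshold $t_0$ is governed by the subobject $F$ alone; what yours buys is brevity and independence from the internal structure of $\cohb$ along the path.
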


\begin{proof}
Observe that $0\to F\to E\to G\to 0$ is also a short exact sequence in $\mathcal{B}^{\beta(0)}$ by the first part of Proposition~\ref{sheaf_subobjects} and so also in $\mathcal{B}^{\beta(t)}$ for all $t>0$ since $\gamma$ is a $\Theta^-$-curve and so remains to the left of any $\Theta$-curve. But $\nu^-_{\gamma(t)}(G)\geq\nu^-_{\gamma(t)}(E)$. So $G\in\mathcal{A}^{\gamma(t)}$ for all $t>0$. The same is not necessarily true for $F$. But $\nu^-_{\gamma(t)}(F)>0$ for all $t\gg0$.
\end{proof}

\subsection{Asymptotics along $\boldsymbol{\Lambda_{\oalpha}^+}$}

We now move to the proof of the second part of Theorem~\ref{a nu sst}, starting with a characterization of objects lying in $\cohb$ for $\beta\gg0$.

{\samepage
\begin{lemma}\label{b+inf}
Let $B\in\dbx$. There is a $\beta_0>0$ such that $B\in\cohb$ for every $\beta>\beta_0$ if and only if the following conditions hold:
\begin{enumerate}
\item $\calh^p(B)=0$ for $p\ne-1,0$.
\item $\calh^{-1}(B)$ is a torsion-free sheaf.
\item $\calh^{0}(B)$ is a torsion sheaf.
\end{enumerate}
\end{lemma}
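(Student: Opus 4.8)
The plan is to reduce everything to the pointwise description of $\cohb$ recorded just after the definition of the first tilt: an object $B$ lies in $\cohb$ if and only if $\calh^p(B)=0$ for $p\neq-1,0$, $\calh^{-1}(B)\in\free\beta$, and $\calh^{0}(B)\in\tors\beta$. With this in hand the lemma becomes a matter of isolating which of the three membership conditions depends on $\beta$ and checking that those conditions stabilise as $\beta\to+\infty$.

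For the forward implication I would assume $B\in\cohb$ for every $\beta>\beta_0$ and read off the three conditions from the characterisation. The vanishing $\calh^p(B)=0$ for $p\neq-1,0$ is $\beta$-independent, giving (1), and $\calh^{-1}(B)\in\free\beta$ forces $\calh^{-1}(B)$ to be torsion free, giving (2). For (3) I would argue by contradiction: if $T:=\calh^{0}(B)$ were not torsion, then $\ch_0(T)>0$, so the quotient $G:=T/T'$ of $T$ by its torsion subsheaf $T'$ is a nonzero torsion-free quotient of finite slope $\mu(G)=\ch_1(G)/\ch_0(G)$. Since $T\in\tors\beta$ requires $\mu(G)>\beta$, and this cannot hold for all $\beta>\beta_0$, we conclude $T$ is torsion.

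For the converse I would assume (1)–(3) and produce the threshold $\beta_0$ explicitly. Condition (1) gives the cohomological vanishing for every $\beta$. Because $\calh^{0}(B)$ is torsion by (3), every nonzero quotient of it is again torsion and hence of slope $+\infty>\beta$, so $\calh^{0}(B)\in\tors\beta$ for all $\beta$ with no constraint imposed. The only genuinely $\beta$-dependent condition is then $F:=\calh^{-1}(B)\in\free\beta$; here $F$ is torsion free by (2), and $F\in\free\beta$ is equivalent to $\mu^+(F)\le\beta$. Since $\mu^+(F)$ is finite, being the top slope of the $\mu$-Harder--Narasimhan filtration of the finite-rank torsion-free sheaf $F$, setting $\beta_0:=\mu^+(F)$ gives $F\in\free\beta$ for every $\beta>\beta_0$, and assembling the three conditions yields $B\in\cohb$ for all such $\beta$.

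Both directions are thus reductions to the stated characterisation of $\cohb$ together with the finiteness of $\mu^+$ for torsion-free sheaves, so I do not anticipate a serious obstacle. The one step meriting care is the slope estimate in the forward direction: one must make sure that a non-torsion $\calh^{0}(B)$ genuinely produces a \emph{finite}-slope torsion-free quotient, which is precisely what violates membership in $\tors\beta$ once $\beta$ exceeds that slope.
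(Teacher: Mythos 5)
Your proof is correct and takes essentially the same approach as the paper: the converse is identical (set $\beta_0=\mu^+(\calh^{-1}(B))$ and use that a torsion sheaf lies in $\tors\beta$ for every $\beta$), and your forward-direction argument for (3) via the finite slope of the torsion-free quotient $T/T'$ is just a structural rephrasing of the paper's numerical observation that $\lim_{\beta\to\infty}\ch_1^\beta(P)/\beta=-\ch_0(P)\ge0$ forces $\ch_0(P)=0$.
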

}

\begin{proof}
Set $E:=\calh^{-1}(B)$ and $P:=\calh^{0}(B)$. If $B\in\cohb$ for every $\beta\gg0$, then the first two items follow immediately. For the third one, just note that 
$$ \lim_{\beta\to\infty} \dfrac{\ch_1^\beta(P)}{\beta}=-\ch_0(P)\ge0; $$
thus $\ch_0(P)=0$.

Conversely, we have that $E\in\free\beta$ for $\beta\ge\mu^+(E)$ and $P\in\tors\beta$ for every $\beta$; it  follows immediately that $B\in\cohb$ for $\beta\ge\mu^+(E)$. 
\end{proof}

\begin{lemma}\label{nu h+:1}
If $B\in\dbx$ is asymptotically $\nuab$-semistable along $\Lambda_{\oalpha}^+$, then $S:=B^\vee[-1]$ is a $\mu_{\le2}$-semistable sheaf such that $S^{**}/S$ either is empty or has pure dimension $1$.
\end{lemma}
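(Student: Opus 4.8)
The plan is to identify $B$ explicitly as a shift of the derived dual of a torsion free sheaf, and then transport the asymptotic $\nuab$-stability across the duality in order to reduce to Lemma~\ref{asymp_nu_is_mu2}. First I would record the shape of $B$: since $B$ is asymptotically $\nuab$-semistable along $\Lambda_{\oalpha}^+$ it lies in $\cohb$ for all $\beta\gg0$, so Lemma~\ref{b+inf} gives $E:=\calh^{-1}(B)$ torsion free, $P:=\calh^{0}(B)$ torsion, and a short exact sequence $0\to E[1]\to B\to P\to0$ in $\cohb$ with $\calh^p(B)=0$ otherwise; note $\ch_0(B)=-\rk(E)\neq0$. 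The two principles I would exploit are that asymptotic semistability forbids any nonzero sub-object of $B$ in $\cohb$ of slope $\nuab=+\infty$ (i.e.\ with $\ch^\beta_1=0$), because $\nu_{\oalpha,\beta}(B)$ stays finite, and dually forbids a quotient whose $\nuab$ decays strictly faster than that of $B$.

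Applying the sub-object principle, if $E$ were not reflexive then the quotient $Q_E:=E^{**}/E$ (of dimension $\le1$, hence $\ch_1=0$ and $\nuab=+\infty$) would embed as a sub-object: the triangle $Q_E\to E[1]\to E^{**}[1]$ has all terms in the heart $\cohb$ for $\beta\gg0$, hence is a short exact sequence, and composing $Q_E\into E[1]\into B$ contradicts semistability; thus $E$ is reflexive. The same principle applied to the connecting map $\partial\colon P\to E[2]$ of the sequence for $B$ shows that every nonzero subsheaf $G'\subseteq P$ has $\partial|_{G'}\neq0$, since otherwise $G'\into P$ would lift to a sub-object $G'\into B$ of slope $+\infty$. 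Finally the quotient principle rules out a $2$-dimensional part of $P$: a pure $2$-dimensional quotient $P''$ of $P$, hence of $B$, satisfies $\nu_{\oalpha,\beta}(P'')\sim-\beta$ while $\nu_{\oalpha,\beta}(B)\sim-\beta/2$, so $\nu(P'')<\nu(B)$ for $\beta\gg0$; hence $\dim P\le1$.

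These three facts are exactly the hypotheses of Proposition~\ref{a=dual} for the object $A:=B[1]$: indeed $\calh^{-2}(A)=E$ is reflexive, $\calh^{-1}(A)=P\in\coh(X)_1$, $\calh^{0}(A)=\calh^1(B)=0\in\coh(X)_0$, and the lifting conditions (5)–(6) are precisely the non-lifting statements just established. Therefore $A^\vee=B^\vee[-1]=S$ is a torsion free sheaf. Moreover $\calh^{0}(S^\vee)=\calh^{1}(B)=0$, and since $\calh^0(S^\vee)=\inext^2(S,\ox)\cong\inext^3(Z_S,\ox)$ for the maximal $0$-dimensional subsheaf $Z_S\subseteq S^{**}/S$ (see \eqref{e' sqc} and the surrounding discussion), we conclude $Z_S=0$; hence $S^{**}/S$ is empty or pure of dimension $1$, and $S'=S$.

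It remains to prove $\mu_{\le2}$-semistability of $S$, which I would deduce from Lemma~\ref{asymp_nu_is_mu2} by showing that $S$ is asymptotically $\nuab$-semistable along $\Lambda_{\oalpha}^-$. Here I would use the slope identity $\nu_{\oalpha,\beta}(S)=-\nu_{\oalpha,-\beta}(B)$, which follows from $\ch^\beta_1(S^\vee)=-\ch^{-\beta}_1(S)$ and $\rho_{S^\vee}(\oalpha,\beta)=\rho_S(\oalpha,-\beta)$ together with the shift-invariance of $\nuab$, combined with the fact that the contravariant functor $(-)^\vee$ followed by the reflection $\beta\mapsto-\beta$ interchanges sub-objects of $B$ in $\cohb$ for $\beta\gg0$ with quotient objects of $S$ in $\mathcal{B}^{-\beta}$. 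Concretely, a subsheaf $F\subseteq S$ destabilizing it along $\Lambda_{\oalpha}^-$ dualizes, via Proposition~\ref{B-cohom-dual}, to a quotient of $B$ along $\Lambda_{\oalpha}^+$ violating the quotient form of semistability, the sign flip converting the sub-object inequality into the required quotient inequality. The main obstacle is making this sub/quotient correspondence precise at the level of the first tilt $\cohb$: one must check that the lower-dimensional pieces $\inext^{>0}(F,\ox)$ and $Z_F$ produced by the dualization do not disturb the asymptotic comparison of $\delta_{10}$ and then $\delta_{20}$ that underlies Lemma~\ref{asymp_nu_is_mu2}. This is exactly where the structural facts $S'=S$ and the purity of $S^{**}/S$ proved above are needed, and with them in hand Lemma~\ref{asymp_nu_is_mu2} yields that $S$ is $\mu_{\le2}$-semistable, completing the proof.
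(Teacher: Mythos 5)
Your first several steps reproduce the paper's own proof essentially verbatim: the shape of $B$ from Lemma \ref{b+inf}, reflexivity of $E=\calh^{-1}(B)$ by embedding $Q_E$ as a sub-object of slope $+\infty$, the exclusion of a $2$-dimensional piece of $P=\calh^{0}(B)$ by the asymptotic quotient-slope comparison, the non-lifting of subsheaves of $P$, and the application of Proposition \ref{a=dual} to $B[1]$ together with $\calh^1(B)=0$ and $\inext^2(S,\ox)\cong\inext^3(Z_S,\ox)$ to conclude that $S$ is torsion free with $S^{**}/S$ empty or pure of dimension $1$. All of that is correct and is exactly the paper's argument.

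The gap is in the final step, and it is genuine. The paper does \emph{not} prove $\mu_{\le2}$-semistability by transporting asymptotic semistability across $(-)^\vee$; it argues directly: for a quotient sheaf $G$ of $S$ it takes the \emph{underived} dual $G^*=\inhom(G,\ox)$, which is a subsheaf of $S^*=\calh^{-1}(B)$ with no hypotheses whatsoever on $G$, so that $G^*[1]\into B$ is an honest sub-object of $B$ in $\cohb$ for $\beta\gg0$, and then it applies the asymptotic semistability of $B$ to these sub-objects and reads off the $\delta_{10}$ and $\delta_{20}$ inequalities from the limits. Your route instead needs the derived dual of a destabilizing sequence $0\to F\to S\to G\to 0$ to become a short exact sequence in $\mathcal{B}^{\beta}$ for $\beta\gg0$, and that is exactly where it breaks: by Lemma \ref{b+inf} (equivalently Proposition \ref{B-cohom-dual}), $G^\vee[-1]$ lies in $\cohb$ for $\beta\gg0$ only if $G$ is torsion free and $G^{**}/G$ has no $0$-dimensional subsheaf, and $F^\vee[-1]$ only if $Z_F=0$. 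These are conditions on the \emph{destabilizer}, not on $S$, and the structural facts you invoke ($S'=S$, purity of $S^{**}/S$) say nothing about them: a reflexive $S$, for which $S^{**}/S=0$, has plenty of saturated subsheaves whose quotients fail these conditions. Moreover, the "lower-dimensional pieces" you propose to ignore are not negligible at the order you need: dualization shifts $\ch_2$ by the effective class $\ch_2\bigl(\inext^1(\cdot,\ox)\bigr)$ — for instance $\ch_2(I_L^*)=\ch_2(\op3)=0$ while $\ch_2(I_L)=-1$ — and this shift enters precisely at the second-order stage of the asymptotic expansion, i.e.\ in the $\delta_{20}$ comparison that distinguishes $\mu_{\le2}$-semistability from mere $\mu$-semistability, and with the unfavourable sign. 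So the sentence "with them in hand Lemma \ref{asymp_nu_is_mu2} yields that $S$ is $\mu_{\le2}$-semistable" is an assertion, not a proof; the sub-object/quotient correspondence across duality at the level of the tilted heart is the actual content of this lemma and is left unestablished. (A smaller logical wrinkle: invoking Lemma \ref{asymp_nu_is_mu2} requires full asymptotic semistability of $S$, i.e.\ control of \emph{all} tilted sub-objects, whereas what you sketch is the contrapositive — a destabilizing subsheaf of $S$ yields a destabilizing object for $B$ — which, if carried out, would need no appeal to that lemma at all.)
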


\begin{proof}
For simplicity, set $E:=\calh^{-1}(B)$ and $P:=\calh^{0}(B)$. We start by checking that $E$ is reflexive and $\dim P\le1$.

Indeed, if $E$ is not reflexive, then $Q_E:=E^{**}/E\in\tors\beta\subset\cohb$ for every $\beta$; thus $Q_E$ is a sub-object of $B$ within $\cohb$. But $\nuab(Q_E)=+\infty$ for every $(\alpha,\beta)\in\HH$, so we have a contradiction with the asymptotic $\nuab$-semistability of $B$. 

Lemma~\ref{b+inf} implies that $\ch_0(P)=0$. If $\ch_1(P)\ne0$, then
$$ \lim_{\beta\to\infty}\dfrac{1}{\beta}(\nu_{\oalpha,\beta}(B)-\nu_{\oalpha,\beta}(P)) = \dfrac{1}{2}, $$
also contradicting the asymptotic $\nuab$-semistability of $B$.

Any subsheaf $U$ of $P$ has dimension less than $1$, so $\nuab(U)=+\infty$, and hence $U$
cannot lift to a sub-object of $B$. By Proposition~\ref{a=dual}, we conclude that $S:=B^\vee[-1]$ is a torsion-free sheaf. Note that $\inext^3(S^{**}/S,\ox)\simeq\inext^2(S,\ox)=\calh^1(B)=0$, so $S^{**}/S$ cannot have a $0$-dimensional subsheaf.

Let $G$ be a quotient sheaf of $S$. Then $G^{*}$ is a subsheaf of $S^*=\calh^{-1}(B)$, and $G^*[1]\into B$ is a sub-object within $\cohb$ for every $\beta>\mu^+(G^*)$. Note that $\delta_{10}(G^*[1],B)=\delta_{10}(S,G)$, so
$$ \lim_{\beta\to\infty} (\nu_{\oalpha,\beta}(G^*[1])-\nu_{\oalpha,\beta}(B)) = \dfrac{1}{2}\dfrac{\delta_{10}(S,G)}{\ch_0(G)\ch_0(S)} \le 0 $$
by asymptotic $\nuab$-semistability, so $S$ is $\mu$-semistable.

Next, note that $\delta_{20}(G^*[1],B)=\delta_{20}(G,S)$; thus if $\delta_{10}(S,G)=0$, then
$$ \lim_{\beta\to\infty}(\beta)(\nu_{\oalpha,\beta}(G^*[1])-\nu_{\oalpha,\beta}(B)) = -\dfrac{\delta_{20}(G,S)}{\ch_0(G)\ch_0(S)}\le0, $$
implying that $\delta_{20}(S,G)\le0$, meaning that $S$ is $\mu_{\le2}$-semistable.
\end{proof}

Finally, we provide the converse of the previous lemma, thus concluding the proof of Theorem~\ref{a nu sst}. For $E\in\coh(X)$, note that Lemma~\ref{b+inf} implies that $E^\vee[-1]\in\cohb$ for $\beta\gg0$ if and only if $E$ has no subsheaf of dimension at most $1$, and the cokernel of the canonical morphism $E\to E^{**}$ has pure dimension $1$. 

\begin{lemma}\label{nu h+:2}
If $S$ is a $\mu_{\le2}$-semistable sheaf such that $S^{**}/S$ is either empty or has pure dimension $1$, then there is a $\beta_0>0$ such that $S^\vee[-1]$ is $\nu_{\oalpha,\beta}$-semistable for all $\beta>\beta_0$.
\end{lemma}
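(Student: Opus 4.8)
The plan is to realize $B:=S^\vee[-1]$ as an object of $\cohb$ for all $\beta\gg0$ and then argue, as in the reflected Lemma~\ref{h nu-ss}, that it can be destabilized on at most one $\nu$-wall in the region $\beta>\mu(B)$. Since $S^{**}/S$ is pure of dimension $1$ (or empty), its maximal $0$-dimensional subsheaf $Z_S$ vanishes, so $S'=S$ in the notation of \eqref{e' sqc}. Proposition~\ref{B-cohom-dual} then yields $\calh^{-1}_\beta(S^\vee)\simeq S^\vee[-1]=B$ and $\calh^0_\beta(S^\vee)\simeq Z_S^\vee[1]=0$ for every $\beta>\mu^+(S^*)$; combined with Lemma~\ref{b+inf} this shows $B\in\cohb$ for all such $\beta$, with $\calh^{-1}(B)=S^*$ reflexive, $\calh^0(B)=\inext^1(S,\ox)\in\coh(X)_1$ and $\calh^{\ge1}(B)=0$.

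The first point to dispose of is the subobjects of infinite $\nuab$-slope. Every nonzero subsheaf of $\calh^0(B)=\inext^1(S,\ox)$ has dimension $\le1$, hence vanishing $\ch_0$ and $\ch_1^\beta$, so it would give a destabilizer with $\nuab=+\infty$ for \emph{every} $(\alpha,\beta)$ if it lifted to a subobject of $B$. But $B^\vee=S[1]$ with $S$ torsion free, so applying Proposition~\ref{a=dual} to $A:=B[1]$ (whose cohomology sheaves are $\calh^{-2}(A)=S^*$, $\calh^{-1}(A)=\inext^1(S,\ox)$ and $\calh^0(A)=0$) tells us precisely, by its item (6), that no such monomorphism lifts. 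Thus any potential destabilizer of $B$ has finite slope.

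For the finite-slope destabilizers I would prove the analogue of Lemma~\ref{asymp_B_is_sheaf}(\ref{one_nu_wall}) on the side $\beta>\mu(B)$: if $F\into B$ is a $\nuab$-semistable subobject giving an actual $\nu$-wall through a point with $\beta>\mu(B)$, then this is the only such wall. The argument is the same $\alpha$-monotonicity computation enabled by Lemma~\ref{delta_tech}(\ref{delta_dervs}),
\[
\partial_\alpha\bigl(\nuab(B)-\nuab(F)\bigr)=-\frac{\alpha\,\delta_{01}(B,F)}{\ch_1^\beta(B)\,\ch_1^\beta(F)},
\]
the sign of $\delta_{01}(B,F)$ now being fixed by the $\mu$-semistability of $S$ (a consequence of its $\mu_{\le2}$-semistability): dualizing $F\into B$ produces a quotient of the torsion free sheaf $S$, and $\ch_i(B)=(-1)^{i+1}\ch_i(S)$ converts the resulting $\mu$-slope bound on $S$ into the required sign of $\delta_{01}(B,F)$. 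As in Lemma~\ref{asymp_B_is_sheaf} this definite sign confines destabilization to a single wall; and that it is the \emph{stable} phase which survives for $\beta\to\infty$ is exactly the content of the limit computations already carried out in the proof of Lemma~\ref{nu h+:1}, reread as the inequalities $\delta_{10}(S,G),\delta_{20}(S,G)\le0$ imposed by $\mu_{\le2}$-semistability of $S$. Letting $\beta_0$ be the $\beta$-coordinate of this unique wall along $\{\alpha=\oalpha\}$ (or $\beta_0=\mu^+(S^*)$ if it is absent) then gives the desired conclusion.

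I expect the middle steps to be the crux. Because dualization is only asymptotically compatible with the tilted hearts (as warned before Definition~\ref{theta-curve}), the translation between $\cohb$-subobjects of $B=S^\vee[-1]$ and sub/quotient sheaves of $S$ cannot be made by a naive application of $(-)^\vee$; it must be routed through the cohomology sheaves $\calh^{-1}(B)=S^*$ and $\calh^0(B)=\inext^1(S,\ox)$ together with Proposition~\ref{a=dual}. It is precisely the purity of $S^{**}/S$ that closes off the infinite-slope destabilizers which would otherwise obstruct semistability for every $\beta$, mirroring the torsion-subsheaf obstruction encountered in Lemma~\ref{asymp_nu_is_mu2}.
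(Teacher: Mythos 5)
Your skeleton does match the paper's proof (put $B=S^\vee[-1]$ into $\cohb$ via Proposition \ref{B-cohom-dual} and Lemma \ref{b+inf}, exclude infinite-slope destabilizers through Proposition \ref{a=dual}, and compare slopes asymptotically for the rest), but the two steps you defer are precisely where the content of the lemma lies, and as written both have genuine gaps. For the infinite-slope case: a destabilizer with $\nuab=+\infty$ for $\beta>\mu^+(S^*)$ is an \emph{arbitrary} monomorphism $F\into B$ in $\cohb$ with $\dim F\le 1$, and such an $F$ is not a priori a subsheaf of $\calh^0(B)=\inext^1(S,\ox)$ that lifts to $B$ --- which is the only thing Proposition \ref{a=dual}(6) forbids. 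Writing $G=B/F$ and taking the $\coh(X)$-cohomology sequence of $F\to B\to G$, the kernel of the induced map $F\to\calh^0(B)$ is $\calh^{-1}(G)/S^*$, a dimension $\le1$ quotient of a torsion-free sheaf containing $S^*$. The paper kills this kernel by dualizing $0\to S^*\to\calh^{-1}(G)\to \calh^{-1}(G)/S^*\to 0$: since a sheaf of dimension $\le1$ has vanishing $\inhom(-,\ox)$ and $\inext^1(-,\ox)$, one gets $\calh^{-1}(G)^*\simeq S^{**}$, hence $\calh^{-1}(G)\into\calh^{-1}(G)^{**}\simeq S^*$, forcing the kernel to vanish; only after this reflexivity step does Proposition \ref{a=dual} apply. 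Without it, your conclusion ``any potential destabilizer of $B$ has finite slope'' does not follow: it leaves untouched exactly the subobjects that factor through $S^*[1]$.

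For the finite-slope case, your sign determination rests on ``dualizing $F\into B$ produces a quotient of the torsion free sheaf $S$,'' i.e.\ on $F^\vee[-1]$ (equivalently $G^\vee[-1]$) being a sheaf. For an arbitrary tilted subobject this is precisely what must be proven, not assumed --- you even warn yourself of this in your last paragraph, but the warning is never discharged. The paper's proof spends most of its length here: it first gets $\mu(F)\le\mu(B)$ directly from $\calh^{-1}(F)\subseteq S^*$ and $\mu$-semistability of $S^*$ (no dualization needed), and then, in the degenerate case $\mu(F)=\mu(B)$ where your $\delta_{01}$-monotonicity is silent, it verifies the hypotheses of Proposition \ref{a=dual} for $F$ (reflexivity of $\calh^{-1}(F)$, exclusion of lifted subsheaves of $\calh^0(F)$), concludes $\overline{F}:=F^\vee[-1]$ is torsion free, and extracts the sheaf sequence $0\to\calh^0(\overline{G})\to S\to\overline{F}\to\calh^1(\overline{G})\to0$ that converts $\mu_{\le2}$-semistability of $S$ into the needed inequality $\delta_{20}(B,G)\ge0$. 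Your appeal to the limit computations ``already carried out in Lemma \ref{nu h+:1}'' cannot substitute for this, because those computations only treat subobjects of the special form $G^*[1]$ with $G$ a quotient sheaf of $S$; in the converse direction one must handle \emph{every} $\cohb$-subobject, and reducing a general subobject to sheaf-theoretic data about $S$ is exactly the crux that is missing from the proposal.
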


\begin{proof}
The observation in the previous paragraph implies that $B:=S^\vee[-1]\in\cohb$ for every $\beta\ge-\mu(S)$.

Let $F\into B$ be a sub-object within $\cohb$ for $\beta>-\mu(S)$; note that $\ch_0(\calh^0(F))=0$ by Lemma~\ref{b+inf}. First assume  $\ch_0(F)=\ch_0(\calh^{-1}(F))\ne0$. It follows that
$$ \mu(F) = \mu(\calh^{-1}(F)) - \dfrac{\ch_1(\calh^0(F))}{\ch_1(\calh^{-1}(F))} \le \mu(\calh^{-1}(F)) \le \mu(S^*) = \mu(B) $$
since $S^*$ is $\mu$-semistable and $\calh^{-1}(F)$ is a subsheaf of $S^*$. We then have 
$$ \lim_{\beta\to\infty} \left( \nu_{\oalpha,\beta}(F)-\nu_{\oalpha,\beta}(B) \right) = \frac{1}{2}\left( \mu(F)-\mu(B) \right)\le0. $$ 
If $\mu(F)=\mu(B)$, then $\mu(\calh^{-1}(F))=\mu(S^*)$ and $\ch_1(\calh^0(F))=0$. Denoting by $G:=B/F$ the corresponding quotient in $\cohb$, we have the following exact sequence in $\coh(X)$:
\begin{equation} \label{sqc-coh-a}
0 \to \calh^{-1}(F) \to S^* \to \calh^{-1}(G) \stackrel{f}{\to} \calh^0(F) \to \inext^1(S,\ox) \to \calh^0(G) \to 0. 
\end{equation}
Clearly, $\ker f$ is torsion-free, and this implies that $\calh^{-1}(F)$ is reflexive. Let $P\into\calh^0(F)$ be a monomorphism of sheaves that lifts to $F$, and let $U$ be the image of the composite morphism of sheaves $P\into\calh^0(F)\to\inext^1(S,\ox)$. Since $U\in\tors\beta\subset\cohb$ for every $\beta$, $U$ coincides with the image of the composite monomorphism $P\into F \into B=S^\vee[-1]$ in $\cohb$, so Proposition~\ref{a=dual} implies that $U=0$, so in fact $P=0$. We therefore conclude, again by Proposition~\ref{a=dual}, that $\overline{F}:=F^\vee[-1]$ is a torsion-free sheaf. Dualizing and shifting the triangle 
$$ F \to B \to G \to F[1] $$
in $\dbx$, we obtain the triangle
$$ \overline{G} \to S \to \overline{F} \to G^\vee,\quad \overline{G}:=G^\vee[-1], $$
which yields the  exact sequence in $\coh(X)$
$$ 0 \to \calh^0(\overline{G}) \to S \to \overline{F} \to \calh^1(\overline{G}) \to 0. $$
Note that $\ch_k(\calh^1(\overline{G}))=0$ for $k=0,1$; it follows that
$$ \delta_{20}(B,G) = \delta_{20}(S,\overline{G}) =
\delta_{20}(S,\calh^0(\overline{G}))+\ch_0(S)\ch_2(\calh^1(\overline{G})) \ge0. $$
We then obtain
$$ \lim_{\beta\to\infty} \beta\left( \nu_{\oalpha,\beta}(B)-\nu_{\oalpha,\beta}(G) \right) = -\dfrac{\delta_{20}(B,G)}{\ch_0(B)\ch_0(G)}\le0. $$
If  equality holds, then both $\ch_{\le2}(F)$ and $\ch_{\le2}(G)$ are multiples of $\ch_{\le2}(B)$, meaning that $\nuab(F)=\nuab(B)=\nuab(G)$ for every $(\alpha,\beta)$. In any case, we conclude that there is a $\beta_0>0$
such that $\nu_{\oalpha,\beta}(F)\le\nu_{\oalpha,\beta}(B)$ for $\beta>\beta_0$.

Next, assume  $\ch_0(F)=\ch_0(\calh^{-1}(F))=0$, so $\calh^{-1}(F)=0$ and $F\in\tors\beta$. If $\ch_1(F)\ne0$, then
$$ \lim_{\beta\to\infty}\dfrac{1}{\beta}(\nu_{\oalpha,\beta}(B)-\nu_{\oalpha,\beta}(F)) = \dfrac{1}{2}, $$
so $\nu_{\oalpha,\beta}(B)>\nu_{\oalpha,\beta}(F)$ for $\beta\gg0$. Therefore, it is enough to consider the case when $\dim F\le1$.

The sequence in display \eqref{sqc-coh-a} simplifies to
$$ 0 \to S^* \to \calh^{-1}(G) \stackrel{f}{\to} F \to \inext^1(S,\ox) \to \calh^0(G) \to 0. $$
Since $\ker f$ is a subsheaf of $F$, we conclude that $\dim\ker f\le1$, so that $(\ker f)^*=\inext^1(\ker f,\ox)=0$. Therefore,
dualizing the sequence
$$ 0\to S^* \to \calh^{-1}(G) \to \ker f\to 0, $$
we get that $S^{**} \simeq \calh^{-1}(G)^*$ and
\begin{equation} \label{new sqc}
\inext^1(S^*,\ox) \to \inext^2(\ker f,\ox) \to \inext^2(\calh^{-1}(G),\ox) \to 0
\end{equation}
since the reflexivity of $S^*$ implies that $\inext^2(S^*,\ox)=0$. In addition, since $\calh^{-1}(G)$ is torsion-free, we can also deduce that $\inext^3(\ker f,\ox)=0$, meaning that either $\ker f$ has pure dimension 1, or $\ker f=0$. However, the latter leads to a contradiction with the exact sequence in display \eqref{new sqc} since both $\inext^1(S^*,\ox)$ and $\inext^2(\calh^{-1}(G),\ox)$ are $0$-dimensional sheaves (since $S^*$ is reflexive and $\calh^{-1}(G)$ is torsion-free), while $\inext^2(\ker f,\ox)$ is a $1$-dimensional sheaf. It follows that $\ker f=0$; hence $F$ is a subsheaf of $\inext^1(S,\ox)$ that lifts to $B=S^\vee[-1]$, contradicting Proposition~\ref{a=dual}.
\end{proof}

\section{The differential geometry of surface walls}\label{sec:suf-walls}

\begin{definition}
Let $u$ and $v$ be real numerical Chern characters such that $v$ satisfies the Bogomolov--Gieseker inequality. We define the surface wall $\Sigma_{u,v}\subset \mathbb{R}^+\times\mathbb{R}\times\mathbb{R}^+$ to be the vanishing locus of
\[ f_{u,v}(\alpha,\beta,s)=\Delta_{32}(\alpha,\beta)-\alpha^2(s-1/3)\Delta_{12}(\alpha,\beta), \]
the numerator of the difference of slopes $\labs(u)-\labs(v)$ now regarded as a function of all three parameters $(\alpha,\beta,s)$. Note that 
\begin{equation}\label{sigmasym}
\Sigma_{u,v}=\Sigma_{v,u}=\Sigma_{\phi v+\psi u,v}
\end{equation}
for any real $\phi$ and $\psi\neq0$. Throughout this section, we will be referring to the function $f_{u,v}$ frequently, and it is will be more readable to abbreviate it when the context is clear to $f(\alpha,\beta,s)$ or just $f$.
\end{definition}

In addition, we will denote by $\Gamma_v$ (without the parameter $s$ in the subscript) the surface $\{\tau_{v,s}(\alpha,\beta)=0\}\subset \mathbb{R}\times\mathbb{R}^+$. In this notation, $\Gamma_{v,s_0}=\Gamma_v\cap\{s=s_0\}$ and $\Upsilon_{u,v,s_0}=\Sigma_{u,v}\cap\{s=s_0\}$ for any $s_0\in\R^+$.

Our aim in this section is to explore some of the differential-geometric properties of $\Sigma_{u,v}$ with a view to understanding finiteness properties of $\lambda$-walls. We will assume  $v$ is a fixed real numerical Chern character. Note that, by \eqref{sigmasym}, we can assume  $u_0=v_0$. It turns out to be best to consider the two cases $u_0=0=v_0$ and $u_0\neq0\neq v_0$. The former is dealt with in Remark~\ref{zerorankcase}.

Generically, $\Sigma_{u,v}$ can have components of dimensions $0$, $1$ or $2$. In fact, each of these does arise at least as a numerical surface wall. For example, for $v=(3,-2,-1/2,1)$ and $u=(0,1,-1/2,0)$,  
\[f_{u,v}(0,\beta,s)=(\beta+1)^2(\beta^2+2)/4\]
and $f_{u,v}(\alpha,\beta,s)\neq0$ whenever $\alpha\neq0$. At special points where the curves $\Gamma_{v,s}$ and $\Theta_{v}$ intersect, $\Sigma_{u,v}$ is guaranteed to be locally a surface.

\begin{lemma}\label{twoD}
At the points of intersection $\Gamma_{u,s}\cap \Gamma_{v,s}$, $\Theta_u\cap\Theta_v$ and $\Gamma_{v,s}\cap\Theta_{v}$, the surface wall $\Sigma_{u,v}$ is $2$-dimensional.
\end{lemma}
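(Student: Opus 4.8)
The plan is to regard $f_{u,v}(\alpha,\beta,s)=\Delta_{32}(\alpha,\beta)-\alpha^2(s-1/3)\Delta_{12}(\alpha,\beta)$ as a polynomial on the open region $\R^+\times\R\times\R^+$ and to show that at each of the three types of points the zero set $\Sigma_{u,v}=\{f_{u,v}=0\}$ has local dimension $2$. By \eqref{sigmasym} we may assume $u_0=v_0$, and I would treat the main case $v_0\neq0$ (the rank-zero case $u_0=v_0=0$ being Remark~\ref{zerorankcase}); throughout I assume $u\not\propto v$, as otherwise $f_{u,v}\equiv0$. The single most useful computation is the $s$-derivative
\[\partial_s f_{u,v}=-\alpha^2\Delta_{12}(\alpha,\beta),\]
since wherever $\Delta_{12}\neq0$ the equation $f_{u,v}=0$ solves uniquely and smoothly for $s$ as a function of $(\alpha,\beta)$, exhibiting $\Sigma_{u,v}$ locally as a graph over the $(\alpha,\beta)$-plane and hence as a smooth $2$-dimensional surface.

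First I would dispose of the points where $\Delta_{12}\neq0$. On $\Gamma_{u,s}\cap\Gamma_{v,s}$ this is the generic situation, while on $\Gamma_{v,s}\cap\Theta_v$ one has $\rho_v=0$, so $\Delta_{12}=\ch^{\alpha,\beta}_1(u)\rho_v-\rho_u\ch^{\alpha,\beta}_1(v)=-\rho_u\,\ch^{\alpha,\beta}_1(v)$, which is nonzero precisely when the point does not also lie on $\Theta_u$ (using $\ch^{\alpha,\beta}_1(v)\neq0$, established below). In all these cases the graph argument applies directly. The leftover points of the second and third families are exactly those that in addition satisfy $\rho_u=0$, i.e.\ also lie on $\Theta_u$; together these reduce the whole statement to the case $(\alpha_0,\beta_0)\in\Theta_u\cap\Theta_v$.

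For a point $p_0=(\alpha_0,\beta_0,s_0)$ with $(\alpha_0,\beta_0)\in\Theta_u\cap\Theta_v$ we have $\ch^{\alpha,\beta}_2(u)=\ch^{\alpha,\beta}_2(v)=0$, hence $\Delta_{i2}=0$ for all $i$ and in particular $\partial_s f_{u,v}=0$, so the graph trick is unavailable. Instead I would use the pencil of slices: since $\Delta_{12}(p_0)=\Delta_{32}(p_0)=0$, the whole vertical line $\{(\alpha_0,\beta_0)\}\times\R^+$ lies in $\Sigma_{u,v}$, that is $(\alpha_0,\beta_0)\in\Upsilon_{u,v,s}$ for every $s$. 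It therefore suffices to show that for all but at most one value of $s$ the slice $\Upsilon_{u,v,s}$ is a genuine curve near $(\alpha_0,\beta_0)$, for then $\Sigma_{u,v}$ contains a one-parameter family of arcs through these points and so is $2$-dimensional at $p_0$. Using Lemma~\ref{delta_tech}(\ref{delta_dervs}) together with $\Delta_{i2}=0$, the in-plane gradient at $(\alpha_0,\beta_0)$ is
\[\partial_\beta f_{u,v}=-\Delta_{31},\qquad \partial_\alpha f_{u,v}=-\alpha\Delta_{30}+\alpha^3(s-1/3)\Delta_{10},\]
and note that $\partial_\beta f_{u,v}=-\Delta_{31}$ is independent of $s$.

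To finish I would argue that this in-plane gradient cannot vanish identically in $s$. On $\Theta_v$ with $v_0\neq0$, equation \eqref{hyperbola} gives $(\beta_0-\mu(v))^2=\alpha_0^2+Q^{\rm tilt}(v)/v_0^2>0$, so $\beta_0\neq\mu(v)$ and $\ch^{\alpha,\beta}_1(v)=v_1-\beta_0v_0\neq0$. Since $\Delta_{12}(p_0)=0$ always and $\Delta_{13}(p_0)=-\Delta_{31}(p_0)$, the assumption $\Delta_{31}(p_0)=0$ would give $\Delta_{12}(p_0)=\Delta_{13}(p_0)=0$; were $\Delta_{10}$ also zero we would have $\Delta_{1j}(p_0)=0$ for all $j$ with $\ch^{\alpha,\beta}_1(v)\neq0$, forcing $u\propto v$ by Lemma~\ref{delta_tech}(\ref{upropv}), which is excluded. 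Hence either $\Delta_{31}\neq0$, so $\partial_\beta f_{u,v}\neq0$ for every $s$, or $\Delta_{31}=0$ and $\Delta_{10}\neq0$, in which case $\partial_\alpha f_{u,v}$ is a non-constant affine function of $s$ (leading coefficient $\alpha_0^3\Delta_{10}$) and thus vanishes for at most one $s$. Either way the slices $\Upsilon_{u,v,s}$ are smooth curves through $(\alpha_0,\beta_0)$ for all $s$ in a punctured interval around $s_0$, and $\Sigma_{u,v}$ is $2$-dimensional at $p_0$. I expect the main obstacle to be precisely this most degenerate configuration — points of $\Theta_u\cap\Theta_v$ that also meet $\Gamma_u\cap\Gamma_v$, where the full spatial gradient of $f_{u,v}$ vanishes; the key to resolving it is that the gradient degenerates only to first order in $s$, so the pencil of slice-curves nonetheless sweeps out a genuine surface.
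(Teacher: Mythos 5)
Your strategy is genuinely different from the paper's: the paper runs a sign-change (intermediate value) argument in the four regions cut out by the two intersecting curves and then invokes Lemma~\ref{indep of s} to sweep out the $s$-direction, whereas you use the implicit function theorem via $\partial_s f_{u,v}=-\alpha^2\Delta_{12}$ wherever $\Delta_{12}\neq0$, plus a vertical-line/pencil-of-slices argument on $\Theta_u\cap\Theta_v$. Your treatment of the second and third families, and of $\Theta_u\cap\Theta_v$ itself (including the dichotomy $\Delta_{31}\neq0$ versus $\Delta_{31}=0$, $\Delta_{10}\neq0$ via Lemma~\ref{delta_tech}(\ref{upropv})), is correct.

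There is, however, a genuine gap in the reduction. You dismiss the points of $\Gamma_{u,s}\cap\Gamma_{v,s}$ with $\Delta_{12}=0$ as non-generic and never return to them, and your claim that the leftover points all lie on $\Theta_u\cap\Theta_v$ is false: at a point of $\Gamma_{u,s}\cap\Gamma_{v,s}$, the condition $\Delta_{12}=0$ only says the point also lies on the numerical $\nu$-wall $\Xi_{u,v}$, not on $\Theta_u\cap\Theta_v$. Such triple points $\Gamma_{u,s}\cap\Gamma_{v,s}\cap\Xi_{u,v}$ do occur (a parameter count shows they form large families, and they are exactly the cusp points of Proposition~\ref{special_regular} and case (\ref{reg_tilt_gamma}) of Theorem~\ref{regular}), and they are the worst points for your method: at any $\Gamma_{u,s_0}\cap\Gamma_{v,s_0}$ point one has $\Delta_{31}=0$ and $\Delta_{32}=(s_0-1/3)\alpha_0^2\Delta_{12}$, so when $\Delta_{12}=0$ and $s_0=1/3$ the full spatial gradient of $f_{u,v}$ vanishes and no first-order argument applies. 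Fortunately your own pencil trick repairs this. Since $\Delta_{32}=\Delta_{12}=0$ at such a point, the whole vertical line lies in $\Sigma_{u,v}$, and the in-plane gradient there, as a function of $s$, is
\[
\bigl(\partial_\alpha f_{u,v},\partial_\beta f_{u,v}\bigr)=\bigl(-\alpha_0\Delta_{30}+\alpha_0^3(s-1/3)\Delta_{10},\;-\alpha_0^2(s-1/3)\Delta_{20}\bigr),
\]
which vanishes for at most finitely many $s$ unless $\Delta_{10}=\Delta_{20}=\Delta_{30}=0$; in that degenerate case all six $\Delta_{ij}$ vanish at the point and Lemma~\ref{delta_tech}(\ref{upropv}) with $i=0$ (recall $v_0\neq0$) forces $u\propto v$, which you excluded. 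Adding this case makes your proof complete; as it stands, the statement is only proved away from these points.
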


In other words, a surface wall $\Sigma_{u,v}$ does have $2$-dimensional components whenever it is not empty.

\begin{proof}
Note that each pair of curves divides a small ball around their intersection into four regions; otherwise, the (algebraic) curves must coincide. They cannot coincide except possibly for the Theta curves, but then $\Sigma_{u,v}=\Theta_u\times\mathbb{R}$. Then for a small arc around the intersection point in two of the regions, the function $f_{u,v}(\alpha,\beta,s)$ is positive on one curve and negative on the other, and so must vanish at some point in the region for each arc sufficiently close to the intersection point. Combining this with Lemma~\ref{indep of s}, we see that $\Sigma_{u,v}$ is $2$-dimensional in a neighbourhood of the point.
\end{proof}

We now look more carefully at the differential geometry of the surface wall $\Sigma_{u,v}$. The normal vector is given by the gradient of $f(\alpha,\beta,s)$.

\begin{lemma}\label{dervs}
\begin{align*}
\partial_\alpha f&=\alpha\bigl(1+2(s-1/3)\bigr)\Delta_{21}
-\alpha\Delta_{30}+\alpha^3(s-1/3)\Delta_{10}, \\
\partial_\beta f&=-\Delta_{31}-\alpha^2(s-1/3)\Delta_{20}, \\
\partial_s f&=\alpha^2\Delta_{21}, \\
\partial^2_\alpha f&=(1+2(s-1/3))\Delta_{21}-\Delta_{30}+\alpha^2(5s+1/3)\Delta_{10}, \\
Hf&=\begin{pmatrix} \partial^2_\alpha f&2\alpha(s-1/3)\Delta_{02}&2\alpha\Delta_{21}-\alpha^3\Delta_{01}, \\
2\alpha(s-1/3)\Delta_{02}&\Delta_{21}+\Delta_{30}+\alpha^2(s-1/3)\Delta_{10}&-\alpha^2\Delta_{20}, \\
2\alpha\Delta_{21}-\alpha^3\Delta_{01}&-\alpha^2\Delta_{20}&0
\end{pmatrix}.
\end{align*}
\end{lemma}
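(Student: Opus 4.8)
The plan is to derive every entry by direct differentiation, using only the two rules recorded in Lemma~\ref{delta_tech}(\ref{delta_dervs}), namely $\partial_\alpha\Delta_{ij}=-\alpha(\Delta_{i-2\,j}+\Delta_{i\,j-2})$ and $\partial_\beta\Delta_{ij}=-(\Delta_{i-1\,j}+\Delta_{i\,j-1})$, together with three elementary facts: the antisymmetry $\Delta_{ij}=-\Delta_{ji}$ (so $\Delta_{ii}=0$); the convention $\Delta_{ij}=0$ whenever $i$ or $j$ lies outside $\{0,1,2,3\}$; and the fact that $\Delta_{32}$ and $\Delta_{21}$ are independent of $s$ while $\partial_s(s-1/3)=1$. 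Writing $f=\Delta_{32}-\alpha^2(s-1/3)\Delta_{12}$ as in the definition of $\Sigma_{u,v}$, the whole statement reduces to careful bookkeeping.

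First I would compute the three first-order partials. Since only the second summand depends on $s$, one gets $\partial_s f=\alpha^2\Delta_{21}$ immediately (rewriting $-\Delta_{12}=\Delta_{21}$). For $\partial_\alpha f$ I apply the product rule to the term $\alpha^2(s-1/3)\Delta_{12}$, using $\partial_\alpha\Delta_{12}=-\alpha(\Delta_{-1\,2}+\Delta_{1\,0})=-\alpha\Delta_{10}$ where the first shifted index drops by the range convention, and I differentiate $\Delta_{32}$ via $\partial_\alpha\Delta_{32}=-\alpha(\Delta_{12}+\Delta_{30})$; collecting terms and converting $\Delta_{12}=-\Delta_{21}$ and $\Delta_{01}=-\Delta_{10}$ produces the displayed expression. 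For $\partial_\beta f$ I use $\partial_\beta\Delta_{32}=-(\Delta_{22}+\Delta_{31})=-\Delta_{31}$ and $\partial_\beta\Delta_{12}=-(\Delta_{02}+\Delta_{11})=-\Delta_{02}$, again with terms vanishing by $\Delta_{ii}=0$ or the range convention.

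Then I would differentiate a second time. The entry $\partial^2_\alpha f$ comes from differentiating the already-computed $\partial_\alpha f$ in $\alpha$; the crucial simplifications there are $\partial_\alpha\Delta_{10}=0$ (both shifted indices out of range) and $\partial_\alpha\Delta_{30}=-\alpha\Delta_{10}$, and after gathering the $\alpha^2\Delta_{10}$ contributions one uses $2+5(s-1/3)=5s+1/3$ to match the stated coefficient. For the off-diagonal Hessian entries I compute each mixed partial in whichever order is most convenient: $\partial_\beta\partial_\alpha f$ collapses to $-2(s-1/3)\alpha\Delta_{20}=2(s-1/3)\alpha\Delta_{02}$, $\partial_s\partial_\alpha f=2\alpha\Delta_{21}+\alpha^3\Delta_{10}=2\alpha\Delta_{21}-\alpha^3\Delta_{01}$, and $\partial_s\partial_\beta f=-\alpha^2\Delta_{20}$; the remaining diagonal entries are $\partial^2_\beta f=\Delta_{21}+\Delta_{30}+(s-1/3)\alpha^2\Delta_{10}$ (via $\partial_\beta\Delta_{31}=-(\Delta_{21}+\Delta_{30})$) and $\partial^2_s f=0$. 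Equality of mixed partials is automatic since every entry is a polynomial, which also doubles as a consistency check on the sign conventions.

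The computation has no genuine obstacle; the only point requiring vigilance is sign discipline, since the definition of $f$ is written in terms of $\Delta_{12}$ whereas the differentiation rules repeatedly throw up $\Delta_{21}$, $\Delta_{01}$, $\Delta_{20}$, $\Delta_{30}$ and the like. I would therefore fix at the outset the convention that each $\Delta$ in the final formulas is written in the index order displayed in the statement, and reduce every intermediate expression to that normal form (using antisymmetry and discarding out-of-range indices) before comparing, so that the match with the asserted entries is unambiguous.
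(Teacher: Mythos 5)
Your computation is correct: every first- and second-order partial you derive from $f=\Delta_{32}-\alpha^2(s-1/3)\Delta_{12}$ using the rules of Lemma~\ref{delta_tech}(\ref{delta_dervs}), antisymmetry, and the out-of-range convention matches the stated gradient and Hessian entries (including the coefficient check $2+5(s-1/3)=5s+1/3$ and the sign conversions $\Delta_{12}=-\Delta_{21}$, $\Delta_{10}=-\Delta_{01}$). The paper states this lemma without proof precisely because it is this routine bookkeeping, so your argument is exactly the intended one.
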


\begin{proposition}
For any $s_1>s_0>0$,
$\Sigma_{u,v}\cap\mathbb{R}\times\mathbb{R}\times\{s_0\leq s\leq s_1\}$ is compact if and only if $\Delta_{01}\neq0$.
\end{proposition}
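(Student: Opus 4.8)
The plan is to mirror the proof of Proposition \ref{bounded walls 1}, the only genuinely new ingredient being uniformity in the parameter $s$ over the compact band $[s_0,s_1]$. First I would record two elementary reductions. Since $\ch_0^{\alpha,\beta}(u)=u_0$ and $\ch_1^{\alpha,\beta}(v)=v_1-\beta v_0$, the quantity $\Delta_{01}$ is in fact the constant $\delta_{01}(u,v)=-\delta_{10}(u,v)$, independent of $(\alpha,\beta,s)$; thus the condition in the statement is just $\delta_{10}(u,v)\ne0$. Moreover the $s$-coordinate of any point of the set in question already lies in the compact interval $[s_0,s_1]$, and the set is closed, being the zero locus of the continuous function $f_{u,v}$ inside the closed slab $\{s_0\le s\le s_1\}$ (taking the first coordinate in $\overline{\HH}=\{\alpha\ge0\}$). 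Consequently compactness is equivalent to boundedness in the $(\alpha,\beta)$-directions, and that is what I would establish.

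For the direction $\delta_{10}\ne0\Rightarrow$ compact, I would isolate the part of $f_{u,v}(\alpha,\beta,s)$ that is homogeneous of degree $4$ in $(\alpha,\beta)$. From the explicit expansion \eqref{num-walls} this equals
\[ F_4(\alpha,\beta,s)=\frac{\delta_{10}}{12}\bigl(6s\,\alpha^2(\alpha^2+\beta^2)+(\alpha^2-\beta^2)^2\bigr), \]
exactly the leading form appearing in the proof of Proposition \ref{bounded walls 1}. Restricting to the unit circle $\alpha^2+\beta^2=1$ turns the bracket into $6s\alpha^2+(2\alpha^2-1)^2$, which is strictly positive for every $s>0$ (vanishing would force simultaneously $\alpha=0$ and $\alpha^2=\tfrac12$). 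Being continuous and positive on the compact set $\{\alpha^2+\beta^2=1\}\times[s_0,s_1]$, it is bounded below by some $c'>0$; the hypothesis $s_0>0$ is essential here, since the form degenerates as $s\to0$. Writing $f_{u,v}=F_4+R$, where $R$ has degree at most $3$ in $(\alpha,\beta)$ with coefficients that are affine in $s$ and hence bounded on $[s_0,s_1]$, a polar estimate with $r=\sqrt{\alpha^2+\beta^2}$ yields $|f_{u,v}|\ge c\,r^4-C(1+r^3)$ uniformly in $s\in[s_0,s_1]$, for suitable constants $c,C>0$. Thus $f_{u,v}$ has no zero with $r>R_0$ for $R_0$ large, proving that $\Sigma_{u,v}\cap\{s_0\le s\le s_1\}$ is bounded, hence compact.

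For the converse, if $\delta_{01}=0$, i.e. $\delta_{10}=0$, then the slice $\Sigma_{u,v}\cap\{s=s_0\}$ is precisely the numerical $\lambda$-wall $\Upsilon_{u,v,s_0}$, which Proposition \ref{bounded walls 1} shows to be unbounded. Hence $\Sigma_{u,v}\cap\{s_0\le s\le s_1\}$ contains an unbounded subset and cannot be compact. The only case not covered by Proposition \ref{bounded walls 1} is the rank-zero case $u_0=v_0=0$; but then $\delta_{10}=\delta_{20}=0$ automatically, the degree-$4$ and degree-$3$ parts of $f_{u,v}$ vanish identically, and one checks directly (as in Remark \ref{zerorankcase}) that the resulting locus is unbounded, consistent with $\delta_{01}=0$.

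The main obstacle is precisely this uniformity in $s$: for a single value of $s$ boundedness is immediate from Proposition \ref{bounded walls 1}, but to bound the entire band one must control the cutoff radius $R_0$ independently of $s\in[s_0,s_1]$. This is where compactness of the interval together with the strict positivity $s_0>0$ enter, through the uniform lower bound $c'>0$ for the leading form on the unit circle; without $s_0>0$ the leading form would cease to be positive definite and the argument would break down.
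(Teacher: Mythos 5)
Your two main directions are correct and follow essentially the paper's own argument. For $\Delta_{01}\neq0$ the paper likewise isolates the degree-four part of $f_{u,v}$, namely $\tfrac{\delta_{10}}{12}\bigl(6s\alpha^2(\alpha^2+\beta^2)+(\alpha^2-\beta^2)^2\bigr)$, and observes that for $s>0$ it vanishes only at $\alpha=\beta=0$; your unit-circle argument merely makes explicit the uniformity over $[s_0,s_1]$ that the paper leaves implicit, and your polar estimate is sound (and you are right that $s_0>0$ is exactly where it would break). For the converse, the paper cites Lemma \ref{verticalwalls} (a component asymptotic to a vertical line when $\delta_{10}=0$, $\delta_{20}\neq0$), whereas you take the unbounded slice $\Upsilon_{u,v,s_0}$ from Proposition \ref{bounded walls 1}; this is equivalent and in fact slightly more robust, since it also covers the degenerate sub-case $\delta_{10}=\delta_{20}=0$ (where the wall is $\Theta_v$) and does not need to avoid $s=1/3$.

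The genuine error is your final claim about the rank-zero case. If $u_0=v_0=0$ then all $\delta_{i0}$ vanish and the expansion \eqref{num-walls} collapses to
\begin{equation*}
f_{u,v,s}(\alpha,\beta)=\frac{6s+1}{6}\delta_{21}\,\alpha^2+\frac{1}{2}\delta_{21}\,\beta^2-\delta_{31}\,\beta+\delta_{32},
\end{equation*}
which for $\delta_{21}\neq0$ is, for each $s$, an ellipse (possibly a point or empty). Hence $\Sigma_{u,v}\cap\{s_0\le s\le s_1\}$ is \emph{compact} in this case even though $\Delta_{01}=0$: this is precisely what Remark \ref{zerorankcase} records (the walls are nested ellipses, with the $s$-variation a vertical rescaling), i.e.\ the opposite of the unboundedness you assert while citing that very remark. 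So the biconditional genuinely fails when $u_0=v_0=0$, and the correct reading is that the proposition carries the standing hypothesis $u_0\neq0\neq v_0$ (after the normalization $u_0=v_0$ allowed by \eqref{sigmasym}), the rank-zero case being deliberately set aside in Remark \ref{zerorankcase}. You should delete that sentence and instead state this hypothesis explicitly; note that Proposition \ref{bounded walls 1}, on which your converse relies, already assumes $v_0\neq0$.
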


\begin{proof}
This is just a restatement of Proposition~\ref{bounded walls 1} in terms of $\Sigma_{u,v}$.
\end{proof}

Note that when $s=0$, the surface is unbounded along $\alpha=\pm\beta$.

\begin{remark}\label{zerorankcase}
In the case where $v_0$ and $u_0$ are non-zero, if $u$ gives rise to a $\nu$-wall with respect to $v$, then $\Delta_{01}\neq0$. Then $\Delta_{21}(\alpha,\beta)=0$ has a $1$-dimensional solution set. So if $\Delta_{02}=0$ (everywhere), then, in particular, $\Delta_{20}=0$ along the $\nu$-wall, and it follows that $(u_0,u_1,u_2)\propto (v_0,v_1,v_2)$, which then cannot have a $\nu$-wall as $\Delta_{21}=0$ everywhere. On the other hand, if $\Delta_{01}=0$, then if $\Delta_{20}(\alpha,\beta)=0$ for some $(\alpha,\beta)$, it must vanish identically and again $(u_0,u_1,u_2)\propto(v_0,v_1,v_2)$. But then $\Delta_{21}=0$ identically. On a $\lambda$-wall, we then have $\Delta_{32}=0$ and so $u\propto v$. It follows that $\Delta_{02}$ cannot vanish identically.

If $v_0=0=u_0$, then $\Delta_{i0}=0$ identically for all $i$. Then there are no $\nu$-walls. The numerical $\lambda$-walls are nested ellipses, much as $\nu$-walls are for the truncated Chern characters. It follows that $\Sigma_{u,v}$ is always regular and horizontal exactly on $\Gamma_{v,s}$. The variation in $s$ is just a vertical scaling by $\sqrt{s+1/3}$.
\end{remark}

It is interesting to consider the regularity of numerical walls, and we will use this extensively in our analysis of the asymptotics. The regularity of a general wall for arbitrary $s$ is complicated and hard to describe, but the situation is simpler if we consider the whole surface $\Sigma_{u,v}$ and also for the case $s=1/3$, which we describe first.

\begin{proposition}\label{special_regular}
When $s=1/3$, a numerical wall $\Upsilon_{u,v,1/3}$ is regular everywhere in the upper half plane except where it intersects $\Gamma_{u,1/3}$ and its numerical $\nu$-wall $\Xi_{u,v}$. 
\end{proposition}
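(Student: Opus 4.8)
The plan is to test regularity of the plane curve $\Upsilon_{u,v,1/3}=\{f_{u,v}(\alpha,\beta,1/3)=0\}$ by locating the points of the curve at which the planar gradient of $f$ vanishes, since by the implicit function theorem the wall is a smooth $1$-manifold wherever $(\partial_\alpha f,\partial_\beta f)\neq 0$. Setting $s=1/3$ in the gradient formulas of the preceding lemma (equivalently, using $f_{u,v,1/3}=\Delta_{32}$ together with Lemma \ref{delta_tech}(\ref{delta_dervs})), the two planar partials collapse to $\partial_\alpha\Delta_{32}=\alpha(\Delta_{21}-\Delta_{30})$ and $\partial_\beta\Delta_{32}=-\Delta_{31}$. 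Since $\alpha>0$ throughout $\HH$, a point of the wall can fail to be regular only if it satisfies the three equations $\Delta_{32}=0$, $\Delta_{31}=0$ and $\Delta_{21}=\Delta_{30}$.

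First I would analyse these three equations using Lemma \ref{delta_tech}. Suppose, for contradiction, that such a candidate singular point had $\ch^{\alpha,\beta}_3(u)\neq0$ or $\ch^{\alpha,\beta}_3(v)\neq0$. Then $\Delta_{32}=\Delta_{31}=0$, and Lemma \ref{delta_tech}(\ref{twovanish}) with common index $3$ forces $\Delta_{21}=0$; the third equation then yields $\Delta_{30}=0$, so $\Delta_{3j}=0$ for every $j$, whence Lemma \ref{delta_tech}(\ref{upropv}) gives $u\propto v$. This contradicts the standing assumption that $u,v$ define a genuine (non-degenerate) wall. Hence every singular point must satisfy $\ch^{\alpha,\beta}_3(u)=\ch^{\alpha,\beta}_3(v)=0$, i.e. it lies on $\Gamma_{u,1/3}$ (and on $\Gamma_{v,1/3}$); there the first two equations hold automatically and the third collapses to $\Delta_{21}=0$, placing the point on the $\nu$-wall $\Xi_{u,v}$. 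Conversely, if $\ch^{\alpha,\beta}_3(u)=\ch^{\alpha,\beta}_3(v)=0$ and $\Delta_{21}=0$, then $\Delta_{32}=\Delta_{31}=\Delta_{30}=0$, so the point is on the wall and the gradient vanishes. This pins down the non-regular locus precisely as $\Gamma_{u,1/3}\cap\Gamma_{v,1/3}\cap\Xi_{u,v}$, and in particular shows the wall is regular off this set.

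To match the statement exactly I would then check that, on the wall, the two conditions ``lies on $\Gamma_{u,1/3}$'' and ``lies on $\Xi_{u,v}$'' already force $\ch^{\alpha,\beta}_3(v)=0$, so the reference to $\Gamma_{v,1/3}$ is redundant. Indeed, on $\Gamma_{u,1/3}\cap\Xi_{u,v}\cap\Upsilon_{u,v,1/3}$ one has $\ch^{\alpha,\beta}_3(u)=0$, hence $\Delta_{32}=-\ch^{\alpha,\beta}_3(v)\,\ch^{\alpha,\beta}_2(u)=0$; whenever $\ch^{\alpha,\beta}_2(u)\neq0$ this gives $\ch^{\alpha,\beta}_3(v)=0$, and the single remaining degenerate configuration $\ch^{\alpha,\beta}_2(u)=0$ (which together with $\Delta_{21}=0$ forces $\ch^{\alpha,\beta}_1(u)\,\ch^{\alpha,\beta}_2(v)=0$) is disposed of by a direct substitution into the gradient.

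The main obstacle I anticipate is book-keeping rather than conceptual: one must apply Lemma \ref{delta_tech}(\ref{twovanish}) and (\ref{upropv}) with care about which $\ch^{\alpha,\beta}_i$ are permitted to vanish, and must clear the low-dimensional degenerate strata where several of the $\ch^{\alpha,\beta}_i(u)$ vanish simultaneously. These exceptional configurations are exactly those absorbed by the qualifier ``for exceptional $u$ and $v$'' in the surface-wall analysis, and each can be handled by evaluating the gradient directly. The clean conceptual point that drives everything is that at $s=1/3$ the gradient of $\Delta_{32}$ is controlled entirely by $\Delta_{21}$, $\Delta_{30}$ and $\Delta_{31}$, and that the vanishing of $\Delta_{32}$ and $\Delta_{31}$ propagates through Lemma \ref{delta_tech} to force simultaneously both $\Gamma$-conditions and the $\nu$-wall condition.
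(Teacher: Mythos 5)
Your proposal is correct and follows essentially the same route as the paper's proof: both reduce regularity at $s=1/3$ to the simultaneous vanishing of $\Delta_{31}$ and $\Delta_{21}-\Delta_{30}$ along the wall, then apply Lemma \ref{delta_tech}(\ref{twovanish}) and (\ref{upropv}) to force $u\propto v$ away from the $\Gamma$-curves, and finally observe that on $\Gamma_{u,1/3}\cap\Gamma_{v,1/3}$ the singularity condition collapses to $\Delta_{21}=0$, i.e.\ the point lies on $\Xi_{u,v}$. Your closing paragraph on the degenerate strata is somewhat more careful than the paper's one-line converse (which is stated via $\Gamma_{v,1/3}$), but the core argument is identical.
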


\begin{proof}
At a non-regular point $p$ on $\Upsilon_{u,v,1/3}$, we have $\Delta_{32}=0=\Delta_{31}$ and $\Delta_{30}=\Delta_{21}$. If $p\not\in\Gamma_{v,1/3}$, then Lemma~\ref{delta_tech}\eqref{twovanish} implies $\Delta_{12}=0$, and so $\Delta_{30}=0$. But then Lemma~\ref{delta_tech}\eqref{upropv} implies $u\propto v$. So $p\in\Gamma_{v,1/3}\cap\Gamma_{u,1/3}$. But then $\Delta_{30}=0$ and so $\Delta_{21}=0$. But then $p\in\Xi_{u,v}$. Conversely, if $p\in\Gamma_{v,1/3}\cap\Xi_{u,v}\cap\Upsilon_{u,v,1/3}$, then $\partial_\alpha f=0=\partial_{\beta}f$, and so $p$ is not a regular point.
\end{proof}

Looking at the second derivatives, we see that the local model for $\Upsilon_{u,v,1/3}$ at its singular point is $(\alpha-\alpha_0)^2+\text{higher order terms}=0$, and so the singular point is a cusp. We see this more generally in case~\eqref{reg_tilt_gamma} in Theorem~\ref{regular} below.

\begin{theorem}\label{regular}
Suppose $u$ and $v$ are real numerical Chern characters with $v_0\neq0$.
Any $2$-dimensional component of the surface wall $\Sigma_{u,v}$ is regular everywhere except in one of the following situations:
\begin{enumerate}
\item\label{regular1} It intersects a $\nu$-wall away from $\Gamma_v$ at $\alpha=0$, in which case it is locally $\alpha^2-\beta^2$.
\item\label{reg_tilt_gamma} It intersects a $\nu$-wall and $\Gamma_v$ at $s=1/3$ for $\alpha>0$, in which case it is locally $$(\alpha-\alpha_0)^2+(\alpha-\alpha_0)(s-1/3)+\dfrac{\Delta_{02}(\alpha_0,\beta_0)}{\Delta_{01}(\alpha_0,\beta_0)}(\beta-\beta_0)(s-1/3)=0$$ up to scaling. 
\item\label{reg_tilt_gamma_alpha} It intersects a $\nu$-wall and $\Gamma_v$ at  $\alpha=0$ and $s=1/3$,
  in which case the surface is smooth $($locally given by $(\beta-\beta_0)^3=0)$. 
\item\label{reducible} The characters $u$ and $v$ are special vectors satisfying $\mu(v)=\mu(u)$, $\Delta_{03}=\Delta_{21}$, $\Delta_{20}\neq0$ and $\ch^{\alpha,\mu(v)}_3(v)=0=\ch^{\alpha,\mu(u)}_3(u)$. Then $\Sigma_{u,v}$ is regular except along the line $\Mu_v\times\{s=1/3\}\cup\{\alpha=0,\ s<1/3\}$,  and it is locally $(\beta-\beta_0)\bigl((s_0-1/3)(\alpha-\alpha_0)+(\beta-\beta_0)^2\bigr)$ at $(\alpha_0,\mu(v),1/3)$ and at $(0,\beta_0,s_0)$.
\end{enumerate}
\end{theorem}

Note that in~\eqref{regular1}, we also allow the degenerate case where the $\nu$-wall has radius $0$. An example of this can be seen for $v=(2,0,-1,0)$, illustrated in Figure~\ref{fig (2,0,-1,0)} in Section~\ref{sec:examples}.

\begin{proof}
Suppose $(\alpha,\beta,s)$ is a non-regular point of $\Sigma_{u,v}$. Then $\nabla f_{u,v}(\alpha,\beta)=0$. From $\partial_s f_{u,v}=0$, we have either $\alpha=0$ or $\Delta_{12}(\alpha,\beta)=0$. We will treat these two cases separately.

First suppose $\alpha\neq0$. Then $\Delta_{12}=0$. From $f_{u,v}=0$, we also have $\Delta_{32}=0$. From $\partial_\alpha f_{u,v}=0$, we have $\Delta_{30}=\alpha^2(s-1/3)\Delta_{10}$, and from $\partial_\beta f_{u,v}=0$, we have $\Delta_{31}=-\alpha^2(s-1/3)\Delta_{20}$. Assume  $\ch_1^\beta(v)\neq0$ and $s\neq1/3$. Then Lemma~\ref{delta_tech}\eqref{twovanish} implies that $\Delta_{31}=0$, and then $\Delta_{20}=0$ so that also $\Delta_{10}=0$. Since $v_0\neq0$, Lemma~\ref{delta_tech}\eqref{upropv} implies that $u\propto v$, which gives a contradiction. So we must have $\ch_1^\beta(v)=0=\ch_1^\beta(u)$ or $s=1/3$. In the case $s\neq1/3$, it follows that $\Delta_{01}=0$. From $\partial_\beta f_{u,v}=0$, we have $\Delta_{02}=0$, but this contradicts $\Delta_{03}=0$. So we must  have $s=1/3$ and either (a) $\Delta_{01}=0=\Delta_{30}$ along $\ch^\beta_1(v)=0$ or (b) the $\lambda$-curve passes through both a $\nu$-wall and $\Gamma_{v,s}$.

We consider case (a). From $\Delta_{31}=0$, we then have that $u\propto v$ unless $\ch_3^{\alpha,\beta}(u)=0=\ch_3^{\alpha,\beta}(v)$ along $\Mu_v$. Then we have that the conditions are equivalent to
\begin{gather}
\Delta_{01}=0\label{v1}, \\
v_3v_0^2 - v_1v_2v_3 + v_1^3/3=0\label{v3}, \\
\Delta_{03}=\Delta_{12}\label{v2},\quad\text{and}\\
\Delta_{02}\neq0.
\end{gather}
The first equality for $u$ also follows (from the second).
Note that in this situation, $\Sigma_{u,v}$ is singular all along $\Mu_v$ and so also at $\alpha=0$. Along $\Mu_v$ the Hessian of $f$ vanishes. The only third derivatives to be non-zero (up to symmetry) are $\partial_\beta^3 f_{u,v}=2\Delta_{02}$ and $\partial_\alpha\partial_\beta\partial_s f_{u,v}=2\alpha\Delta_{02}$. So the local model for $f_{u,v}$ is 
\[2\Delta_{02}(\beta-\mu(v))\bigl((\beta-\mu)^2+(\alpha-\alpha_0)(s-1/3)\bigr).\]
This is a triple zero at $\alpha=0$ and looks like $y=0\cup y^2=xz$ for $\alpha\neq0$. In fact, since $f_{u,v}(\alpha,v_1/v_0)=0$, we have that $(\beta-v_1/v_0)$ is a linear factor of $f_{u,v}$.

Now suppose $\alpha=0$. Then $\partial_\beta f_{u,v}=0$ implies that $\Delta_{31}(\alpha,\beta)=0$. But from $f_{u,v}=0$, we have $\Delta_{32}(\alpha,\beta)=0$. If the point is not on $\Gamma$, then $\Delta_{21}(\alpha,\beta)=0$, and so $(\alpha,\beta)$ is on a $\nu$-wall. Then the Hessian at this point is the diagonal $\operatorname{diag}(-\Delta_{30}(0,\beta),\Delta_{30}(0,\beta),0)$. Then $f_{u,v}$ is locally of the form $\Delta_{03}(\alpha^2-(\beta-\beta_0)^2)$ to lowest order, as required. Conversely, such a point (where a numerical $\lambda$-wall intersects its associated $\nu$-wall away from $\Gamma_v$ on $\alpha=0$) is always singular of this form.

Alternatively, if the point lies on $\Gamma_{v,s}\cap \Gamma_{u,s}$ and $\alpha=0$, then the Hessian is equal to the diagonal  $\operatorname{diag}(\Delta_{21}(0,\beta)(s+1/3),\Delta_{21}(0,\beta),0)$. So $\Delta_{21}=0$. But this can only happen if either $\ch^\beta_1(v)=0=\ch^\beta_1(u)$ or we are also on a $\nu$-wall. In the latter case, we are in case (b); see below. Otherwise, $\Delta_{01}=0$. But then we either have the special situation described before, or $\Delta_{20}=0$ and $\Delta_{30}=0$, so we have $u\propto v$. This is impossible. In the special case above, we have $Hf_{u,v}=0$ again, and the same third derivatives  are non-zero as before.

In case (b) above, we have $\Delta_{21}=0$ and $\ch^{\alpha,\beta}_3(u)=0=\ch^{\alpha,\beta}_3(u)$, so that $\Delta_{3i}=0$ for all $i$. Since $\Sigma_{u,v}$
also crosses a $\nu$-wall, we must have $\Delta_{20}\neq0$. When $\alpha\neq0$, the local form in~\eqref{reg_tilt_gamma} can then be read off the Hessian. Note that at $s=1/3$, the wall has a standard cusp singularity given by $(\alpha-\alpha_0)^2\propto (\beta-\beta_0)^3$. Otherwise, the Hessian vanishes at the point. The third derivatives are then all zero except for $\partial^3_\beta f_{u,v}=-2\Delta_{02}$. This gives the required form in~\eqref{reg_tilt_gamma_alpha}.
\end{proof}

\begin{remark}
The special case where the surface is not regular for $\alpha>0$ splits into two types: $v_1=0$ and $v_1\neq0$. The former case gives $v_3=0=u_3$ and $u_1=0$. Then, so long as $\Delta_{02}\neq0$, the surface has the local form as stated in Theorem~\ref{regular}. It follows that $f(\alpha,\beta,s)=((3s-1)\alpha^2+\beta^2)\beta\Delta_{02}$, so $\Sigma_{u,v}$ is reducible. As a concrete (actually generic) example, consider the Chern characters $v=(2,0,-3,0)$ and $u=\ch(\ox)$. On $\p3$, $v$ is the Chern character of a rank $2$ locally free sheaf $E$ with $c_1(E)=0$ and $c_2(E)=3$; see Figure~\ref{fig:non-reg-example}.

\begin{figure} \centering
\includegraphics[scale=0.7]{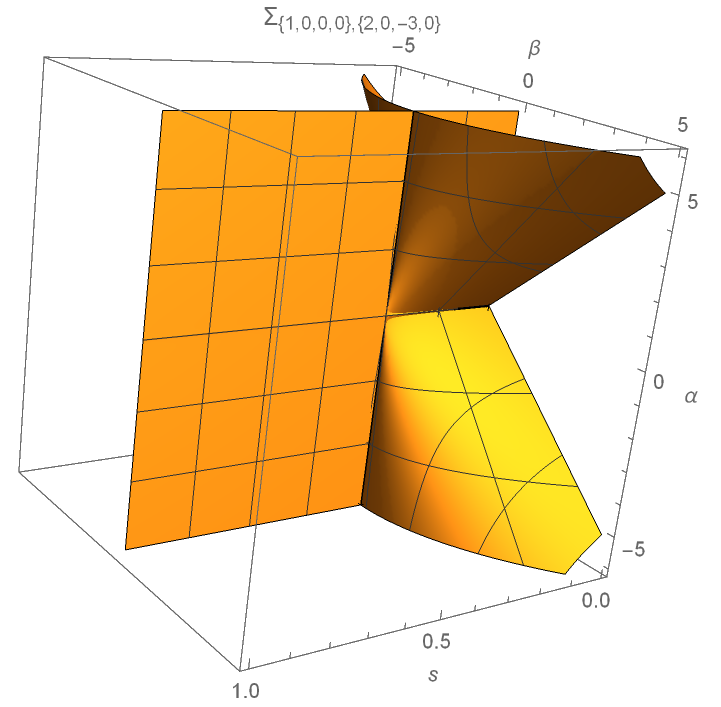}
\caption{An example of a reducible surface $\Sigma_{u,v}$ with $u=(1,0,0,0)$ and $v=(2,0,-3,0)$, illustrating case \eqref{reducible} of Theorem~\ref{regular}. }
\label{fig:non-reg-example}
\end{figure}

On the other hand, if $v_1\neq0$, the picture will be the same translated by $\mu(v)$ along $\beta$. Note that \eqref{v1} will determine $u_1$ for arbitrary non-zero $\ch(v)$ and $u_0$. Then \eqref{v3} will determine $v_3$, while \eqref{v2} will determine $u_2$: one root is also a root of $\Delta_{02}=0$ and so must be dismissed, and then $u_2$ is uniquely determined. This gives a $3$-parameter family of possible rational examples, but to correspond to actual objects, there are strong diophantine constraints which will depend on the threefold $X$.
\end{remark}

\begin{corollary}
If $\Delta_{01}\neq0$, then $\Sigma_{u,v}\cap\{\alpha>0,s>0,s\neq 1/3\}$ is regular and $\Upsilon_{u,v,s_0}=\Sigma_{u,v}\cap\{s=s_0\}$ is bounded for each $s_0>0$.
\end{corollary}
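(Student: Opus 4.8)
The plan is to treat the two assertions independently, as each reduces to a result already established in the excerpt: boundedness of the slices to Proposition \ref{bounded walls 1}, and regularity to the critical-point classification underlying Theorem \ref{regular}.

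For boundedness, I would first record that $\Delta_{01}$ is the \emph{constant} $\delta_{01}(u,v)=u_0v_1-u_1v_0$ (the $\beta$-dependence cancels, since $\ch^{\alpha,\beta}_0\equiv v_0$), so that $\delta_{10}(u,v)=-\Delta_{01}\neq0$. Each slice $\Upsilon_{u,v,s_0}=\Sigma_{u,v}\cap\{s=s_0\}$ is, by definition, the numerical $\lambda$-wall for $v$ determined by $u$ at parameter $s_0$; Proposition \ref{bounded walls 1} then applies verbatim and gives that $\Upsilon_{u,v,s_0}$ is bounded for every $s_0>0$. This half is immediate.

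For regularity I would argue by contradiction. Suppose some $(\alpha_0,\beta_0,s_0)\in\Sigma_{u,v}$ with $\alpha_0>0$, $s_0>0$, $s_0\neq1/3$ satisfied $\nabla f_{u,v}(\alpha_0,\beta_0,s_0)=0$. The critical-point analysis in the proof of Theorem \ref{regular} classifies every solution of $\nabla f_{u,v}=0=f_{u,v}$, and each falls into one of the exceptional situations (1)--(4): cases (1) and (3) force $\alpha_0=0$; cases (2) and (3) force $s_0=1/3$; and case (4) forces $\mu(u)=\mu(v)$, which is exactly $\delta_{01}(u,v)=\Delta_{01}=0$. All three conclusions contradict our hypotheses, so no such critical point exists and $\nabla f_{u,v}\neq0$ everywhere on $\Sigma_{u,v}\cap\{\alpha>0,s>0,s\neq1/3\}$.

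The only point requiring genuine care — and the main, if modest, obstacle — is to rule out stray lower-dimensional pieces: Theorem \ref{regular} is phrased for $2$-dimensional components, whereas $\{f_{u,v}=0\}$ can a priori contain isolated points or arcs (as it does on $\{\alpha=0\}$ for $v=(3,-2,-1/2,1)$, $u=(0,1,-1/2,0)$). I would dispose of this with the implicit function theorem: at any zero of $f_{u,v}$ where $\nabla f_{u,v}\neq0$, the vanishing locus is locally a smooth embedded surface, so such a point cannot lie on a component of dimension below $2$. Since the previous step gives $\nabla f_{u,v}\neq0$ throughout the region, there are no lower-dimensional components there, and the whole of $\Sigma_{u,v}\cap\{\alpha>0,s>0,s\neq1/3\}$ is a regular surface. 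The remaining work is pure bookkeeping: confirming that the critical-point list in the proof of Theorem \ref{regular} is exhaustive and that the identification $\mu(u)=\mu(v)\Leftrightarrow\Delta_{01}=0$ genuinely removes case (4).
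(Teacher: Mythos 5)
Your proposal is correct and takes essentially the route the paper intends (the corollary is stated there without further proof): boundedness of each slice is exactly Proposition \ref{bounded walls 1}, since $\Delta_{01}=\delta_{01}(u,v)=-\delta_{10}(u,v)$ is constant, and regularity follows because the case analysis in the proof of Theorem \ref{regular} classifies \emph{all} solutions of $f_{u,v}=0=\nabla f_{u,v}$ (not merely those on $2$-dimensional components) and shows each one requires $\alpha=0$, $s=1/3$, or $\Delta_{01}=0$. Your implicit-function-theorem remark disposing of possible lower-dimensional pieces is a sensible precaution that the paper leaves implicit, but it introduces nothing beyond the same critical-point analysis.
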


A numerical $\lambda$-wall $\Upsilon_{\alpha,\beta,s}$ is said to be \emph{horizontal} at a point $(\alpha_0,\beta_0)\in\HH$ if $\partial_\beta f_{u,v}(\alpha_0,\beta_0)=0$. 

\begin{proposition}\label{horizontal wall}
Assume $u$ and $v$ are as in Theorem~\ref{regular}.
Let $s=1/3$. A numerical $\lambda$-wall is horizontal at a point $(\alpha,\beta)\not\in\Theta_v$ if and only if $(\alpha,\beta)\in\Gamma_{v,s}$ or $(\alpha,\beta)\in\Xi_{u,v}$ away from $\Theta_v$. It is a local maximum on $\Gamma_{v,s}$ and a local minimum of $\,\Xi_{u,v}$ except for the special case where $\delta_{01}=0$ and $(\alpha,\beta)\in\Gamma_{v,s}\cap\Xi_{u,v}$, where it is a point of inflection.
\end{proposition}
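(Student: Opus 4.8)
The plan is to work throughout at $s=1/3$, where $f_{u,v,1/3}=\Delta_{32}$ and, by the derivative formulas computed just above, $\partial_\beta f_{u,v,1/3}=-\Delta_{31}$. Hence the wall is horizontal at $(\alpha,\beta)$ precisely when $\Delta_{31}(\alpha,\beta)=0$, and since the point lies on the wall we also have $\Delta_{32}(\alpha,\beta)=0$. To get the characterisation I would split on whether $\ch^{\alpha,\beta}_3(v)$ vanishes. If $\ch^{\alpha,\beta}_3(v)=0$ then $(\alpha,\beta)\in\Gamma_{v,1/3}$ by definition of $\tau_{v,1/3}$; if instead $\ch^{\alpha,\beta}_3(v)\neq0$ (or $\ch^{\alpha,\beta}_3(u)\neq0$), then Lemma \ref{delta_tech}(\ref{twovanish}) applied with $i=3$ to $\Delta_{32}=\Delta_{31}=0$ forces $\Delta_{21}=0$, i.e. $(\alpha,\beta)\in\Xi_{u,v}$. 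This gives the ``only if'' direction. For the converse, away from $\Theta_v$ we have $\ch^{\alpha,\beta}_2(v)=\rho_v\neq0$: on $\Gamma_{v,1/3}$ the wall equation $\Delta_{32}=0$ then forces $\ch^{\alpha,\beta}_3(u)=0$, whence $\Delta_{31}=0$; on $\Xi_{u,v}$ the relations $\Delta_{21}=\Delta_{23}=0$ together with $\ch^{\alpha,\beta}_2(v)\neq0$ give $\Delta_{31}=0$ via Lemma \ref{delta_tech}(\ref{twovanish}) with $i=2$. Either way the point is horizontal.

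Next I would determine the type of each horizontal point by implicit differentiation of $f_{u,v,1/3}=0$. Writing the wall locally as $\alpha=\alpha(\beta)$, at a horizontal point ($\partial_\beta f=0$) one has $\alpha''=-\partial_\beta^2 f/\partial_\alpha f$. At $s=1/3$ the derivative formulas give $\partial_\alpha f=\alpha(\Delta_{21}-\Delta_{30})$ and $\partial_\beta^2 f=\Delta_{21}+\Delta_{30}$. On $\Gamma_{v,1/3}\setminus\Theta_v$ the argument above yields $\ch^{\alpha,\beta}_3(u)=\ch^{\alpha,\beta}_3(v)=0$, hence $\Delta_{30}=0$, so $\alpha''=-\Delta_{21}/(\alpha\Delta_{21})=-1/\alpha<0$: a local maximum. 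On $\Xi_{u,v}\setminus\Theta_v$ we have $\Delta_{21}=0$, so $\alpha''=-\Delta_{30}/(-\alpha\Delta_{30})=1/\alpha>0$: a local minimum. Each computation is valid as long as the surviving denominator ($\Delta_{21}$, respectively $\Delta_{30}$) is non-zero, i.e. away from the intersection $\Gamma_{v,1/3}\cap\Xi_{u,v}$.

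Finally I would treat that intersection, where $\Delta_{21}=\Delta_{30}=0$ and therefore $\partial_\alpha f=\partial_\beta f=0$, so $(\alpha,\beta)$ is a singular point of the wall and the first-derivative test is inconclusive. Here I would read off the planar Hessian of $f_{u,v,1/3}$: the mixed and $\beta\beta$ entries vanish while $\partial_\alpha^2 f=2\alpha^2\Delta_{10}=-2\alpha^2\delta_{01}$, using $\Delta_{10}=\delta_{10}=-\delta_{01}$. When $\delta_{01}\neq0$ this Hessian is non-degenerate only in the $\alpha$-direction, giving a local model $(\alpha-\alpha_0)^2+\text{(higher order)}=0$, i.e. the cusp of Proposition \ref{special_regular} and Theorem \ref{regular}(\ref{reg_tilt_gamma}); that cusp is exactly where the maximal arc along $\Gamma_{v,1/3}$ meets the minimal arc along $\Xi_{u,v}$, so it is consistent with the two generic cases rather than a new phenomenon. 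When $\delta_{01}=0$ the entire planar Hessian vanishes and the local model is cubic; invoking Theorem \ref{regular}(\ref{reg_tilt_gamma_alpha}) and (\ref{reducible}) the wall then passes through horizontally with a genuine point of inflection. The main obstacle is precisely this degenerate step: showing that the cubic jet of $f$ at a singular point with $\delta_{01}=0$ produces an odd inflection rather than a cusp, which requires the third-order analysis and reducibility structure of $\Sigma_{u,v}$ rather than the clean second-order computation that settles all the generic points.
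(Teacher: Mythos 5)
Your proposal is correct and follows essentially the same route as the paper's proof: horizontality is read off as $\Delta_{31}=0$ at $s=1/3$, the dichotomy $\Gamma_{v,1/3}$ versus $\Xi_{u,v}$ comes from Lemma \ref{delta_tech}(\ref{twovanish}) exactly as in the paper (the paper's use of the identity $\Delta_{32}=\tfrac{\ch_2^{\alpha,\beta}(v)}{\ch_1^\beta(v)}\Delta_{31}$ on the $\nu$-wall is equivalent to your $i=2$ application of that lemma), and the implicit second-derivative computation $\alpha''=-1/\alpha$ on $\Gamma_{v,1/3}$ and $\alpha''=1/\alpha$ on $\Xi_{u,v}$ matches the paper verbatim. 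Your handling of the degenerate point $\Gamma_{v,1/3}\cap\Xi_{u,v}$ — Hessian $\mathrm{diag}(2\alpha^2\Delta_{10},0)$, cusp when $\delta_{01}\neq0$, deferral to the singularity classification of Theorem \ref{regular} when $\delta_{01}=0$ — is also what the paper does, the only quibble being that for $\alpha>0$ the relevant case is \ref{regular}(\ref{reducible}) rather than (\ref{reg_tilt_gamma_alpha}), which concerns $\alpha=0$.
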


\begin{proof}
By Lemma~\ref{dervs}, $\Upsilon_{\alpha,\beta,s}$ is horizontal if and only if $\partial_\beta f_{u,v,s}=\Delta_{13}(\alpha,\beta)=0$. First assume  the point is on $\Gamma_{v,s}$ (but not on $\Theta_v$), so that $\ch^{\alpha,\beta}_3(v)=0$. Then also $\ch^{\alpha,\beta}_3(u)=0$ because $\ch_2^{\alpha,\beta}(v)\neq0$ and $f_{u,v,s}(\alpha,\beta)=0$. Hence, $\Delta_{31}(\alpha,\beta)=0$, and so $\partial_\beta f_{u,v,s}=0$, and the wall is horizontal. In a local chart away from a point where the wall is vertical, we can view $\alpha$ as a function of $\beta$. Then the second derivative is given by
\[\frac{d^2\alpha}{d\beta^2}=-\frac{\partial^2_\beta f_{u,v,s}}{\partial_\alpha f_{u,v,s}}=\frac{\Delta_{21}+\Delta_{30}}{\alpha\Delta_{21}-\alpha\Delta_{30}}=-\frac{1}{\alpha}.\]

If the point is on the associated $\nu$-wall $\Xi_{u,v}$, then since $\ch_1^\beta(v)\neq0$, we have
\[\Delta_{32}=\frac{\ch_2^{\alpha,\beta}(v)}{\ch_1^\beta(v)}\Delta_{31},\]
and so if also $\Delta_{32}=0$, then the $\Upsilon_{u,v,s}$ must be horizontal there, unless the point lies on $\Theta_v$. The second derivative (if the point is not on $\Gamma_{v,s}$) is $1/\alpha$. Note that when the point is also on $\Gamma_{v,s}$, the wall is singular by Theorem~\ref{regular}\eqref{reg_tilt_gamma}; otherwise, $\Delta_{21}$ and $\Delta_{30}$ cannot vanish simultaneously except in the special case where~$\delta_{01}=0$. Consequently, there is never a point of inflection on a numerical $\lambda$-wall except at $\Gamma_{v,s}\cap \Xi_{u,v}$ in this special case.

Assume  $\Upsilon_{u,v,s}$ is horizontal at a point $(\alpha,\beta)$ not on $\Gamma_{v,s}$. Then $\Delta_{13}(\alpha,\beta)=0$, and from $\Delta_{23}(\alpha,\beta)=0$, we also have $\Delta_{12}(\alpha,\beta)=0$ by Lemma~\ref{delta_tech}\eqref{twovanish} since $\ch_3^{\alpha,\beta}\neq0$ by assumption. So $(\alpha,\beta)$ is on the associated $\nu$-wall.
\end{proof}

\begin{remark}\label{horiz_theta}
A numerical $\lambda$-wall might be horizontal as it crosses $\Theta_v$, though it generally is not. In that case, we can use $\beta$ and $s$ as local coordinates on the surface, and the second fundamental form at that point is $\operatorname{II}=\frac{1}{\alpha}d\beta^2$. Then the Gauss curvature is zero, and the mean curvature is $1/2\alpha$. 
\end{remark}

Now we consider how numerical $\lambda$-walls vary with $s$. Note that $\Sigma_{u,v}\cap\{s=s_0\}=\Upsilon_{u,v,s_0}$.

\begin{theorem}\label{create wall}
Let $\Sigma'$ be a non-empty connected component of $\,\Sigma_{u,v}$.
\begin{enumerate}
\item Suppose $s_0\geq 1/3$. If $\,\Sigma'\cap\{s=s_0\} \neq \emptyset$, then $\Sigma'\cap \{s=s_1\} \neq \emptyset$ for every $s_1\geq 1/3$.
\item Suppose $s_0<1/3$. If $\,\Sigma'\cap\{s=s_0\} \neq \emptyset$, then $\Sigma'\cap \{s=s_1\} \neq \emptyset$ for every $0\leq s_1 < s_0$.
\end{enumerate}
If $\,\Sigma'\cap\{s=s_0\} \neq \emptyset$, then $\Sigma'\cap\{s=s_1\} \neq \emptyset$ for every $0<s_1\leq s_0$.
\end{theorem}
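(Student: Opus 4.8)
The plan is to deduce the final statement from parts (1) and (2) of Theorem~\ref{create wall}, so the only genuinely new ingredient is a short continuity argument that carries a nonempty slice across the value $s=1/3$. Write $f=f_{u,v}(\alpha,\beta,s)$ for the defining function of $\Sigma_{u,v}$. The mechanism behind everything is that $\partial_s f=\alpha^2\Delta_{21}$, which for $\alpha>0$ vanishes exactly on the associated numerical $\nu$-wall $\Xi_{u,v}=\{\Delta_{21}=0\}$; away from $\Xi_{u,v}$ the implicit function theorem presents $\Sigma_{u,v}$ as a graph $s=g(\alpha,\beta)=\tfrac13-\Delta_{32}/(\alpha^2\Delta_{21})$ whose level sets are the slices $\Upsilon_{u,v,s}$. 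Fix a connected component $\Sigma'$ with $\Sigma'\cap\{s=s_0\}\neq\emptyset$; since $\Sigma'$ is connected the image $J$ of the height function $(\alpha,\beta,s)\mapsto s$ is an interval containing $s_0$, and it suffices to prove $(0,s_0]\subseteq J$.

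If $s_0<1/3$ this is immediate from part (2), which gives $\Sigma'\cap\{s=s_1\}\neq\emptyset$ for every $s_1\in(0,s_0)$, while $s_0\in J$ by hypothesis. So the work is in the case $s_0\geq1/3$. Here part (1) already yields $[1/3,s_0]\subseteq J$, and I claim it is enough to produce a single value $s'<1/3$ with $\Sigma'\cap\{s=s'\}\neq\emptyset$: applying part (2) with $s'$ in place of $s_0$ then gives $(0,s')\subseteq J$, and as $J$ is an interval already containing $[s',s_0]$ we conclude $(0,s_0]\subseteq J$.

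To produce such an $s'$ I would argue at $s=1/3$. By part (1) there is a point $p=(\alpha_0,\beta_0,1/3)\in\Sigma'$. If $p$ lies off $\Xi_{u,v}$ then $\partial_s f(p)\neq0$, and by Proposition~\ref{special_regular} the slice $\Upsilon_{u,v,1/3}$ is a smooth curve at $(\alpha_0,\beta_0)$, so $p$ is a noncritical point of the local graph $g$; hence $g$ takes values strictly below $1/3$ arbitrarily close to $(\alpha_0,\beta_0)$ and $\Sigma'$ meets $\{s=s'\}$ for some $s'<1/3$. If instead $\Sigma'\cap\{s=1/3\}\subseteq\Xi_{u,v}$, then every such point is a non-regular point of the \emph{surface} $\Sigma_{u,v}$, necessarily lying on $\Gamma_{u,1/3}\cap\Xi_{u,v}$, and Theorem~\ref{regular} supplies the explicit local model: case~(\ref{reg_tilt_gamma}) gives the standard cusp $(\alpha-\alpha_0)^2\propto(\beta-\beta_0)^3$, and cases~(\ref{reg_tilt_gamma_alpha}) and (\ref{reducible}) the remaining models. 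A direct inspection of each model shows it contains points with $s<1/3$, so again $\Sigma'$ reaches some $s'<1/3$, completing the bridge.

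I expect the main obstacle to be exactly this bridge, i.e. excluding a genuine local minimum of the height function at $s=1/3$ in the degenerate situation where the whole slice $\Sigma'\cap\{s=1/3\}$ is critical. This is precisely where $\Sigma_{u,v}$ fails to be regular, so the clean way to rule out a minimum is to show the Gauss curvature of $\Sigma'$ is non-positive there; for that one feeds the explicit Hessian of $f$ and the Pl\"ucker identity of Lemma~\ref{delta_tech}(\ref{delta_identity}) into the local models of Theorem~\ref{regular}, verifying in each case that a branch of $\Sigma'$ descends below $1/3$ rather than turning back.
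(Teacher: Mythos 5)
Your proposal has a fundamental structural gap: the statement to be proved is the whole of Theorem \ref{create wall}, including parts (1) and (2), but you take (1) and (2) as given and only supply a bridge for the final sentence. Those two parts are where essentially all of the content lies. The paper's proof first reduces to the bounded case ($\Delta_{01}\neq0$), then argues that if a slice $\Sigma'\cap\{s=s_1\}$ were empty there would have to be a point where $\partial_\alpha f=\partial_\beta f=0$ while $\partial_s f\neq0$ (using the regularity statement of Theorem \ref{regular}); it then shows that the slice through such a critical point consists of isolated points (ruling out closed and unbounded curve components by counting roots of $f$ as a quadratic in $\alpha^2$ and by its asymptotic behaviour), and from the three vanishing conditions together with Lemma \ref{delta_tech}(\ref{delta_identity}) derives the identity $\alpha_0^2(s_0-1/3)\Delta_{02}^2=-(2s_0+1/3)\Delta_{12}^2$, which forces $s_0<1/3$ — this is precisely part (1). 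Finally, a Gauss- and mean-curvature computation at such a critical point shows the mean curvature is negative, so the surface bends toward $s<s_0$ and components can only be created, never destroyed, as $s$ decreases — this is part (2) and the final sentence. None of this appears in your proposal, so what you have is a conditional derivation of the last sentence, not a proof of the theorem.

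The bridge you do give also contains an error. You claim that if $\Sigma'\cap\{s=1/3\}\subseteq\Xi_{u,v}$ then every such point is a non-regular point of the surface $\Sigma_{u,v}$. That is false: by Theorem \ref{regular}, regularity of $\Sigma_{u,v}$ at $s=1/3$, $\alpha>0$ fails only at points lying on \emph{both} $\Xi_{u,v}$ and $\Gamma_v$; at a point of $\Xi_{u,v}$ off $\Gamma_v$ one has $\partial_s f=\alpha^2\Delta_{21}=0$, yet the surface is regular because $(\partial_\alpha f,\partial_\beta f)\neq(0,0)$. The correct (and simpler) observation at such a point is that the tangent plane contains the $s$-direction, so the surface is locally a graph $\beta=h(\alpha,s)$ or $\alpha=h(\beta,s)$ over an open set and automatically meets every slice $\{s=s'\}$ with $s'$ close to $1/3$, below as well as above. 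Your closing paragraph correctly senses that excluding a local minimum of the height function is the crux, but it defers exactly the curvature computation that the paper carries out — and that computation is what proves (1) and (2) in the first place.
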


In other words, if a numerical $\lambda$-wall exists for one $s_0\geq 1/3$, then it must also exist for all $s\geq 1/3$, whereas if $s_0<1/3$, then it need only exist for $s<s_0$. This means that numerical $\lambda$-walls can only be ``created'' for $s<1/3$ and as $s$ decreases. We shall see below that walls cannot be ``created'' in $R^{\pm}_{v,s}$ even when $s<1/3$, but they can be created in $R^0_{v,s}$.

\begin{proof}
First observe  that we may assume boundedness away from $s=0$ because if $\Delta_{01}=0$, then $\Delta_{20}\neq0$ and so $f_{u,v,s}(\alpha,\beta)=0$ is a cubic in $\beta$. Then it must have a solution for all $\alpha$. But if a wall crosses $\alpha=0$ for one value of $s$, then it crosses it for all $s$.

By Theorem~\ref{regular}, the surface is regular except at special points on $\alpha=0$ where there are multiple tangent planes which include the $s$-direction or we are in case \eqref{reg_tilt_gamma} of the theorem. But this latter case cannot arise in the present situation. This means that if $\Sigma_{u,v}\cap\{s=s_1\}$ is empty for some $s_1>0$, then there is some $s_0$ such that $\partial_\alpha f_{u,v,s_0}=0=\partial_\beta f_{u,v,s_0}$ and $\partial_s f_{u,v}|_{s=s_0}\neq0$. It follows that $\alpha\neq0$ and $\Delta_{12}(\alpha,\beta)\neq0$. 

Then $\Sigma_{u,v}|_{s_0}$ is a union of closed curves, unbounded curves or one or more distinct points. Suppose $(\alpha,\beta)$ is a point on a closed curve component. Since $f$ is a quadratic function of $\alpha^2$, it follows that nearby $(\alpha,\beta)$ there are two distinct solutions for a fixed $\beta$. If the curve does not cross $\alpha=0$, then there are four distinct solutions sufficiently close to two points in $\alpha>0$. But then there are also four points for $\alpha<0$, which is impossible. So the curve must cross $\alpha=0$. But this gives a contradiction as $\alpha\neq0$. We can also eliminate the possibility of unbounded curves, as follows. Each unbounded curve must have two distinct unbounded branches. Then nearby $s_0$, there will be four unbounded branches. But the implicit function defining the $\lambda$-wall $\Upsilon_{u,v,s_0}$ is only asymptotically cubic (see the proof of Proposition~\ref{bounded walls 1}), and so this is impossible.

Consequently, $\Sigma_{u,v}\cap \{s=s_0\}$ consists only of isolated points. Now consider one of these points, with coordinates $(\alpha_0,\beta_0)$. By regularity, we can use $\alpha$ and $\beta$ as local coordinates on $\Sigma_{u,v}$ at this point. Note that the first fundamental form at $(\alpha_0,\beta_0)$ is $d\alpha^2+d\beta^2$, from the vanishing of $-\partial_\alpha f_{u,v}/\partial_s f_{u,v}$ and $-\partial_\beta f_{u,v}/\partial_s f_{u,v}$.

From $f_{u,v}(\alpha_0,\beta_0,s_0)=0$, $\partial_\alpha f_{u,v}(\alpha_0,\beta_0,s_0)=0$ and $\partial_\beta f_{u,v}(\alpha_0,\beta_0,s_0)=0$, we have
\begin{gather}
\Delta_{32}=\alpha_0^2\left(s_0-1/3\right)\Delta_{12},\label{f_vanish}\\
\Delta_{30}=\left(1+2\left(s_0-1/3\right)\right)\Delta_{21}+\alpha_0^2\left(s_0-1/3\right)\Delta_{10},\label{from_d_alpha}\\
\Delta_{31}=\alpha_0^2\left(s_0-1/3\right)\Delta_{02},\label{from_d_beta}
\end{gather}
where we abbreviate $\Delta_{ij}(\alpha_0,\beta_0)$ to $\Delta_{ij}$ here and in the rest of the proof. Together with Lemma~\ref{delta_tech}\eqref{delta_identity}, these identities imply that
\begin{equation}\alpha_0^2(s_0-1/3)\Delta_{02}^2=-(2s_0+1/3)\Delta_{12}^2.\label{from_cyclic}\end{equation}
Since $\alpha_0\neq0$ and $\Delta_{12}\neq0$, we deduce that $\Delta_{02}\neq0$ and $s<1/3$. This establishes the first part of the theorem.

For the second part, we show that the mean curvature at our point is negative, and so the surface must curve towards $s<s_0$. To this end, observe that the Gauss curvature $K$ is a positive multiple of  $\partial_{\alpha}^2f_{u,v}\partial_{\beta}^2f_{u,v}-(\partial_{\alpha}\partial_{\beta}f_{u,v})^2$. Using \eqref{from_d_alpha}, we have 
\begin{align*}
\partial_{\alpha}^2f_{u,v} & = 2\alpha_0^2(2s_0+1/3)\Delta_{10}, \\
\partial_{\alpha}\partial_{\beta}f_{u,v} & = 2\alpha_0(s_0-1/3)\Delta_{02}, \\
\partial_{\beta}^2f_{u,v} & = \alpha_0^2(s-1/3)\Delta_{10}+2(s_0+2/3)\Delta_{21}.
\end{align*}
Using \eqref{from_cyclic}, we have $(\partial_{\alpha}\partial_{\beta}f_{u,v})^2=-(2s_0+1/3)(s_0-1/3)\Delta_{21}^2$. Then $K$ has the same sign as 
\[\begin{split}
    &\alpha_0^4(s_0-1/3)\Delta_{10}^2+\alpha_0^2(s_0+2/3)\Delta_{10}\Delta_{21}+(s_0-1/3)\Delta_{21}^2 \\
    &=(s_0-1/3)(\alpha_0\Delta_{10}+\Delta_{12})^2+3s_0\alpha_0^2\Delta_{10}\Delta_{21}
\end{split}
\]
But the tangent plane at $(\alpha_0,\beta_0)$ only intersects $\Sigma_{u,v}$ at that point and so $K\geq0$. The first term is negative, and so we must have $\Delta_{10}\Delta_{21}>0$. 

The mean curvature is then
\[-\frac{\partial_{\alpha}^2f_{u,v}}{\partial_s f_{u,v}}-\frac{\partial_{\beta}^2f_{u,v}}{\partial_s f_{u,v}}=
-3s_0\alpha_0\frac{\Delta_{10}}{\Delta_{21}}-\frac{2}{\alpha_0}\left(s_0+\frac23\right)<0
\]
as required.
\end{proof}

We observe that numerical $\lambda$-walls do admit isolated points; see Figure~\ref{pointexample} below.

\begin{figure}[ht]\centering
\includegraphics[scale=0.7]{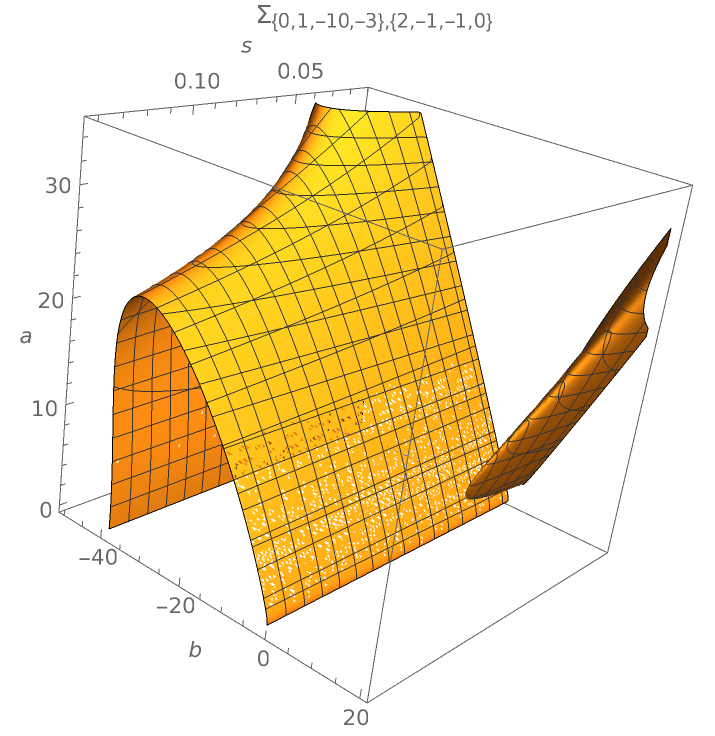}    
\caption{This is the surface wall $\Sigma_{u,v}$ for $v=(2,-1,-1,0)$ and $u=(0,1,-10,-3)$. Taking $s_0\simeq0.0569$, the point $(\alpha,\beta)\simeq(6.24,3.52)$ is an isolated point of the numerical $\lambda$-wall $\Upsilon_{u,v,s_0}=\Sigma_{u,v}|_{s_0}$.} \label{pointexample}
\end{figure}

As a consequence of Theorem~\ref{regular} and the proof of Theorem~\ref{create wall}, we can deduce that $1$-dimensional components of numerical $\lambda$-walls for a fixed $s$ are regular away from $\nu$-walls.

\begin{corollary}\label{regular_wall}
For any $s>0$ and any real numerical Chern characters $u$ and $v$, any connected component of a numerical $\lambda$-wall $\Upsilon_{u,v,s}$ in $\mathbb{H}$ is regular as a real curve away from $\Xi_{u,v}$. 
In particular, if there is no associated $\nu$-wall for the pair $u$, $v$, then $\Upsilon_{u,v,s}$ is always regular.
\end{corollary}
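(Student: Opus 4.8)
The plan is to show that any non-regular point of $\Upsilon_{u,v,s}$ lying in the open half plane $\HH$ (so $\alpha_0>0$) and off the $\nu$-wall $\Xi_{u,v}$ (so $\Delta_{21}(\alpha_0,\beta_0)\neq0$) is forced to be an \emph{isolated} point of the slice, and hence cannot lie on a one-dimensional component. Since $\partial_s f_{u,v}=\alpha^2\Delta_{21}\neq0$ at such a point, Theorem \ref{regular} guarantees that $\Sigma_{u,v}$ is regular there and can be written locally as a graph $s=s(\alpha,\beta)$; a non-regular point of $\Upsilon_{u,v,s_0}=\Sigma_{u,v}\cap\{s=s_0\}$ is then exactly a critical point of this height function, and the local shape of the level set $\{s=s_0\}$ is governed by the sign of $\det H_2=f_{\alpha\alpha}f_{\beta\beta}-f_{\alpha\beta}^2$.

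The first step is to translate $f_{u,v}=\partial_\alpha f_{u,v}=\partial_\beta f_{u,v}=0$ into the relations (exactly as in the proof of Theorem \ref{create wall})
\[
\Delta_{32}=c\,\Delta_{12},\qquad \Delta_{31}=c\,\Delta_{02},\qquad \Delta_{30}=(2s+\tfrac13)\Delta_{21}+c\,\Delta_{10},
\]
where $c:=\alpha_0^2(s-\tfrac13)$, and to feed these into the Plücker identity of Lemma \ref{delta_tech}(\ref{delta_identity}). The $\Delta_{10}$-terms cancel and one is left with
\[
\alpha_0^2\bigl(s-\tfrac13\bigr)\Delta_{02}^2=-\bigl(2s+\tfrac13\bigr)\Delta_{21}^2 .
\]
Because $\Delta_{21}\neq0$ and $2s+\tfrac13>0$, the right-hand side is strictly negative; this immediately rules out $s>\tfrac13$ (the left-hand side would be nonnegative) and forces $s<\tfrac13$ together with $\Delta_{02}\neq0$. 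The borderline case $s=\tfrac13$ is disposed of separately by Proposition \ref{special_regular}, whose non-regular locus sits inside $\Gamma_{u,1/3}\cap\Xi_{u,v}$ and hence inside $\Xi_{u,v}$.

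It remains to treat $s<\tfrac13$, and this is the crux. Using the second-derivative formulas and eliminating $\Delta_{02}^2$ via the displayed identity, I would compute, with $X:=\alpha_0^2\Delta_{10}$ and $Y:=\Delta_{21}$,
\[
\det H_2=4\bigl(2s+\tfrac13\bigr)\Bigl[(s-\tfrac13)\bigl(X^2+XY+Y^2\bigr)+XY\Bigr],
\]
so $\det H_2>0$ precisely when $XY>(\tfrac13-s)(X^2+XY+Y^2)$. The goal is to verify that at an honest critical point this inequality holds, so that the point is a strict local extremum of $s$ and therefore an isolated point of $\Upsilon_{u,v,s_0}$ (matching the isolated points observed in Figure \ref{pointexample}); combined with the negative mean-curvature computation from the proof of Theorem \ref{create wall}, which identifies the extremum as a local maximum, this reproduces the picture that one-dimensional components are created only as $s$ decreases and never pass through such points. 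Consequently no non-regular point survives on a one-dimensional component away from $\Xi_{u,v}$, and when $\Xi_{u,v}=\emptyset$ the wall is regular throughout $\HH$.

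The main obstacle is exactly the sign analysis in the last paragraph: unlike the clean contradiction in the $s>\tfrac13$ case, the quadratic form $(s-\tfrac13)(X^2+XY+Y^2)+XY$ is indefinite, so positivity of $\det H_2$ is not formal and must be extracted from genuine differential-geometric input from Theorem \ref{create wall} — namely that at a critical value the tangent plane meets $\Sigma_{u,v}$ only at the point, forcing the Gauss curvature to be nonnegative and hence $\Delta_{10}\Delta_{21}>0$. Establishing that this geometric constraint is available at \emph{every} such non-regular point, and not merely at the extreme value of $s$ where a component first appears, is the delicate part, and is where I expect the argument to lean on the full description of $\Sigma_{u,v}$ rather than on the local algebra alone.
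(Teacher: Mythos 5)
Your reduction is sound as far as it goes, and it follows the same skeleton as the paper: away from $\Xi_{u,v}$ and with $\alpha_0>0$ one has $\partial_s f_{u,v}=\alpha_0^2\Delta_{21}\neq0$, so $\Sigma_{u,v}$ is locally a regular graph $s=s(\alpha,\beta)$; a singular point of the slice is a critical point of this height function; the three critical-point relations fed into Lemma \ref{delta_tech}(\ref{delta_identity}) do yield $\alpha_0^2\bigl(s-\tfrac13\bigr)\Delta_{02}^2=-\bigl(2s+\tfrac13\bigr)\Delta_{21}^2$, forcing $s<\tfrac13$ off the $\nu$-wall (with $s=\tfrac13$ correctly disposed of by Proposition \ref{special_regular}); and your expression for $\det H_2$ is consistent with the Hessian recorded in the paper.

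The genuine gap is the step you yourself flag as the crux, and the route you propose cannot close it because it is circular. The input you want — "at a critical point the tangent plane $\{s=s_0\}$ meets $\Sigma_{u,v}$ locally only at that point" — \emph{is} the statement that the critical point is isolated in $\Upsilon_{u,v,s_0}$, i.e., exactly the conclusion you are after; it cannot serve as the hypothesis from which you extract $K\geq0$, then $\Delta_{10}\Delta_{21}>0$, then isolatedness. (Even granting it, $K\geq0$ only gives $\det H_2\geq0$, and the degenerate case $\det H_2=0$ would still allow a non-isolated critical point, so the Hessian route is insufficient as well as circular.) The paper runs the logic in the opposite direction: in the proof of Theorem \ref{create wall}, isolatedness of horizontal-tangent points is established \emph{first}, by an elementary counting argument with no curvature input — if such a point lay on a closed one-dimensional component of the slice, some vertical line $\{\beta=\mathrm{const}\}$ would meet the slice in more than two points with $\alpha>0$, impossible because $f_{u,v}(\cdot,\beta,s_0)$ is a quadratic polynomial in $\alpha^2$; unbounded components are excluded because the slice is asymptotically only cubic (as in the proof of Proposition \ref{bounded walls 1}). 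Only afterwards does the paper deduce $K\geq0$ and $\Delta_{10}\Delta_{21}>0$, and those are needed for the mean-curvature step of Theorem \ref{create wall}, not for this corollary: the corollary's proof simply cites Theorem \ref{regular} together with this isolatedness. So the missing idea in your proposal is precisely that global counting argument; with it the sign analysis of $\det H_2$ becomes unnecessary, and without it your argument does not close.
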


\begin{proof}
Away from the $\nu$-wall $\Xi_{u,v}$ (if non-empty) when $s=1/3$, $\Sigma_{u,v}$ is regular by Proposition~\ref{special_regular} or  Theorem~\ref{regular}. But by the proof above, the only place where the plane tangent to $\Sigma_{u,v}$ is parallel to the $(\alpha,\beta)$-plane occurs at isolated points and not in a $1$-dimensional portion of the surface wall.  
\end{proof}

We complete this section by returning to the issue of the intersection of numerical $\lambda$-walls for $v$ and the curves $\Theta_v$ and $\Gamma_{v,s}$.

Unravelling the equality in \eqref{eqn for b_0} yields a cubic polynomial equation for $\beta_0$. This means that the intersection $\Upsilon_{u,v,s}\cap\Gamma_{v,s}$ consists of at most three points away from $\Theta_v$; in addition, according to Lemma~\ref{indep of s}, the number of intersection points does not depend on the parameter $s$. Notice that the total number of intersection points of a $\lambda$-wall for $v$ with $\Gamma_{v,s}$ will increase by $1$ if $q(v)<0$.

Let us now examine one situation in which $\Upsilon_{u,v,s}\cap\Gamma_{v,s}$ contains two points. 

\begin{lemma}\label{twice} 
If a connected component of a numerical $\lambda$-wall $\Upsilon_{u,v,s}$ crosses a connected component of $\Gamma_{v,s}$ twice away from the hyperbola $\Theta_v$, then the associated numerical $\nu$-wall $\Xi_{u,v}$ is non-empty. 
\end{lemma}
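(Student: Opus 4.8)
The plan is to reduce to the case $s=1/3$, where Proposition \ref{horizontal wall} gives precise control of the turning points of the wall, and then to run a turning-point argument between the two intersection points. First I would invoke Lemma \ref{indep of s}: since $\Upsilon_{u,v,s}$ meets $\Gamma_{v,s}$ away from $\Theta_v$, the $\beta$-coordinates of the intersection points are independent of $s$ and the intersection persists for every $s\ge0$; moreover the associated numerical $\nu$-wall $\Xi_{u,v}$ does not depend on $s$. Thus it suffices to produce a point of $\Xi_{u,v}$ after specialising to $s=1/3$, where $f_{u,v,1/3}=\Delta_{32}$. Let $P_1=(\alpha_1,\beta_1)$ and $P_2=(\alpha_2,\beta_2)$ be the two intersection points; since $\tau_{v,s}$ is affine in $\alpha^2$, each horizontal line meets $\Gamma_{v,s}$ at most once away from $\Mu_v$, so $\beta_1\ne\beta_2$. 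By Proposition \ref{horizontal wall} the wall is horizontal at $P_1$ and $P_2$, and each is a \emph{local maximum} of $\alpha$ along the wall.

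The core of the argument proceeds by contraposition: suppose $\Xi_{u,v}=\emptyset$. Then by Corollary \ref{regular_wall} the wall $\Upsilon_{u,v,1/3}$ is regular throughout $\HH$, so the connected component $C$ carrying $P_1,P_2$ is a smooth $1$-manifold, and I may select the sub-arc $\gamma\subset C$ joining $P_1$ to $P_2$. Because $P_1$ and $P_2$ lie off $\Theta_v$ (hence $\alpha_i>0$) and are strict local maxima of $\alpha$, the restriction $\alpha|_\gamma$ attains its minimum at an \emph{interior} point $q$ of $\gamma$ with $\alpha(q)>0$; this is the decisive role of having two intersection points on a single component, as it confines the minimum away from the boundary $\{\alpha=0\}$. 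At $q$ the wall is horizontal with $\alpha$ a local minimum, so by Proposition \ref{horizontal wall} the point $q$ lies on $\Gamma_{v,1/3}$ or on $\Xi_{u,v}$. The former is impossible, since points of $\Gamma_{v,1/3}$ are maxima rather than minima (the exceptional inflectional case $\delta_{01}=0$ being dismissed separately), and the latter contradicts $\Xi_{u,v}=\emptyset$. Hence $\Xi_{u,v}\ne\emptyset$.

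The step requiring the most care, and the one I expect to be the main obstacle, is controlling the minimum point $q$ should the arc $\gamma$ meet $\Theta_v$, since the classification in Proposition \ref{horizontal wall} is valid only off $\Theta_v$. I would dispose of this with the dichotomy implicit in Lemma \ref{l-l}: at any point of the wall lying on $\Theta_v$ one has $\rho_v=0$, whence $\tau_{v,1/3}\,\rho_u=0$, so either $\rho_u=0$, in which case $\Delta_{21}=\rho_u\ch_1^\beta(v)-\rho_v\ch_1^\beta(u)=0$ and the point already lies on $\Xi_{u,v}$, finishing the proof, or else the point lies on $\Gamma_{v,1/3}\cap\Theta_v$. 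This remaining nodal configuration is genuinely degenerate: there Lemma \ref{nodal point} forces every $\lambda$-wall through the node and the wall acquires a cusp, and I would handle it by a direct sign analysis of $\Delta_{21}$ along $\gamma$ rather than through the turning-point classification. A secondary technical point is checking that the reduction to $s=1/3$ keeps the two intersection points off $\Theta_v$ and on a single component; this is controlled by the persistence of components in Theorem \ref{create wall} together with the continuity in $s$ supplied by Lemma \ref{indep of s}.
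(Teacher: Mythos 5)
Your proof is correct and is essentially the paper's own argument: reduce to $s=1/3$ via Lemma \ref{indep of s}, observe that both crossings are local maxima by Proposition \ref{horizontal wall}, invoke Corollary \ref{regular_wall} so that either the component is regular or any singularity already lies on $\Xi_{u,v}$, and then locate an interior minimum of $\alpha$ between the two maxima, which Proposition \ref{horizontal wall} forces onto the associated $\nu$-wall. Your contrapositive framing and the extra case analysis at $\Theta_v$ (which the paper's proof glosses over) are harmless refinements of the same argument.
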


\begin{proof}
By  Lemma~\ref{indep of s}, we can let $s=1/3$. Then the two intersection points are local maxima of $\Upsilon_{u,v,1/3}$. By Corollary~\ref{regular_wall}, the component is regular or has a tacnode on a $\nu$-wall. If it is regular, then it must have a minimum between the two maxima. Proposition~\ref{horizontal wall} implies that such a minimum must lie on the associated $\nu$-wall.
\end{proof}

Figure~\ref{bubble} illustrates the typical situation described in Lemma~\ref{twice} and shows that it does arise. It is easy to see that the intersection with $\alpha=0$ must happen to the right of $\Gamma^-_{v,s}$ because $\Gamma^-_{v,s}$ is monotonic.

\begin{figure}[ht]
\includegraphics{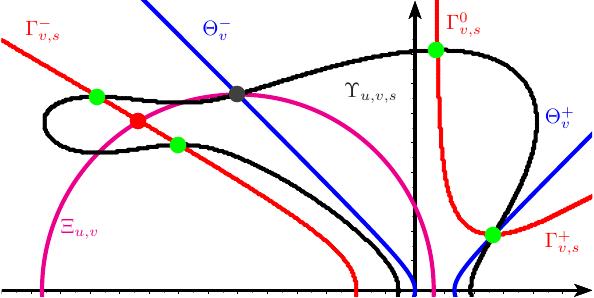}
\caption{This example contains several interesting features. We set $s=1/3$, $v=(3,1,0,-1)$ and $u=(0,1,-3,7)$. First, the curve $\Gamma_{v,s}$ (in red) intersects the positive branch of the hyperbola $\Theta_v$ (in blue); this intersection is marked with a black bullet. Second, the numerical $\lambda$-wall $\Upsilon_{u,v,s}$ (in black) crosses the curve $\Gamma_{v,s}$ four times (marked with green bullets), twice along $\Gamma_{v,s}^-$. Finally, we can see that the associated numerical $\nu$-wall $\Xi_{u,v}$ (in magenta) cuts $\Gamma_{v,s}^-$ (red bullet) between the two intersection points of $\Upsilon_{u,v,s}\cap\Gamma_{v,s}^-$, illustrating the phenomenon described in Lemma~\ref{twice}. Both $\Upsilon_{u,v,s}$ and $\Xi_{u,v}$ cut $\Theta_v^-$ at the same point.}
\label{bubble}
\end{figure}

The situation in Figure~\ref{bubble} also demonstrates another phenomenon in which a numerical $\lambda$-wall intersects horizontal lines four times. When this happens, it must be that $f(\alpha,\beta)$ has three turning points along this horizontal line. But note that $\partial_\beta f=-\Delta_{31}$ when $s=1/3$. On the other hand, $\partial_\alpha\Delta_{31}=0$, and so the solutions of $\Delta_{31}=0$ are vertical lines. But these intersect the wall at its horizontal turning points (the green points in Figure~\ref{bubble}). The middle one also intersects $\Upsilon_{u,v,1/3}$ again at a minimum or on $\Theta_v$, which must therefore also be on $\Xi_{u,v}$. For a connected component, we could already deduce this because such a component must have at least one minimum, but the same will follow even when the geometry of the wall does not require there to be a minimum. This more precise reasoning allows us to refine Lemma~\ref{twice} as follows. 

\begin{lemma}\label{double_to_left}
If a numerical $\lambda$-wall $\Upsilon_{u,v,s}$ intersects $\Gamma^-_{v,s}$ twice, then $\Upsilon_{u,v,s}$ must intersect $\Theta_v$ and $\Xi_{u,v}$ intersects $\Gamma^-_{v,s}$ in between the intersection points.
\end{lemma}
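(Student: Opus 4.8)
The plan is to reduce to the case $s=1/3$ and then obtain the point of $\Xi_{u,v}\cap\Gamma^-_{v,s}$ by applying Rolle's theorem to the restriction of $\ch^{\alpha,\beta}_3(u)$ to $\Gamma^-_{v,s}$. By Lemma \ref{indep of s} the two points of $\Upsilon_{u,v,s}\cap\Gamma^-_{v,s}$ persist, with the same $\beta$-coordinates, as points of $\Upsilon_{u,v,1/3}\cap\Gamma^-_{v,1/3}$, while $\Xi_{u,v}$ and $\Theta_v$ are independent of $s$; hence it suffices to treat $s=1/3$. Write $\beta_1<\beta_2$ for the $\beta$-coordinates of these two points.

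First I would note that along $\Gamma^-_{v,1/3}$ one has $\ch^{\alpha,\beta}_3(v)=\tau_{v,1/3}=0$, so that $f_{u,v,1/3}=\Delta_{32}=\ch^{\alpha,\beta}_3(u)\,\rho_v(\alpha,\beta)$. Since $\rho_v\neq0$ off $\Theta_v$, the intersection $\Upsilon_{u,v,1/3}\cap\Gamma^-_{v,1/3}$ is exactly the locus $\{\ch^{\alpha,\beta}_3(u)=0\}$ on $\Gamma^-_{v,1/3}$; in particular $\ch^{\alpha,\beta}_3(u)$ vanishes at the points with $\beta=\beta_1$ and $\beta=\beta_2$.

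Because $\Gamma^-_{v,1/3}$ is strictly monotonic and regular away from $\Theta_v$ and $\Mu_v$, on the interval $[\beta_1,\beta_2]\subset R^-_v$ it is the graph of a smooth function $\alpha=g(\beta)$, and $\ch^\beta_1(v)=v_1-v_0\beta\neq0$ throughout (as $\beta<\mu(v)$ and $v_0\neq0$). Set $\Phi(\beta):=\ch^{g(\beta),\beta}_3(u)$, so $\Phi(\beta_1)=\Phi(\beta_2)=0$. Differentiating the defining relation $\ch^{g(\beta),\beta}_3(v)=0$ and using the derivative rules \eqref{dervch} gives $g'(\beta)=-\rho_v/\bigl(\alpha\,\ch^\beta_1(v)\bigr)$; substituting this into $\Phi'(\beta)=-\alpha\,\ch^\beta_1(u)\,g'(\beta)-\rho_u$ and simplifying yields
\[ \Phi'(\beta)=\frac{\ch^\beta_1(u)\rho_v-\ch^\beta_1(v)\rho_u}{\ch^\beta_1(v)}=-\frac{\Delta_{21}(g(\beta),\beta)}{\ch^\beta_1(v)}. \]
By Rolle's theorem there is $\beta^*\in(\beta_1,\beta_2)$ with $\Phi'(\beta^*)=0$, whence $\Delta_{21}(g(\beta^*),\beta^*)=0$; that is, $(g(\beta^*),\beta^*)$ is a point of $\Xi_{u,v}\cap\Gamma^-_{v,1/3}$ lying strictly between the two intersection points. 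This proves the second assertion.

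Finally, the existence of this point shows $\Xi_{u,v}$ is non-empty; since numerical $\nu$-walls are semicircles meeting $\Theta_v$ at their maximum point, $\Xi_{u,v}$ crosses $\Theta_v$, and the converse direction of Lemma \ref{l-l} then forces $\Upsilon_{u,v,1/3}$ through that same point of $\Theta_v$, giving the first assertion. The step that requires care is the identification $\Phi'=-\Delta_{21}/\ch^\beta_1(v)$, together with the verification that $\Gamma^-_{v,1/3}$ is a smooth graph with $\ch^\beta_1(v)\neq0$ on $[\beta_1,\beta_2]$ so that Rolle applies; this computation is the crux, as it is precisely what converts a critical point of $\ch_3(u)|_{\Gamma^-}$ into a point of the associated $\nu$-wall, and it is also why the reduction to $s=1/3$ is essential (for $s\neq1/3$ an extra term $\alpha^2(s-1/3)\Delta_{01}$ appears in $\Phi'$ and spoils the clean conclusion).
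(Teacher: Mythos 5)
Your proof is correct, and it takes a genuinely different route from the paper's. The paper, after the same reduction to $s=1/3$ via Lemma \ref{indep of s}, argues geometrically: it uses the fact that horizontal points of $\Upsilon_{u,v,1/3}$ away from $\Theta_v$ are maxima on $\Gamma_{v,1/3}$ and minima on $\Xi_{u,v}$ (Proposition \ref{horizontal wall}), and then runs a case analysis on the topology of the wall --- a single connected component (where Lemma \ref{twice} supplies a minimum between the two maxima) versus two nested components (where the three vertical lines $\{\Delta_{31}=0\}$ are played off against the outer component) --- finally extracting the crossing of $\Theta_v$ from a third intersection with $\Xi_{u,v}$ at a point that is not a turning point. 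Your Rolle argument compresses all of this: restricting $\ch^{\alpha,\beta}_3(u)$ to the graph $\alpha=g(\beta)$ of $\Gamma^-_{v,1/3}$ (legitimate, since $\partial_\alpha\tau_{v,1/3}=-\alpha\ch^\beta_1(v)\neq0$ for $\beta<\mu(v)$, and $\Gamma^-_{v,s}\subset R^-_v$ keeps $\rho_v>0$ at the endpoints) and computing $\Phi'=-\Delta_{21}/\ch^\beta_1(v)$ produces the point of $\Xi_{u,v}\cap\Gamma^-_{v,1/3}$ strictly between the two zeros with no case analysis at all; your identity is essentially Proposition \ref{horizontal wall} read from the curve rather than from the wall, and it is insensitive to whether the two crossings lie on one component of $\Upsilon_{u,v,s}$ or two --- a real gain over the paper's argument. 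Your closing remark about the extra term $\alpha^2(s-1/3)\Delta_{01}$ for $s\neq1/3$ is exactly right, and deriving the $\Theta_v$-crossing from the converse direction of Lemma \ref{l-l} is cleaner than the paper's third-crossing argument. Two points worth making explicit: first, you should rule out the degenerate case in which $\Xi_{u,v}$ is the vertical line $\Mu_v$, which never meets $\Theta_v$, so the ``semicircle crossing $\Theta_v$ at its maximum'' step would fail; happily your own identity does this, since $\delta_{01}(u,v)=0$ makes $\Delta_{21}$ a nonzero multiple of $\ch^\beta_1(v)$, which cannot vanish at $\beta^*<\mu(v)$, so the Rolle point forces $\delta_{01}(u,v)\neq0$ and hence a genuine semicircle. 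Second, the reduction via Lemma \ref{indep of s} tacitly assumes the two crossing points, which keep their $\beta$-coordinates, still lie on the branch $\Gamma^-$ (rather than $\Gamma^0$) at $s=1/3$; the paper's own proof makes the same tacit reduction, so you are no worse off, but one sentence acknowledging it would tighten the argument.
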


\begin{proof}
Note that Lemma~\ref{twice} shows that if the wall is a connected component, then it must intersect $\Xi_{u,v}$. Since the solutions of $\Delta_{31}=0$ are vertical lines, there must be minima above or below the maxima (the green dots on $\Gamma^-_{v,1/3}$ in Figure~\ref{bubble}). Since the wall cannot cross $\Gamma^-_{v,s}$ for a third time because it would have to double back on itself and there can only be at most two solutions of $\Delta_{32}=0$ along any vertical line, it must cross $\Xi_{u,v}$ a third time at a point which is not a minimum, which must therefore be on $\Theta_v$. Note that an alternative picture to Figure~\ref{bubble} has the wall cross $\alpha=0$ twice to the left of $\Gamma^-_{v,s}$. In that case, it only crosses $\Xi_{u,v}$ once, and then $\Xi_{u,v}$ does not intersect $\Gamma^-_{u,v}$ between the two intersection points. 

Now we assume the wall is two nested components.
First we observe that this hypothesis is independent of $s$. To see that, note that if a component were created at $(\overline\alpha,\overline\beta)$, then there would be six distinct solutions of $f(\alpha,\beta)=0$ along $\beta=\overline\beta$, which is impossible. The two bounded components must have maxima at $s=1/3$ which must intersect $\Gamma^-_{v,1/3}$.

Then there are three vertical line components of $\Delta_{31}=0$ which intersect $\Gamma^-_{v,1/3}$ at the maxima of the wall at distinct $\beta$-values.
Since the inner bounded component must have a maximum (on $\Gamma^-_{v,1/3}$), the middle component of $\Delta_{31}=0$ intersects the wall at that point and so intersects the outer component away from $\Gamma^-_{v,1/3}$. But this point cannot be a point of inflection (see Remark~\ref{horiz_theta}) and so must be a minimum, which must therefore be on a $\nu$-wall. If the wall is otherwise in the exterior of the $\nu$-wall, then that point must be at the maximum of the $\nu$-wall and so is on $\Theta_{v}$. Otherwise, it must cross the $\nu$-wall again, which is not a turning point, and so must also cross $\Theta_v$. In fact this minimum of $\Upsilon_{u,v,1/3}$ in the outer component must have another maximum, which must be on $\Gamma_{v}$ (see Figure~\ref{line-fig-2} for a concrete example of this case).
\end{proof}

We can argue similarly to Lemma~\ref{double_to_left} in $R^0_{v}$. 

\begin{lemma}\label{double_R0}
If a numerical $\lambda$-wall $\Upsilon_{u,v,s}$ intersects $\Gamma^0_{v,s}$ twice, then $\Upsilon_{u,v,s}$ intersects $\Xi_{u,v}$ and $\Theta_v$.
\end{lemma}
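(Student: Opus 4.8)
The plan is to transpose the proof of Lemma \ref{double_to_left} to the region $R^0_v$, extracting the $\Theta_v$-conclusion at the end from Lemma \ref{l-l}. By Lemma \ref{indep of s} the number of intersection points of $\Upsilon_{u,v,s}$ with $\Gamma_{v,s}$ is independent of $s$, so I may work at $s=1/3$, where $f_{u,v,1/3}=\Delta_{32}$ and $\partial_\beta f_{u,v,1/3}=-\Delta_{31}$; since $\partial_\alpha\Delta_{31}=0$, the turning points of the wall all lie on the vertical lines $\{\Delta_{31}=0\}$. If $\Gamma_{v,s}$ already meets $\Theta_v$, Lemma \ref{nodal point} shows every numerical $\lambda$-wall passes through the intersection and both claims are immediate; so I assume the two points $P_1,P_2\in\Upsilon_{u,v,1/3}\cap\Gamma^0_{v,1/3}$ lie off $\Theta_v$, and then Proposition \ref{horizontal wall} makes each $P_i$ a local maximum of the wall.

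First I would pin down the type of the wall. A degenerate wall coincides with $\Theta_v$ and so meets $\Gamma^0_{v,s}$ only on $\Theta_v\cap\Gamma_{v,s}$, already excluded. An unbounded wall has $u\sim_v(0,0,1,x)$, so $\Gamma_{u,1/3}=\{\beta=x\}$ is vertical, and since off $\Theta_v$ one has $\Upsilon_{u,v,1/3}\cap\Gamma_{v,1/3}\subseteq\Gamma_{u,1/3}$ (from $f_{u,v,1/3}=\tau_u\rho_v-\tau_v\rho_u$ with $\tau_v=0$, $\rho_v\neq0$), the wall meets $\Gamma_{v,1/3}$, hence $\Gamma^0_{v,1/3}$, at most once. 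Thus two crossings force $\delta_{10}(u,v)\neq0$, so $\Upsilon_{u,v,1/3}$ is bounded by Proposition \ref{bounded walls 1}. I would also record that the two crossings occur at distinct $\beta$-coordinates: if instead $\Gamma^0_{v,1/3}$ coincided with $\Mu_v$, then $\ch_3^{\alpha,\mu(v)}(v)=\tau_{v,1/3}(\alpha,\mu(v))\equiv0$ along $\Mu_v$, whence $f_{u,v,1/3}(\alpha,\mu(v))=\tau_{u,1/3}(\alpha,\mu(v))\,\rho_v(\alpha,\mu(v))$ vanishes for $\alpha>0$ only where $\tau_{u,1/3}(\alpha,\mu(v))=0$ (as $\rho_v<0$ there), i.e.\ at a single value of $\alpha$ --- contradicting two crossings. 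Hence $\beta(P_1)\neq\beta(P_2)$, placing $P_1,P_2$ on two distinct vertical lines of $\{\Delta_{31}=0\}$.

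Next I would produce a point of $\Upsilon_{u,v,1/3}$ on $\Xi_{u,v}$. If $P_1,P_2$ lie on one connected component, the arc joining them carries a local minimum of $\alpha$; such a minimum is either a regular horizontal point, which by Proposition \ref{horizontal wall} lies on $\Gamma_{v,s}$ or $\Xi_{u,v}$ but cannot lie on $\Gamma_{v,s}$ (being a minimum), or an irregular point, which by Corollary \ref{regular_wall} lies on $\Xi_{u,v}$; either way it lies on $\Xi_{u,v}$ (or on $\Theta_v$, which already settles the second claim). If $P_1,P_2$ lie on two nested components, I reproduce the vertical-line bookkeeping of Lemma \ref{double_to_left}: the cubic $\Delta_{31}$ has three real roots, and a vertical line $\{\Delta_{31}=0\}$ through the maximum of the inner component also meets the outer component at a turning point which, using that $\Delta_{32}=0$ has at most two solutions on each vertical line (as $\delta_{10}\neq0$), is forced to be a minimum and hence to lie on $\Xi_{u,v}$. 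In all cases $\Xi_{u,v}\neq\emptyset$ and $\Upsilon_{u,v,s}\cap\Xi_{u,v}\neq\emptyset$, which is the first assertion.

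Finally, the $\Theta_v$ assertion follows formally: being a non-empty numerical $\nu$-wall, $\Xi_{u,v}$ is a semicircle meeting $\Theta_v$ at its apex $(\alpha_0,\beta_0)$ with $\alpha_0>0$; as $\Theta_v$ avoids $\Mu_v$ for $\alpha>0$ we have $\beta_0\neq\mu(v)$, hence $\ch_1^{\beta_0}(v)=v_0(\mu(v)-\beta_0)\neq0$, and the converse half of Lemma \ref{l-l} places $\Upsilon_{u,v,s}$ through $(\alpha_0,\beta_0)\in\Theta_v$. I expect the genuine work to be concentrated in the nested-component sub-case above --- faithfully reproducing the vertical-line argument of Lemma \ref{double_to_left} in $R^0_v$ --- whereas the preliminary reductions (bounded type, distinct $\beta$-coordinates) and the concluding passage to $\Theta_v$ through Lemma \ref{l-l} are comparatively routine.
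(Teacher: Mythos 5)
Your proof is correct in its core and, for the first assertion (intersection with $\Xi_{u,v}$), follows the paper's own strategy: reduce to $s=1/3$ via Lemma \ref{indep of s}, note that crossings of $\Gamma^0_{v,1/3}$ are local maxima by Proposition \ref{horizontal wall}, and then split into the single-component case (a minimum between the two maxima) and the nested-components case (the vertical-line bookkeeping with the cubic $\Delta_{31}$), exactly as in Lemma \ref{double_to_left}. Where you genuinely diverge is in the second assertion. The paper keeps unbounded walls in play and argues geometrically that the outer component either is unbounded with its minimum forced onto $\Theta_v$, or is bounded with a further maximum on $\Gamma^{\pm}_{v,1/3}$, hence crosses $\Theta_v$. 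You instead rule out the unbounded type at the outset (an unbounded wall meets $\Gamma_{v,1/3}$ off $\Theta_v$ only along the vertical line $\Gamma_{u,1/3}=\{\beta=x\}$, hence at most once), so $\delta_{10}(u,v)\neq0$, and then extract $\Theta_v$ formally: $\Xi_{u,v}$ is non-empty, hence a semicircle whose apex lies on $\Theta_v$, and the converse half of Lemma \ref{l-l} (legitimate here since $\ch_1^{\beta_0}(v)\neq0$ at the apex) puts $\Upsilon_{u,v,s}$ through that point. This ending is cleaner and more robust than the paper's case analysis, and the early reduction to bounded type is a real simplification the paper does not make.

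Two loose ends, both minor. First, your opening dismissal of the case $\Gamma_{v,s}\cap\Theta_v\neq\emptyset$ overclaims: Lemma \ref{nodal point} makes the $\Theta_v$ conclusion immediate, but not the $\Xi_{u,v}$ one — at a nodal point $\tau_{v,s}=0$, so the mechanism of Lemma \ref{l-l} (which forces $\rho_u=0$ from $\tau_{v,s}>0$) breaks down, and the $\nu$-wall need not pass through such a point; you would still need to run the maximum/minimum argument on whichever crossing lies off $\Theta_v$. Second, your ``distinct $\beta$-coordinates'' step fails precisely when $\tau_{u,1/3}(\,\cdot\,,\mu(v))\equiv0$, i.e.\ in the reducible situation of Theorem \ref{regular}(\ref{reducible}) where the wall contains $\Mu_v$; there (e.g.\ $v=(2,0,-3,0)$, $u=(0,0,1,0)$, where $\Upsilon_{u,v,1/3}=\Gamma^0_{v,1/3}=\Xi_{u,v}=\Mu_v$) the wall meets $\Gamma^0_{v,1/3}$ infinitely often and never meets $\Theta_v$, so this degenerate case is actually a counterexample to the statement read literally. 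Since the paper's own proof has exactly the same blind spot, this reflects an implicit non-degeneracy assumption in the lemma rather than a defect of your argument.
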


\begin{proof}
First observe that by Lemma~\ref{indep of s}, the hypothesis and conclusion are independent of $s$, and so we may set $s=1/3$.

If  $\Upsilon_{u,v,s}$ has a single component, then there must be a minimum between the maxima, and so it crosses $\Xi_{u,v}$ at that point. If there are two components, then the geometry does not require there to be a minimum. But then there are three vertical components of $\Delta_{31}=0$. The middle one intersects the inner component at its maximum, and then it must intersect the other component at a minimum (by Proposition~\ref{horizontal wall}) which must be on $\Xi_{u,v}$.  

In the last case, either the outer component is unbounded and the minimum occurs on $\Theta_v$, or it is bounded and there is a further maximum which must be on $\Gamma^{\pm}_{v,1/3}$, and so the wall again crosses $\Theta_v$. The shape is illustrated in Figure~\ref{line-fig-2}.
\end{proof}

We can now state the main theorem of this section. 

\begin{theorem}\label{RLvs}
Suppose a real numerical Chern character $v$ satisfies the Bogomolov--Gieseker inequality and $v_0\neq0$. Any connected bounded component of a numerical $\lambda$-wall in $R^{-}_{v,s}$ for some $s\geq1/3$ intersects $\Gamma^-_{v,s}$. 
\end{theorem}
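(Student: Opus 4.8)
The plan is to reduce to the case $s=1/3$, where the horizontal points of a numerical $\lambda$-wall are pinned down completely, and then run a ``topmost point'' argument. Let $C$ be the given connected bounded component of $\Upsilon_{u,v,s_0}$ contained in $R^-_v$ with $s_0\ge 1/3$, and let $\Sigma'$ be the connected component of the surface $\Sigma_{u,v}$ that contains $C$. Boundedness of $C$ forces $\delta_{10}(u,v)\ne0$ by Proposition \ref{bounded walls 1}, equivalently $\Delta_{01}\ne0$, so every slice $\Sigma'\cap\{s=s'\}$ is again bounded and $\Sigma'$ is compact in each $s$-slab. By Theorem \ref{create wall}(1) we have $\Sigma'\cap\{s=s'\}\ne\emptyset$ for all $s'\ge1/3$, so in particular there is a nonempty bounded slice $C_{1/3}$ at $s=1/3$. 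The mechanism that will let a $\Gamma^-$-crossing pass between levels is Lemma \ref{indep of s}: a point of $\Upsilon_{u,v,1/3}\cap\Gamma_{v,1/3}$ lying off $\Theta_v$ produces, for every $s$, a crossing point of $\Upsilon_{u,v,s}\cap\Gamma_{v,s}$ with the same $\beta$-coordinate and with $(s+1/6)\alpha_s^2$ held constant.

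The core step, at $s=1/3$, is as follows. Since $R^-_v$ is open and disjoint from $\Theta_v$, the compact curve $C_{1/3}$ avoids $\Theta_v$; let $p=(\alpha^*,\beta^*)\in C_{1/3}$ be a point where $\alpha$ attains its maximum on $C_{1/3}$, so that $C_{1/3}$ is horizontal at $p$ in the sense of Proposition \ref{horizontal wall}. If $p$ is a smooth point, Proposition \ref{horizontal wall} forces $p\in\Gamma_{v,1/3}$ or $p\in\Xi_{u,v}$ (away from $\Theta_v$); but on $\Xi_{u,v}$ a horizontal point is a local \emph{minimum} of the wall, which cannot be a global maximum of $\alpha$ unless $p$ also lies on $\Gamma_{v,1/3}$ (even the exceptional inflection case $\delta_{01}=0$ occurs only at $\Gamma_{v,1/3}\cap\Xi_{u,v}$, hence on $\Gamma_{v,1/3}$). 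If instead $p$ is a singular point of $\Upsilon_{u,v,1/3}$, then Proposition \ref{special_regular} places it on $\Gamma_{v,1/3}\cap\Gamma_{u,1/3}\cap\Xi_{u,v}$, again on $\Gamma_{v,1/3}$. In every case $p\in\Gamma_{v,1/3}\cap R^-_v=\Gamma^-_{v,1/3}$; Corollary \ref{regular_wall} ensures the second-derivative computation underlying Proposition \ref{horizontal wall} is valid away from $\Xi_{u,v}$. Thus $C_{1/3}$ meets $\Gamma^-_{v,1/3}$. To transport this back up to $s_0$, note the crossing has some $\beta_0<\mu(v)$, and Lemma \ref{indep of s} yields $(\alpha_{s_0},\beta_0)\in\Upsilon_{u,v,s_0}\cap\Gamma_{v,s_0}$ with $(s_0+1/6)\alpha_{s_0}^2=(1/3+1/6)\alpha^{*2}$, whence $\alpha_{s_0}\le\alpha^*$ because $s_0\ge1/3$. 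For $v_0>0$ one has $\partial_{\alpha^2}\rho_v=-v_0/2<0$, so $\rho_v(\alpha_{s_0},\beta_0)\ge\rho_v(\alpha^*,\beta_0)>0$; hence the transported point stays in $R^-_v$, i.e.\ on $\Gamma^-_{v,s_0}$, and lies on $C$ since it lies on $\Sigma'$. (For $v_0<0$ the region $R^-_v$ and the monotonicity reverse, and one runs the argument directly at $s_0$ after the obvious relabelling.)

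The hard part is the region bookkeeping implicit in the reduction, namely verifying that $C_{1/3}$ is genuinely a bounded component \emph{inside} $R^-_v$ and that the $\Gamma$-crossing produced at $s=1/3$ is the one on $\Gamma^-_{v}$ rather than on $\Gamma^0_v$ or $\Gamma^+_v$. The key is that $\Theta_v$ does not depend on $s$, so the slice $C_s$ of $\Sigma'$ can leave $R^-_v$ only by meeting $\Theta_v$; at such a contact point, $\rho_v=0$ together with $f_{u,v,s}=\tau_u\rho_v-\tau_v\rho_u=0$ forces $\tau_v\rho_u=0$, so the point lies on $\Theta_u\cap\Theta_v$ or on $\Gamma_v\cap\Theta_v$. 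On $\Theta_u\cap\Theta_v$ one computes $\Delta_{12}=0$, so $f_{u,v,s}=\Delta_{32}$ there is independent of $s$ and the wall either passes through that fixed point for all $s$ or for none; the locus $\Gamma_v\cap\Theta_v$ is nonempty only in the degenerate case $q(v)<0$ of Lemma \ref{nodal point} and Theorem \ref{thm for q}. Away from these special (codimension-one) configurations, $C_s$ cannot touch $\Theta_v$ and therefore remains in $R^-_v$ throughout $s\in[1/3,s_0]$, so $C_{1/3}\subset R^-_v$ and the argument above applies; the degenerate cases must then be treated separately, using that every $\lambda$-wall passes through the nodal point $\Gamma_v\cap\Theta_v$. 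I expect isolating and dispatching these degenerate region-migrations to be the main obstacle, with the topmost-point computation at $s=1/3$ being the clean heart of the proof.
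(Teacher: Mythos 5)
Your proposal follows essentially the same route as the paper's proof: reduce to $s=1/3$ via Theorem \ref{create wall}, take the point of maximal $\alpha$ on the component there, and use Proposition \ref{horizontal wall} (with Theorem \ref{regular} and Proposition \ref{special_regular} covering the singular case) to force that maximum onto $\Gamma^-_{v,1/3}$; your explicit transport back to $s_0$ via Lemma \ref{indep of s}, with the monotonicity of $\rho_v$ in $\alpha^2$ keeping the crossing inside $R^-_v$, makes precise what the paper compresses into ``we may assume $s=1/3$''.

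There is, however, one step that is genuinely false. You assert that boundedness of the component $C$ forces $\delta_{10}(u,v)\neq0$ by Proposition \ref{bounded walls 1}. That proposition characterizes boundedness of the \emph{entire} numerical wall $\Upsilon_{u,v,s}$, not of a single connected component: when $\delta_{10}(u,v)=0$ the wall is unbounded as a set for every $s$, yet it may still possess a bounded connected component lying in $R^-_v$. The paper points this out explicitly in the remark preceding Figure \ref{un-bounded} and exhibits an example with $v=(2,0,-1,0)$ and $u=(1,0,-1,1)$, where $\delta_{01}(u,v)=0$ and the wall has a bounded component in $R^-_v$ together with an unbounded component in $R^{0+}_v$. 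Such components are squarely within the scope of the theorem (indeed, in that example the bounded component does cross $\Gamma^-_{v,s}$), so your reduction silently discards a case that must be covered. Concretely, when $\delta_{10}(u,v)=0$ the slices $\Sigma'\cap\{s=s'\}$ of the full wall are never compact (one component is asymptotic to a vertical line, cf.\ Lemma \ref{verticalwalls}), so the compactness you invoke to extract a topmost point is unavailable at the level of the wall; the argument has to be run on the connected component itself, showing that the bounded component in $R^-_v$ persists as a \emph{bounded} component down to $s=1/3$ --- which is what the paper's proof does implicitly by working with the component rather than the wall throughout --- after which your maximum-point argument goes through unchanged.
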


\begin{proof}
Let $\Upsilon_{u,v,s}$ be the component, and suppose for a
contradiction that it does not cross $\Gamma^-_{v,s}$.  By
Theorem~\ref{create wall}, this component must exist for all $s$, and
so we may assume $s=1/3$. By Theorem~\ref{regular},
$\Upsilon_{u,v,1/3}$ is regular except possibly in case
\eqref{reg_tilt_gamma} of that theorem, in which case the singular
point is on $\Gamma^-_{v,s}$, as required. Otherwise,
$\Upsilon_{u,v,1/3}$ is regular, and so it must have a maximum. By
Proposition~\ref{horizontal wall}, it must intersect
$\Gamma^-_{v,1/3}$ at this point.
\end{proof}

\begin{remark}
In other words, if we want to classify all of the actual $\lambda$-walls in the region $R^{-}_{v,s}$ then we only need to locate the ones which cross $\Gamma^-_{v,s}$. But note that the wall may not be actual as it crosses $\Gamma^-_{v,s}$. 
\end{remark} 

On the other hand, this fails for $s<1/3$ because isolated walls can appear as $s$ decreases. Some will be unbounded as $s\to0$, but others may persist as isolated closed curves.

Finally, observe that analogous statements also hold for $\Gamma^+_{v,s}$ and the region to the right of $\Theta^+_v$.

\section{Asymptotic \texorpdfstring{$\boldsymbol{\labs}$}{lambda\textunderscore\{alpha,beta,s\}}-stability in \texorpdfstring{$\boldsymbol{R_{v,s}^{\pm}}$}{R\textasciicircum\{+/-\}\textunderscore\{v,s\}}} \label{labs sst -}

Similarly to Definition~\ref{asym nu s-st}, we introduced the following definition, where $\gamma\colon [0,\infty)\to\HH$ is  an unbounded path.

\begin{definition}\label{asym s-st}
An object $A\in\dbx$ is \emph{asymptotically $\labs$-$($semi$)$stable along $\gamma$} if the following two conditions hold for a given $s>0$: 
\begin{enumerate}[(i)]
\item There is a $t_0>0$ such that $A\in\mathcal{A}^{\gamma(t)}$ for every $t>t_0$.
\item There is a $t_1>t_0$ such that for all $t>t_1$, every sub-object $F\into A$ in $\mathcal{A}^{\gamma(t)}$ satisfies $\lambda_{\gamma(t),s}(F) <$ $(\le)$ $\lambda_{\gamma(t),s}(A)$.
\end{enumerate}
\end{definition}

Our first goal is to characterize asymptotically $\labs$-semistable objects with numerical Chern character $v$ satisfying $v_0>0$ and the Bogomolov--Gieseker inequality along two families of paths contained in the region $R_{v}^-$, namely the paths $\Gamma^{-}_{v,s}$ for each $s>0$ and $\Lambda^-_{\oalpha}$ for each $\oalpha>0$; see the notation introduced in display
\eqref{h-lines}. 

We then invoke Proposition~\ref{dual_in_A} to characterize asymptotically $\labs$-semistable objects along the paths $\Gamma^{+}_{v,s}$ and $\Lambda^+_{\oalpha}$. More precisely, we show that $\labs$-semistable objects along $\Gamma^{+}_{v,s}$ and $\Lambda^+_{\oalpha}$ are duals of Gieseker semistable sheaves.

First, we provide a simple consequence of asymptotic $\labs$-semistability and the support
property along unbounded paths.

\begin{lemma}
Let $A\in\dbx$ be an asymptotically $\labs$-semistable object along an unbounded path
$\gamma(t)$. Then $Q^{\mathrm{tilt}}(A)\geq0$. 
\end{lemma}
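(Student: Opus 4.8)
The plan is to reduce the statement to the support property (Proposition \ref{support_property}) combined with the explicit shape of the generalized Bogomolov form in display \eqref{GB-ineq}. Writing
\[
Q_{\alpha,\beta}(A) = Q^{\rm tilt}(A)\,(\alpha^2+\beta^2) + \bigl(6\ch_0(A)\ch_3(A)-2\ch_1(A)\ch_2(A)\bigr)\beta + 4\ch_2(A)^2-6\ch_1(A)\ch_3(A),
\]
I note that the leading term is $Q^{\rm tilt}(A)\,(\alpha^2+\beta^2)$, whereas the remaining terms grow at most linearly in $\beta$ and so are $o(\alpha^2+\beta^2)$ along any path whose image is unbounded in $\HH$. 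This already pins down the mechanism: the sign of $Q^{\rm tilt}(A)$ controls the sign of $Q_{\alpha,\beta}(A)$ far out along $\gamma$.

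The heart of the proof is then a limiting computation. Suppose I can produce a sequence $t_n\to\infty$ with $\alpha(t_n)^2+\beta(t_n)^2\to\infty$ at which $A$ is genuinely $\labs$-semistable in $\cohab^{\gamma(t_n)}$. Then Proposition \ref{support_property} gives $Q_{\gamma(t_n)}(A)\ge0$, and dividing by $\alpha^2+\beta^2>0$ yields
\[
0\le \frac{Q_{\gamma(t_n)}(A)}{\alpha^2+\beta^2}
= Q^{\rm tilt}(A) + \frac{\bigl(6\ch_0\ch_3-2\ch_1\ch_2\bigr)\beta + 4\ch_2^2-6\ch_1\ch_3}{\alpha^2+\beta^2}.
\]
Since $|\beta|/(\alpha^2+\beta^2)\le (\alpha^2+\beta^2)^{-1/2}\to0$ and the constant term is divided by a quantity tending to $+\infty$, the fractional term vanishes in the limit, leaving $Q^{\rm tilt}(A)\ge0$. (Equivalently, if $Q^{\rm tilt}(A)<0$ then $Q_{\gamma(t)}(A)\to-\infty$, contradicting the support-property bound at the points $\gamma(t_n)$.)

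The main obstacle is the bridge step, namely producing the points $t_n$ at which $A$ is \emph{honestly} $\labs$-semistable. Asymptotic $\labs$-semistability as in Definition \ref{asym s-st} is phrased subobject by subobject, so a priori it does not assert semistability of $A$ at any single $\gamma(t)$. I would resolve this by controlling destabilizers: if $A$ were $\labs$-unstable for a sequence $t_n\to\infty$, pick the maximal destabilizing subobject $F_n\into A$, which is itself $\labs$-semistable; the support property then forces $Q_{\gamma(t_n)}(F_n)\ge0$ and $Q_{\gamma(t_n)}(A/F_n)\ge0$, and together with the slope inequality $\labs(F_n)>\labs(A)$ this confines $\ch(F_n)$ to a finite subset of the lattice, so some Chern character recurs and yields a fixed subobject violating condition (ii) of Definition \ref{asym s-st}. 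This boundedness of destabilizers is exactly the point where the \emph{strong} form of asymptotic stability is needed, since $\lambda$-walls for a given object are not known to be finite in general; the asymptotic Harder--Narasimhan control of Proposition \ref{asymp_HN} and the local finiteness afforded by the support property are the natural tools. I expect this finiteness/boundedness argument, rather than the limiting computation, to be the technically delicate part.
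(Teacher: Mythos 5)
Your core computation is exactly the paper's. The paper's entire proof is: (a) the support property (Proposition \ref{support_property}) gives $Q_{\gamma(t)}(A)\geq0$ for all large enough $t$; (b) divide by $\beta(t)^2$ (when $\beta(t)\to\pm\infty$) or by $\alpha(t)^2$ (when $\alpha(t)\to\infty$) and pass to the limit, the lower-order terms vanishing just as in your estimate. Your normalization by $\alpha^2+\beta^2$ is an equivalent, slightly cleaner, way to organize the same limit.

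Where you diverge is the ``bridge step'', and there the comparison cuts both ways. The paper does not argue this step at all: it reads (strong) asymptotic $\labs$-semistability as directly licensing the inequality $Q_{\gamma(t)}(A)\geq0$ far out along $\gamma$, in one sentence. You are right that, taken literally, Definition \ref{asym s-st}(ii) is quantified subobject-by-subobject and so does not hand you semistability at any single point; flagging this is a legitimate observation. However, your proposed repair is not an argument but a plan, and its key claim is doubtful as stated: the constraints $Q_{\gamma(t_n)}(F_n)\geq0$, $Q_{\gamma(t_n)}(A/F_n)\geq0$ together with the slope inequality do \emph{not} obviously confine $\ch(F_n)$ to a finite subset of the lattice, because the points $\gamma(t_n)$ escape to infinity and the quadratic constraints they impose vary with $n$; uniform finiteness of destabilizing classes along an unbounded path is essentially the finiteness-of-$\lambda$-walls problem which the paper explicitly says is open (``it is theoretically possible that the number of walls is infinite''). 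The recurrence trick at the end is sound --- since $\labs$-slopes depend only on Chern characters, a class destabilizing at times $t_{n_k}\to\infty$ does contradict condition (ii) applied to any one of the corresponding subobjects --- so the only missing piece is that finiteness. In short: your proof matches the paper's where the paper gives an argument, and is incomplete precisely on the step the paper asserts without proof.
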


\begin{proof}
The support property in Proposition~\ref{support_property} implies that $Q_{\gamma(t)}(A)\geq0$ for all large enough $t$.
  
First suppose $\lim_{t\to\infty}\beta(t)=\pm\infty$;  then
\[0\leq\lim_{t\to\infty}\frac{Q_{\gamma(t)}(A)}{\beta(t)^2}=\left(\lim_{t\to\infty}\frac{\alpha(t)^2}{\beta(t)^2}+1\right)Q^{\mathrm{tilt}}(A).\]
As the first factor is positive for all $t$, it follows that $Q^{\mathrm{tilt}}(A)\geq0$.

If $\lim_{t\to\infty}\alpha(t)=\infty$, then
\[0\leq\lim_{t\to\infty}\frac{Q_{\gamma(t)}}{\alpha(t)^2}=Q^{\mathrm{tilt}}(A),\]
and the same conclusion follows.
\end{proof}

\subsection{Asymptotics along $\boldsymbol{\Gamma^{-}_{v,s}}$}\label{sec:gamma-}

We now turn to asymptotically $\labs$-semistable objects with Chern character $v$ along the curve $\Gamma^{-}_{v,s}$; we assume from now on  $v$ satisfies $v_0\ne0$ and the Bogomolov--Gieseker inequality \eqref{B-ineq}.

Since $\ch_1^\beta(E)\neq0$ along $\Gamma^{\pm}_{v,s}$, we have
\begin{equation} \label{subst}
\labs(E) = 0 ~~ \Leftrightarrow ~~ \alpha^2 = \dfrac{\ch^\beta_3(E)}{(s+1/6)\ch_1^\beta(E)},
\end{equation}
at least away from the point where $\Gamma^{\pm}_{v,s}$ meets $\Theta_v$ (it meets it exactly once if $q(E)<0$ and not at all otherwise), so we can use $\beta<0$ as a parameter.

Substituting $\alpha^2$ from equation \eqref{subst} into the expression for $\labs(u)$, we define the function
$\olambda_{v,\beta,s}(u)$; to be precise,
$$ \olambda_{v,\beta,s}(u) = \dfrac{\ch_3^{\beta}(u)\ch_1^{\beta}(v)-\ch_3^{\beta}(v)\ch_1^{\beta}(u)}{\ch_2^{\beta}(u)\ch_1^{\beta}(v)-u_0\ch_3^{\beta}(v)/(2s+1/3)} $$ 
expresses the $\lambda$-slope of an object $F$ with $\ch(F)=u$ along $\Gamma^{\pm}_{v,s}$. If $\ch_0(F)\ne0$, the previous expression yields
\begin{equation} \label{asymp lambda}
\olambda_{v,\beta,s}(F) = - \dfrac{6s+1}{9s\ch_0(E)\ch_0(F)}\cdot
\dfrac{ \delta_{01}\beta^3 - 3\delta_{02}\beta^2 + 3(\delta_{03}+\delta_{12})\beta - 3\delta_{13} }{ \beta^3 + \textrm{ lower order terms} },
\end{equation}
where $\delta_{ij}=\delta_{ij}(F,E)$, as defined in equation \eqref{Delta}. When $\ch_0(F)=0$ and $\ch_1(F)\ne0$, we obtain
\begin{equation} \label{asymp lambda rk0}
\olambda_{v,\beta,s}(F) = -\dfrac{1}{3}  \dfrac{\ch_0(E)\ch_1(F)\beta^3+\textrm{ lower order terms}}{\ch_0(E)\ch_1(F)\beta^2+\textrm{ lower order terms}}.
\end{equation}

Note that a numerical $\lambda$-wall $\Upsilon_{u,v,s}$ crosses $\Gamma_{v,s}^\pm$ precisely at the zeros of the function $\olambda_{v,\beta,s}(u)$.

The behaviour of the $\nu$-slope of an object $F\in\dbx$ along $\Gamma_{v,s}^\pm$ can be analysed in a similar way. Substituting $\alpha^2$ from equation \eqref{subst} into the expression for $\nuab(F)$, we obtain
\begin{equation}\label{eq:limitnu}
\lim_{\beta\to-\infty}\dfrac{-1}{\beta}\nuab(F) =
\begin{cases}
\dfrac{s}{2s+1/3} &\textrm{if } \ch_0(F)\ne0, \\ 
1 &\textrm{if } \ch_0(F)=0,~\ch_1(F)\ne0.
\end{cases}
\end{equation}

We are now in position to state the main result of this section.

\begin{theorem}\label{t:asymtotegamma}
Let $v$ be a numerical Chern character with $v_0\ne0$ satisfying the Bogomolov--Gieseker inequality \eqref{B-ineq}. For each $s>0$, an object $A\in\dbx$ with $\ch(A)=v$ is asymptotically $\labs$-$($semi\/$)$stable along $\Gamma_{v,s}^-$ if and only if $A$ is a Gieseker $($semi\/$)$stable sheaf.
\end{theorem}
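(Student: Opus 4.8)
The plan is to prove both implications by reducing, along $\Gamma^-_{v,s}$, the comparison of $\labs$-slopes to the lexicographic comparison of $(\delta_{10},\delta_{20},\delta_{30})(A,F)$ that governs Gieseker stability. Since $\Gamma^-_{v,s}$ is an unbounded $\Theta^-$-curve (Example \ref{Gamma_is_gamma}), Proposition \ref{dual_in_A} shows that condition (i) of Definition \ref{asym s-st} holds if and only if $A\in\coh(X)$, so in both directions I may assume $A$ is a coherent sheaf. The analytic core is the expansion (\ref{asymp lambda}): parametrising $\Gamma^-_{v,s}$ by $\beta\to-\infty$ via (\ref{subst}) and using $\olambda_{v,\beta,s}(A)=0$, the sign of $\olambda_{v,\beta,s}(F)$ for $\beta\ll0$ is, for any $F$ with $\ch_0(F)\neq0$, controlled by the leading nonvanishing term among $\delta_{01}(F,A)$, $\delta_{02}(F,A)$, $\delta_{03}(F,A)+\delta_{12}(F,A)$. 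Since $\delta_{0i}(F,A)=\delta_{i0}(A,F)$, since $\delta_{10}(A,F)=\delta_{20}(A,F)=0$ forces $\delta_{21}(A,F)=0$ (so the $\beta$-coefficient degenerates to $\delta_{30}(A,F)$), and since the prefactor $-\tfrac{6s+1}{9s\,\ch_0(A)\ch_0(F)}$ is negative, this reads: $\olambda_{v,\beta,s}(F)\le0$ for $\beta\ll0$ if and only if $(\delta_{10},\delta_{20},\delta_{30})(A,F)\ge0$ lexicographically (with the strict inequalities matching the stable case). This is exactly the recorded numerical criterion for Gieseker (semi)stability, so everything reduces to showing that the relevant destabilising objects are genuine subsheaves.

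For the forward implication, assume $A$ is asymptotically $\labs$-(semi)stable. I first rule out torsion: a nonzero subsheaf $T\subset A$ of dimension $\le2$ has $\ch_0(T)=0$, and then $\olambda_{v,\beta,s}(T)\to+\infty$ by (\ref{asymp lambda rk0}) when $\ch_1(T)\neq0$, while if $\dim T\le1$ one computes directly $\olambda_{v,\beta,s}(T)=+\infty$ (using $\rho_T=\ch_2(T)\ge0$); in all cases this exceeds $\olambda_{v,\beta,s}(A)=0$, a contradiction, so $A$ is torsion-free. Any subsheaf $F\into A$ is then torsion-free, hence $F\in\cohab$ for $t\gg0$ by Proposition \ref{dual_in_A}, and the sheaf sequence $0\to F\to A\to A/F\to0$ stays short exact in $\cohab$ for $t\gg0$ by Corollary \ref{ses_in_A}; thus $F$ is a genuine sub-object and the slope inequality applies. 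The sign computation then gives $(\delta_{10},\delta_{20},\delta_{30})(A,F)\ge0$ (resp. $>0$) for every subsheaf, i.e. $A$ is Gieseker (semi)stable.

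For the converse, let $A$ be Gieseker (semi)stable; then $A$ is torsion-free and $\mu_{\le2}$-(semi)stable. By Theorem \ref{a nu sst}, together with Lemma \ref{asymp_B_is_sheaf}(\ref{one_nu_wall}) and Lemma \ref{h nu-ss}, $A$ has at most one $\nu$-wall below $\Mu_v$; since $\Gamma^-_{v,s}$ is unbounded while that wall is a bounded semicircle, $\gamma(t)$ lies below it for $t\gg0$, where $A$ is genuinely $\nu_{\gamma(t)}$-semistable, and $\nu_{\gamma(t)}(A)>0$ because $\Gamma^-_{v,s}\subset R^-_v$. Hence $\nu^-_{\gamma(t)}(A)>0$, so $A\in\tors{\alpha,\beta}\subset\cohab$ sits in $\cohab$ in cohomological degree $0$. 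The decisive consequence is that every sub-object $D\into A$ in $\cohab$ is then a subsheaf: applying $\calh^{-1}_\beta$ to $D\into A$ and using $\calh^{-1}_\beta(A)=0$ gives $\calh^{-1}_\beta(D)=0$, so $D\in\cohb$; and since $A\in\tors\beta$ has $\calh^{-1}(A)=0$, applying $\coh(X)$-cohomology gives $\calh^{-1}(D)=0$, so $D\in\coh(X)$. This is precisely the content of Proposition \ref{sheaf_subobjects} in the special case where $A$ lies in $\tors{\alpha,\beta}$. Applying Gieseker (semi)stability and the sign computation to these subsheaves yields $\olambda_{v,\beta,s}(D)\le0$ (resp. $<0$) for $\beta\ll0$, which is asymptotic $\labs$-(semi)stability.

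The main obstacle is this last step of the converse: controlling \emph{all} $\cohab$-sub-objects of $A$, not merely subsheaves. A priori Proposition \ref{sheaf_subobjects} permits a sub-object $D$ with $\calh^{-1}(D)=Z$ a nonzero torsion-free sheaf and $A/D\simeq Z[2]$, whose Chern data $\ch(A)-\ch(Z)$ is not obviously constrained by Gieseker stability of $A$. What rescues the argument is that $Z[2]\in\cohab$ forces $Z\in\free\beta$, i.e. $\mu^+(Z)\le\beta(t)\to-\infty$, impossible for a fixed nonzero sheaf; equivalently, pushing far enough along $\Gamma^-_{v,s}$ places $A$ in $\tors{\alpha,\beta}$ and annihilates the $\calh^{-1}$ of every sub-object, as above. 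Thus the delicate point is not a calculation but the \emph{uniform} category-theoretic control of the shape of sub-objects as $\beta\to-\infty$; once that is secured, the purely numerical sign analysis does the remaining work, and the strict versus non-strict bookkeeping distinguishing the stable and semistable cases is routine.
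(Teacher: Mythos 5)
Your forward direction (asymptotic $\labs$-semistability implies Gieseker semistability) is essentially the paper's Lemma \ref{asymtogs}, preceded by its Lemma \ref{gammaasymp}, and is sound. The converse, however, rests on a false step. From a short exact sequence $0\to D\to A\to C\to 0$ in $\cohab$ with $A\in\tors{\alpha,\beta}$, taking $\cohb$-cohomology yields only the four-term exact sequence $0\to\calh^{-1}_\beta(C)\to D\to A\to\calh^{0}_\beta(C)\to0$ in $\cohb$: the map $D\to A$ has kernel $\calh^{-1}_\beta(C)\in\free{\alpha,\beta}$, which need not vanish, so you may not pass to $\coh(X)$-cohomology of a monomorphism and conclude $\calh^{-1}(D)=0$. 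Proposition \ref{sheaf_subobjects} asserts the opposite of what you attribute to it: it expressly allows a sub-object $D\not\in\coh(X)$ of a sheaf $E\in\cohab\cap\coh(X)$, with $E/D\in\coh(X)[2]\cap\cohb[1]$. This genuinely happens on $\Gamma^-_{v,s}$ itself: for $v=\ch(I_p)$ the non-sheaf object $D$ with $\ch(D)=(0,3,-9/2,7/2)$ and quotient $\op3(-3)[2]$ is a sub-object of $I_p$ in $\cohab$ at the point $(1/\sqrt{6s+1},-2)\in\Gamma^-_{v,s}$, where it cuts out an actual $\lambda$-wall (Section \ref{ideal point}). Moreover such sub-objects are numerically dangerous: on the curve one computes $\tau_{\ch(D),s}=\beta^2+\tfrac92\beta+\tfrac72-3/\beta>0$ and $\rho_{\ch(D)}=-\tfrac92-3\beta>0$ for $\beta\ll0$, so $\lambda_{\gamma(t),s}(D)>\lambda_{\gamma(t),s}(I_p)$ for all large $t$; they cannot simply be waved away.

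Your attempted repair --- a non-sheaf sub-object cannot persist because its quotient $Z[2]$ forces $\mu^+(Z)\le\beta(t)\to-\infty$ --- is correct as far as it goes, but it only disposes of sub-objects that remain sub-objects for all large $t$, i.e.\ it proves the converse under the weakest possible reading of Definition \ref{asym s-st}. The paper deliberately works with a strong form of asymptotic stability (it ``effectively includes finiteness of the number of walls''), so the converse requires controlling destabilizers that appear at arbitrarily late points along $\Gamma^-_{v,s}$: one must show that beyond some fixed point no sub-object, sheaf or not, arising anywhere further along the curve has eventually larger slope. That is the actual content of the paper's proof of this half: Lemma \ref{asymp_gamma}, Lemma \ref{sheaf_no_destab}, and above all Theorem \ref{one_lambda_wall}, which establishes that at most one actual $\lambda$-wall meets $\Gamma^-_{v,s}$ beyond the unique $\nu$-wall of Lemma \ref{asymp_B_is_sheaf} and that it destabilizes on the inner side only; its proof in turn uses the full case analysis of Proposition \ref{sheaf_subobjects}, Harder--Narasimhan and $\Theta$-curve arguments, and the geometric input of Lemma \ref{double_to_left} (a numerical $\lambda$-wall crossing $\Gamma^-_{v,s}$ twice forces a $\nu$-wall in between). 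None of this wall-control appears in your proposal, so the hard half of the theorem is not established at the strength asserted. (The semistable case additionally needs the Jordan--H\"older induction of Lemma \ref{gstoasym}, but that part is indeed routine, as you say.)
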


The proof will be done in a series of lemmas. 

\begin{lemma}\label{gammaasymp}
For every $s>0$, if there is a $\beta_0<0$ such that $E\in\cohab$ for $(\alpha,\beta)\in\Gamma^-_{v,s}$ with $\beta<\beta_0$, then $E\in\coh(X)$.
\end{lemma}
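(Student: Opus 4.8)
The plan is to recognise this statement as the special case of Proposition~\ref{dual_in_A} in which the unbounded $\Theta^-$-curve is taken to be $\Gamma^-_{v,s}$ itself. First I would check that the hypotheses of that proposition are in force. By Example~\ref{Gamma_is_gamma} the curve $\Gamma^-_{v,s}$ is an unbounded $\Theta^-$-curve in the sense of Definition~\ref{theta-curve}: parameterising it by $\gamma(t)=(\alpha(t),\beta(t))$ with $\beta(t)\to-\infty$, one has $\alpha(t)^2<\beta(t)^2(1-\epsilon)$ far along the curve, so $\nu_{\gamma(t)}(w)>0$ eventually for every $w$ with $w_{\le1}\ne(0,0)$. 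Since $\cohb$ depends only on $\beta$ and $\Gamma^-_{v,s}$ meets each horizontal line $\{\beta<\beta_0\}$ in a single point, the hypothesis ``$E\in\cohab$ for $(\alpha,\beta)\in\Gamma^-_{v,s}$ with $\beta<\beta_0$'' is precisely the assertion that $E\in\mathcal{A}^{\gamma(t)}$ for all $t\gg0$, that is, condition~(1) of Proposition~\ref{dual_in_A}. The equivalence of conditions~(1) and~(3) there then yields $E\in\coh(X)$, which is exactly the conclusion of the lemma.

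If one prefers not to invoke the full equivalence as a black box, the same conclusion can be reassembled from its two ingredients. The goal would be to show $E\in\cohb$ for all $\beta\ll0$ and then apply Lemma~\ref{asymp_B_is_sheaf}(1), which promotes that to $E\in\coh(X)$. To get $E\in\cohb$, consider the defining triangle $\calh^{-1}_\beta(E)[1]\to E\to\calh^0_\beta(E)$ for $E\in\cohab$: the shifted subobject part $\calh^{-1}_\beta(E)$ lies in $\free{\alpha,\beta}$ and so satisfies $\nu^+_{\alpha,\beta}\le0$, whereas Lemma~\ref{l34} forces $\nu^-_{\gamma(t)}>0$ on any nonzero object of $\cohb$ with $\ch_{\le1}\ne(0,0)$ persisting along the curve (and $\nu^+=+\infty>0$ in the remaining case $\ch_{\le1}=(0,0)$). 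Either way $\calh^{-1}_\beta(E)=0$, so $E=\calh^0_\beta(E)\in\cohb$ at that value of $\beta$; as $\cohb$ is insensitive to $\alpha$, this holds for every $\beta<\beta_0$, and Lemma~\ref{asymp_B_is_sheaf}(1) finishes.

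The main point requiring care is not any computation internal to this lemma, which is genuinely a corollary, but the correct translation of the geometric hypothesis into the categorical one: one must use both that $\cohb$ is independent of $\alpha$ and that $\Gamma^-_{v,s}$ lies asymptotically to the left of every $\Theta_w$, the latter being exactly what Example~\ref{Gamma_is_gamma} records. The one genuine subtlety in the self-contained route is that $\calh^{-1}_\beta(E)$ varies with $\beta$, so Lemma~\ref{l34} cannot be applied naively to a single fixed object; this is precisely the difficulty already absorbed into Proposition~\ref{dual_in_A}, which is why the cleanest presentation is simply to cite it.
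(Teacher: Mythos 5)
Your proposal is correct and takes essentially the same approach as the paper: the paper itself remarks, immediately after the statement, that the claim ``actually follows directly from Proposition \ref{dual_in_A}'' (your primary route, with the hypothesis verified exactly as you do via Example \ref{Gamma_is_gamma}), while its printed hands-on proof is your backup route --- it kills $\calh^{-1}_\beta(E)$ by playing $\nu^+_{\alpha,\beta}\le0$ on $\free{\alpha,\beta}$ against the asymptotic positivity of $\nuab$ along $\Gamma^-_{v,s}$ from equation \eqref{eq:limitnu} (the computation underlying Lemma \ref{l34}), and then kills $\calh^{-1}$ of the remaining $\cohb$-object via $\ch_1^\beta\le0$ for $\beta\ll0$, which is precisely the content of Lemma \ref{asymp_B_is_sheaf}(1) that you cite. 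Your closing caveat that $\calh^{-1}_\beta(E)$ varies with $\beta$ is also apt: the paper's version addresses the same point by noting that the threshold $\beta_0'$ depends only on $\ch_{\le2}$ of the destabilizing sub-object.
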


This claim actually follows directly from Proposition~\ref{dual_in_A}, but we give an alternative hands-on proof.

\begin{proof}
Using the notation of Section~\ref{2nd tilt}, note that $\nuab^+(E_1)\leq0$, since $E_1\in\free{\alpha,\beta}$, for every $(\alpha,\beta)\in\Gamma^-_{v,s}$ with $\beta<\beta_0$. However, equation \eqref{eq:limitnu} implies that for every sub-object $F\into E_1$, there exists a $\beta_0'<0$, depending only on $\ch_{\le2}(F)$, such that $\nuab(F)>0$ for every $(\alpha,\beta)\in\Gamma^-_{v,s}$ and $\beta<\beta_0'$, leading to a contradiction. It follows that $E_1=0$.

On the other hand, the inequality
$$ \ch_1^\beta(E_{01})=\ch_1(E_{01})-\beta\ch_0(E_{01})\leq0 $$ for all $\beta\ll 0$ implies that $\ch_0(E_{01})=0$; thus $E_{01}=0$ since $E_{01}$ is torsion-free.
\end{proof}

\begin{lemma}\label{asymtogs}
For every $s>0$, if $E$ is an asymptotically $\labs$-$($semi\/$)$stable object along $\Gamma^-_{v,s}$, then $E$ is a Gieseker $($semi\/$)$stable sheaf.
\end{lemma}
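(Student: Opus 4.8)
The plan is to combine Lemma \ref{gammaasymp} with the explicit asymptotic slope formula \eqref{asymp lambda} and a lexicographic comparison of the coefficients $\delta_{ij}(F,E)$. First I would invoke Lemma \ref{gammaasymp} to conclude that $E\in\coh(X)$, so that $E$ is genuinely a sheaf. The next step is to rule out torsion. If $T\into E$ is a nonzero torsion subsheaf, then along $\Gamma^-_{v,s}$ one has $\ch_1^\beta(T)=\ch_1(T)$ and, for $\beta\ll0$, $\rho_T(\alpha,\beta)>0$, so that $\nuab(T)\in(0,+\infty]$ and hence $T\in\tors{\alpha,\beta}\subset\cohab$ is a sub-object of $E$; a direct computation (using \eqref{asymp lambda rk0} when $\dim T=2$, and the fact that $\rho_T=\ch_2(T)$ is a positive constant when $\dim T\le1$) shows $\labs(T)\to+\infty$. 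Since $\labs(E)=0$ on $\Gamma_{v,s}$, this contradicts asymptotic $\labs$-semistability, so $E$ is torsion free and in particular $\ch_0(E)=v_0>0$.

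Now let $F\subsetneq E$ be a proper nonzero subsheaf; being a subsheaf of a torsion free sheaf, $\ch_0(F)>0$. Along $\Gamma^-_{v,s}$, which is an unbounded $\Theta^-$-curve by Example \ref{Gamma_is_gamma}, Lemma \ref{l34} gives $\nuab^-(F)>0$ for $\beta\ll0$, so $F\in\tors{\alpha,\beta}\subset\cohab$ and the sheaf inclusion $F\into E$ is a monomorphism in $\cohab$. Asymptotic semistability then forces $\olambda_{v,\beta,s}(F)\le\labs(E)=0$ for $\beta\ll0$. Writing \eqref{asymp lambda} as $\olambda_{v,\beta,s}(F)=-\tfrac{6s+1}{9s\,\ch_0(E)\ch_0(F)}\,N(\beta)/D(\beta)$, with $D(\beta)=\beta^3+\cdots<0$ and $N(\beta)=\delta_{01}\beta^3-3\delta_{02}\beta^2+3(\delta_{03}+\delta_{12})\beta-3\delta_{13}$ where $\delta_{ij}=\delta_{ij}(F,E)$, and noting $\ch_0(E)\ch_0(F)>0$, I would observe that the sign of $\olambda_{v,\beta,s}(F)$ equals that of $N(\beta)$ for $\beta\ll0$.

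The heart of the argument is the lexicographic analysis of $N(\beta)$ as $\beta\to-\infty$. If $\delta_{01}<0$, the leading term $\delta_{01}\beta^3$ makes $N(\beta)>0$; if $\delta_{01}=0$ and $\delta_{02}<0$, the term $-3\delta_{02}\beta^2$ makes $N(\beta)>0$; and if $\delta_{01}=\delta_{02}=0$, then Lemma \ref{delta_tech}(\ref{twovanish}) applied at $(\alpha,\beta)=(0,0)$ (using $\ch_0(E)\neq0$) gives $\delta_{12}=0$, so $N(\beta)=3\delta_{03}\beta-3\delta_{13}$ and $\delta_{03}<0$ again forces $N(\beta)>0$. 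Each case contradicts $\olambda_{v,\beta,s}(F)\le0$, so $(\delta_{01},\delta_{02},\delta_{03})\ge0$ in the lexicographic order. Since $\delta_{0i}(F,E)=\delta_{i0}(E,F)$, this is exactly the Gieseker semistability criterion recalled before the statement, completing the semistable case.

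For the stable case the inequalities become strict, giving $N(\beta)<0$ for $\beta\ll0$ and hence $(\delta_{01},\delta_{02},\delta_{03})\ge0$; it remains to exclude equality. If $(\delta_{01},\delta_{02},\delta_{03})=0$, then $\Delta_{0j}(0,0)=0$ for all $j$ and $\ch_0(E)\neq0$, so Lemma \ref{delta_tech}(\ref{upropv}) forces $\ch(F)\propto\ch(E)$, whence every $\delta_{ij}(F,E)=0$, so $N\equiv0$ and $\olambda_{v,\beta,s}(F)\equiv0=\labs(E)$, contradicting the strict inequality demanded by asymptotic $\labs$-stability. Thus $(\delta_{01},\delta_{02},\delta_{03})>0$ strictly, i.e.\ $E$ is Gieseker stable. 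The main obstacle I anticipate is the careful verification that a torsion free subsheaf genuinely defines a sub-object of $E$ in $\cohab$ for $\beta\ll0$ along $\Gamma^-_{v,s}$ (the interplay of the sheaf inclusion with the double tilt), together with the sign bookkeeping in \eqref{asymp lambda}; the boundary case $\ch(F)\propto\ch(E)$ is precisely what cleanly separates the stable from the semistable statement.
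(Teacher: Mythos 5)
Your proposal follows the same route as the paper's proof: Lemma \ref{gammaasymp} shows $E$ is a sheaf, torsion is excluded because torsion subobjects have $\labs$-slope tending to $+\infty$ along $\Gamma^-_{v,s}$, and Gieseker (semi)stability is extracted from a lexicographic analysis of the limits of $\olambda_{v,\beta,s}(F)$ via \eqref{asymp lambda}. In several places you are in fact more careful than the paper: you justify, via Example \ref{Gamma_is_gamma} and Lemma \ref{l34}, that a torsion-free subsheaf $F\into E$ really is a sub-object in $\cohab$ for $\beta\ll0$; you make explicit that $\delta_{01}=\delta_{02}=0$ forces $\delta_{12}=0$ by Lemma \ref{delta_tech}(\ref{twovanish}) (a fact the paper's third limit uses silently); and your handling of the stable case via Lemma \ref{delta_tech}(\ref{upropv}) --- proportionality of $\ch(F)$ and $\ch(E)$ forces $\olambda_{v,\beta,s}(F)\equiv0$, contradicting strict asymptotic stability --- is cleaner than the paper's terse final sentence.

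There is, however, one inference that is wrong as stated: for a torsion subsheaf $T\into E$ you claim that $\nuab(T)\in(0,+\infty]$ for $\beta\ll0$ implies $T\in\tors{\alpha,\beta}$. Membership in $\tors{\alpha,\beta}$ requires every non-zero quotient of $T$ \emph{taken in $\cohb$} to have positive $\nu$-slope, i.e.\ $\nuab^-(T)>0$, and positivity of $\nuab(T)$ alone does not bound the slopes of such quotients from below (a $\cohb$-quotient of $T$ can have non-trivial $\calh^{-1}$). When $\dim T\le1$ your inference is fine: then $\ch_1^\beta(T)=0$, so $\nuab(T)=+\infty$, and since $\ch_1^\beta\ge0$ on $\cohb$ every $\cohb$-quotient of $T$ also has slope $+\infty$. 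But when $T$ has a $2$-dimensional part you need a genuine argument that $\nuab^-(T)>0$ for $\beta\ll0$: either apply Lemma \ref{l34} directly to $T$ (which is legitimate here, since $T\in\cohb$ for all $\beta$, $\ch_{\le1}(T)\neq(0,0)$, and $\Gamma^-_{v,s}$ is an unbounded $\Theta^-$-curve), or argue as the paper does, passing to the maximal $\hat\mu$-semistable subsheaf of the pure $2$-dimensional part and invoking Remark \ref{h nu-ss rk0} to get $\nu$-semistability, hence $\nuab^-=\nuab>0$, for $\beta\ll0$. With this patch --- which uses exactly the tool you already deploy for the torsion-free case --- your proof goes through.
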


\begin{proof}
If $E$ is asymptotically $\labs$-semistable along $\Gamma^-_{v,s}$, then $E$ is a sheaf by Lemma~\ref{gammaasymp} or Proposition~\ref{dual_in_A}.

If $E$ is not torsion-free, let $F\into E$ be its maximal torsion subsheaf; if $T\into F$ is a subsheaf of dimension at most $1$, then $T\into E$ is a morphism in $\cala^{\alpha,\beta}$ for $(\alpha,\beta)\in\Gamma^-_{v,s}$ and $\beta\ll0$ since $T\in\tors{\alpha,\beta}$ for every $(\alpha,\beta)$. We have that $\ch_0(T)=\ch_1(T)=0$; then
\begin{equation} \label{eq:limitzerorank-2}
\olambda_{v,\beta,s}(T) = \begin{cases}
-\beta +  \dfrac{\ch_3(T)}{\ch_2(T)} &\textrm{if } \ch_2(T)\ne0, \\ 
+\infty &\textrm{if } \ch_2(T)=0, 
\end{cases} 
\end{equation}
so $T$ would destabilize $E$. Therefore, we can assume  $F$ has pure dimension 2; let $T$ be its maximal $\hat{\mu}$-semistable subsheaf. Remark~\ref{h nu-ss rk0} implies that $T\in\cala^{\alpha,\beta}$ for $(\alpha,\beta)\in\Gamma^-_{v,s}$ and $\beta\ll0$; thus $T\into E$ is a morphism in $\cala^{\alpha,\beta}$ in the same range. Since $\ch_0(T)=0$ and $\ch_1(T)\ne0$, we have
$$ \lim_{\beta\to-\infty} \dfrac{-1}{\beta} \olambda_{v,\beta,s}(T) = \dfrac{1}{3}, $$ again contradicting asymptotic $\lambda$-semistability. We therefore conclude that $E$ must be torsion-free.

If $F\into E$ is a proper (torsion-free) subsheaf with $\delta_{01}(F,E)\ne0$, then equation \eqref{asymp lambda} implies that
$$ \lim_{\beta\to-\infty} \olambda_{v,\beta,s}(F) = - \dfrac{6s+1}{9s\ch_0(E)\ch_0(F)} \delta_{01}(F,E) \le 0 $$
by hypothesis; thus $\delta_{01}(F,E)\ge0$.

If $\delta_{01}(F,E)=0$, then equation \eqref{asymp lambda} implies that
$$ \lim_{\beta\to-\infty} (-\beta)\olambda_{v,\beta,s}(F) = - \dfrac{6s+1}{3s\ch_0(E)\ch_0(F)} \delta_{02}(F,E) \le 0; $$ thus $\delta_{02}(F,E)\ge0$.

Finally, if $\delta_{01}(F,E)=\delta_{02}(F,E)=0$, then
$$ \lim_{\beta\to-\infty} \beta^2 \olambda_{v,\beta,s}(F) = -\dfrac{6s+1}{3s\ch_0(E)\ch_0(F)} \delta_{03}(F,E) \le 0; $$ thus $\delta_{03}(F,E)\ge0$, with equality holding if and only of $\delta_{03}(F,E)=0$ as well. In other words, $E$ is Gieseker (semi)stable.
\end{proof}

We now prove the converse of Lemma~\ref{asymtogs}. The difficulty is that the strong definition of asymptotic stability includes showing that a given object has only finitely many walls along $\Gamma^-_{v,s}$. In fact, we can show that there is at most one, at least outside its actual $\nu$-wall.

Assume $E$ is a Gieseker stable sheaf, and suppose there is an actual $\lambda$-wall given by the short exact sequence $0\to F\to E\to G\to0$ crossing $\Gamma^-_{v,s}$. Consider the case where $F$ and $G$ are sheaves.

\begin{lemma}\label{asymp_gamma}
Fix $s>0$ and $\obeta$.
Suppose $E$ is a Gieseker stable sheaf with $\ch(E)=v$ and $F\into E$ is a sub-object in both $\coh(X)$ and $\cohab$ such that $E/F\in\cohab\cap\coh(X)$ for all $\beta<\obeta$ and $(\alpha,\beta)\in\Gamma^-_{v,s}$. Then there is some $\beta_0<\obeta$ such that 
$\lambda_{\alpha,\beta,s}(F)<\lambda_{\alpha,\beta,s}(E)$ for all $\beta<\beta_0$.
\end{lemma}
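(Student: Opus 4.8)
The plan is to use the asymptotic expansion of the $\lambda$-slope along $\Gamma^-_{v,s}$ that was established in equation \eqref{asymp lambda}, together with the Gieseker stability of $E$, to compare slopes directly. Since $F\into E$ is a subsheaf with $E/F$ a sheaf, both $F$ and $E/F$ are torsion free (as subobjects and quotients of the torsion free sheaf $E$ inside $\coh(X)$), so $\ch_0(F)\neq0$ and the rational expression \eqref{asymp lambda} applies with $\delta_{ij}=\delta_{ij}(F,E)$. The Gieseker stability of $E$ means precisely that the lexicographically leading nonzero entry of $\bigl(\delta_{01}(F,E),\delta_{02}(F,E),\delta_{03}(F,E)\bigr)$ is strictly positive (using the sign conventions from Section \ref{sst sheaves}, where a subsheaf of a Gieseker stable sheaf satisfies $\bigl(\delta_{10},\delta_{20},\delta_{30}\bigr)>0$, equivalently $\delta_{0i}(F,E)\geq0$ with strict inequality at the first nonzero entry).

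First I would observe that the numerator of $\olambda_{v,\beta,s}(F)$ in \eqref{asymp lambda} is the cubic $\delta_{01}\beta^3-3\delta_{02}\beta^2+3(\delta_{03}+\delta_{12})\beta-3\delta_{13}$, while the denominator is $\ch_0(E)\ch_0(F)$ times a cubic with leading coefficient $1$, up to the positive constant $\tfrac{6s+1}{9s}$. As $\beta\to-\infty$ the sign of $\olambda_{v,\beta,s}(F)$ is governed by the leading nonvanishing $\delta_{0i}$: if $\delta_{01}(F,E)\neq0$ then $\lim_{\beta\to-\infty}\olambda_{v,\beta,s}(F)=-\tfrac{6s+1}{9s\ch_0(E)\ch_0(F)}\delta_{01}(F,E)<0$; if $\delta_{01}=0$ but $\delta_{02}\neq0$ then $\lim_{\beta\to-\infty}(-\beta)\olambda_{v,\beta,s}(F)<0$; and if $\delta_{01}=\delta_{02}=0$ but $\delta_{03}\neq0$ then $\lim_{\beta\to-\infty}\beta^2\olambda_{v,\beta,s}(F)<0$. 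In every case, because $\lambda_{\alpha,\beta,s}(E)=0$ identically along $\Gamma^-_{v,s}$ (that is the defining property of the curve), we conclude $\lambda_{\alpha,\beta,s}(F)=\olambda_{v,\beta,s}(F)<0=\lambda_{\alpha,\beta,s}(E)$ for all $\beta$ sufficiently negative, which is exactly the desired strict inequality. The key point is that only finitely many sign changes of a rational function can occur, so there is a single threshold $\beta_0$ below which the strict inequality persists.

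The one subtlety requiring care is the degenerate case $\delta_{01}(F,E)=\delta_{02}(F,E)=\delta_{03}(F,E)=0$, i.e. when $\ch_{\le3}(F)$ and $\ch_{\le3}(E)$ are proportional in the first four components. Here the proper subsheaf $F$ of the Gieseker stable $E$ would have the same reduced Hilbert polynomial as $E$, contradicting stability unless $F=E$; so this case does not arise for a proper subobject, and I would dispose of it by invoking Gieseker stability directly. I would also note that the comparison is unaffected by passing between $F$ as a subsheaf in $\coh(X)$ and as a subobject in $\cohab$, since along $\Gamma^-_{v,s}$ with $\beta<\obeta$ the hypotheses guarantee $F,E/F\in\cohab\cap\coh(X)$, so the short exact sequence is simultaneously one in $\coh(X)$ and in $\cohab$ and the $\lambda$-slopes are computed from the same Chern characters. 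The main obstacle is less the computation than bookkeeping the sign conventions: one must track carefully that Gieseker stability gives $\delta_{0i}(F,E)\geq0$ (not $\le0$) at the leading term so that the limit of $\olambda_{v,\beta,s}(F)$ comes out negative, matching the strict inequality $\lambda<\lambda(E)=0$ demanded by the statement.
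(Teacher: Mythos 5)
Your proposal is correct and follows essentially the same route as the paper's own proof: compute the three successive limits of $\olambda_{v,\beta,s}(F)$ along $\Gamma^-_{v,s}$ from equation \eqref{asymp lambda} and read off their signs from the Gieseker stability inequalities $\delta_{0i}(F,E)\geq0$, using that $\lambda_{\alpha,\beta,s}(E)=0$ on $\Gamma^-_{v,s}$. Your explicit exclusion of the fully degenerate case $\delta_{01}=\delta_{02}=\delta_{03}=0$ (impossible for a proper subsheaf of a Gieseker \emph{stable} sheaf) is in fact slightly more careful than the paper, which merely records that equality would force $\olambda_{\alpha,\beta,s}(F)=0$ asymptotically; the only blemish is your incidental claim that $E/F$ is torsion free, which need not hold but is never used.
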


\begin{proof} 
We can compute the same limits as in the proof of Lemma~\ref{asymtogs}; we first have
$$ \lim_{\beta\to-\infty} \olambda_{\alpha,\beta,s}(F) = - \dfrac{6s+1}{9s\ch_0(E)\ch_0(F)} \delta_{01}(F,E) \le 0 $$ since $\delta_{01}(F,E)\ge0$ by hypothesis.  If $\delta_{01}(F,E)=0$, then
$$ \lim_{\beta\to-\infty} (-\beta)\olambda_{\alpha,\beta,s}(F) = - \dfrac{6s+1}{3s\ch_0(E)\ch_0(F)} \delta_{02}(F,E) \le 0 $$ because $\delta_{02}(F,E)\ge0$. Finally, if $\delta_{02}(F,E)$ also vanishes, then
$$ \lim_{\beta\to-\infty} \beta^2 \olambda_{\alpha,\beta,s}(F) = -\dfrac{6s+1}{3s\ch_0(E)\ch_0(F)} \delta_{03}(F,E) \le 0 $$
since $\delta_{03}(F,E)\ge0$, with equality holding if and only if $\delta_{03}(F,E)=0$ as well, which implies that 
$\olambda_{\alpha,\beta,s}(F)=0$ for every $(\alpha,\beta)\in\Gamma^-_{v,s}$ for $\beta\ll0$.
\end{proof}

This enables us to prove that Gieseker stable sheaves can only be destabilized as we move down along~$\Gamma^-_{v,s}$.

\begin{lemma}\label{sheaf_no_destab}
Suppose $E$ is a Gieseker stable sheaf and $F\into E$ is a subsheaf which also corresponds to an actual $\lambda$-wall $W_{u,v,s}$ crossing $\Gamma^-_{v,s}$ at a point $P$ beyond any $\nu$-wall for $E$. If $E/G\in\cohab\cap\coh(X)$, then $E$ must be stable above the point $P$.
\end{lemma}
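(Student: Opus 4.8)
The plan is to argue by contradiction, using two facts established above: any numerical $\lambda$-wall meeting $\Gamma^-_{v,s}$ does so at a local maximum (Proposition \ref{horizontal wall}), and a wall crossing $\Gamma^-_{v,s}$ twice forces its associated $\nu$-wall to cross $\Gamma^-_{v,s}$ in between (Lemma \ref{double_to_left}).

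First I would fix the ambient structure along $\Gamma^-_{v,s}$ above $P$. Since $P$ lies beyond every $\nu$-wall for $E$, the sheaf $E$ is $\nuab$-semistable at $P$ and at all points of $\Gamma^-_{v,s}$ above it; as $\Gamma^-_{v,s}\subset R^-_v$ we have $\rho_E>0$ and $\ch_1^\beta(E)>0$, whence $\nuab(E)>0$ and so $E\in\tors{\alpha,\beta}\subset\cohab\cap\coh(X)$ throughout this range. Consequently every $\cohab$-subobject of $E$ is controlled by Proposition \ref{sheaf_subobjects}. Note also that $\labs(E)=0$ on $\Gamma^-_{v,s}$, so a subobject destabilizes $E$ there precisely when its restricted slope $\olambda_{v,\beta,s}$ is $\ge0$. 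Applying Lemma \ref{asymp_gamma} to the given sequence $0\to F\to E\to G\to0$ gives $\olambda_{v,\beta,s}(F)<0$ for $\beta\ll0$; combined with the fact that $\Upsilon_{u,v,s}$ meets $\Gamma^-_{v,s}$ at $P$ at a local maximum, this identifies the stable side of the actual wall $W_{u,v,s}$ as the side toward $\beta\to-\infty$. In particular $E$ is $\labs$-stable on a segment of $\Gamma^-_{v,s}$ immediately above $P$.

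Now suppose for contradiction that $E$ is $\labs$-unstable at some point of $\Gamma^-_{v,s}$ above $P$, and set $b_+:=\sup\{\beta<\beta(P): E\text{ is }\labs\text{-unstable at }(\alpha(\beta),\beta)\in\Gamma^-_{v,s}\}$; the previous paragraph gives $b_+<\beta(P)$. By Proposition \ref{theta_or_wall} the transition at $b_+$ comes from an actual $\lambda$-wall with a destabilizing subobject $F_+\into E$. Using Proposition \ref{sheaf_subobjects} together with the $\nuab$-semistability of $E$ I would reduce to the case where $F_+$ and $E/F_+$ are sheaves, so that the asymptotic estimate \eqref{asymp lambda} applies and yields $\olambda_{v,\beta,s}(F_+)<0$ for $\beta\ll0$. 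Since $E$ is stable on $(b_+,\beta(P))$, the locus where $F_+$ destabilizes $E$ is a bounded interval $(d,b_+)$ with $-\infty<d<b_+<\beta(P)$, whose endpoints $d$ and $b_+$ are two distinct intersections of $\Upsilon_{\ch(F_+),v,s}$ with $\Gamma^-_{v,s}$.

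Finally I would invoke Lemma \ref{double_to_left}: two intersections of $\Upsilon_{\ch(F_+),v,s}$ with $\Gamma^-_{v,s}$ force the associated $\nu$-wall $\Xi_{\ch(F_+),v}$ to meet $\Gamma^-_{v,s}$ at a point with $\beta\in(d,b_+)$, hence strictly above $P$. This contradicts the assumption that $P$ lies beyond every $\nu$-wall for $E$. Therefore no such $b_+$ exists and $E$ is $\labs$-stable at every point of $\Gamma^-_{v,s}$ above $P$. I expect the principal difficulty to be the reduction in the third paragraph: when $F_+$ is not a priori a sheaf one must use Proposition \ref{sheaf_subobjects} and the vanishing of the relevant $\inext$-sheaves forced by $\nuab$-semistability to show either that the non-sheaf cases cannot destabilize $E$ for $\beta\ll0$ or that they do not occur beyond the $\nu$-wall, so that the estimate \eqref{asymp lambda} remains available for $F_+$.
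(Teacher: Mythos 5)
Your skeleton is the same as the paper's: Lemma \ref{asymp_gamma} for the asymptotic sign of $\olambda_{v,\beta,s}$, a forced second crossing of $\Gamma^-_{v,s}$, and Lemma \ref{double_to_left} to produce a $\nu$-wall above $P$. But your final step has a genuine gap. Lemma \ref{double_to_left} only yields a \emph{numerical} $\nu$-wall $\Xi_{\ch(F_+),v}$ meeting $\Gamma^-_{v,s}$ above $P$, and that alone contradicts nothing: every point of $\HH$ lies on some numerical $\nu$-wall for $v$ (take any real $u$ with $\nuab(u)=\nuab(v)$ at that point), so the hypothesis that $P$ is ``beyond any $\nu$-wall for $E$'' can only refer to \emph{actual} $\nu$-walls. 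To obtain a contradiction you must show that the crossing point carries an actual destabilization of $E$ in $\cohb$, i.e.\ that the destabilizing short exact sequence remains exact in $\mathcal{B}^{\beta}$ all the way out to that point. This is exactly where the sheaf hypothesis of the lemma does its work, and it is the step the paper's proof makes explicit: since the three terms are sheaves lying in $\tors{\beta_0}$ at the wall point $\beta_0$, they lie in $\tors{\beta}$ for every $\beta\le\beta_0$ (the torsion condition only gets weaker as $\beta$ decreases), so the sequence stays exact in $\cohb$ along $\Gamma^-_{v,s}$ and the numerical $\nu$-wall it meets there is actual. You never address this, so as written the contradiction does not exist.

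Second, your introduction of a new destabilizer $F_+$ at the transition point $b_+$ makes the unproved sheaf reduction (your acknowledged ``principal difficulty'') essential rather than peripheral: without it you can apply neither Lemma \ref{asymp_gamma} to $F_+$ nor the $\cohb$-exactness argument above to the sequence for $F_+$. The paper avoids this entirely by arguing with the given $F$ only: if $E$ were unstable immediately above $P$, then by the actual-wall structure at $P$ the destabilizer just above $P$ is $F$ itself, Corollary \ref{ses_in_A} and Lemma \ref{asymp_gamma} force a second crossing $Q$ of $\Upsilon_{u,v,s}$ with $\Gamma^-_{v,s}$, and the argument closes; the reduction of arbitrary destabilizers to sheaf ones is deferred to the proof of Theorem \ref{one_lambda_wall}, where it is the main labour. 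A smaller point: Proposition \ref{horizontal wall} is stated only for $s=1/3$, so you cannot invoke the ``local maximum'' property for general $s$; and even at $s=1/3$ the local maximum by itself does not identify the stable side of the wall --- that identification \emph{is} the double-crossing argument, not an input to it.
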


\begin{proof}
Parameterize the curve $\Gamma^-_{v,s}$ by $\gamma(t)$ in the decreasing $\beta$-direction with $\gamma(0)=P$. By assumption and Proposition~\ref{dual_in_A}, $E\in\cala^{\gamma(t)}$ for all $t\geq0$.  Suppose $E$ is unstable in $\mathcal{A}^{\gamma(t)}$ for some $t>0$. Observe that for $t\gg0$, we have that $F\into E$ is a monomorphism in $\cala^{\gamma(t)}$. This is because if $\lambda_{\gamma(t),s}(F)\geq\lambda_{\gamma(t),s}(E)$ for all $t>0$, then Corollary~\ref{ses_in_A} and Lemma~\ref{asymp_gamma} give a contradiction. It follows that at some point $Q=\gamma(t_0)$, for $t_0>0$, we have $\lambda_{Q,s}(F)=\lambda_{Q,s}(E)$, and so the numerical $\lambda$-wall $\Upsilon_{u,v,s}$ crosses $\Gamma^-_{v,s}$ twice. Then Lemma~\ref{double_to_left} tells us that there is a numerical $\nu$-wall $\Xi_{u,v}$ intersecting $\Gamma^-_{v,s}$ between $P$ and $Q$. But $0\to F\to E\to G\to0$ is a short exact sequence in $\mathcal{B}^{\beta(t)}$ for all $t\geq0$, and so this is an actual $\nu$-wall, contradicting the assumption.
\end{proof}

We now show that above an actual $\nu$-wall for $E$, the $\lambda$-wall equivalent of Lemma~\ref{asymp_B_is_sheaf}\eqref{one_nu_wall} holds.

\begin{theorem}\label{one_lambda_wall}
Suppose $E$ is a Gieseker stable sheaf with Chern character $v$. Then there is at most one actual wall $W_{u,v,s}$ intersecting $\Gamma^-_{v,s}$ at a point $P$ above the actual $\nu$-wall for $E$. In particular, such a wall destabilizes downwards along $\Gamma^-_{v,s}$.
\end{theorem}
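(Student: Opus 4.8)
The plan is to combine the asymptotic stability of $E$ along $\Gamma^-_{v,s}$ (Theorem \ref{t:asymtotegamma}) with the downward destabilization already established for a single sheaf wall in Lemma \ref{sheaf_no_destab}, the point being to reduce every actual $\lambda$-wall crossing $\Gamma^-_{v,s}$ above the $\nu$-wall to the sheaf situation covered by that lemma, and then to close the argument by a short monotonicity count.

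First I would fix the global picture. Since $E$ is Gieseker stable, Theorem \ref{t:asymtotegamma} shows $E$ is asymptotically $\labs$-stable along $\Gamma^-_{v,s}$, hence $\labs$-stable for $\beta\ll0$; moreover $E$ is $\mu$-semistable, so by Lemma \ref{asymp_B_is_sheaf}(\ref{one_nu_wall}) it is destabilized on at most one actual $\nu$-wall for $\beta<\mu(E)$ and is $\nuab$-stable on the part of $\Gamma^-_{v,s}$ lying outside (above) that $\nu$-wall. On $\Gamma^-_{v,s}\subset R^-_v$ we have $\rho_v>0$ and $\ch_1^\beta(v)>0$, so $\nuab(E)>0$; together with $\nuab$-stability this gives $E\in\tors{\alpha,\beta}\subset\cohab\cap\coh(X)$ at every such point $P$.

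Next I would carry out the reduction to sheaves. Let $P$ lie above the $\nu$-wall on an actual $\lambda$-wall, and let $0\to F\to E\to G\to0$ be the destabilizing sequence in $\cohab$ with $F$ the maximal destabilizing subobject, so that $F$ and $G$ are $\labs$-semistable. Because $E\in\tors{\alpha,\beta}$ and torsion classes are closed under sub- and quotient objects, both $F$ and $G$ lie in $\tors{\alpha,\beta}\subset\cohb$. Applying Proposition \ref{sheaf_subobjects} to $F\into E$: the alternative $F\not\in\coh(X)$ would force $G\in\coh(X)[2]\cap\cohb[1]$, impossible for $G\in\cohb$ since $\cohb\cap\cohb[1]=0$; hence $F$ is a genuine subsheaf. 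For the quotient, the cases (1)--(3) of Proposition \ref{sheaf_subobjects} remain; case (2) is excluded as $G\in\cohb$, and case (3), where $G=\mathcal{G}[1]$ with $\mathcal{G}\in\free\beta$, is excluded as follows. From $G\in\tors{\alpha,\beta}$ we get $\nuab(G)=\rho_{\mathcal{G}}/\ch_1^\beta(\mathcal{G})>0$, and since $\mathcal{G}\in\free\beta$ is torsion free we have $\ch_1^\beta(\mathcal{G})\le0$, forcing $\rho_{\mathcal{G}}(P)<0$; but the region description of case (3) places $P\in R^{0-}_{\ch(G)}$, i.e. $\rho_{\ch(G)}(P)\le0$, and as $\ch(G)=-\ch(\mathcal{G})$ this reads $\rho_{\mathcal{G}}(P)\ge0$, a contradiction. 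Finally, maximality of $F$ makes the saturating enlargement $F\into F'$ of Proposition \ref{sheaf_subobjects} trivial, since the pieces it adds lie on $\Theta$ and hence carry $\labs$-slope $+\infty>\labs(F)$; thus $G=E/F$ is a genuine quotient sheaf with $G\in\cohab\cap\coh(X)$.

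Having reduced to a sheaf destabilizer, I would invoke Lemma \ref{sheaf_no_destab}, which (via Lemma \ref{double_to_left}) yields that $E$ is $\labs$-stable immediately above $P$ along $\Gamma^-_{v,s}$; being an actual wall, $E$ is then $\labs$-unstable immediately below $P$, which is exactly the claimed downward destabilization. Uniqueness follows by a monotonicity count: if two such walls occurred at outward parameters $t_1<t_2$ (both above the $\nu$-wall), the wall at $t_1$ gives stability of $E$ just above $t_1$ while the wall at $t_2$ gives instability just below $t_2$; as there is no wall-crossing in $(t_1,t_2)$, $E$ would be simultaneously stable and unstable there, a contradiction. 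I expect the main obstacle to be precisely this sheaf reduction --- excluding the shifted quotient case (3) and verifying that no saturation of the maximal destabilizer is needed --- where the region/slope contradiction is the crucial mechanism; the boundary loci ($\ch_1^\beta(\mathcal{G})=0$, or $P\in\Theta$) will have to be dispatched separately, using that there the relevant $\labs$-slopes become infinite and so cannot give a proper destabilization near $P$.
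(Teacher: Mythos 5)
Your skeleton (reduce the destabilizing sequence to a sheaf--sheaf sequence, then quote Lemma \ref{sheaf_no_destab}, then count walls) is in fact the paper's skeleton, but the execution fails at the crucial step. The decisive gap is your claim that both $F$ and $G$ lie in $\tors{\alpha,\beta}$ because ``torsion classes are closed under sub- and quotient objects.'' In the tilted heart $\cohab=\langle\free{\alpha,\beta}[1],\tors{\alpha,\beta}\rangle$ the class $\tors{\alpha,\beta}$ is the torsion-\emph{free} part of the torsion pair $(\free{\alpha,\beta}[1],\tors{\alpha,\beta})$, so it is closed under subobjects in $\cohab$ (whence $F\in\tors{\alpha,\beta}$, correctly) but \emph{not} under quotients: applying $\cohb$-cohomology to $F\into E\onto G$ with $\calh^{-1}_\beta(E)=0$ gives
\[0\to\calh^{-1}_\beta(G)\to F\to E\to\calh^{0}_\beta(G)\to0,\]
so $\calh^{-1}_\beta(G)$ can be any $\cohb$-subobject of $F$, and $G$ need not lie in $\cohb$ at all. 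Every exclusion in your second paragraph --- ruling out $F\notin\coh(X)$, ruling out cases (2) and (3) of Proposition \ref{sheaf_subobjects} --- is premised on $G\in\cohb$, so the reduction collapses. This is not a repairable shortcut: the case $G\notin\cohb$ is exactly where the paper's proof does almost all of its work, replacing the destabilizer via Harder--Narasimhan factors of $\calh^{-1}_\beta(G)$ (the objects $B^+$ and $N$ there), octahedra, and the behaviour of $\lambda$-slopes as one approaches the $\Theta^{\pm}$ curves of those factors, before Lemma \ref{sheaf_no_destab} can legitimately be invoked.

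Two further problems. First, your opening appeal to Theorem \ref{t:asymtotegamma} is circular: the direction you use (Gieseker stable $\Rightarrow$ asymptotically $\labs$-stable along $\Gamma^-_{v,s}$) is Lemma \ref{gstoasym}, whose proof in the paper cites Theorem \ref{one_lambda_wall} itself; fortunately you never really need it, but as written it vitiates the logic. Second, the uniqueness ``monotonicity count'' tacitly assumes you can pick two walls with none strictly between them and that stability is locally constant off actual walls; local finiteness of actual $\lambda$-walls is not available (the paper stresses this is unknown for $\lambda$-stability), and the destabilizing monomorphism $F_2\into E$ may cease to be a monomorphism between the two walls, so ``the same wall'' cannot simply be followed. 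The paper's uniqueness argument confronts precisely this dichotomy: either $F_2\into E$ persists along the segment, forcing the numerical wall to recross $\Gamma^-_{v,s}$ and hence producing a forbidden actual $\nu$-wall (via Lemma \ref{double_to_left}), or the injection fails at some point $R$, and a separate, stronger lemma shows $E$ stays $\lambda$-unstable across $R$, contradicting the stability established above the lower wall. Your proposal needs a substitute for both halves of that argument.
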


\begin{proof}
We prove the last part first. Suppose  a numerical wall $\Upsilon_{u,v,s}$ intersects $\Gamma^-_{v,s}$ at $P$ above the $\nu$-wall for $E$. Note that $\Upsilon_{u,v,s}$ is a slice of $\Sigma_{u,v}$, and since this is an orientable surface, to show that the $E$ can only be destabilized downwards, it suffices to consider the case $s=1/3$. We show that $f_{u,v,1/3}(\alpha,\beta)$ is increasing as we cross $\Upsilon_{u,v,1/3}$ moving  down $\Gamma^-_{v,1/3}$. This is equivalent to showing 
\[\nabla f\cdot (-\partial_\beta \tau,\partial_\alpha\tau)\bigr|_{P}>0.\]
 Using \eqref{dervch} and Lemma~\ref{dervs}, we have $\nabla
 f=(\alpha\Delta_{21}-\alpha\Delta_{30}, -\Delta_{31})$, and the
 tangent vector down along $\Gamma^-_{v,s}$ is
 $(\ch^{\alpha,\beta}_2,-\alpha\ch^{\alpha,\beta}_1)$. Note that
 $\ch_3^P(v)=0=\ch_2^P(u)$ since $\ch_2^P(v)\neq0$. Hence,
 $\Delta_{3i}(P)=0$. So
 \[\nabla f\cdot (-\partial_\beta \tau,\partial_\alpha\tau)\bigr|_{P}=\alpha\ch_2^P(v)\Delta_{21}(P).\]
But $\Delta_{21}(P)>0$ as we are outside the $\nu$-wall (where $\Delta_{21}$ vanishes), and $\ch_2^P(v)>0$ as $P\in R^-_v$. 

So if we have two walls $W_{u,v,s}$ and $W_{u',v,s}$ corresponding to $F_1\hookrightarrow E$ and $F_2\hookrightarrow E$ crossing $\Gamma^-_{v,s}$ at $P=\gamma(0)$ and $Q=\gamma(1)$, respectively, then $F_2$ destabilizes $E$ below  $Q$. If $F_2\to E$ remains an injection in $\mathcal{A}^{\gamma(t)}$ for $0\leq t\leq1$, $W_{u',v,s}$ must cross $\Gamma^-_{v,s}$ again, and then there would be an actual $\nu$-wall between $P$ and $Q$, which is not permitted by hypothesis. So it must be that $F_2\to E$ ceases to be an injection at some point. Let that point be $R$.  We show that $E$ remains unstable as we cross $R$. In fact, we will prove a stronger result in the following lemma.
\end{proof}

\begin{lemma}
Suppose $E$ is a Gieseker stable sheaf with $\ch(E)=v$, and let $\gamma$ be a curve segment of $\,\Gamma^-_{v,s}$ from a point $Q=\gamma(0)$ to $P=\gamma(1)$ which is outside an actual $\nu$-wall. Suppose $0\to A\to E\to B\to 0$ is a short exact sequence in $\mathcal{A}^{P}$ corresponding to an actual $\lambda$-wall through the point $Q$. Then $E$ is $\lambda_{\gamma(t),s}$-unstable for all $t\in (0,1]$.
\end{lemma}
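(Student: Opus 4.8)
The plan is to propagate the given destabilization of $E$ along $\gamma$ by means of Proposition~\ref{theta_or_wall}, exploiting two facts that hold all along $\Gamma^-_{v,s}$. First, since $\gamma\subset\Gamma^-_{v,s}$ we have $\lambda_{\gamma(t),s}(E)=0$ for every $t$. Second, applying the sheaf reduction of Proposition~\ref{sheaf_subobjects} exactly as in the proof of Theorem~\ref{one_lambda_wall}, we may assume that $A$ and $B$ are sheaves, so that near $Q$ the sequence $0\to A\to E\to B\to0$ is simultaneously short exact in $\mathcal{B}^{\beta}$; wherever $A\to E$ is a monomorphism in $\mathcal{A}^{\gamma(t)}$ the slope $\lambda_{\gamma(t),s}(A)$ then agrees with the one-variable function $\olambda_{v,\beta(t),s}(\ch A)$ of \eqref{asymp lambda}, whose zeros on $\gamma$ are exactly the points of $\Upsilon_{\ch A,v,s}\cap\Gamma^-_{v,s}$. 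A direct computation with \eqref{asymp lambda}, using that the Gieseker-stable sheaf $E$ is $\mu$-stable and $A$ is a subsheaf, shows that $\olambda_{v,\beta,s}(\ch A)<0$ for $\beta\ll0$ while it vanishes at $Q=\gamma(0)$; hence it is positive just inward of $Q$, fixing $\gamma((0,\epsilon))$ as the side on which $E$ is $\lambda_{\gamma(t),s}$-unstable, with destabilizing sub-object $A$ of strictly positive slope.

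Arguing by contradiction, I would suppose that $E$ is $\lambda_{\gamma(t),s}$-stable for some smallest $t_\ast\in(0,1]$. As long as $A\to E$ stays a monomorphism up to $t_\ast$, the equality $\lambda_{\gamma(t),s}(A)=\olambda_{v,\beta(t),s}(\ch A)$ forces this function to change sign at $t_\ast$, producing a second point of $\Upsilon_{\ch A,v,s}\cap\Gamma^-_{v,s}$ strictly between $Q$ and $\gamma(t_\ast)$. By Lemma~\ref{double_to_left} a numerical $\lambda$-wall meeting $\Gamma^-_{v,s}$ twice forces the associated numerical $\nu$-wall $\Xi_{\ch A,v}$ to cross $\Gamma^-_{v,s}$ on the open segment between the two crossings, hence on $\gamma$. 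Because $A$ and $B$ are sheaves, $0\to A\to E\to B\to0$ is an actual $\nu$-destabilizing sequence in $\mathcal{B}^{\beta}$, so $\Xi_{\ch A,v}$ is an actual $\nu$-wall; this contradicts the hypothesis that $\gamma$ avoids every actual $\nu$-wall.

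It remains to cross the points $R\in(0,t_\ast)$ at which $A\to E$ fails to be a monomorphism in $\mathcal{A}^{\gamma(t)}$ (where a sub-object of $A$ or a quotient of $B$ meets a $\Theta$-curve). At each such $R$ I would invoke Proposition~\ref{theta_or_wall}: since $E$ is unstable just before $R$, either part (2) produces an actual $\lambda$-wall through $\gamma(R)$, in which case the $\nu$-wall argument of the previous paragraph again contradicts the avoidance hypothesis, or part (1) supplies a destabilizing sequence $K\into E\onto Q'$ valid across $R$ with $\lambda_{\gamma(t),s}(K)\ge\lambda_{\gamma(t),s}(A)>0$, so that $E$ stays $\lambda_{\gamma(t),s}$-unstable across $R$. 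Local finiteness of actual walls leaves only finitely many such $R$ in $[0,1]$, so the continuation terminates and instability is carried from $\gamma((0,\epsilon))$ all the way to $P=\gamma(1)$, as required.

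The step I expect to be the main obstacle is the bookkeeping underlying the last two paragraphs: when $A\to E$ loses injectivity the destabilizing sub-object, and therefore the numerical $\lambda$-wall it defines, may change, so one must verify that a putative return to stability still manifests as a second crossing of a \emph{single} numerical $\lambda$-wall with $\Gamma^-_{v,s}$ (so that Lemma~\ref{double_to_left} genuinely applies) and that the $\nu$-wall it produces is both actual and meets $\gamma$. This is precisely where the sheaf reduction of Proposition~\ref{sheaf_subobjects} and the slope monotonicity $\lambda_{\gamma(t),s}(K)\ge\lambda_{\gamma(t),s}(A)$ of Proposition~\ref{theta_or_wall}(1) have to be combined with care, in the same spirit as the reductions carried out in the proof of Theorem~\ref{one_lambda_wall}.
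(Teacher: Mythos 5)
Your strategy --- push the instability inward from $Q$, and convert any putative return to stability into a second crossing of a numerical $\lambda$-wall with $\Gamma^-_{v,s}$, hence via Lemma \ref{double_to_left} into an actual $\nu$-wall meeting $\gamma$ --- is the right skeleton, and your first two paragraphs essentially reproduce what the paper extracts from Theorem \ref{one_lambda_wall} and Lemma \ref{sheaf_no_destab}. The genuine gap is the one you flag yourself in the last paragraph and leave open: the continuation across a point $R$ where $A\to E$ stops being a monomorphism. There, Proposition \ref{theta_or_wall}(1) replaces $A$ by a new destabilizing sub-object $K$, but $K$ need not be a sheaf, nor a subsheaf of $E$, so its sequence need not be exact in $\cohb$; consequently neither Lemma \ref{asymp_gamma} (negativity of $\olambda_{v,\beta,s}(\ch(K))$ far out on $\Gamma^-_{v,s}$, which is what forces a \emph{second} crossing) nor the promotion ``numerical $\nu$-wall $\Rightarrow$ actual $\nu$-wall'' is available for the wall of $K$, and the contradiction machine stalls exactly where it is needed. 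The paper closes this hole in a different way: it proves that no replacement ever occurs. If the smallest-slope Harder--Narasimhan factor $A^-$ of $A$ were to exit the category, one passes to $\ker_{\mathcal{A}}(B\to A^-[1])$ and continues factor by factor; at the last factor $A'$ one would have $\lambda(A')<0$ near $\Theta^-_{\ch(A')}$, producing a wall destabilizing $E$ above, which Theorem \ref{one_lambda_wall} forbids. Hence $A$ --- and then $B$, by Theorem \ref{t:stays_in_A} --- remains in $\mathcal{A}^{\gamma(t)}$ for all $t\in[0,1]$, so the \emph{original} sequence transports the instability all the way to $P$. Supplying this (or an equivalent) argument is precisely what your proposal lacks.

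Two further inaccuracies. Your opening reduction ``we may assume $A$ and $B$ are sheaves'' is not available here: if $A\notin\coh(X)$, then Proposition \ref{sheaf_subobjects} forces $E=A_{00}$ and $B=A_{01}[2]$, and this case is not reduced to the sheaf case but handled separately (and trivially), since that sequence stays exact along all of $\Gamma^-_{v,s}$, which ends on the $\beta$-axis or on $\Theta^-_v$. Also, Gieseker stability of $E$ gives only $\mu$-semistability, not $\mu$-stability as you assert; the sign analysis of \eqref{asymp lambda} still goes through using the lexicographic positivity of $(\delta_{01},\delta_{02},\delta_{03})$ as in Lemma \ref{asymp_gamma}, but not for the reason you give. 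Finally, finiteness of the problem points $R$ comes from finiteness of Harder--Narasimhan filtrations, not from local finiteness of actual walls.
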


\begin{proof}
By the second statement of the previous lemma, we know that $0\to A\to E\to B\to 0$ destabilizes for $t\in(0,\epsilon)$ for some $\epsilon>0$. We also know from that proof that $\lambda_{\gamma(t),s}(A)>\lambda_{\gamma(t),s}(E)$ for all $t\in(0,1]$. In the case where $A$ is not a sheaf, we have that $0\to A\to E\to B\to 0$ remains short exact to the end of $\Gamma^-_{v,s}$ as $E=A_{00}$ and $B=A_{01}[2]$ and $\Gamma^-_{v,s}$ ends on either the $\beta$-axis or $\Theta^-_v$. So we may assume $A$ is a sheaf.

Then $A$ remains in the category until its Harder--Narasimhan factor with smallest $\nu$, $A^-$, say, goes out of the category. But then just beyond that point $\lambda(A^-)>0$, and so $E\to \ker_{\mathcal{A}}(B\to A^-[1])$ still destabilizes $E$. We can continue until we have exhausted all of the Harder--Narasimhan factors $A'$ of $A$. Finally, as we approach the last $\Theta^-_{\ch(A')}$-curve for the filtration of $A$, we have $\lambda(A')<0$. But then there would be a wall corresponding to $A'$ destabilizing $E$ above, which is impossible. It follows that $A$ remains in $\mathcal{A}^{\gamma(t)}$ for all $t\in[0,1]$.

Similarly, $B_0$ remains in $\mathcal{A}^{\gamma(t)}$ until $\Theta^-$ of a factor, but this cannot happen as $E$ remains in the category. But then $B$ remains in the category by Theorem~\ref{t:stays_in_A}. This completes the proof.
\end{proof}

We can now complete the proof of Theorem~\ref{t:asymtotegamma}.

\begin{lemma}\label{gstoasym}
If $E$ is a Gieseker $($semi\/$)$stable sheaf, then $E$ is asymptotically $\labs$-$($semi\/$)$stable along $\Gamma^-_{v,s}$.
\end{lemma}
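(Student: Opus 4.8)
The plan is to verify directly the two conditions of Definition \ref{asym s-st}, leaning on the structural results already established for Gieseker stable sheaves and reducing the semistable case to the stable one.

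First I would dispatch condition (i). Since $\Gamma^-_{v,s}$ is an unbounded $\Theta^-$-curve by Example \ref{Gamma_is_gamma}, and a Gieseker semistable sheaf $E$ lies in $\coh(X)$, Proposition \ref{dual_in_A} gives immediately that $E\in\mathcal{A}^{\gamma(t)}$ for all $t\gg0$. For condition (ii) in the case where $E$ is Gieseker \emph{stable}, the crucial input is the finiteness of destabilizing walls. Parametrising $\Gamma^-_{v,s}$ outward so that $\beta(t)\to-\infty$ as $t\to\infty$, the actual $\nu$-wall for $E$ is bounded, so for $t$ large enough $\gamma(t)$ lies beyond it and $E$ is $\nu$-stable there. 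Theorem \ref{one_lambda_wall} then guarantees that at most one actual $\lambda$-wall meets $\Gamma^-_{v,s}$ above the $\nu$-wall, and that this wall destabilises downward. Hence there is $t_1$ beyond both the $\nu$-wall and this single $\lambda$-wall such that $E$ is $\labs$-stable for all $t>t_1$; in particular every sub-object $F\into E$ in $\mathcal{A}^{\gamma(t)}$ satisfies $\lambda_{\gamma(t),s}(F)<\lambda_{\gamma(t),s}(E)$, which is precisely condition (ii) with strict inequality.

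To pass to the Gieseker semistable case I would reduce to the stable one via a Jordan--H\"older filtration. If $E$ is strictly Gieseker semistable, its factors are Gieseker stable sheaves with the same reduced Hilbert polynomial, so $\delta_{10}(E,\cdot)=\delta_{20}(E,\cdot)=\delta_{30}(E,\cdot)=0$ between $E$ and each factor; the limit computations in the proof of Lemma \ref{asymtogs} then show that every factor has the same asymptotic $\lambda$-slope as $E$, namely $0$, since $\lambda_{\gamma(t),s}(E)=0$ along $\Gamma^-_{v,s}$ by the definition of $\Gamma_{v,s}$ as $\{\tau_{v,s}=0\}$. Each factor is asymptotically $\labs$-stable by the stable case just treated, and since $\labs$-semistability is closed under extensions of objects of equal slope, the iterated extension $E$ is asymptotically $\labs$-semistable. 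Combined with the stable case, this yields the $(\mathrm{semi})$stable statement.

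The main obstacle is genuinely the finiteness of walls, i.e.\ that $E$ is destabilised only finitely often (in fact at most once) along $\Gamma^-_{v,s}$ above its $\nu$-wall; this is exactly the content that has already been extracted in Theorem \ref{one_lambda_wall} using the differential-geometric analysis of $\Sigma_{u,v}$ (Theorem \ref{regular}, Proposition \ref{horizontal wall}, Lemma \ref{double_to_left}). Consequently the present lemma is largely a matter of assembling that input with Proposition \ref{dual_in_A}, together with the routine extension step needed for the semistable reduction. The only point requiring care is ensuring that the quantifier in condition (ii) ranges over all sub-objects in the tilted heart and not merely over subsheaves; this is precisely why the wall-crossing formulation of Theorem \ref{one_lambda_wall}, rather than the bare $\delta_{ij}$-limit computations, is indispensable.
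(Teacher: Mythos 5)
Your proof is correct and follows essentially the same route as the paper's: condition (i) of Definition \ref{asym s-st} via Proposition \ref{dual_in_A} together with Example \ref{Gamma_is_gamma}, condition (ii) in the stable case via Theorem \ref{one_lambda_wall} (the unique actual $\lambda$-wall above the $\nu$-wall destabilizing downwards), and the semistable case by induction on the Jordan--H\"older filtration. Your extra detail in the semistable step — that the factors have Chern characters proportional to $v$, hence identically equal $\labs$-slope and the same curve $\Gamma^-_{v,s}$ — simply fleshes out the induction the paper states in one line.
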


\begin{proof}
If $E$ is Gieseker stable, then the first part of Definition~\ref{asym s-st} follows from the fact that $E\in\coh(X)$ and Proposition~\ref{dual_in_A}, and the second now follows from Theorem~\ref{one_lambda_wall}. The statement for semistability follows by inducting on the length of the Jordan--H\"older filtration of $E$.
\end{proof}

\begin{remark}
Just as for Proposition~\ref{asymp_HN}, we can deduce from Lemma~\ref{gstoasym} that any $E\in\cohab$ for all $(\alpha,\beta)$ along an unbounded $\Theta^-$-curve $\gamma(t)$ in $R^-_v$ has an asymptotic Harder--Harasimhan filtration for $\lambda_{\alpha\,\beta,s}$-stability.
\end{remark}

The following will be useful in Section~\ref{sec:instantons}. 

\begin{proposition}\label{tilt_wall_above}
Let $E$ be a Gieseker stable sheaf with $\ch(E)=v$, and let $W_{u,v,s}$ be an actual $\lambda$-wall in $R^-_v$ crossing $\Gamma^-_{v,s}$. Then there is an actual $\lambda$-wall which either crosses the $\beta$-axis between $\Gamma^-_{v,s}$ and $\Theta^-_{v}$ or cuts $\Theta^-_{v}$. In particular, in the latter case, there is an actual $\nu$-wall for $E$.
\end{proposition}

\begin{proof}
Let $P$ denote the point where $W_{u,v,s}$ cuts $\Gamma^-_{v,s}$.  By Lemma~\ref{lem:actual wall}, $W_{u,v,s}$ ends either on another actual $\lambda$-wall which must remain above the original wall or on $\alpha=0$. So we have a piecewise path of actual $\lambda$-walls in the region between $\Gamma^-_{v,s}$ and $\Theta^-_v$. By Theorem~\ref{one_lambda_wall}, this path cannot cross $\Gamma^-_{v,s}$ again except at $P$, but then there is a loop in $R^-_{v}$ intersecting $\Gamma^-_{v,s}$ at $P$ outside of which $E$ is unstable. But then $E$ would be unstable on both sides of $P$, contradicting the fact that $W_{u,v,s}$ is an actual wall. The path cannot be unbounded in this region because unbounded curves are only unbounded in $R^0_{v}$. So it must cross either $\Theta^-_{v}$ or the $\beta$-axis in this region.

For the last part, observe that the final segment of the path crossing $\Theta^-_{v}$ is an actual $\lambda$-wall, and so by Theorem~\ref{schmidt}, there is an actual $\nu$-wall crossing $\Theta^-_{v}$ at the same point.  
\end{proof}

\subsection{Asymptotics along $\boldsymbol{\Lambda_{\oalpha}^-}$}

Next, we study objects that are asymptotically $\labs$-semistable along a horizontal line $\{\alpha=\oalpha\}$, or \emph{asymptotically $\lambda_{\oalpha,\beta,s}$-semistable}. The key is to observe that $\Lambda^-_{\oalpha}$ is eventually in $\leftgamma_{v,s}$, which is the region to the left of $\Gamma^-_{v,s}$ (see Definition~\ref{RL}).  Then we can use Theorem~\ref{RLvs}. We establish the following result.

\begin{theorem}\label{t:asymp-h}
Let $v$ be a Chern character of an object of $\dbx$ satisfying $v_0\ne0$ and the Bogomolov--Gieseker inequality \eqref{B-ineq}. For each $s\geq 1/3$ and each $\oalpha>0$, an object $A\in\dbx$ is asymptotically $\labs$-semistable along $\Lambda_{\oalpha}^-$ if and only if $A$ is a Gieseker semistable sheaf.
\end{theorem}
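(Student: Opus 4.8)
The plan is to prove both implications by bootstrapping off the analysis already carried out along $\Gamma^-_{v,s}$, exploiting the crucial observation that $\Lambda^-_{\oalpha}$ is itself an unbounded $\Theta^-$-curve in the sense of Definition \ref{theta-curve}: along it $\alpha=\oalpha$ is constant while $\beta\to-\infty$, so $\lim\dot\alpha/\dot\beta=0>-1$. Consequently Proposition \ref{dual_in_A} applies, and condition (i) of Definition \ref{asym s-st} is equivalent to $A\in\coh(X)$. This is what lets me treat $\Lambda^-_{\oalpha}$ in parallel with $\Gamma^-_{v,s}$.

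\emph{Necessity.} Suppose $A$ is asymptotically $\labs$-semistable along $\Lambda^-_{\oalpha}$. By the remark above $A$ is a sheaf, and the torsion-subsheaf argument of Lemma \ref{asymtogs} forces $A$ to be torsion free (a subsheaf of dimension $\le1$ has $\labs=+\infty$, while a pure $2$-dimensional subsheaf $T$ has $\labs(T)\sim-\beta/2$, which exceeds $\labs(A)\sim-\beta/3$ as $\beta\to-\infty$). I would then read the Gieseker inequalities directly from the expansion \eqref{num-walls}: for a subsheaf $F\into A$ with $\ch(F)=u$, $\ch(A)=v$, dividing $f_{u,v,s}(\oalpha,\beta)$ by $\rho_u\rho_v\sim(u_0v_0/4)\beta^4$ yields $\lim_{\beta\to-\infty}(\labs(F)-\labs(A))=-\delta_{01}(F,A)/(3\ch_0(F)\ch_0(A))$, whence $\delta_{01}(F,A)\ge0$, and then successively $\delta_{02}(F,A)\ge0$ and $\delta_{03}(F,A)\ge0$. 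This is exactly the lexicographic criterion for Gieseker semistability; the computation is formally identical to Lemma \ref{asymtogs}, only with $\beta$ rather than arc-length along $\Gamma^-_{v,s}$ as the parameter.

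\emph{Sufficiency.} Conversely, let $E$ be Gieseker semistable; by the Jordan--H\"older induction used in Lemma \ref{gstoasym} it suffices to treat $E$ Gieseker stable, and then $E\in\coh(X)$ gives (i) via Proposition \ref{dual_in_A}. The real content is (ii), the finiteness of actual $\lambda$-walls along $\Lambda^-_{\oalpha}$. Here I use that, since $\Gamma^-_{v,s}$ is asymptotic to the line $\alpha\sqrt{6s+1}=-(\beta-\mu(v))$ while $\Lambda^-_{\oalpha}$ has fixed $\alpha=\oalpha$, for $\beta\ll0$ the line $\Lambda^-_{\oalpha}$ lies strictly to the left of $\Gamma^-_{v,s}$, i.e. inside $\leftgamma_{v,s}\cap R^-_v$. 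Suppose $E$ were destabilized at points $(\oalpha,\beta_n)$ with $\beta_n\to-\infty$ by actual $\lambda$-walls $W_n$. Each connected component through such a point is bounded and contained in $R^-_v$, so by Theorem \ref{RLvs} it meets $\Gamma^-_{v,s}$; reducing to $s=1/3$ (intersection with $\Gamma_{v,s}$ being $s$-independent by Lemma \ref{indep of s}, and wall existence for $s\ge1/3$ by Theorem \ref{create wall}), Proposition \ref{horizontal wall} identifies this intersection as the maximum of the wall, where the curvature is $-1/\alpha$. A short estimate using this curvature shows the maximal $\alpha$-value of $W_n$ tends to infinity as $\beta_n\to-\infty$, so these intersections eventually lie above every actual $\nu$-wall of $E$ (which are confined to a fixed semicircle). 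By Theorem \ref{one_lambda_wall} at most one actual $\lambda$-wall meets $\Gamma^-_{v,s}$ above the $\nu$-wall, so the $W_n$ coincide for $n\gg0$; but a single actual wall is a bounded arc and meets $\Lambda^-_{\oalpha}$ in a bounded set, contradicting $\beta_n\to-\infty$. Thus $E$ is $\labs$-stable at $(\oalpha,\beta)$ for all $\beta\ll0$, which is (ii).

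\emph{Main obstacle.} The delicate step is the transfer inside the sufficiency argument: I must guarantee that the destabilizing short exact sequence realized at $(\oalpha,\beta_n)$ genuinely persists along the connected component of $W_n$ up to its intersection with $\Gamma^-_{v,s}$, rather than terminating at an intermediate endpoint (another actual wall, $\Theta_u\cap\Theta_v$, or the $\beta$-axis) as permitted by Lemma \ref{lem:actual wall}. This will be handled by the wall-chasing technique of Proposition \ref{tilt_wall_above}, supplemented by Lemmas \ref{l-l} and \ref{double_to_left} to exclude the scenario in which $W_n$ instead crosses $\Gamma^-_{v,s}$ a second time below the $\nu$-wall without being governed by the uniqueness statement above it. Carrying out this case analysis cleanly, and verifying that the intersection with $\Gamma^-_{v,s}$ to which I appeal is exactly the one controlled by Theorem \ref{one_lambda_wall}, is where the bulk of the work lies.
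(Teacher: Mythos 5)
Your necessity argument is essentially the paper's own (Lemma \ref{l2:asymp-h}): sheafness via Proposition \ref{dual_in_A} (indeed $\Lambda^-_{\oalpha}$ is an unbounded $\Theta^-$-curve), exclusion of torsion, and the three successive limits along $\Lambda^-_{\oalpha}$ reading off $\delta_{10}$, $\delta_{20}$, $\delta_{30}$; note only that these limits produce the $\delta_{i0}$'s, not the $\delta_{0i}$-string appearing along $\Gamma^-_{v,s}$ in \eqref{asymp lambda}, though the two agree up to sign. The skeleton of your sufficiency argument — reduce to $s=1/3$ by Theorem \ref{create wall}, use that bounded components in $R^-_v$ meet $\Gamma^-_{v,1/3}$ at maxima (Theorem \ref{RLvs}, Proposition \ref{horizontal wall}), and appeal to the uniqueness statement of Theorem \ref{one_lambda_wall} — is also the paper's (Lemma \ref{Lambda_finite}).

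The gap is the step through which you route the contradiction, which the paper never needs: the claim that the maximal $\alpha$-value of the walls $W_n$ tends to infinity as $\beta_n\to-\infty$, justified by ``a short estimate using this curvature.'' The curvature $-1/\alpha$ is computed only \emph{at} the maximum point; it is purely local and gives no control on the global extent of these quartics. At $s=1/3$ the wall is $\bigl\{\tfrac{\delta_{10}}{4}\alpha^4+\tfrac{\delta_{21}-\delta_{30}}{2}\alpha^2+g(\beta)=0\bigr\}$ with $g$ quartic in $\beta$, and since the destabilizing classes $u_n$ vary with $n$ their coefficients are unbounded; long, nearly horizontal components reaching arbitrarily far along $\Lambda^-_{\oalpha}$ while their maximum on $\Gamma^-_{v,1/3}$ stays in a fixed bounded region (their minimum sitting on a very large associated numerical $\nu$-wall $\Xi_{u_n,v}$) are not excluded by any local computation; ruling them out would require using actuality (integrality, the support property, semistability of the destabilizers) in an essential way, i.e.\ a substantial argument you have not sketched. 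This matters because Theorem \ref{one_lambda_wall} gives uniqueness only for crossings of $\Gamma^-_{v,s}$ \emph{above the actual $\nu$-wall} of $E$: if the crossings of the $W_n$ need not escape the bounded region containing the actual $\nu$-walls, you cannot conclude that the $W_n$ coincide, and the contradiction evaporates. The paper's Lemma \ref{Lambda_finite} avoids any such quantitative estimate: from the crossing point it follows the actual wall and, wherever it ceases to be actual, it ends on another actual wall (Lemma \ref{lem:actual wall}); the resulting piecewise path destabilizes to the right, has increasing $\alpha$, and hence must itself cross $\Gamma^-_{v,1/3}$, where Theorem \ref{one_lambda_wall} applies. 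In other words, the wall-chasing you defer to your ``main obstacle'' paragraph is not a supplement to the curvature step — it \emph{is} the proof, and the curvature step should be discarded.
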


The strategy of the proof is very similar to the one used in Theorem~\ref{t:asymtotegamma}, though with different calculations; we include them here for the sake of completeness. Indeed, note that
\begin{equation}\label{lim-nu-h}
\lim_{\beta\to-\infty}\frac{-1}{\beta}\nu_{\oalpha,\beta}(F) = 
\begin{cases}
1/2 &\text{if } \ch_0(F)\ne0, \\
1 &\text{if } \ch_0(F)=0\text{ and }\ch_1(F)\ne0.
\end{cases}
\end{equation}

As before, the claim follows from Proposition~\ref{dual_in_A}\eqref{dual_in_A1}; alternatively, it can also be proved in the same manner as Lemma~\ref{gammaasymp} above.

\begin{lemma}\label{l1:asymp-h}
Fix $\oalpha>0$. If there is a $\beta_0<0$ such that $E\in\mathcal{A}^{\oalpha,\beta}$ for every $\beta<\beta_0$, then $E\in\coh(X)$.
\end{lemma}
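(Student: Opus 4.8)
The plan is to recognize $\Lambda^-_{\oalpha}$ as a special case of the unbounded $\Theta^-$-curves already treated, and then to record the elementary argument mirroring Lemma \ref{gammaasymp} for completeness.

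First I would verify that the horizontal half-line $\Lambda^-_{\oalpha}$, parametrized by $\gamma(t)=(\oalpha,-t)$ for $t>0$, is an unbounded $\Theta^-$-curve in the sense of Definition \ref{theta-curve}. Indeed $\beta(t)=-t\to-\infty$, and since $\alpha(t)^2=\oalpha^2<t^2(1-\epsilon)=\beta(t)^2(1-\epsilon)$ for all $t$ sufficiently large, the Remark following Definition \ref{theta-curve} guarantees that $\nu_{\gamma(t)}(v)>0$ for every $v$ with $v_{\le1}\ne(0,0)$ once $t\gg0$ (equivalently $\lim_{t\to\infty}\dot\alpha(t)/\dot\beta(t)=0>-1$). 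With this identification, the hypothesis that $E\in\mathcal{A}^{\oalpha,\beta}$ for all $\beta<\beta_0$ is exactly condition (1) of Proposition \ref{dual_in_A} along $\gamma$, and the equivalence of (1) and (3) there gives $E\in\coh(X)$ at once.

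For the promised direct proof I would instead follow the proof of Lemma \ref{gammaasymp} almost verbatim, using the asymptotics \eqref{lim-nu-h} in place of \eqref{eq:limitnu}. Writing $E_1=\calh^{-1}_\beta(E)$ and $E_0=\calh^{0}_\beta(E)$ as in Section \ref{2nd tilt}, the first step is to show $E_1=0$. Since $E_1\in\free{\oalpha,\beta}$ we have $\nu_{\oalpha,\beta}(E_1)\le\nu^+_{\oalpha,\beta}(E_1)\le0$ for every $\beta<\beta_0$; on the other hand, if $\ch_{\le1}(E_1)\ne(0,0)$ then \eqref{lim-nu-h} forces $\nu_{\oalpha,\beta}(E_1)>0$ for $\beta\ll0$ (the limit of $-\nu_{\oalpha,\beta}(E_1)/\beta$ being $1/2$ or $1$), while if $\ch_{\le1}(E_1)=(0,0)$ then $\ch_1^\beta(E_1)=0$ and so $\nu_{\oalpha,\beta}(E_1)=+\infty$. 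Either way the inequality $\nu_{\oalpha,\beta}(E_1)\le0$ is violated unless $E_1=0$. As this holds at every $\beta\ll0$, we obtain $E\in\cohb$ for all $\beta\ll0$, and Lemma \ref{asymp_B_is_sheaf}(1) then yields $E\in\coh(X)$. Equivalently, one may finish exactly as in Lemma \ref{gammaasymp}: since $E_{01}[1]\in\cohb$, Proposition \ref{proprank1} gives $\ch_1^\beta(E_{01})\le0$, which for $\beta\ll0$ forces $\ch_0(E_{01})=0$ and hence $E_{01}=0$, as it is torsion free, leaving $E=E_{00}\in\coh(X)$.

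The only place where the geometry of the path enters is the verification that $\Lambda^-_{\oalpha}$ is an unbounded $\Theta^-$-curve, i.e. that the fixed value $\oalpha$ of the first coordinate is asymptotically negligible against $\beta$; this is precisely what makes the limits in \eqref{lim-nu-h} strictly positive. I do not expect any genuine obstacle beyond what was already handled in Lemma \ref{gammaasymp}, since the relevant limits along $\Lambda^-_{\oalpha}$ are even simpler than those along $\Gamma^-_{v,s}$.
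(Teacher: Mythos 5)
Your proposal is correct and follows exactly the two routes the paper itself indicates for this lemma: the paper's stated proof is precisely that the claim "follows from item (1) of Proposition \ref{dual_in_A}" (after observing that $\Lambda^-_{\oalpha}$ is an unbounded $\Theta^-$-curve), "or alternatively, it can be proved in the same manner as Lemma \ref{gammaasymp}." Both your invocation of Proposition \ref{dual_in_A} and your hands-on argument via \eqref{lim-nu-h}, the vanishing of $E_1$, and Lemma \ref{asymp_B_is_sheaf}(1) match the paper's intended proofs, so there is nothing to add.
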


The next step is to understand the difference in $\lambda$-slopes; note that
\begin{equation}\label{lambda-h}
\labs(F)-\labs(E) = \dfrac{f_{u,v}(\alpha,\beta)}{\rho_u(\alpha,\beta)\rho_v(\alpha,\beta)}
\end{equation}
with $u=\ch(F)$ and $v=\ch(E)$. The full expression of the numerator is given by equation \eqref{num-walls}; the numerator is given by
\begin{align*}
& \rho_u(\alpha,\beta)\rho_v(\alpha,\beta) = \dfrac{1}{4}\ch_0(F)\ch_0(E)\beta^4 - \dfrac{1}{2}(\ch_1(F)\ch_0(E)+\ch_1(E)\ch_0(F))\beta^3 \\
& + \left( \dfrac{1}{2}\ch_0(F)\ch_0(E)\alpha^2 + \ch_1(F)\ch_1(E) + \dfrac{1}{2}(\ch_0(F)\ch_2(E)+\ch_2(F)\ch_0(E))\right)\beta^2 \\
  & + (\textrm{ terms of lower order in } \beta).
  \end{align*}

With these formulas at hand, we can establish the \emph{if} part of Theorem~\ref{t:asymp-h}. First we prove a version of Lemma~\ref{asymp_gamma} along $\Lambda^-_{\oalpha}$.

\begin{lemma}\label{Lambda_asymptotic}
For any fixed $s>0$ and $\oalpha>0$, if $E$ is a Gieseker stable sheaf, then there is no monomorphism $F\into E$ in $\mathcal{A}^{\oalpha,\beta}$ with $\lambda_{\oalpha,\beta,s}(F)\geq\lambda_{\oalpha,\beta,s}(E)$ for all $\beta<\obeta$.
\end{lemma}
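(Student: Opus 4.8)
The plan is to follow the asymptotic slope comparison of Lemma \ref{asymtogs} and Lemma \ref{asymp_gamma}, but carried out along the horizontal ray $\Lambda^-_{\oalpha}$ rather than along $\Gamma^-_{v,s}$. I would argue by contradiction: suppose a proper monomorphism $F\into E$ in $\mathcal{A}^{\oalpha,\beta}$ exists with $\labs(F)\ge\labs(E)$ for all $\beta<\obeta$, and put $u=\ch(F)$. Since $F$ lies in $\mathcal{A}^{\oalpha,\beta}$ for every $\beta<\obeta$, Lemma \ref{l1:asymp-h} (applied to $F$ in place of $E$) forces $F\in\coh(X)$, while $E$ is already a sheaf. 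For $\beta\ll0$ the sheaf $E$ is $\nu_{\oalpha,\beta}$-semistable with $\nu_{\oalpha,\beta}(E)>0$ (by Lemma \ref{h nu-ss} together with the limit \eqref{lim-nu-h}), so $E\in\tors{\oalpha,\beta}$; the same applies to $F$, and a short diagram chase then shows that the $\mathcal{A}^{\oalpha,\beta}$-monomorphism $F\into E$ is a genuine subsheaf inclusion, since any nonzero sheaf kernel would itself lie in $\tors{\oalpha,\beta}\subset\mathcal{A}^{\oalpha,\beta}$ and map to zero, contradicting monicity. In particular $F$ is a torsion-free proper subsheaf of $E$, so $\ch_0(F)>0$.

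With $F$ reduced to a subsheaf, the comparison becomes purely asymptotic. Using \eqref{lambda-h}, the expansion of $f_{u,v}$ from \eqref{num-walls}, and the displayed expansion of $\rho_u\rho_v$, I would first observe that for $\beta\ll0$ the denominator $\rho_u(\oalpha,\beta)\rho_v(\oalpha,\beta)$ is positive, its leading term being $\tfrac14\ch_0(F)\ch_0(E)\beta^4$, so that the sign of $\labs(F)-\labs(E)$ agrees with that of $f_{u,v}(\oalpha,\beta)$. Extracting the successive leading coefficients of $f_{u,v}(\oalpha,\beta)$ as $\beta\to-\infty$ gives, in order of decreasing degree, $-\tfrac{1}{12}\delta_{01}(F,E)\beta^4$, then $\tfrac13\delta_{02}(F,E)\beta^3$, and then $-\tfrac12\delta_{03}(F,E)\beta^2$; here I would invoke the proportionality forced by $\delta_{10}=\delta_{20}=0$ (via Lemma \ref{delta_tech}(\ref{twovanish})) to see that $\delta_{21}(F,E)$ vanishes as well, which cleans up the $\beta^2$ coefficient. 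Tracking the sign of the relevant power of $\beta$ in each regime shows that for $\beta\ll0$ the sign of $\labs(F)-\labs(E)$ equals $-1$ times the sign of the lexicographically first nonvanishing entry of $\bigl(\delta_{01}(F,E),\delta_{02}(F,E),\delta_{03}(F,E)\bigr)$.

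Since $E$ is Gieseker \emph{stable}, that first nonvanishing entry is strictly positive for every proper subsheaf (and the three cannot all vanish, as that would make $\ch_{\le3}(F)$ proportional to $\ch_{\le3}(E)$). Hence $\labs(F)-\labs(E)<0$ for all sufficiently negative $\beta$, contradicting $\labs(F)\ge\labs(E)$ for all $\beta<\obeta$, and the lemma follows. I expect the only genuinely delicate step to be the reduction in the first paragraph: confirming that, for $\beta\ll0$, the $\mathcal{A}^{\oalpha,\beta}$-subobject is honestly a torsion-free subsheaf of $E$, so that Gieseker stability may be applied to it. Once this is in place the remaining content is the bookkeeping of the three limits, which is routine and entirely parallel to the proof of Lemma \ref{asymtogs}.
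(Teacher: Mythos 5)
Your proposal is correct and is essentially the paper's own argument: reduce the $\mathcal{A}^{\oalpha,\beta}$-monomorphism to a genuine subsheaf inclusion $F\subset E$, then read off the signs of the three successive leading terms of $\labs(F)-\labs(E)$ as $\beta\to-\infty$ and play them against the lexicographic Gieseker condition; your sign bookkeeping (including the observation that $\delta_{21}(F,E)=0$ must be extracted from Lemma \ref{delta_tech}(\ref{twovanish}) before the $\beta^2$-coefficient becomes $-\tfrac12\delta_{03}(F,E)$) matches the paper's three limits, and is in fact slightly more careful, since the paper uses this vanishing silently. The only place you deviate is the reduction step: the paper applies Lemma \ref{l1:asymp-h} to both $F$ and the $\mathcal{A}$-quotient $G=E/F$, so that the triangle $F\to E\to G$ consists of sheaves and the long exact sequence of $\coh(X)$-cohomology immediately gives $0\to F\to E\to G\to 0$ exact in $\coh(X)$; you apply it only to $F$ and then run a kernel chase. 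Your chase does work, but the assertion that a nonzero sheaf kernel $K$ of $F\to E$ lies in $\tors{\oalpha,\beta}$ is not automatic at a fixed $\beta$: it holds for $\beta\ll0$ (depending on $K$) by Lemma \ref{l34} applied along $\Lambda^-_{\oalpha}$, or trivially when $\dim K\le1$, and it is only because the monomorphism hypothesis holds for all $\beta<\obeta$ that you may move far enough left to invoke it — this should be said explicitly. The paper's version of the reduction avoids that point entirely, which is what it buys; yours avoids needing any information about the quotient $G$.
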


\begin{proof}
If $F\into E\onto G$ is a short exact sequence in $\mathcal{A}^{\oalpha,\beta}$ for every $\beta<\beta_0$, then Lemma~\ref{l1:asymp-h} implies that both $F$ and $G$ are sheaves. We first have that
$$
\lim_{\beta\to-\infty} \left( \lambda_{\oalpha,\beta,s}(F)-\lambda_{\oalpha,\beta,s}(E) \right) = \dfrac{1}{3}\dfrac{\delta_{10}(F,E)}{\ch_0(F)\ch_0(E)}\le0
$$
since $\delta_{10}(F,E)\le0$ by hypothesis. If $\delta_{10}(F,E)=0$, then also
$$ \lim_{\beta\to-\infty} (-\beta)(\lambda_{\oalpha,\beta,s}(F)-\lambda_{\oalpha,\beta,s}(E)) = \dfrac{4}{3}\dfrac{\delta_{20}(F,E)}{\ch_0(F)\ch_0(E)}\le0$$
because $\delta_{20}(F,E)\le0$. Finally, if $\delta_{20}(F,E)$ also vanishes, then
$$ \lim_{\beta\to-\infty} \beta^2(\lambda_{\oalpha,\beta,s}(F)-\lambda_{\oalpha,\beta,s}(E)) = 4\dfrac{\delta_{30}(F,E)}{\ch_0(F)\ch_0(E)}\le0$$ since $\delta_{30}(F,E)\le0$, with equality holding if and only if $\delta_{30}(F,E)=0$ as well, which implies that $\lambda_{\oalpha,\beta,s}(F)=\lambda_{\oalpha,\beta,s}(E)$ for every $\oalpha$, $\beta$ and $s$.  It follows that $E$ is asymptotically $\lambda_{\oalpha,\beta,s}$-(semi)stable, as desired.
\end{proof}

Now we extend Theorem~\ref{one_lambda_wall} to $\Lambda^-_{\oalpha}$. The trick is to reduce to the case of $\Gamma^-_{v,s}$.

\begin{lemma}\label{Lambda_finite}
Suppose $s\geq1/3$, and suppose $E$ is a Gieseker semistable sheaf with $\ch(E)=v$. Then there is a $\obeta$ such that for all $\beta<\obeta$, there are no actual $\lambda$-walls intersecting $\Lambda^-_{\oalpha}$. Furthermore, there are no more actual $\lambda$-walls to the left of the actual $\nu$-wall for $E$ in $R^-_{v}$.
\end{lemma}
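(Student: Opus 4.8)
The plan is to reduce the first assertion to the second, and then to prove the second by transporting any putative actual $\lambda$-wall onto $\Gamma^-_{v,s}$, where Theorem \ref{one_lambda_wall} already gives complete control. Since $E$ is a Gieseker semistable sheaf of positive rank, $v_0=\ch_0(E)>0$, so for $\beta\ll0$ the point $(\oalpha,\beta)$ lies in $R^-_v$ (as $\rho_v(\oalpha,\beta)\to+\infty$ and $\beta<\mu(v)$). Moreover, since $\Gamma^-_{v,s}$ is asymptotic to the line $\oalpha\sqrt{6s+1}=\mu(v)-\beta$, its $\alpha$-coordinate tends to $+\infty$ as $\beta\to-\infty$, so $(\oalpha,\beta)\in\leftgamma_{v,s}$ for $\beta$ sufficiently negative. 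Because actual $\nu$-walls are bounded, being contained in a fixed semicircle as recalled in Section \ref{sec:nu-walls}, such a point also lies to the left of the actual $\nu$-wall for $E$. Hence the first statement follows from the second, and it suffices to rule out actual $\lambda$-walls for $E$ lying in $R^-_v$ strictly to the left of its actual $\nu$-wall.

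For the second statement I would first reduce to the case that $E$ is Gieseker stable, inducting on the length of a Jordan--H\"older filtration exactly as in Lemma \ref{gstoasym}. Suppose then, for a contradiction, that there is an actual $\lambda$-wall $W_{u,v,s}$ for a stable $E$ meeting $R^-_v$ strictly to the left of the $\nu$-wall, and let $C$ be the connected component of the numerical wall $\Upsilon_{u,v,s}$ containing it. Since the unbounded connected components of any numerical $\lambda$-wall lie in $R^0_v$ by Main Theorem \ref{mthm1}(4), the component $C$, which meets $R^-_v$, must be bounded; Theorem \ref{RLvs} then shows that $C$ crosses $\Gamma^-_{v,s}$, and Proposition \ref{horizontal wall} shows that each such crossing is a horizontal maximum of $C$.

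The core of the argument is to promote this numerical crossing to an actual one. To the left of its $\nu$-wall the sheaf $E$ is $\nu$-stable, so by Corollary \ref{ses_in_A} together with Theorem \ref{t:stays_in_A} the destabilizing sequence $0\to A\to E\to B\to0$ underlying $W_{u,v,s}$ persists in $\cohab$ as the base point is dragged along $C$ outward onto $\Gamma^-_{v,s}$, producing an actual $\lambda$-wall point $P\in\Gamma^-_{v,s}$ above the $\nu$-wall. By Theorem \ref{one_lambda_wall} such a $P$ is unique and destabilizes $E$ in a single direction along $\Gamma^-_{v,s}$, with $E$ stable on the far side. A wall sitting strictly to the left of the $\nu$-wall forces $C$ either to meet $\Gamma^-_{v,s}$ a second time or to reach $\Theta^-_v$; in the former case Lemma \ref{double_to_left} places the associated $\nu$-wall $\Xi_{u,v}$ on $\Gamma^-_{v,s}$ between the two crossings, and in the latter Proposition \ref{tilt_wall_above} places an actual $\nu$-wall at or above the crossing with $\Theta^-_v$. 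Either way one obtains an actual $\nu$-wall for $E$ lying strictly to the left of the assumed one, contradicting the uniqueness in Lemma \ref{asymp_B_is_sheaf}(\ref{one_nu_wall}).

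The main obstacle is precisely this promotion step: one must guarantee that the short exact sequence witnessing destabilization keeps all three terms inside $\cohab$ and keeps $A$ a genuine subobject as the base point moves from $\leftgamma_{v,s}$ onto $\Gamma^-_{v,s}$, so that a numerical crossing genuinely becomes an actual wall, and one must track orientations carefully so that the uniqueness and fixed destabilization direction of Theorem \ref{one_lambda_wall}, combined with the maximum property of Proposition \ref{horizontal wall}, produce a genuine contradiction rather than a tautology. Marshalling Corollary \ref{ses_in_A}, Theorem \ref{t:stays_in_A}, and the case analysis of Lemmas \ref{double_to_left} and \ref{double_R0} to cover every way the bounded component $C$ can close up is the delicate part; the remaining verifications are routine.
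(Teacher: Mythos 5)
Your overall strategy---transport any putative wall onto $\Gamma^-_{v,s}$ and invoke the uniqueness of Theorem \ref{one_lambda_wall}---is indeed the paper's strategy, but the step you yourself flag as ``the main obstacle'' is a genuine gap, not a routine verification, and the tools you cite cannot close it. Corollary \ref{ses_in_A} moves a short exact sequence \emph{of sheaves} outward along an unbounded $\Theta^-$-curve (toward $\beta\to-\infty$); it says nothing about dragging a destabilizing sequence along the wall component $C$ toward $\Gamma^-_{v,s}$, and Theorem \ref{t:stays_in_A} only gives local persistence of membership in $\cohab$, not persistence of the destabilizing property. In fact an actual $\lambda$-wall can genuinely terminate before reaching $\Gamma^-_{v,s}$: by Lemma \ref{lem:actual wall} its endpoint then lies on \emph{another} actual $\lambda$-wall for $u$ or $v$ (for instance, when the destabilizing object is itself destabilized). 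The paper's proof is built precisely around this: after using Theorem \ref{create wall} to reduce to $s=1/3$, it follows the piecewise chain of actual walls produced by Lemma \ref{lem:actual wall}---each destabilizing to the right, so the chain has increasing $\alpha$---until the chain crosses $\Gamma^-_{v,1/3}$, where Theorem \ref{one_lambda_wall} identifies the crossing wall as the unique one, and then takes $\obeta$ to the left of it. Without this chain device your argument only shows that the \emph{numerical} component $C$ crosses $\Gamma^-_{v,s}$, which yields no contradiction, since numerical walls cross $\Gamma^-_{v,s}$ in abundance.

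There is a second, related problem: your intended contradiction proves too much. You reduce to showing that \emph{no} actual $\lambda$-wall lies strictly to the left of the actual $\nu$-wall; but Theorem \ref{one_lambda_wall} permits exactly one actual wall crossing $\Gamma^-_{v,s}$ above the $\nu$-wall, and such walls exist: for $v=\ch(I_L)=(1,0,-1,1)$ the vanishing $\lambda$-wall of Section \ref{ideal line} crosses $\Gamma^-_{v,s}$ at $(\alpha,\beta)=(1/\sqrt{6s+1},-2)$, which lies outside the actual $\nu$-wall (the semicircle of radius $1/2$ centred at $\beta=-3/2$). Your case analysis cannot rule this out: when $C$ reaches $\Theta^-_v$, Proposition \ref{tilt_wall_above} only produces an actual $\nu$-wall \emph{at or above} the crossing point, which can be the assumed $\nu$-wall itself (by Lemma \ref{l-l} a $\lambda$-wall meets $\Theta_v$ exactly where its associated $\nu$-wall does---precisely what happens for $I_L$), so there is no clash with the uniqueness in Lemma \ref{asymp_B_is_sheaf}(\ref{one_nu_wall}); and in the double-crossing case, Lemma \ref{double_to_left} only yields a \emph{numerical} $\nu$-wall $\Xi_{u,v}$, and upgrading it to an actual one needs the sheaf-theoretic analysis of Lemma \ref{sheaf_no_destab}, since the destabilizing objects along a $\lambda$-wall need not be sheaves. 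The correct conclusion, as in the paper, is not outright nonexistence but that every actual $\lambda$-wall in this region belongs to the single chain through the unique crossing of $\Gamma^-_{v,s}$, so that $\obeta$ can be chosen to its left.
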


\begin{proof}
Suppose $W_{u,v,s}$ is an actual $\lambda$-wall intersecting $\Lambda^-_{\oalpha}$ at a point $P=(\oalpha,\beta_0)$. By the choice of $\obeta$, we can assume $P\in R^L_{v,s}$.  By Theorem~\ref{create wall}. any numerical $\lambda$-wall $W_{u,v,s}$ for $s\ge1/3$ remains a numerical $\lambda$-wall at $s=1/3$, and, since it is bounded, each component must intersect $\Gamma^-_{v,1/3}$ at its maxima. Theorem~\ref{one_lambda_wall} tells us that this wall must destabilize to the right. If $W_{u,v,1/3}$ is not actual as it crosses $\Gamma^-_{v,1/3}$, then by Lemma~\ref{lem:actual wall}, it ends on another actual $\lambda$-wall. Repeating, we have a piecewise path of walls $W^{(i)}_{u,v,1/3}$ which must all destabilize to the right, and so the path has increasing $\alpha$. Then it must cross $\Gamma^-_{v,1/3}$. But the actual $\lambda$-wall component $W^{N}_{v,1/3}$ crossing $\Gamma^-_{v,1/3}$ must be the unique wall by Theorem~\ref{one_lambda_wall}. So there cannot be another such wall to the left of $P$. Hence, we can let $\obeta=\beta_0$.

For the last sentence, observe that if the wall crosses $\Lambda^-_{\oalpha}$ again, then there must be a piecewise smooth curve of actual $\lambda$-walls which intersects $\Gamma^-_{v,s}$ at a point $Q$, say. This can only happen if there is a $\nu$-wall above $Q$.
\end{proof}

These two lemmas then immediately imply the following. 

\begin{lemma}\label{l3:asymp-h}
For any $s\ge1/3$ and $\oalpha>0$, if $E$ is a Gieseker $($semi\/$)$stable sheaf, then $E$ is asymptotically $\lambda_{\oalpha,\beta,s}$-$($semi\/$)$stable along $\Lambda^-_{\oalpha}$.
\end{lemma}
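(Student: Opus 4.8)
The plan is to obtain the statement by assembling the two preceding lemmas, treating the stable and semistable cases separately and reducing the latter to the former by induction. Fix $s\geq1/3$ and $\oalpha>0$, and first suppose $E$ is Gieseker stable. I would begin with condition (i) of Definition \ref{asym s-st}: the path $\Lambda^-_{\oalpha}$ is an unbounded $\Theta^-$-curve (indeed $\dot\alpha/\dot\beta\equiv 0>-1$), so since $E\in\coh(X)$, the equivalence in Proposition \ref{dual_in_A} gives $E\in\mathcal{A}^{\oalpha,\beta}$ for all $\beta\ll0$. This is the content of Lemma \ref{l1:asymp-h} read backwards, and can also be seen directly: a Gieseker stable sheaf is $\mu_{\le2}$-semistable, hence $\nu_{\oalpha,\beta}$-semistable with positive slope for $\beta\ll0$ by Lemma \ref{h nu-ss}, and so lies in $\tors{\alpha,\beta}\subset\cohab$.

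For condition (ii), the key input is Lemma \ref{Lambda_finite}: for $s\geq1/3$ it produces a threshold $\obeta<0$ below which there are no actual $\lambda$-walls for $E$ meeting $\Lambda^-_{\oalpha}$. Since, by Proposition \ref{theta_or_wall}, the only way an object can pass between stability and instability along a path is by crossing an actual $\lambda$-wall (or a $\Theta$ curve, which is excluded in this wall-free tail), the stability type of $E$ must be constant for $\beta<\obeta$. It then suffices to rule out $E$ being $\lambda_{\oalpha,\beta,s}$-unstable throughout: were it unstable, the destabilizing sub-object would, in the absence of walls, persist as a monomorphism $F\into E$ with $\lambda_{\oalpha,\beta,s}(F)\geq\lambda_{\oalpha,\beta,s}(E)$ for all $\beta<\obeta$, contradicting Lemma \ref{Lambda_asymptotic}. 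Hence $E$ is $\lambda_{\oalpha,\beta,s}$-stable for every $\beta<\obeta$, every sub-object satisfies the strict inequality there, and (ii) holds with a uniform $t_1=-\obeta$.

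The semistable case I would reduce to the stable one by inducting on the length of a Jordan--H\"older filtration of $E$, exactly as in the proof of Lemma \ref{gstoasym}: the stable factors all share the reduced Hilbert polynomial of $E$, each is asymptotically $\lambda_{\oalpha,\beta,s}$-semistable by the stable case, and asymptotic semistability is preserved under the relevant extensions, giving asymptotic $\lambda_{\oalpha,\beta,s}$-semistability of $E$.

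The conceptual heart of the argument, and the only genuinely hard point, is the finiteness encoded in Lemma \ref{Lambda_finite}; this is where the hypothesis $s\geq1/3$ is essential, since it allows any bounded numerical wall to be transported down to $s=1/3$ via Theorem \ref{create wall}, where Theorem \ref{RLvs} and the uniqueness statement Theorem \ref{one_lambda_wall} along $\Gamma^-_{v,s}$ force there to be at most one actual wall. Once that finiteness is granted, the present lemma is essentially bookkeeping; the remaining subtlety is purely logical, namely confirming that ``no walls below $\obeta$'' together with the non-persistence statement of Lemma \ref{Lambda_asymptotic} yields genuine stability rather than mere strict semistability, which holds because any strictly destabilizing sub-object at a single point would be forced to persist throughout the wall-free region.
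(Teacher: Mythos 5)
Your proposal is correct and follows essentially the paper's own route: the paper gives no separate argument for this lemma beyond declaring that Lemmas \ref{Lambda_asymptotic} and \ref{Lambda_finite} complete its proof, and your assembly---condition (i) via Proposition \ref{dual_in_A} (or the direct $\mu_{\le2}$-semistability argument), condition (ii) by combining the wall-free tail from Lemma \ref{Lambda_finite} with the asymptotic slope comparison of Lemma \ref{Lambda_asymptotic}, and Jordan--H\"older induction for the semistable case exactly as in Lemma \ref{gstoasym}---is precisely that assembly. The only cosmetic difference is that you make explicit the chamber-constancy step (no transition between stability and instability without an actual wall, via Proposition \ref{theta_or_wall}) that the paper leaves implicit.
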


We now consider the converse.

\begin{lemma}\label{l2:asymp-h}
For any $s>0$ and $\oalpha>0$, if $E$ is asymptotically $\lambda_{\oalpha,\beta,s}$-$($semi\/$)$stable along $\Lambda^-_{\oalpha}$, then $E$ is a Gieseker $($semi\/$)$stable sheaf.
\end{lemma}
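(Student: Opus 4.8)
The plan is to follow the same strategy as in Lemma \ref{asymtogs}, reading the Gieseker inequalities off the asymptotic behaviour of $\labs$-slopes along $\Lambda^-_{\oalpha}$ as $\beta\to-\infty$. The required limits are exactly those already computed (in the opposite logical direction) in Lemma \ref{Lambda_asymptotic}, so no new estimates are needed; the content is in choosing the right test sub-objects and reading off signs. In particular, this is the ``easy'' converse, and none of the finiteness-of-walls analysis used for Lemma \ref{l3:asymp-h} is required here.

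First I would show that $E$ is a sheaf: by hypothesis $E\in\mathcal{A}^{\oalpha,\beta}$ for all $\beta<\beta_0$, so Lemma \ref{l1:asymp-h} (equivalently Proposition \ref{dual_in_A}) gives $E\in\coh(X)$. Next I establish torsion-freeness. Suppose $E$ carries a nonzero torsion subsheaf, and first take a subsheaf $T$ with $\dim T\le1$. Then $T\in\tors{\alpha,\beta}\subset\cohab$ for every $(\alpha,\beta)$, and for $\beta\ll0$ the quotient $E/T$ has positive rank, hence lies in $\tors{\alpha,\beta}\subset\cohab$ as well, so $T\into E$ is a genuine sub-object in $\mathcal{A}^{\oalpha,\beta}$. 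A direct computation along $\Lambda^-_{\oalpha}$ gives $\labs(T)=\ch_3(T)/\ch_2(T)-\beta\to+\infty$ when $\dim T=1$, and $\labs(T)=+\infty$ when $\dim T=0$, whereas $\labs(E)\sim-\beta/3$; thus $T$ destabilizes $E$, contradicting asymptotic semistability. If instead the torsion part has pure dimension $2$, replace it by its maximal $\hat{\mu}$-semistable subsheaf $T$, which lies in $\cohab$ for $\beta\ll0$ by Remark \ref{h nu-ss rk0}; since $\ch_0(T)=0$, $\ch_1(T)\ne0$, one finds $\labs(T)\sim-\beta/2$, again dominating $\labs(E)\sim-\beta/3$ and destabilizing $E$. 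Hence $E$ is torsion free.

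Finally I extract the Gieseker inequalities. Given a proper subsheaf $F\into E$, for $\beta\ll0$ both $F$ and $E/F$ lie in $\tors{\alpha,\beta}\subset\cohab$, so $F\into E$ is a sub-object in $\mathcal{A}^{\oalpha,\beta}$ there (the denominators $\rho_F,\rho_E\sim\tfrac12\beta^2\ch_0$ do not vanish below $\Theta_v$). Using the identity \eqref{lambda-h}, the numerator formula \eqref{num-walls} for $f_{u,v}$ with $u=\ch(F)$, $v=\ch(E)$, and the stated expansion of $\rho_u\rho_v$, I obtain the same limits as in Lemma \ref{Lambda_asymptotic}, namely
$$ \lim_{\beta\to-\infty}\bigl(\labs(F)-\labs(E)\bigr)=\frac{1}{3}\,\frac{\delta_{10}(F,E)}{\ch_0(F)\ch_0(E)}, $$
and, when $\delta_{10}(F,E)=0$, the rescaled limits against $-\beta$ and $\beta^2$ producing $\delta_{20}(F,E)$ and $\delta_{30}(F,E)$ with coefficients $4/3$ and $4$ respectively. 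Asymptotic $\labs$-semistability forces each of these limits to be $\le0$, which yields successively $\delta_{10}(E,F)\ge0$, then $\delta_{20}(E,F)\ge0$, then $\delta_{30}(E,F)\ge0$; that is, $(\delta_{10}(E,F),\delta_{20}(E,F),\delta_{30}(E,F))\ge0$ lexicographically, the characterization of Gieseker semistability recalled in Section \ref{prelim}. For the stable case, strict asymptotic stability gives strict inequality unless all three differences vanish, in which case the reduced Hilbert polynomials of $F$ and $E$ coincide and $\labs(F)=\labs(E)$ identically, contradicting the strict inequality; hence $E$ is Gieseker stable.

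The only genuine obstacle is the bookkeeping verifying that the three classes of test objects (dimension-$\le1$ torsion, the $\hat{\mu}$-semistable subsheaf, and the subsheaf $F$) define sub-objects in $\mathcal{A}^{\oalpha,\beta}$ for $\beta\ll0$; this is exactly where the condition $\beta<\mu(E)$ and the positivity of $\nuab$ on positive-rank quotients for $\beta\ll0$ are used, and it is routine.
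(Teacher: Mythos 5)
Your proposal is correct and follows essentially the same route as the paper's own proof: reduce to a sheaf via Lemma \ref{l1:asymp-h}, rule out torsion subsheaves with the same two limit computations (dimension $\le 1$, then the maximal $\hat{\mu}$-semistable subsheaf of a pure $2$-dimensional torsion part), and read the lexicographic Gieseker inequalities off the limits of Lemma \ref{Lambda_asymptotic}. The only immaterial difference is organizational: the paper runs the last step as a contradiction against the \emph{maximal} Gieseker-destabilizing subsheaf, while you test arbitrary proper subsheaves directly; both versions rest on the same point, namely that for $\beta\ll0$ a subsheaf $F$ and its quotient $E/F$ lie in $\tors{\alpha,\beta}$ (this is Lemma \ref{l34}, or Proposition \ref{dual_in_A}, applied to the horizontal line viewed as an unbounded $\Theta^-$-curve, not merely the fact that the quotient has positive rank, as your parenthetical justification suggests), so that $F\into E$ really is a monomorphism in $\cala^{\oalpha,\beta}$ there.
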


\begin{proof}
If $E$ is asymptotically $\labs$-(semi)stable along $\alpha=\overline{\alpha}$, then Lemma~\ref{l1:asymp-h} implies that $E$ is a sheaf.

If $E$ is not torsion-free, let $F\into E$ be its maximal torsion subsheaf. If $T\into F$ is a subsheaf of dimension $1$, then $T\into E$ is a morphism in $\cala^{\oalpha,\beta}$ for $\beta\ll0$ since $T\in\tors{\alpha,\beta}$ for every $(\alpha,\beta)$. We have that $\ch_0(T)=\ch_1(T)=0$; thus
$$ \lim_{\beta\to-\infty} \dfrac{-1}{\beta} \left( \lambda_{\oalpha,\beta,s}(T)-\lambda_{\oalpha,\beta,s}(E) \right) = \dfrac{2}{3}>0 $$
provided $\ch_2(T)\ne0$, contradicting the asymptotic $\lambda$-semistability. If $\ch(T)=(0,0,0,e)$, then $T$
clearly destabilizes $E$ as well.

So now assume $F$ has pure dimension 2, and let $T$ be its maximal $\hat{\mu}$-semistable subsheaf. Remark~\ref{h nu-ss rk0} implies that $T\in\cala^{\oalpha,\beta}$ for $\beta\ll0$; thus $T\into E$ is a morphism in $\cala^{\oalpha,\beta}$ in the same range. Since  $\ch_0(T)=0$ and $\ch_1(T)\ne0$, then
$$ \lim_{\beta\to-\infty} \dfrac{-1}{\beta} \left( \lambda_{\oalpha,\beta,s}(T)-\lambda_{\oalpha,\beta,s}(E) \right) = 
\dfrac{1}{6}>0, $$
again contradicting the asymptotic $\lambda$-semistability. We therefore conclude that $E$ is torsion-free.

If $E$ is not Gieseker semistable, let $F\into E$ be its maximal destabilizing subsheaf. As in the first part of the proof of Lemma~\ref{l3:asymp-h}, we can conclude that $F\in\cala^{\oalpha,\beta}$ for $\beta\ll0$, so $F\into E$ is a morphism in $\cala^{\oalpha,\beta}$ in the same range.

It follows that
$$ \lim_{\beta\to-\infty} \left( \lambda_{\oalpha,\beta,s}(F)-\lambda_{\oalpha,\beta,s}(E) \right) = \dfrac{1}{3}\dfrac{\delta_{10}(F,E)}{\ch_0(F)\ch_0(E)}\ge0; $$
thus $\delta_{10}(F,E)=0$ because $E$ is asymptotically $\lambda_{\oalpha,\beta,s}$-semistable. We then have that
$$ \lim_{\beta\to-\infty} (-\beta)(\lambda_{\oalpha,\beta,s}(F)-\lambda_{\oalpha,\beta,s}(E)) = \dfrac{4}{3}\dfrac{\delta_{20}(F,E)}{\ch_0(F)\ch_0(E)}\ge0;$$
thus again $\delta_{20}(F,E)=0$. Finally, we have that 
$$ \lim_{\beta\to-\infty} \beta^2(\lambda_{\oalpha,\beta,s}(F)-\lambda_{\oalpha,\beta,s}(E)) = 4\dfrac{\delta_{30}(F,E)}{\ch_0(F)\ch_0(E)}\ge0;$$
thus $\delta_{30}(F,E)=0$, meaning that $\ch(F)=\ch(E)$, contradicting the fact that $F$ is a proper subsheaf of $E$. We therefore conclude that $E$ must be Gieseker (semi)stable, as desired.
\end{proof}

\subsection{Asymptotics along general unbounded $\boldsymbol{\Theta^-}$-curves}
We can now extend our results to any unbounded $\Theta^-$-curve (recall Definition~\ref{theta-curve}).

\begin{theorem}\label{t:asymp theta -}
Let $\gamma$ be an unbounded $\Theta^-$-curve, and fix $s\geq1/3$. An object $E\in D^b(X)$ is asymptotically $\lambda_{\alpha,\beta,s}$-$($semi\/$)$stable along $\gamma$ if and only if $E$ is Gieseker $($semi\/$)$stable.
\end{theorem}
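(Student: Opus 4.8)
The plan is to deduce Theorem \ref{t:asymp theta -} from the two special cases already in hand, asymptotic stability along $\Gamma^-_{v,s}$ (Theorem \ref{t:asymtotegamma}) and along the horizontal rays $\Lambda^-_{\oalpha}$ (Theorem \ref{t:asymp-h}), by exploiting that an arbitrary unbounded $\Theta^-$-curve eventually enters $R^-_v$ and can only meet the same bounded $\lambda$-walls those reference curves detect. Write $v=\ch(E)$ with $v_0\ne0$ (automatic once $E$ is a torsion-free Gieseker-stable sheaf). Since $v_{\le1}\ne(0,0)$, applying the defining property of a $\Theta^-$-curve to $v$ gives $\nu_{\gamma(t)}(v)>0$ for $t\gg0$; as $\beta(t)\to-\infty$ one has $\ch_1^{\beta(t)}(v)>0$, so this forces $\rho_v(\gamma(t))>0$ together with $\beta(t)<\mu(v)$, i.e. $\gamma(t)\in R^-_v$ for all large $t$. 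Moreover the defining condition $\lim\dot\alpha/\dot\beta>-1$ guarantees that $\alpha(t)/|\beta(t)|$ is eventually bounded by a constant $c<1$.

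For necessity, suppose $E$ is asymptotically $\labs$-(semi)stable along $\gamma$. Proposition \ref{dual_in_A} gives $E\in\coh(X)$ at once, hence condition (i). To upgrade this to Gieseker (semi)stability I would re-run the slope estimates of Lemmas \ref{asymtogs} and \ref{l2:asymp-h}. The only new point is that for a subsheaf (or torsion subsheaf) $F\into E$ with $u=\ch(F)$, the sign of $\lim_{\beta\to-\infty}(\labs(F)-\labs(E))$ along $\gamma$ is still controlled successively by $\delta_{10}(F,E)$, then $\delta_{20}(F,E)$, then $\delta_{30}(F,E)$. Using $\labs(F)-\labs(E)=f_{u,v}(\alpha,\beta)/(\rho_u\rho_v)$ from \eqref{lambda-h} and the explicit leading terms in \eqref{num-walls}, one checks that along any curve with $\alpha/|\beta|\to c<1$ the leading coefficient ratio is a strictly positive multiple of $\delta_{10}(F,E)/(\ch_0(F)\ch_0(E))$, because the quartic $(6s+1)c^4+(6s-2)c^2+1$ is positive for $c\in[0,1)$ and $s>0$ (its discriminant in $c^2$ is $12s(3s-4)$, and when real its roots are negative). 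The successive-vanishing argument then yields the required inequalities, so $E$ is Gieseker (semi)stable.

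For sufficiency, let $E$ be Gieseker (semi)stable, hence a torsion-free sheaf; Proposition \ref{dual_in_A} gives $E\in\cohab^{\gamma(t)}$ for $t\gg0$, which is condition (i). For condition (ii) I would first reduce to the stable case by inducting on the length of a Jordan--H\"older filtration exactly as in Lemma \ref{gstoasym}. Assuming $E$ stable, the claim is that $\gamma$ meets only finitely many actual $\lambda$-walls for $E$ and is beyond all of them for $t$ large. Since $\gamma(t)\in R^-_v$ eventually and the unbounded components of unbounded numerical $\lambda$-walls lie in $R^0_v$ (Main Theorem \ref{mthm1}(4)), any wall met by $\gamma$ far out lies on a \emph{bounded} component inside $R^-_v$; for $s\ge1/3$, Theorem \ref{RLvs} forces each such component to cross $\Gamma^-_{v,s}$. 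Theorem \ref{one_lambda_wall} then applies: above the actual $\nu$-wall of $E$ there is at most one actual $\lambda$-wall crossing $\Gamma^-_{v,s}$, and it destabilizes inward. Together with the boundedness of the $\nu$-wall this confines all relevant walls to a compact subset of $R^-_v$, so $\gamma$ escapes them and $E$ is $\labs$-stable for all large $t$, giving (ii).

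The main obstacle is precisely this finiteness step in the sufficiency direction: ruling out that $\gamma$ meets infinitely many walls, which requires controlling destabilizing sub-objects that are not sheaves. Here I expect to reuse the full argument from the proof of Theorem \ref{one_lambda_wall}, invoking Proposition \ref{sheaf_subobjects} to replace a non-sheaf destabilizer by a genuine subsheaf without destroying the destabilization, so that the reduction to a single wall crossing $\Gamma^-_{v,s}$ carries over to an arbitrary unbounded $\Theta^-$-curve and not merely to $\Gamma^-_{v,s}$ and $\Lambda^-_{\oalpha}$. By contrast the necessity direction is routine once the positivity of $(6s+1)c^4+(6s-2)c^2+1$ on $[0,1)$ is recorded.
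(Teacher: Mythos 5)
Your necessity direction is correct but takes a genuinely different route from the paper: the paper argues by contradiction, feeding the horizontal-line case (Theorem \ref{t:asymp-h}) into a wall-chasing argument that ultimately contradicts Lemma \ref{Lambda_finite}, whereas you redo the limit computations of Lemmas \ref{asymtogs} and \ref{l2:asymp-h} directly along $\gamma$. Your positivity claim for $(6s+1)c^4+(6s-2)c^2+1$ on $[0,1)$ is right, and this route works and is arguably cleaner. One caveat: under the paper's primary form of Definition \ref{theta-curve} (positivity of $\nu_{\gamma(t)}$ on all classes), the ratio $\alpha(t)/|\beta(t)|$ need not converge nor stay below a fixed $c<1$; only $|\beta(t)|-\alpha(t)\to\infty$ is guaranteed. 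So you should phrase the estimate uniformly on the cone $\alpha\le|\beta|$, e.g.\ $f_{u,v,s}=\tfrac{\delta_{10}}{12}\bigl((6s+1)\alpha^4+(6s-2)\alpha^2\beta^2+\beta^4\bigr)+O(|\beta|^3)$ with the quartic form bounded below by a positive multiple of $\beta^4$ on that cone, rather than pass to a limiting slope $c$; you also need Lemma \ref{l34} (as in the paper's lemmas) to know the test subsheaves are genuinely subobjects in $\cohab$ along $\gamma$.

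The genuine gap is in the sufficiency direction, at the sentence ``together with the boundedness of the $\nu$-wall this confines all relevant walls to a compact subset of $R^-_v$.'' This does not follow from Theorem \ref{RLvs} plus Theorem \ref{one_lambda_wall}. First, a bounded component of a numerical $\lambda$-wall whose crossing of $\Gamma^-_{v,s}$ lies inside the $\nu$-wall of $E$ can still extend arbitrarily far into the region $\leftgamma_{v,s}$ to the left of $\Gamma^-_{v,s}$: its maximum sits on $\Gamma^-_{v,s}$, but nothing bounds its diameter, so $\gamma$ could a priori meet infinitely many such walls far out. Second, and more seriously, Theorem \ref{one_lambda_wall} counts \emph{actual} walls at their crossing of $\Gamma^-_{v,s}$, but --- as the paper warns in the remark following Theorem \ref{RLvs} --- an actual $\lambda$-wall need not be actual at the point where its underlying numerical wall crosses $\Gamma^-_{v,s}$, so that theorem cannot see actual wall segments sitting out in $\leftgamma_{v,s}$. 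Your proposed repair (Proposition \ref{sheaf_subobjects} to replace non-sheaf destabilizers) addresses a different difficulty and does not close this. What is needed is exactly Lemma \ref{Lambda_finite}, whose second assertion --- there are no actual $\lambda$-walls to the left of the actual $\nu$-wall for $E$ in $R^-_v$ --- is the confinement statement; its proof requires chaining actual wall segments via Lemma \ref{lem:actual wall} (an actual arc can only end on another actual wall, on $\Theta$, or on the $\beta$-axis) and Proposition \ref{tilt_wall_above} to rule out unbounded chains. This is how the paper's own proof proceeds: Lemma \ref{Lambda_finite}, with $\oalpha$ varying, yields a single piecewise curve of actual walls which must close up on the $\beta$-axis or on $\Theta^-_v$, giving a region $\{\beta<\obeta\}\cap R^-_v$ on which $E$ is $\labs$-stable outright. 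If you invoke Lemma \ref{Lambda_finite} at this point, your sufficiency argument becomes correct and essentially coincides with the paper's.
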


\begin{proof}
We consider the stable case first, and we deduce the semistable case by inducting on the lengths of the Jordan--H\"older filtrations.

Suppose $E$ is Gieseker stable. Then Lemma~\ref{Lambda_finite} as $\oalpha$ varies implies that there is at most a single destabilizing piecewise smooth curve of actual $\lambda$-walls from the $\beta$-axis, and it cannot be unbounded in $R^-_{v}$. So the piecewise curve must either cross the $\beta$-axis again or cross $\Theta^-_v$. In either case, there is a $\obeta$ such that $E$ is $\lambda_{\beta,\alpha,s}$-stable in $\{\beta<\obeta\}\cap R^-_v$ and so asymptotically $\lambda$-stable along any curve in this region.

Conversely, suppose that $E$ is asymptotically $\lambda$-stable along $\gamma$ and not Gieseker stable. Then for all $t\gg0$, $E$ is not $\lambda$-stable along $\Lambda^-_{\alpha(t)}$. Let $t_0$ be the least $t\in\mathbb{R}$ such that $E$ is $\lambda$-semistable in $\mathcal{A}^{\gamma(t)}$ for all $t<t_0$. By increasing $t_0$ if necessary, we may assume that for each $t>t_0$, there is some $F_t$ such that $F_t$ destabilizes $E$ at some $\beta=\beta_1<\beta(t)$ along $\Lambda^-_{\alpha(t)}$ and such that $\nu^-_{\gamma(t)}(F_t)>0$ (by the assumption that $\gamma$ is a $\Theta^-$-curve and Lemma~\ref{l34}). So there must be a wall $W_{\ch(E),\ch(F_t),s}$ which crosses the vertical line $\beta=\beta_1$. But that wall must cross either $\Lambda^-_{\alpha(t)}$ at $\beta<\beta_1$ or $\Im(\gamma)$ at $\gamma(t')$ for some $t'>t$. Since it remains an actual $\lambda$-wall, the latter possibility cannot happen by the assumption on $t_0$. The former possibility means that there must be another $F'_t$ which destabilizes at some $(\alpha(t),\beta_2)$ with $\beta_2<\beta_1$. Repeating, we have an infinite sequence of such destabilizers, which contradicts Lemma~\ref{Lambda_finite}.
\end{proof}

\begin{remark}
We expect that the assumption $s\geq1/3$ is unnecessary. But there are instances of walls which vanish as $s$ increases. These could occur to the left of our piecewise destabilizing wall. For any $s$, we would conjecture that there are only finitely many of these, and then there would be a Gieseker chamber containing a finite number of small regions where $E$ is $\lambda$-unstable. 
\end{remark}

We can now deduce asymptotic conditions along unbounded curves in $R^+_{v}$. Using the notion of unbounded $\Theta^-$-curve $\gamma$ and of its dual curve $\gamma^*$ proposed in Definition~\ref{theta-curve}, we state the following. 

\begin{proposition}\label{asymptotic_dual}
Let $\gamma$ be an unbounded $\Theta^-$-curve. Then $E$ is asymptotically $\labs$-$($semi\/$)$stable along $\gamma$ if and only if $E^\vee$ is asymptotically $\labs$-$($semi\/$)$stable along $\gamma^*$.
\end{proposition}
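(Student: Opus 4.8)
The plan is to reduce the statement to two ingredients: the transformation of the $\labs$-slope under the duality functor $(-)^\vee=\RHom(-,\ox)[2]$, and the asymptotic exchange of the hearts $\mathcal{A}^{\gamma(t)}$ and $\mathcal{A}^{\gamma^*(t)}$ under this functor, already packaged in Proposition \ref{dual_in_A}. First I would record the numerical identity. Since $\ch_k(A^\vee)=(-1)^k\ch_k(A)$, substituting into the definition of the twisted characters gives $\ch_0^{-\beta}(A^\vee)=\ch_0^\beta(A)$, $\ch_1^{-\beta}(A^\vee)=-\ch_1^\beta(A)$, $\ch_2^{-\beta}(A^\vee)=\ch_2^\beta(A)$ and $\ch_3^{-\beta}(A^\vee)=-\ch_3^\beta(A)$; hence $\rho_{A^\vee}(\alpha,-\beta)=\rho_A(\alpha,\beta)$ and $\tau_{A^\vee,s}(\alpha,-\beta)=-\tau_{A,s}(\alpha,\beta)$, so that
\[ \lambda_{\alpha,-\beta,s}(A^\vee)=-\lambda_{\alpha,\beta,s}(A) \]
for every object $A$ and every $s>0$. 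Writing $\gamma(t)=(\alpha(t),\beta(t))$ and $\gamma^*(t)=(\alpha(t),-\beta(t))$, this reads $\lambda_{\gamma^*(t),s}(A^\vee)=-\lambda_{\gamma(t),s}(A)$ for all $t$. Note that this is independent of $s$, which is why no constraint $s\ge1/3$ is needed. Because $(-)^\vee$ is an involution ($A^{\vee\vee}\cong A$) and $(\gamma^*)^*=\gamma$, the two implications are symmetric, so I carry out one and transfer the other.

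Next I would settle condition (i) of Definition \ref{asym s-st}. If $E$ is asymptotically $\labs$-semistable along $\gamma$, then $E\in\mathcal{A}^{\gamma(t)}$ for $t\gg0$, so $E\in\coh(X)$ by Proposition \ref{dual_in_A}. A $0$-dimensional subsheaf $Z\into E$ would lie in $\mathcal{A}^{\gamma(t)}$ with $\lambda_{\gamma(t),s}(Z)=+\infty$ for all $t$, contradicting asymptotic semistability; hence $E$ has no $0$-dimensional subsheaf, and the last part of Proposition \ref{dual_in_A} gives $E^\vee\in\mathcal{A}^{\gamma^*(t)}$ for $t\gg0$, which is condition (i) for $E^\vee$ along $\gamma^*$.

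The core step is the comparison of destabilizing objects. Given a sub-object $F\into E^\vee$ in $\mathcal{A}^{\gamma^*(t)}$ with cokernel $Q$, applying the contravariant exact functor $(-)^\vee$ to $0\to F\to E^\vee\to Q\to 0$ produces a triangle $Q^\vee\to E\to F^\vee\to Q^\vee[1]$. The claim I must justify is that for $t\gg0$ this is a short exact sequence in $\mathcal{A}^{\gamma(t)}$, i.e.\ $Q^\vee,F^\vee\in\mathcal{A}^{\gamma(t)}$; granting it, $E\onto F^\vee$ is a genuine quotient of $E$ with kernel $Q^\vee$. Since $\rho\ge0$ on $\mathcal{A}^{\gamma(t)}$ by Proposition \ref{proprank2}, the seesaw property of the slope $\labs=\tau/\rho$ converts the sub-object bound $\lambda_{\gamma(t),s}(Q^\vee)\le\lambda_{\gamma(t),s}(E)$ (valid for $t\gg0$ by asymptotic semistability of $E$) into the quotient bound $\lambda_{\gamma(t),s}(F^\vee)\ge\lambda_{\gamma(t),s}(E)$; the degenerate case $\rho=0$ forces $\lambda=+\infty$ and is excluded directly by the asymptotic semistability of $E$. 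Negating via the slope identity yields $\lambda_{\gamma^*(t),s}(F)\le\lambda_{\gamma^*(t),s}(E^\vee)$ for $t\gg0$, which is condition (ii) for $E^\vee$. Proper sub-objects $F$ correspond to proper quotients $F^\vee$, so strict inequalities transfer and the stable case follows identically; the converse is obtained by running the same argument with $(E^\vee,\gamma^*)$ in place of $(E,\gamma)$ and invoking $A^{\vee\vee}\cong A$.

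The main obstacle is exactly the heart-membership claim in the core step: that dualizing a short exact sequence in $\mathcal{A}^{\gamma^*(t)}$ lands in $\mathcal{A}^{\gamma(t)}$. The pointwise version of this is false, as emphasized before Definition \ref{theta-curve}, and holds only asymptotically. The work lies in checking, for each fixed destabilizer (whose Chern character is fixed, so a $t_1$ depending on it may be chosen as Definition \ref{asym s-st} permits), that the cohomology sheaves of $F^\vee$ and $Q^\vee$ fall into the torsion/torsion-free classes defining the two successive tilts once $t$ is large; this is precisely where the cohomological descriptions of duals in Propositions \ref{B-cohom-dual} and \ref{a=dual}, together with the asymptotic anti-equivalence established in Proposition \ref{dual_in_A}, do the work.
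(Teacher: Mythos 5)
Your skeleton — the slope anti-symmetry $\lambda_{\gamma^*(t),s}(A^\vee)=-\lambda_{\gamma(t),s}(A)$ together with Proposition \ref{dual_in_A} for heart membership, plus an involution argument for the second implication — is the same as the paper's, and your treatment of condition (i) of Definition \ref{asym s-st} matches the paper's first step. The problem is the direction in which you run the transfer of destabilizing data, and this is where your acknowledged "main obstacle" is a genuine gap rather than a deferrable verification. You take a sub-object $F\into E^\vee$ in $\mathcal{A}^{\gamma^*(t)}$, dualize to the triangle $Q^\vee\to E\to F^\vee$, and claim that Propositions \ref{B-cohom-dual}, \ref{a=dual} and \ref{dual_in_A} supply the membership $Q^\vee,F^\vee\in\mathcal{A}^{\gamma(t)}$ for $t\gg0$. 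They do not. Proposition \ref{dual_in_A} is one-directional: it dualizes \emph{sheaves} (objects asymptotically in $\mathcal{A}^{\gamma(t)}$, with no $0$-dimensional subsheaf) into $\mathcal{A}^{\gamma^*(t)}$; it says nothing about sub- or quotient objects taken on the $\Theta^+$ side, where members of the heart are typically honest complexes ($E^\vee$ itself has three cohomology sheaves). Worse, by the equivalence (1)$\Leftrightarrow$(3) of Proposition \ref{dual_in_A}, your claim is equivalent to asserting that $F^\vee$ and $Q^\vee$ are sheaves, i.e.\ that every persistent sub-object of $E^\vee$ on the $\Theta^+$ side is the dual of a sheaf. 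That is essentially the asymptotic anti-equivalence one is trying to prove; checking the lifting criteria (5)--(6) of Proposition \ref{a=dual} for an arbitrary such $F$ (whose $\coh(X)$-cohomologies are only constrained by a long exact sequence against $\calh^{-2}(E^\vee)=E^*$, $\inext^1(E,\ox)$, $\inext^2(E,\ox)$) is the real work, and nothing you cite does it.

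The paper's proof runs the explicit step in the opposite, tractable direction: it takes a persistent sub-object $F\into E$ on the $\Theta^-$ side, where $E$ is a sheaf, so that Proposition \ref{sheaf_subobjects} controls $F$ (it is essentially a sheaf, or the quotient is a shifted sheaf), and then Proposition \ref{dual_in_A} legitimately puts $F^\vee$ and $E^\vee$ in $\mathcal{A}^{\gamma^*(t)}$ with $E^\vee\onto F^\vee$ a surjection there; the slope identity then converts destabilizers of $E$ along $\gamma$ into destabilizing quotients of $E^\vee$ along $\gamma^*$. Relatedly, your closing appeal to the involution is not licit as stated: $\gamma^*$ is a $\Theta^+$-curve, so you cannot "run the same argument with $(E^\vee,\gamma^*)$ in place of $(E,\gamma)$" — every $\Theta^-$-specific input (Proposition \ref{dual_in_A}, the sheaf-theoretic description of sub-objects) becomes unavailable after the swap. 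The correct symmetric move is exactly the paper's: whichever of the two implications you are proving, always test sub-objects on the side of the sheaf $E$ (the $\Theta^-$ side) and dualize those into the $\Theta^+$ side, never the reverse.
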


\begin{proof}
By Proposition~\ref{dual_in_A}, we know that there is a $t_1$ such that $E^\vee\in\mathcal{A}^{\gamma^*(t)}$ for all $t>t_1$. If $F\into E$ destabilizes $E$ for all $t>t_0$, then the proposition also shows that there is some $t_2$ such that $F^\vee$ and $E^\vee$ are in $\mathcal{A}^{\gamma^*(t)}$ for all $t>t_2$ and also $E^\vee\to F^\vee$ surjects in $\mathcal{A}^{\gamma^*(t)}$. But $\lambda_{\gamma(t),s}(F)=-\lambda_{\gamma^*(t),s}(F^\vee)$.
\end{proof}

Putting Theorem~\ref{t:asymp theta -} and Proposition~\ref{asymptotic_dual} together, we immediately deduce the following statement.

\begin{theorem}\label{t:asymp-h+}
Let $v$ be a numerical Chern character satisfying $v_0\ne0$ and $Q^{\rm tilt}(v)\ge0$, and fix $\oalpha>0$. For each $s\geq1/3$, an object $A\in\dbx$ with $\ch(A)=v$ is asymptotically $\labs$-$($semi\/$)$stable along an unbounded $\Theta^+$-curve if and only if $A^\vee$ is a Gieseker $($semi\/$)$stable sheaf.
\end{theorem}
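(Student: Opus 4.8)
The plan is to obtain this statement formally from its counterpart in $R^-_v$, namely Theorem~\ref{t:asymp theta -}, by dualizing. The whole design of Definition~\ref{theta-curve} is that the derived dual exchanges unbounded $\Theta^-$-curves and unbounded $\Theta^+$-curves, so that no genuinely new asymptotic analysis is needed: all the content has already been carried by Theorem~\ref{t:asymp theta -} and its supporting lemmas.

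Concretely, I would begin by fixing an unbounded $\Theta^+$-curve, call it $\gamma^+$, and setting $\gamma:=(\gamma^+)^*$, i.e.\ $\gamma(t)=(\alpha^+(t),-\beta^+(t))$. By the remark immediately after Definition~\ref{theta-curve}, $\gamma$ is then an unbounded $\Theta^-$-curve, and since $(\cdot)^*$ is an involution on curves we have $\gamma^*=\gamma^+$. I would then apply Proposition~\ref{asymptotic_dual} to the object $E:=A^\vee$: it gives that $A^\vee$ is asymptotically $\labs$-(semi)stable along $\gamma$ if and only if $(A^\vee)^\vee$ is asymptotically $\labs$-(semi)stable along $\gamma^*=\gamma^+$. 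Using that the derived dual is an involution on $D^{\mathrm b}(X)$ for $X$ smooth (so that $(A^\vee)^\vee\cong A$, since every object is perfect), the latter condition reads: $A$ is asymptotically $\labs$-(semi)stable along $\gamma^+$. Finally I would feed $A^\vee$ and the unbounded $\Theta^-$-curve $\gamma$ into Theorem~\ref{t:asymp theta -}, which for $s\geq1/3$ gives that $A^\vee$ is asymptotically $\labs$-(semi)stable along $\gamma$ if and only if $A^\vee$ is Gieseker (semi)stable. Chaining the two equivalences produces exactly the claim, and the semistable and stable versions drop out simultaneously because both Proposition~\ref{asymptotic_dual} and Theorem~\ref{t:asymp theta -} are stated uniformly in that respect.

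The only point requiring care --- and hence the main (modest) obstacle --- is the bookkeeping around duality rather than any hard estimate. One must verify $(A^\vee)^\vee\cong A$ in $D^{\mathrm b}(X)$, which holds since $X$ is smooth projective and $\RHom(-,\ox)$ is then an involution up to the shift built into $(\cdot)^\vee$, and one should record that the numerical hypotheses persist under dualizing: from $\ch(A^\vee)=(v_0,-v_1,v_2,-v_3)$ one reads off $\ch_0(A^\vee)=v_0\neq0$ and $Q^{\rm tilt}(A^\vee)=v_1^2-2v_0v_2=Q^{\rm tilt}(v)\geq0$ directly from \eqref{B-ineq}, so the standing assumptions transfer to $A^\vee$ exactly as needed to make the Gieseker (semi)stability of $A^\vee$ meaningful. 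With these compatibilities in hand the proof is a two-step formal chase.
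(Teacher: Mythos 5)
Your proof is correct and follows exactly the route the paper takes: the paper deduces Theorem~\ref{t:asymp-h+} "immediately" by combining Theorem~\ref{t:asymp theta -} with Proposition~\ref{asymptotic_dual}, which is precisely your two-step chase through the dual curve $\gamma=(\gamma^+)^*$ and the object $A^\vee$. Your added bookkeeping (the involution $(A^\vee)^\vee\cong A$ on $\dbx$ and the invariance of $v_0\neq0$ and $Q^{\rm tilt}(v)\geq0$ under dualizing) is exactly the implicit content the paper leaves to the reader, and it is verified correctly.
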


Since Theorem~\ref{t:asymtotegamma} holds for every $s>0$, we have a stronger statement for the asymptotics along $\Gamma^+_{v,s}$.

\begin{theorem}\label{t:asymtotegammaplus}
Let $v$ be a numerical Chern character satisfying $v_0\ne0$ and $Q^{\rm tilt}(v)\ge0$. For each $s>0$, an object $A\in\dbx$ is asymptotically $\labs$-$($semi\/$)$stable along $\Gamma^+_{v,s}$ if and only if $A^\vee$ is a Gieseker $($semi\/$)$stable sheaf.
\end{theorem}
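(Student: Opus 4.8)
The plan is to obtain this statement from Theorem \ref{t:asymtotegamma} by exactly the dualization used to deduce Theorem \ref{t:asymp-h+} from Theorem \ref{t:asymp theta -}, the point being that Theorem \ref{t:asymtotegamma} holds for \emph{every} $s>0$, so the resulting statement will too. Throughout I take $\ch(A)=v$, as in Theorem \ref{t:asymtotegamma}, and I set $w:=\ch(A^\vee)=(v_0,-v_1,v_2,-v_3)$. Since $w_0=v_0\ne0$ and $Q^{\rm tilt}(w)=w_1^2-2w_0w_2=v_1^2-2v_0v_2=Q^{\rm tilt}(v)\ge0$, the character $w$ satisfies the hypotheses of Theorem \ref{t:asymtotegamma} and the region decomposition of Section \ref{regions}.

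The geometric heart of the argument is the identification $\Gamma^+_{v,s}=(\Gamma^-_{w,s})^*$, where $(\cdot)^*$ denotes the reflection $\beta\mapsto-\beta$ of Definition \ref{theta-curve}. First I would verify directly from \eqref{tau function} the identity
\[\tau_{w,s}(\alpha,-\beta)=-\tau_{v,s}(\alpha,\beta),\]
so that the cubic curve $\Gamma_{w,s}$ is precisely the reflection of $\Gamma_{v,s}$ across the $\alpha$-axis. Since $\mu(w)=-\mu(v)$, the branch of $\Gamma_{w,s}$ asymptotic to $\beta=\mu(w)-\sqrt{6s+1}\,\alpha$, namely $\Gamma^-_{w,s}\subset R^-_w$, is the reflection of the branch of $\Gamma_{v,s}$ asymptotic to $\beta=\mu(v)+\sqrt{6s+1}\,\alpha$, namely $\Gamma^+_{v,s}\subset R^+_v$. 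This gives $(\Gamma^+_{v,s})^*=\Gamma^-_{w,s}$, and hence $(\Gamma^-_{w,s})^*=\Gamma^+_{v,s}$ because $(\cdot)^*$ is an involution. By Example \ref{Gamma_is_gamma} applied to $w$, the curve $\Gamma^-_{w,s}$ is an unbounded $\Theta^-$-curve.

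It then remains to chain the available equivalences. Applying Proposition \ref{asymptotic_dual} to the unbounded $\Theta^-$-curve $\gamma:=\Gamma^-_{w,s}$, whose dual is $\gamma^*=\Gamma^+_{v,s}$, shows that $A^\vee$ is asymptotically $\labs$-(semi)stable along $\Gamma^-_{w,s}$ if and only if $(A^\vee)^\vee\cong A$ is asymptotically $\labs$-(semi)stable along $\Gamma^+_{v,s}$. On the other hand, Theorem \ref{t:asymtotegamma} applied to the character $w$ (valid for every $s>0$) shows that $A^\vee$ is asymptotically $\labs$-(semi)stable along $\Gamma^-_{w,s}$ if and only if $A^\vee$ is a Gieseker (semi)stable sheaf. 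Combining the two equivalences yields the claim for all $s>0$.

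The proof contains no genuine obstacle beyond bookkeeping: the sole computational input is the reflection identity for $\tau$, and the only points requiring care are that the hypotheses $v_0\ne0$ and $Q^{\rm tilt}\ge0$ are preserved under $v\mapsto w$, and that the reflected curve matches $\Gamma^+_{v,s}$ \emph{including} its orientation, so that $\lim_{t\to\infty}\beta(t)=+\infty$ and the dual-curve construction of Definition \ref{theta-curve} applies verbatim. As indicated in the paper, it is precisely this use of Theorem \ref{t:asymtotegamma} in place of Theorem \ref{t:asymp theta -} that removes the restriction $s\ge1/3$ present in Theorem \ref{t:asymp-h+}.
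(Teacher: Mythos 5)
Your proof is correct and follows essentially the same route as the paper, which deduces Theorem \ref{t:asymtotegammaplus} by combining Theorem \ref{t:asymtotegamma} (valid for every $s>0$) with the duality of Proposition \ref{asymptotic_dual}, exactly as Theorem \ref{t:asymp-h+} is obtained from Theorem \ref{t:asymp theta -}. Your explicit verification of the reflection identity $\tau_{w,s}(\alpha,-\beta)=-\tau_{v,s}(\alpha,\beta)$ and of the identification $(\Gamma^-_{w,s})^*=\Gamma^+_{v,s}$ simply spells out bookkeeping the paper leaves implicit.
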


Note that Proposition~\ref{a=dual} provides an explicit characterization of objects $A\in\dbx$ that are dual to torsion-free sheaves.

\section{Examples of classifying walls: ideal sheaves and null correlation sheaves}\label{sec:examples}

We complete this paper by studying some concrete examples of actual $\lambda$-walls and asymptotically $\labs$-stable objects for $X=\p3$ which illustrate many of the results established above.

\subsection{Ideal sheaves of lines in $\boldsymbol{\p3}$} \label{ideal line}

Let $L\subset\p3$ be a line, and let $I_{L}$ denote its ideal sheaf; recall that $v:=\ch(I_{L})=(1,0,-1,1)$. The curves $\Theta_v$ and $\Gamma_{v,s}$ are given by
\begin{align*}
\Theta_v &:~~ \beta^2 - \alpha^2 = 2, \\
\Gamma_{v,s} &:~~ \left( s+\dfrac{1}{6} \right) \beta\alpha^2 -\dfrac{\beta^3}{6} + \beta = - 1.
\end{align*}
Note that for every $s>0$, the curve $\Gamma_{v,s}$ intersects $\Theta_v^-$  at a single point, call it $P_s$, whose $\beta$-coordinate is the real root of the polynomial $3s\beta^3+(2-6s)\beta+3=0$; the case $s=1/3$ is pictured in Figure~\ref{line-fig-1}. 

Theorem~\ref{thm for q} guarantees the existence of vanishing $\nu$- and $\lambda$-walls, which are precisely given by the triangle
$$ \op3(-1)^{\oplus 2} \to I_{L} \to \op3(-2)[1]. $$
We simplify the notation and use $\Xi:=\Xi_{u,v}$ and $\Upsilon_s:=\Upsilon_{u,v,s}$, where $u=\ch(\op3(-1))$, for the vanishing $\nu$- and $\lambda$-walls just described, respectively.

Both curves cut $\Theta_v^-$ at the point $R:=\Theta_v\cap\Xi\cap\Upsilon_s=(\alpha=1/2,\beta=-3/2)$. In addition, the vanishing $\lambda$-wall crosses $\Gamma_{v,s}^-$ at the point $Q_s:=\Gamma_{v,s}^-\cap\Upsilon_s=(1/\sqrt{6s+1},-2)$.  

There are no other actual $\nu$-walls for the numerical Chern vector $v=(1,0,-1,1)$. To see that, suppose $A\to I_{L}$ is a $\nuab$-stable destabilizing object in $\cohb$; then it must destabilize along $\beta=-\sqrt{2}$ and also on $\Theta_v^-$. Let $\ch_{\le2}(A)=(r,-c,d)$; then $r>0$, and the condition that both $A$ and $I_{L}/A$ are in $\cohb$ gives us
\begin{align*}
-c-\beta r&<0,\\
c-\beta(1-r)&\leq0,
\end{align*}
and so $-\beta r<c\leq\beta(1-r)$. For $\beta=-\sqrt{2}$, we see that $c=r$ is the only solution. Then the first inequality implies $r<\beta/(1+\beta)=\sqrt{2}/(\sqrt{2}-1)$ and so $0<r\leq 3$. Now choosing $\beta$ on $\Theta_v^-$, we have $d+\beta c+r=0$ in order for $A$ to destabilize $I_{L}$. So $d=-r(\beta+1)>r(\sqrt{2}-1)$. On the other hand, the Bogomolov--Gieseker inequality for $A$ gives $r^2\geq 2rd$ and so $d\leq r/2$. Then $d=r/2$ as it must be in $\mathbb{Z}[1/2]$.

\begin{figure}[ht] \centering
\includegraphics{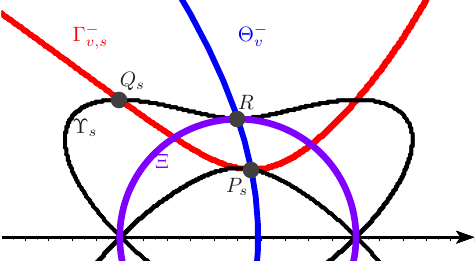}
\caption{The vanishing $\nu$- and $\lambda$-walls (in purple and black, respectively) for the ideal sheaf of a line. Note that the $\lambda$-wall is connected but possesses two irreducible components. Both curves intersect $\Theta_v$ (in blue) and $\Gamma_{v,s}^-$ (in red).} \label{line-fig-2}
\end{figure}

Since $\op3(-2)$ and $\op3(-1)$ are $\labs$-stable for all $(\alpha,\beta,s)$, it follows that this is the only actual $\lambda$-wall destabilizing ideal sheaves of lines.

\subsection{Ideal sheaves of points in $\boldsymbol{\p3}$} \label{ideal point}

Let $p\in\p3$ be a point, and let $I_{p}$ denote its ideal sheaf; recall that $v:=\ch(I_{p})=(1,0,0,-1)$. The curves $\Theta_v$ and $\Gamma_{v,s}$ are given by
\begin{align*}
\Theta_v &:~~ \beta^2 - \alpha^2 = 0, \\
\Gamma_{v,s} &:~~ \left( s+\dfrac{1}{6} \right)\beta\alpha^2 - \frac{\beta^3}{6}=1.
\end{align*}
Note that $\Gamma_{v,s}$ does not intersect $\Theta_v^-$. There are no actual $\nu$-walls
because $\ch_{\leq2}(I_p)=\ch_{\leq2}(\op3)$ and $I_p\to\op3$ injects in $\cohab\cap\cohb\cap\coh(\p3)$.

The resolution of $I_{p}$
\[0\to\op3(-3)\to \op3(-2)^{\oplus 3}\to \op3(-1)^{\oplus 3}\to I_{p}\to 0\]
induces two triangles. The first one defines a reflexive sheaf $S_p$ with
$u:=\ch(S_p[1])=(-2,3,-3/2,-1/2)$ whose only singularity is precisely the point $p$ by splitting the
resolution in the middle. Then we have triangles
\begin{equation}\label{S_p}
\op3(-2)^{\oplus3}[1] \to S_p[1] \to \op3(-3)[2].
\end{equation} 
The second one relates the sheaf $S_p$ with $I_{p}$:  
\begin{equation}\label{I_p}
\op3(-1)^{\oplus 3} \to I_{p} \to S_p[1].
\end{equation}
This provides one wall which cuts $\Gamma_{v,s}$ at $(1/\sqrt{6s+1},-2)$. There is another wall which crosses at the same point, given by an object $D$ which is not a sheaf (or a shift of a sheaf) with $u':=\ch(D)=(0,3,-9/2,7/2)$, 
\[D\to I_p\to \op3(-3)[2],\]
where $D$ is the cone on the morphism $\op3(-2)^{\oplus3}\to\op3(-1)^{\oplus3}$ in the
resolution of $I_p$. This gives rise to a fourth triangle:
\[\op3(-1)^{\oplus 3}\to D\to \op3(-2)^{\oplus 3}[1].\]
Note that this provides an example in Proposition~\ref{sheaf_subobjects} where $D\not\in\coh(X)$, and note that we have $E/D\in\coh(\op3)[2]\cap\cohb[1]$.  The objects $\op3(-3)[2]$, $\op3(-2)[1]$ and $\op3(-1)$ all belong to $\cohab$ for every $(\alpha,\beta)$ within a square with vertices at the points $(\alpha=0,\beta=-2)$, $(\alpha=1/2,\beta=-3/2)$, $(\alpha=1,\beta=-2)$, $(\alpha=1/2,\beta=-5/2)$. Consequently, this also follows for $S_p$ and $D$.

Note that at the special point $P$, $\labs(\op3(-1))=\labs(\op3(-2))=\labs(\op3(-3))=\labs(D)=\labs(S_p)=\labs(I_p)$.

Within this region, the two triangles induce actual $\lambda$-walls $\Upsilon_{w,u,s}$, where $w:=\ch(\op3(-2)[1])$, and $\Upsilon_{u,v,s}$.  One can check that $\Upsilon_{w,u,s}$ and $\Upsilon_{u,v,s}$ intersect at the point $(\alpha=1/\sqrt{6s+1},\beta=-2)$, which also belongs to $\Gamma_{v,s}^-$, for every $s$, implying that the sheaf $I_{p}$ is a $\labs$-semistable object at that point; see Figure~\ref{fig pt}. It is easy to see that $\Ext^1(D,\op3(-3)[2])=0=\Ext^1(\op3(-1)^{\oplus3},S_p)$, and so the walls are vanishing.

\begin{figure}[t] \centering
\includegraphics{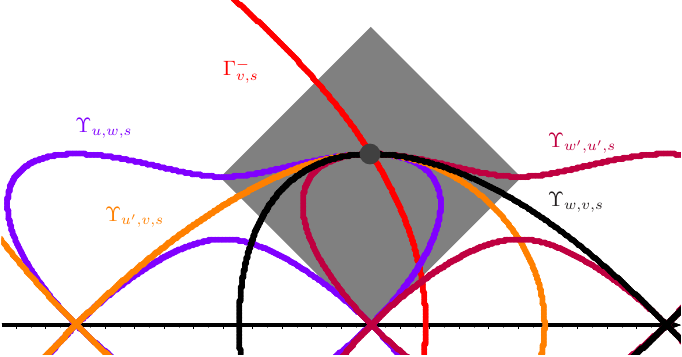}
\caption{This graph shows the numerical $\lambda$-walls $\Upsilon_{u,w,s}$ (in blue), $\Upsilon_{w',u',s}$ (in purple, where $w'=\ch(\op3(-1))$), $\Upsilon_{w,v,s}$ (in black) and  $\Upsilon_{u',v,s}$ (in orange), and the curve $\Gamma_{v,s}^-$ (in red) crossing at the point $(\alpha=1/\sqrt{3},\beta=-2)$; we set $s=1/3$. The shaded region marks where the objects $O(-3)[2]$, $O(-2)[1]$ and $O(-1)$, and consequently also $S_p[1]$, $D$ and $I_{p}$, belong to $\cohab$.}\label{fig pt}
\end{figure}

It follows that there exists an open set $V\subset\HH$ containing the point $(\alpha=1/\sqrt{6s+1},\beta=-2)$ in its boundary such that $I_{p}$ is a $\labs$-stable object for every $(\alpha,\beta)\in V$. Moreover, $\Upsilon_{u,v,s}$ is a vanishing $\lambda$-wall for $I_{p}$ for $\beta>-2$, and $\Upsilon_{u'v,s}$ is the vanishing wall for $\beta<-2$. The resulting wall is $C^1$ on $\Gamma_{v,1/3}^-$ but only $C^0$ for other values of $s$.

Since $I_{p}$ is also asymptotically $\labs$-stable along $\Gamma_{v,s}^-$, it follow that $I_{p}$ is $\labs$-stable for every $(\alpha,\beta)\in\Gamma_{v,s}^-$ with $\beta<-2$ (that is, no actual $\lambda$-wall crosses $\Gamma_{v,s}^-$ for $\beta<-2$).

\subsection{Torsion-free sheaves with Chern character $\mathbf{(2,0,-1,0)}$} \label{sec:(2,0,-1,0)}

Our first step towards the classification of $\labs$-semistable objects with Chern character $v=(2,0,-1,0)$ on $\p3$ near the curve $\Gamma_{v,s}^-$ is the classification of $\mu$-semistable torsion-free sheaves with this Chern character.

Below, $p$ and $L$, respectively, denote a point and a line in $\p3$. Recall also that a torsion-free sheaf $E$ on $\p3$ is called a \emph{null correlation sheaf} if it sits in the exact sequence
$$ 0 \to \op3(-1) \to \Omega^1_{\p3}(1) \to E \to 0; $$
see \cite{Ein}. Note that $\ch(E)=(2,0,-1,0)$. A locally free null correlation sheaf is called a null correlation bundle; non--locally free
null correlation sheaves satisfy a sequence of the form 
$$ 0\to E \to \op3^{\oplus 2} \to \mathcal{O}_{L}(1) \to 0 $$
for some line $L\subset\p3$.

\begin{proposition}\label{(2,0,-1,0)}
Let $E$ be a $\mu$-semistable torsion-free sheaf on $X=\p3$ with $\ch(E)=(2,0,-1,0)$.
\begin{enumerate}
\item If $E$ is locally free, then $E$ is a null correlation bundle; in particular, $E$ is $\mu$-stable.
\item\label{P8.1-2} If $E$ is properly torsion-free, then $E$ is strictly $\mu$-semistable, and it is given by one of the following extensions:
  \begin{enumerate}[label={\rm(\alph*)},ref=\alph*]
\item\label{P8.1-21} $0\to I_{L} \to E \to I_{p} \to 0$ for $p\in L$ with non-trivial extension (in particular, $E$ is a null correlation sheaf, and it is $\mu_{\le2}$-stable), 
\item\label{P8.1-22}  $0\to I_{p} \to E \to I_{L} \to 0$ for arbitrary $p$ and $L$ (in particular, $E$ is not $\mu_{\le2}$-semistable, and it has no global sections),
\item \label{P8.1-23} $0\to \op3 \to E \to I_{\tilde{L}} \to 0$, where $\tilde{L}$ is a $1$-dimensional scheme satisfying the sequence $0\to\mathcal{O}_p\to\mathcal{O}_{\tilde{L}}\to\mathcal{O}_L\to0$ for arbitrary $p$ and $L$ (in particular, $E$ is not $\mu_{\le2}$-semistable, and it has a global section).
\end{enumerate}
\end{enumerate}
\end{proposition}

Note that ${\rm Ext}^1(I_{L},I_{p})=0$ when $p\notin L$.

\begin{proof}
  If $E$ is locally free, then \cite[Lemma 2.1]{Chang} implies that $E$ is $\mu$-stable; the fact that every $\mu$-stable rank $2$ bundle $E$ with $c_2(E)=1$ on $\p3$ is a null correlation bundle is proved in \cite[4.3.2, p.~363]{OSS}.

If $E$ is not locally free, then $E^{**}$ is a $\mu$-semistable rank $2$ reflexive sheaf with $\ch_1(E^{**})=0$, and either $\ch_2(E^{**})=0$ or $\ch_2(E^{**})=1$; in both cases, $E^{**}$ is strictly $\mu$-semistable (\textit{cf.} \cite[Lemma 2.1]{Chang}), so $E$ is strictly $\mu$-semistable. The first case forces $E^{**}=\op3^{\oplus 2}$, while, in the second case, $E^{**}$ must be a properly reflexive sheaf $S_L$ given by the sequence
\begin{equation}\label{defn-s}
0 \to \op3 \to S_L \to I_{L} \to 0 ;
\end{equation}
note that $\ch(S_L)=(2,0,-1,1)$.

We start by analysing the first case, that is, $E^{**}=\op3^{\oplus 2}$. We have that $\ch(Q_E)=(0,0,1,0)$, where $Q_E:=E^{**}/E$; note that $h^0(E)=0,1$. Again, two possibilities follow: either $Q_E$ has pure dimension 1, in which case $Q_E=\mathcal{O}_L(1)$, or $Q_E$ satisfies a sequence of the form
$$ 0 \to Z \to Q_E \to O_L(k) \to 0, $$
where $Z$ is a $0$-dimensional sheaf and $k-1=-h^0(Z)<0$ because $\ch_3(O_L(k))=-\ch_3(Z)$; since we have $h^0(O_L(k))>0$, we must have $k=0$ and thus $Z=\mathcal{O}_p$.

The first possibility, namely $Q_E=\mathcal{O}_L(1)$, leads to the sequence
$$ 0\to E \to \op3^{\oplus 2} \to \mathcal{O}_{L}(1) \to 0 $$
and therefore yields the sheaves described in item~\eqref{P8.1-2}\eqref{P8.1-21} since $I_{L}$ is the kernel of the composition \mbox{$\op3\to\op3^{\oplus 2}\onto\mathcal{O}_{L}(1)$.} In addition, notice that these are the null correlation sheaves. Checking that these sheaves are $\mu_{\ge2}$-stable is a simple exercise.

The second possibility leads to the sequence
$$ 0\to E \to \op3^{\oplus 2} \to \mathcal{O}_{\tilde{L}} \to 0 $$
and therefore yields the sheaves described in item~\eqref{P8.1-2}\eqref{P8.1-23}.

Finally, if $E^{**}=S_L$, then $Q_E=\mathcal{O}_p$, and the sequence $0\to E \to S_L \to \mathcal{O}_p\to0$ together with the sequence \eqref{defn-s} lead to the sequence in item~\eqref{P8.1-2}\eqref{P8.1-22}.
\end{proof}

It is important to notice that non-trivial extensions like the ones in items~\eqref{P8.1-2}\eqref{P8.1-22} and~\eqref{P8.1-2}\eqref{P8.1-23} of Proposition~\ref{(2,0,-1,0)} do exist. Let us first consider the extension of an ideal sheaf of a line by an ideal sheaf of a point.

\begin{lemma}\label{ext1_point_line}
If $p$ is a point and $L$ is a line in $\p3$, then
$$ \dim\Ext^1(I_L,I_p) = \left\{
\begin{array}{l}
  3 \quad\text{if}~ p\notin L, \\
  5 \quad\text{if}~ p\in L. \end{array} \right. $$
\end{lemma}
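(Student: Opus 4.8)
The plan is to compute $\Ext^1(I_L, I_p)$ by applying $\Hom(I_L, -)$ to the structure sequence of the point, namely
\begin{equation*}
0 \to I_p \to \op3 \to \mathcal{O}_p \to 0,
\end{equation*}
and reading off the relevant cohomology from the resulting long exact sequence. First I would record the groups $\Ext^i(I_L, \op3)$ and $\Ext^i(I_L, \mathcal{O}_p)$ separately. For the former, dualizing the standard resolution of $I_L$ (or using $\Ext^i(I_L,\op3)\simeq H^i(\p3, I_L^\vee\otimes\op3)$ suitably interpreted via local-to-global and the fact that $I_L$ has homological dimension $1$) gives $\Hom(I_L,\op3)=H^0(\op3)=\C$, $\Ext^1(I_L,\op3)=0$, and $\Ext^2(I_L,\op3)\simeq H^0(\mathcal{O}_L(-2))^*=0$ by Serre duality together with the local structure of $I_L$ along the line. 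For the latter, since $\mathcal{O}_p$ is supported at a point, $\Ext^i(I_L,\mathcal{O}_p)\simeq \Ext^i(I_{L,p}, \mathcal{O}_p)$ localizes at $p$, and this is where the two cases $p\notin L$ and $p\in L$ diverge.

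The key computation is therefore the local $\Ext$ groups of $I_L$ against the skyscraper $\mathcal{O}_p$. When $p\notin L$, the sheaf $I_L$ is locally free (indeed trivial) near $p$, so $\Ext^i(I_L,\mathcal{O}_p)=\Ext^i(\op3,\mathcal{O}_p)=H^i(\mathcal{O}_p)$, which is $\C$ for $i=0$ and $0$ for $i\geq 1$. When $p\in L$, the sheaf $I_L$ fails to be locally free at $p$ and one must resolve it locally: using the Koszul-type resolution of $I_L$ on $\p3$ (a line is a complete intersection of two hyperplanes), one finds $\Ext^i(I_L,\mathcal{O}_p)$ has dimensions reflecting the two local generators of $I_L$ and the Tor-contributions at $p$; the upshot is an extra $2$-dimensional contribution in the relevant degree. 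I would carry out this local computation via the Koszul complex $0\to\op3(-2)\to\op3(-1)^{\oplus 2}\to I_L\to 0$ and apply $\inhom(-,\mathcal{O}_p)$.

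Assembling the long exact sequence
\begin{equation*}
\cdots \to \Ext^1(I_L,I_p)\to \Ext^1(I_L,\op3)\to \Ext^1(I_L,\mathcal{O}_p)\to \Ext^2(I_L,I_p)\to\cdots,
\end{equation*}
and combining it with the connecting maps at the $\Hom$ level (where $\Hom(I_L,\op3)=\C\to\Hom(I_L,\mathcal{O}_p)=\C$ is surjective, being evaluation at $p$), yields $\Ext^1(I_L,I_p)$ as the cokernel/kernel dictated by these terms. I expect the answer $3$ in the generic case $p\notin L$ and $5$ when $p\in L$, the difference of $2$ coming precisely from the local non-freeness of $I_L$ at a point of $L$. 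The main obstacle will be the careful bookkeeping of the local $\Ext$ sheaves $\inext^i(I_L,\mathcal{O}_p)$ in the incidence case $p\in L$ and confirming that the global-to-local spectral sequence (or the long exact sequence) degenerates so that no further cancellation occurs; a cleaner alternative I would also check is to use Serre duality, $\Ext^1(I_L,I_p)\simeq \Ext^2(I_p,I_L(-4))^*$, and compare against a direct $\check{\mathrm{C}}$ech or resolution computation to cross-validate both numbers.
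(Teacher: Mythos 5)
Your strategy --- applying $\Hom(I_L,-)$ to $0\to I_p\to\op3\to\mathcal{O}_p\to0$ --- is a reasonable transpose of the paper's argument (the paper instead applies $\Hom(-,I_p)$ to $0\to I_L\to\op3\to\mathcal{O}_L\to0$ to get $\Ext^1(I_L,I_p)\simeq\Ext^2(\mathcal{O}_L,I_p)$, and then $\Hom(\mathcal{O}_L,-)$ to the point sequence). However, your plan contains a concrete error that derails it: the vanishing $\Ext^1(I_L,\op3)=0$ is false. Since $I_L$ fails to be locally free along all of $L$, one has $\inext^1(I_L,\op3)\simeq\inext^2(\mathcal{O}_L,\op3)\simeq\det N_{L/\p3}\simeq\mathcal{O}_L(2)$, so $\Ext^1(I_L,\op3)\simeq H^0(\mathcal{O}_L(2))\simeq\C^3$ (equivalently, by Serre duality, $\Ext^1(I_L,\op3)\simeq H^2(I_L(-4))^*\simeq H^1(\mathcal{O}_L(-4))^*$). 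This group is precisely where the answer comes from: with your claimed vanishing, exactness of your own sequence in the case $p\notin L$ (where $\Hom(I_L,I_p)=0$, the map $\Hom(I_L,\op3)\to\Hom(I_L,\mathcal{O}_p)$ is an isomorphism $\C\to\C$, and $\Ext^1(I_L,\mathcal{O}_p)=0$) would force $\Ext^1(I_L,I_p)=0$, contradicting the target value $3$; with the correct value one gets $\Ext^1(I_L,I_p)\simeq\Ext^1(I_L,\op3)\simeq\C^3$. There is a second slip in the incidence case: for $p\in L$ the composite $I_L\into\op3\onto\mathcal{O}_p$ is zero (because $I_L\subset I_p$), so $\Hom(I_L,\op3)\to\Hom(I_L,\mathcal{O}_p)$ is the zero map rather than a surjective evaluation; moreover $\Hom(I_L,\mathcal{O}_p)\simeq\C^2$ and $\Ext^1(I_L,\mathcal{O}_p)\simeq\C$ there, since your local Koszul computation gives $\inhom(I_L,\mathcal{O}_p)\simeq\mathcal{O}_p^{\oplus2}$ and $\inext^1(I_L,\mathcal{O}_p)\simeq\mathcal{O}_p$, all differentials vanishing at $p$.

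You should also be aware that, once these points are corrected, your method yields $4$, not $5$, when $p\in L$: the sequence reads $0\to\C^2\to\Ext^1(I_L,I_p)\to\Ext^1(I_L,\op3)\to\Ext^1(I_L,\mathcal{O}_p)$, and the last map is $H^0$ of the restriction $\mathcal{O}_L(2)\onto\mathcal{O}_p$, hence surjective, giving $\dim\Ext^1(I_L,I_p)=2+(3-1)=4$. This is not a defect of your route; the value $5$ claimed in the Lemma appears to be itself incorrect. Indeed, from the resolution $0\to\op3(-2)\to\op3(-1)^{\oplus2}\to I_L\to0$ and $H^1(I_p(1))=0$ one identifies $\Ext^1(I_L,I_p)\simeq\coker\bigl(H^0(I_p(1))^{\oplus2}\to H^0(I_p(2))\bigr)$, the map being $(g_1,g_2)\mapsto g_2h_1-g_1h_2$ where $L=\{h_1=h_2=0\}$; its image has dimension $5$ when $p\in L$ (the kernel being spanned by $(h_1,h_2)$) and $6$ when $p\notin L$, so the cokernel has dimension $9-5=4$, respectively $9-6=3$. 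As a cross-check, $\chi(I_L,I_p)=-3$ by Riemann--Roch, $\Ext^2(I_L,I_p)$ and $\Ext^3(I_L,I_p)$ vanish (by the same resolution and Serre duality), and $\dim\Hom(I_L,I_p)=1$ for $p\in L$, forcing $\dim\Ext^1(I_L,I_p)=4$. The paper's proof reaches $5$ only because it implicitly treats the connecting map $\Ext^2(\mathcal{O}_L,\op3)\to\Ext^2(\mathcal{O}_L,\mathcal{O}_p)$ in the sequence \eqref{ext-lp} as zero, whereas it too is the surjective evaluation $H^0(\mathcal{O}_L(2))\to\C$. So the incidence case of the statement should read $4$; fortunately, only the nonvanishing of $\Ext^1(I_L,I_p)$ is used elsewhere in the paper.
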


\begin{proof}
First, apply the functor $\Hom(-,I_p)$ to the exact sequence $0\to I_L\to\op3\to\mathcal{O}_L\to0$ to conclude that $\Ext^1(I_L,I_p)\simeq \Ext^2(\mathcal{O}_L,I_p)$. Applying the functor $\Hom(\mathcal{O}_L,-)$ to the exact sequence $0\to I_p\to\op3\to\mathcal{O}_p\to0$, we obtain
\begin{equation}\label{ext-lp}
0\to \Ext^1(\mathcal{O}_L,\mathcal{O}_p) \to \Ext^2(\mathcal{O}_L,I_p) \to
H^1(\mathcal{O}_L(-4)) \to \Ext^1(\mathcal{O}_L,\mathcal{O}_p) \to 0.   
\end{equation}

Note that $\Ext^i(\mathcal{O}_L,\mathcal{O}_p)=H^0(\inext^i(\mathcal{O}_L,\mathcal{O}_p))$. If $p\notin L$, then $\inext^i(\mathcal{O}_L,\mathcal{O}_p)=0$ for every $i\ge0$; thus $\Ext^1(I_L,I_p)\simeq H^1(\mathcal{O}_L(-4))$, which completes the proof of the first claim.

If $p\in L$, then one can check that
$$ h^0(\inext^i(\mathcal{O}_L,\mathcal{O}_p))= \left\{
\begin{array}{l} 2 \quad\text{for}~ i=1, \\ 1 \quad\text{\rm for}~ i=2. \end{array} \right. $$
Comparing with the exact sequence in display \eqref{ext-lp}, we obtain the second part of the statement.
\end{proof}

Next we consider the sheaves in item~\eqref{P8.1-2}\eqref{P8.1-23} of Proposition~\ref{(2,0,-1,0)}.

\begin{lemma}\label{ext1_Ltilde}
If $\,\tilde{L}$ is the $1$-dimensional scheme described in item~\eqref{P8.1-2}\eqref{P8.1-23} of Proposition~\ref{(2,0,-1,0)}, then we have \mbox{$\dim\Ext^1(I_{\tilde{L}},\op3)=1$.}
\end{lemma}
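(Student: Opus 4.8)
The plan is to convert this into a cohomology computation on $\p3$ by dualizing against $\op3$, and then to evaluate the resulting group using the two structure sequences attached to $\tilde L$. First I would apply $\Hom(-,\op3)$ to the ideal-sheaf sequence $0\to I_{\tilde L}\to\op3\to\mathcal{O}_{\tilde L}\to 0$. Because $\op3$ is a line bundle on $\p3$ we have $\Ext^i(\op3,\op3)=H^i(\op3)=0$ for $i>0$, so the long exact sequence collapses to a canonical isomorphism $\Ext^1(I_{\tilde L},\op3)\cong\Ext^2(\mathcal{O}_{\tilde L},\op3)$. Equivalently, Serre duality on $\p3$ (with canonical bundle $\op3(-4)$) rewrites the group as $H^1(\mathcal{O}_{\tilde L}(-4))^{*}$; I would keep both forms available, since the first is convenient for local-duality arguments while the second admits a direct Riemann--Roch check.

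Next I would feed in the defining sequence $0\to\mathcal{O}_p\to\mathcal{O}_{\tilde L}\to\mathcal{O}_L\to 0$ from item (2.3) of Proposition \ref{(2,0,-1,0)} and apply $\Hom(-,\op3)$ again. The two end terms are standard and computed by local duality: for the point, $\inext^q(\mathcal{O}_p,\op3)$ is concentrated in degree $q=3$, so $\Ext^\bullet(\mathcal{O}_p,\op3)$ vanishes outside degree $3$; for the line $L\cong\p1$ one has $\inext^q(\mathcal{O}_L,\op3)=0$ for $q\neq 2$ and $\inext^2(\mathcal{O}_L,\op3)\cong\det N_{L/\p3}$, whence $\Ext^\bullet(\mathcal{O}_L,\op3)$ is read off from the cohomology of $\det N_{L/\p3}$ through the local-to-global spectral sequence. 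This is precisely the mechanism already used for Lemma \ref{ext1_point_line}, so I would model the bookkeeping on that argument. Substituting these inputs into the long exact sequence expresses $\Ext^2(\mathcal{O}_{\tilde L},\op3)$ in terms of the known groups together with the connecting homomorphisms.

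The main obstacle is the final dimension count: one must pin down the connecting maps in this long exact sequence exactly, and in particular control the interaction between the point and the line according to whether $p\in L$ or $p\notin L$, which is governed by the local sheaves $\inext^\bullet(\mathcal{O}_L,\mathcal{O}_p)$ in the same way that distinguished the two cases in Lemma \ref{ext1_point_line}. Once the relevant connecting map has been identified, the alternating-sum (Euler-characteristic) count across the sequence determines $\dim\Ext^2(\mathcal{O}_{\tilde L},\op3)$, and hence $\dim\Ext^1(I_{\tilde L},\op3)$, giving the value recorded in the statement. As a consistency check I would recompute $H^1(\mathcal{O}_{\tilde L}(-4))$ directly from Riemann--Roch, using $\chi(\mathcal{O}_{\tilde L}(-4))=\chi(\mathcal{O}_p)+\chi(\mathcal{O}_L(-4))$ together with the vanishing of the relevant degree-zero cohomology, and match it against the Serre-dual description; agreement of the two routes both confirms the count and certifies that the extension defining the sheaf $E$ of item (2.3) is non-split, as needed.
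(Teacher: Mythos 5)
Your chain of reductions is sound and runs parallel to the paper's own: the isomorphism $\Ext^1(I_{\tilde L},\op3)\cong\Ext^2(\mathcal{O}_{\tilde L},\op3)$ (equivalently $H^2(I_{\tilde L}(-4))^*$ by Serre duality) is correct, and feeding in $0\to\mathcal{O}_p\to\mathcal{O}_{\tilde L}\to\mathcal{O}_L\to0$ is exactly the right next move. But the "main obstacle" you identify is illusory, and deferring the count is where your proposal fails. Since $\inext^q(\mathcal{O}_p,\op3)$ is concentrated in $q=3$, the groups $\Ext^1(\mathcal{O}_p,\op3)$ and $\Ext^2(\mathcal{O}_p,\op3)$ vanish outright, so the long exact sequence collapses to $\Ext^2(\mathcal{O}_{\tilde L},\op3)\cong\Ext^2(\mathcal{O}_L,\op3)$ with no connecting homomorphism to analyse and no dependence on whether $p\in L$ or $p\notin L$. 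Carrying your plan to its end then gives $\Ext^2(\mathcal{O}_L,\op3)\cong H^0(\det N_{L/\p3})=H^0(\mathcal{O}_L(2))$, which is $3$-dimensional, not the value $1$ you assert your method will produce. Your own consistency check says the same: $H^2(I_{\tilde L}(-4))\cong H^2(I_L(-4))\cong H^1(\mathcal{O}_L(-4))$, again of dimension $3$ (and note $h^0(\mathcal{O}_{\tilde L}(-4))=1$, not $0$, because of the embedded point). So the genuine gap is that you never perform the decisive computation and instead declare agreement with the statement; the computation in fact contradicts it.

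For what it is worth, the same slip occurs in the paper: its exact sequence $0\to H^2(I_L(-4))\to H^3(\op3(-6))\to H^3(\op3(-5))^{\oplus2}\to H^3(I_L(-4))\to0$ has dimensions $10$, $8$ and $1$ in the last three slots, so it forces $h^2(I_L(-4))=10-8+1=3$, not $1$ as asserted there. The lemma should read $\dim\Ext^1(I_{\tilde L},\op3)=3$; fortunately, the only thing used downstream is that this group is nonzero, so that non-split extensions $0\to\op3\to E\to I_{\tilde L}\to0$ of type (2.3) exist, and both your route and the paper's confirm that. A completed write-up of your approach would prove the corrected statement in a few lines, but as submitted your proposal endorses a numerical value that its own method refutes.
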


\begin{proof}
Note that
$$ \Ext^1(I_{\tilde{L}},\op3)\simeq H^2(I_{\tilde{L}}(-4))^*\simeq H^2(I_L(-4))^*, $$
where the first isomorphism is given by Serre duality, and the second follows from the sequence $0\to I_{\tilde{L}} \to I_L \to \mathcal{O}_p\to0$.

The resolution of the ideal sheaf $I_L$ yields the cohomology sequence
$$ 0\to H^2(I_L(-4)) \to H^3(\op3(-6)) \to H^3(\op3(-5))^{\oplus 2} \to H^3(I_L(-4)) \to 0. $$
However, $H^3(I_L(-4))\simeq\Hom(I_L,\op3)^*$; thus $h^2(I_L(-4))=1$.
\end{proof}

\bigskip

Now let $\mathcal{G}$ denote the Grassmanian of lines in $\p3$. Recall that a line $L\in\mathcal{G}$ is called a \emph{jumping line} for a $\mu$-semistable torsion-free sheaf $E$ with $\ch_1(E)=0$ on $\p3$ if $E\otimes\mathcal{O}_L=\mathcal{O}_L(-a)\oplus\mathcal{O}_L(a)$ for some $a>0$; we denote by $\mathcal{J}(E)$ the set of jumping lines for $E$, as a subset of $\mathcal{G}$.

If $E$ is a null correlation sheaf, then $\mathcal{J}(E)$ is a divisor of degree 1 in $\mathcal{G}$. Therefore, for each null correlation sheaf $E$ and $L\in\mathcal{J}(E)$, there exists an epimorphism $E\onto\mathcal{O}_L(-1)$ whose kernel is a Gieseker semistable sheaf~$K$ with $\ch(K)=(2,0,-2,2)$.

Such sheaves have been previously considered by Mir\'o-Roig and Trautmann in \cite[Section 1.5]{MRT}; they showed that the family of sheaves $K$ defined by exact sequences of the form
$$ 0 \to K \to E \to \mathcal{O}_L(-1) \to 0, $$
where $E$ is a null correlation sheaf and $L\in\mathcal{J}(E)$, define a locally closed $8$-dimensional subscheme of the full moduli space of Gieseker semistable sheaves on $\p3$ with Chern character equal to $(2,0,-2,2)$. In addition, this moduli space, which we will simply denote by $\mathcal{Z}$, is an irreducible projective variety of dimension 13, and every such sheaf satisfies an exact sequence of the form 
\begin{equation} \label{k-sqc}
0 \to \op3(-2)^{\oplus 2} \stackrel{A}{\to} \op3^{\oplus 4}(-1) \to K \to 0,
\end{equation}
with $A$  a $4\times2$ matrix of linear forms in four variables. Combining this with the presentation of $\mathcal{O}_L(-1)$
\[0\to\op3(-3)\to\op3(-2)^{\oplus 2}\to\op3(-1)\to\mathcal{O}_L(-1)\to0, \]
we have a presentation for any Gieseker stable sheaf of Chern character $(2,0,-1,0)$:
\begin{equation}\label{Epres}
0\to\op3(-3)\to\op3(-2)^{\oplus 4}\to\op3(-1)^{\oplus 5}\to E\to 0.
\end{equation}

Our next goal in this section is the classification of $\nuab$-stable objects for the Chern character $v=(-2,0,1,0)$. First, we show that there are no $\nu$-walls.

\begin{lemma}\label{N1_no_tilt}
For $v=\pm(2,0,-1,k)$, there are no actual $\nu$-walls for any $k$.
\end{lemma}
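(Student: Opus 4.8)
The plan is to show that every numerical $\nu$-wall for $v$ is either empty, degenerates to the single point where $\Theta_v$ meets the $\beta$-axis, or cannot carry a genuine destabilising sequence. Since both $t_{u,v}$ and the condition $\ch_1^\beta\ge0$ for membership in $\cohb$ depend only on $\ch_{\le2}(v)=(2,0,-1)$, the value of $k$ and the overall sign are irrelevant; thus $\mu(v)=0$, $\Theta_v=\{\beta^2-\alpha^2=1\}$ and $Q^{\rm tilt}(v)=4$. Because $\ch_{\le2}(v^\vee)=\ch_{\le2}(v)$ and dualising carries $\cohb$-walls below $\Mu_v$ to $\mathcal{B}^{-\beta}$-walls above it (cf. Proposition \ref{dual_in_A}), it suffices to exclude actual walls lying to the left of $\Mu_v$, i.e. with $\beta_0<0$.

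First I would fix the geometry. A numerical $\nu$-wall to the left of $\Mu_v$ is a semicircle centred on the $\beta$-axis whose maximum lies on $\Theta_v^-$, so its centre is $(0,c)$ with $c=-\sqrt{1+R^2}\le-1$. An elementary estimate shows such a semicircle always meets the vertical line $\{\beta=-1\}$ (the $\alpha=0$ intercept of $\Theta_v^-$) at an interior point with $\alpha>0$, \emph{unless} $R=0$, in which case the ``wall'' is the single point $(0,-1)$ and so is not one-dimensional and not actual. Hence for a genuine positive-radius wall I may test the destabilising sequence $0\to F\to B\to G\to 0$ both at its maximum on $\Theta_v^-$, where $\rho_F=\rho_G=0$, and at its crossing of $\{\beta=-1\}$.

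The key reduction is to pin down $F$. By Lemma \ref{asymp_B_is_sheaf} and Proposition \ref{sheaf_subobjects}, on a wall with $\beta_0<\mu(v)$ the destabiliser may be taken to be a genuine subsheaf $F$ of a torsion-free sheaf $B$, so $\calh^{-1}(F)=0$ and $\ch_0(F)\in\{1,2\}$ (rank $0$ is impossible since $B$ is torsion free, and $1,2$ are exchanged by $F\leftrightarrow G$). Evaluating $\ch_1^{-1}(F)\ge0$ and $\ch_1^{-1}(G)\ge0$ at $\beta=-1$ (Proposition \ref{proprank1}) forces $\ch_1(F)\in\{-1,0,1\}$; imposing $\rho_F=0$ at the maximum, so that $\ch_2(F)=\ch_1(F)\beta_0-\ch_0(F)/2$ with $\beta_0\le-1$, together with the Bogomolov inequalities $Q^{\rm tilt}(F)\ge0$ and $Q^{\rm tilt}(G)\ge0$, squeezes $\ch_2(F)$ into a narrow range. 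For $\ch_0(F)=1$ this forces $\ch_2(F)$ either to be non-integral in a way incompatible with $F=\op3(\pm1)\otimes I_W$ (when $\ch_1(F)=0$, where a rank-one sheaf with $\ch_1=0$ has $\ch_2\in\Z$), or to land exactly on $\beta_0=-1$, that is, the degenerate point-wall $F=\op3(-1)$ (when $\ch_1(F)=\pm1$); a direct evaluation gives $t_{u,v}(\alpha,\beta)=-(\beta+1)^2-\alpha^2$ for $u=\ch(\op3(-1))$, confirming $\Xi_{u,v}=\{(0,-1)\}$.

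For $\ch_0(F)=2$, the quotient $G=B/F$ is a pure two-dimensional sheaf, and along the wall the polar bilinear form of $Q^{\rm tilt}$ on the pair $(F,G)$ is strictly positive (it equals $\ch_1^{\beta_0}(F)\,\ch_1(G)>0$), so $Q^{\rm tilt}(F)+Q^{\rm tilt}(G)<Q^{\rm tilt}(v)=4$; as $Q^{\rm tilt}(G)=\ch_1(G)^2\ge1$ this forces $\ch_1(G)=1$. Then $G$ is a rank-one torsion-free sheaf on a plane, so $\ch_2(G)\in\tfrac12+\Z$, while $\rho_G=0$ gives $\ch_2(G)=\hat\mu(G)=\beta_0$ and the same inequalities confine $\beta_0$ to $[-5/4,-1)$, which meets $\tfrac12+\Z$ in the empty set --- a contradiction (consistent with Lemma \ref{empty_tilt}, which already shows such rank-zero-quotient walls require $|\hat\mu(G)|>1$). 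I expect the main obstacle to be exactly this final bookkeeping: the whole argument hinges on the discriminant $Q^{\rm tilt}(v)=4$ being minimal, so that the two Bogomolov inequalities leave essentially no integral room, and on correctly reducing arbitrary destabilising subobjects of $\cohb$ to honest subsheaves via Proposition \ref{sheaf_subobjects}. Once the left-hand walls are excluded, duality yields the right-hand ones and hence the Lemma.
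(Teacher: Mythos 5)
Your overall strategy (reduce to the left of $\Mu_v$ by duality, use the fact that every positive-radius wall must cross the vertical line $\{\beta=-1\}$, then squeeze with $\cohb$-membership, $\rho=0$ at the top of the wall, Bogomolov and integrality) is the same as the paper's, and your numerical work in the rank $1$ and rank $2$ cases is correct --- including the computation $t_{u,v}=-(\beta+1)^2-\alpha^2$ for $u=\ch(\op3(-1))$, and your elementary derivation of the crossing at $\beta=-1$ in place of the paper's citation of Maciocia's Corollary 2.8. But there is a genuine gap at the key reduction: you assume the destabiliser ``may be taken to be a genuine subsheaf $F$ of a torsion-free sheaf $B$'', so that $\ch_0(F)\in\{1,2\}$. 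Neither result you cite gives this: Lemma \ref{asymp_B_is_sheaf} only says that objects lying in $\cohb$ for all $\beta\ll0$ are sheaves, and Proposition \ref{sheaf_subobjects} concerns monomorphisms in $\cohab$, not in $\cohb$. What is true is that a $\cohb$-monomorphism $F\into B$ with $B$ a sheaf forces $F$ to be a sheaf (from the long exact sequence of $\coh(X)$-cohomology); but the induced sheaf map $F\to B$ can have a nonzero kernel $\calh^{-1}(B/F)\in\free\beta$, so $F$ need not be a subsheaf and $\ch_0(F)$ can be arbitrarily large. The paper's own Section \ref{ideal line} exhibits exactly this phenomenon: $\op3(-1)^{\oplus2}\into I_{L}$ is a destabilising $\cohb$-subobject of rank $2$ inside a rank-$1$ sheaf. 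Moreover you cannot simply replace $F$ by its sheaf image $F'=F/\calh^{-1}(B/F)$, since $\ch(F')\neq\ch(F)$ and $F'$ defines a different numerical wall, not the one you are trying to exclude. Your proof therefore has no mechanism to rule out destabilisers with $\ch_0(F)\ge3$.

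This is precisely what the paper's proof is built to handle: it allows an arbitrary sheaf destabiliser of rank $r\ge1$, derives $x:=\ch_1(F)=1-r$ from the crossing at $\beta=-1$, then, writing $\ch_2(F)=y/2$, gets $y=2\beta(1-r)-r$ from $\rho_F=0$ on $\Theta_v^-$, and kills all $r\ge2$ at once via the Bogomolov inequality $(r-1)^2\ge ry\ge r(r-1)$, leaving only the $r=1$ integrality contradiction that you also found. A second, lesser, gap is your one-line duality reduction for $\beta>0$ ``(cf.\ Proposition \ref{dual_in_A})'': that proposition is an asymptotic statement about unbounded $\Theta^-$-curves and does not say that actual walls dualise; the paper itself warns, at the start of the subsection on unbounded $\Theta^*$-curves, that duals of objects of $\cohb$ need not lie in $\mathcal{B}^{-\beta}$, and its own $\beta>0$ argument requires Proposition \ref{a=dual} together with a case analysis according to whether the object with Chern character $v$ is a sheaf or the dual of one.
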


\begin{proof}
  We first suppose $\beta<0$. Then we may assume $E$ is a Gieseker semistable sheaf (to find the biggest wall). Suppose $F$ is a sub-object of $E$ in $\cohb$ with $\nu_\beta(F)=\nu_\beta(E)$ and $\ch(F)=(r,x,y/2,z/6)$. Then $r\geq1$ and $x\leq0$. By \cite[Corollary~2.8]{M}, every $\nu$-wall must cross $\beta=\mu(E)-\sqrt{\Delta(E)/r(E)^2}=-1$ (at the bottom of $\Theta_v$), and so $0<x+r<2$ and then $x=1-r$.  We look for $\nu$-walls along $\Theta^-_v$. Then $\alpha^2=\beta^2-1$. Now $\ch_2^{\alpha,\beta}(F)=y/2-\beta (1-r)+r/2=0$. So $y=2\beta(1-r)-r$. If $r\geq2$, then $y\geq r-1$ as $\beta<-1$. From the Bogomolov--Gieseker inequality  for $F$, we have $(r-1)^2\geq ry\geq r(r-1)$, which is impossible. So $r=1$. Then $y=-1$, but $x=0$, and $(1,0,-1/2,z/6)$ is not the Chern character of a sheaf.

Now we assume $\beta>0$. Then $E$ is the dual of a Gieseker semistable sheaf. Suppose $0\to F\to E\to G\to 0$ is a short exact sequence in $\cohb$ with $\nuab(F)=\nuab(E)=\nuab(G)$ for some $\alpha$. If $E$ is a sheaf, then so is $G$, and $G^\vee\in\cohb$ with $\nu_{\alpha,-\beta}(G^\vee)=-\nu_{\alpha,-\beta}(G)$. But then $F^\vee$ is also a sheaf, and so $E^\vee$ is not $\nu_{\alpha,-\beta}$-stable, but this contradicts the last paragraph. If $E$ is not a sheaf, then $\mathcal{H}^0(G)$ is a quotient of $\mathcal{O}_L(1)$, and since we may assume  $G$ is $\nuab$-semistable, it follows that condition~\eqref{a=dual6} of Proposition~\ref{a=dual} holds and $\mathcal{H}^{-1}(G)$ is reflexive (any torsion sheaf $T$ with a map $T[1]\to \mathcal{H}^{-1}(G)\to G$ injects). Since $S=0$ in Proposition~\ref{a=dual}, it follows that $G$ is the dual of a torsion-free sheaf, and again $F^\vee$ is a sheaf. So we still have a contradiction.   
\end{proof}

Lemma~\ref{N1_no_tilt} has two interesting applications. First we recover the following stronger version of a well-known fact.

\begin{proposition}\label{no_mu2}
If $k>0$ is an integer, then there are no $\mu_{\le2}$-semistable torsion-free sheaves of Chern character $(2,0,-1,k)$ on $\p3$.
\end{proposition}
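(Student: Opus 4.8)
The plan is to argue by contradiction, exploiting the fact that the Chern character $(2,0,-1,k)$ admits no $\nu$-walls to push a hypothetical $\mu_{\le2}$-semistable sheaf into a $\nu_{\alpha,\beta}$-semistable state on a region where it must violate the generalized Bogomolov inequality. So suppose $E$ is a $\mu_{\le2}$-semistable torsion free sheaf with $\ch(E)=(2,0,-1,k)$ and $k>0$. Since $\mu(E)=0$ and $\mu_{\le2}$-semistability implies $\mu$-semistability, we have $E\in\tors\beta\subset\cohb$ for every $\beta<0$. By Lemma \ref{h nu-ss} (the converse direction of Theorem \ref{a nu sst}(1)), there is a $\beta_0<0$ such that $E$ is $\nu_{\oalpha,\beta}$-semistable for all $\beta<\beta_0$.

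Next I would invoke Lemma \ref{N1_no_tilt}: there are no actual $\nu$-walls for $v=(2,0,-1,k)$. Because $E$ stays in $\cohb$ throughout the connected region $\{\alpha>0,\ \beta<0\}$, and a transition from $\nu_{\alpha,\beta}$-semistable to $\nu_{\alpha,\beta}$-unstable can occur only across an actual $\nu$-wall (such walls being locally finite and nested), the absence of $\nu$-walls propagates the semistability from the asymptotic regime $\beta\ll0$ to the whole half-plane $\{\beta<0\}$. Hence $E$ is $\nu_{\alpha,\beta}$-semistable for every $(\alpha,\beta)$ with $\beta<0$. The generalized Bogomolov inequality \eqref{GB-ineq} then forces $Q_{\alpha,\beta}(E)\geq0$ at every such point. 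Reading off the coefficients from \eqref{GB-ineq}, here $Q^{\rm tilt}(E)=4$, $6\ch_0\ch_3-2\ch_1\ch_2=12k$ and $4\ch_2^2-6\ch_1\ch_3=4$, so
\[ Q_{\alpha,\beta}(E) = 4(\alpha^2+\beta^2) + 12k\beta + 4. \]
Evaluating at $\beta=-3k/2$ gives $4\alpha^2+4-9k^2$, which is strictly negative as soon as $0<\alpha<\tfrac12\sqrt{9k^2-4}$; this range is nonempty since $9k^2-4\geq5>0$ for $k\geq1$. As $-3k/2<0=\mu(E)$, the point $(\alpha,-3k/2)$ lies in the region where $E$ is $\nu_{\alpha,\beta}$-semistable, so $Q_{\alpha,\beta}(E)\geq0$ there, contradicting the computation.

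The only delicate point is the middle step, namely propagating semistability from $\beta\ll0$ all the way to the finite height $\beta=-3k/2$; this is exactly where Lemma \ref{N1_no_tilt} carries the weight, together with the observation that $\mu(E)=0$ keeps $E$ inside $\cohb$ for all $\beta<0$ so that no instability can appear off an actual wall. Once that is in hand, the remainder is the one-line evaluation of $Q_{\alpha,\beta}(E)$ above, and the hypothesis $k>0$ enters solely to guarantee $9k^2-4>0$, i.e. that the quadratic in $\beta$ genuinely dips below zero.
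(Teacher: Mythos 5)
Your proof is correct, but it takes a genuinely different route from the paper's. Both arguments pivot on Lemma \ref{N1_no_tilt} (no actual $\nu$-walls for $\pm(2,0,-1,k)$), but they reach the contradiction differently. The paper computes $q(v)<0$ and invokes Theorem \ref{thm for q}, which forces a vanishing $\nu$-wall to exist whenever a semistable sheaf with that character exists; contradiction. (Strictly speaking, Theorem \ref{thm for q} is stated for Gieseker semistable sheaves while the proposition assumes only $\mu_{\le2}$-semistability, so the paper implicitly uses that the proof of that theorem only needs asymptotic $\nuab$-semistability, which is exactly $\mu_{\le2}$-semistability.) You instead stay entirely within the first tilt: Lemma \ref{h nu-ss} gives asymptotic $\nuab$-semistability, the absence of actual $\nu$-walls propagates semistability over all of $\{\beta<0\}$ --- this propagation step is exactly the logic the paper itself employs in Lemma \ref{asymp_B_is_sheaf}(2) and Lemma \ref{h nu-ss}, so it is legitimate given the wall-and-chamber structure cited from \cite{S} --- and then the generalized Bogomolov inequality \eqref{GB-ineq}, valid on $\p3$ for all $\nuab$-semistable objects, is violated at the explicit points $(\alpha,-3k/2)$ with $0<\alpha<\tfrac12\sqrt{9k^2-4}$, since $Q_{\alpha,\beta}(E)=4(\alpha^2+\beta^2)+12k\beta+4$. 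What each approach buys: yours avoids the second tilt, the $\Gamma$/$\Theta$ geometry and the invariant $q$ altogether, and exhibits concrete points where the support property fails; the paper's is shorter because Theorem \ref{thm for q} packages the same underlying phenomenon --- indeed $q(v)<0$ is essentially the statement that the inequality \eqref{GB-ineq} fails along part of $\Theta_v$, so the two contradictions share a common numerical source. One small inaccuracy in your closing remark: $k>0$ does not enter \emph{solely} through $9k^2-4>0$; it also enters by placing the vertex $\beta=-3k/2$ of the quadratic inside the half-plane $\beta<0$ (a point you do verify in the body of the argument), and this is precisely what blocks the argument for $k<0$, as it must, since $\mu$-stable sheaves with character $(2,0,-1,-k)$ do exist.
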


\begin{proof}
Note that $q(v)=8(2-9k^2)<0$. Then if there are $\mu_{\le2}$-semistable torsion-free sheaves, there must exist an actual vanishing $\nu$-wall by Theorem~\ref{thm for q}. But there are no $\nu$-walls.
\end{proof}

Note that $S_L$ in the proof of Proposition~\ref{(2,0,-1,0)} is an example of a $\mu$-semistable reflexive sheaf of Chern character $(2,0,-1,1)$. In addition, let $E$ be a null correlation bundle, and let $\{p_1,\dots,p_k\}$ be distinct points in $\p3$; the kernel of an epimorphism $E\onto \oplus_{i=1}^k\mathcal{O}_{p_i}$ provides an example of a $\mu$-stable torsion-free sheaf with Chern character $(2,0,-1,-k)$ for any $k>0$.

One can also provide a complete description of $\nuab$-stable objects with Chern character $(2,0,-1,0)$.

\begin{proposition}
Given any $\alpha>0$, an object $B\in\cohb(\p3)$ with $\ch(B)=(2,0,-1,0)$ is $\nuab$-stable if and only if
\begin{enumerate}
\item $B$ is a null correlation sheaf, when $\beta<0$;
\item $B^\vee[-1]$ is a null correlation sheaf, when $\beta>0$.
\end{enumerate}
\end{proposition}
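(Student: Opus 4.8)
The plan is to prove both directions by splitting into the two cases $\beta<0$ and $\beta>0$, exploiting the complete absence of $\nu$-walls established in Lemma \ref{N1_no_tilt} together with the asymptotic results of Section \ref{nuab sst}. The underlying strategy is that, since there are no actual $\nu$-walls for $v=(2,0,-1,0)$, $\nuab$-stability cannot change anywhere in $\HH$ on a given side of $\Mu_v$; thus an object is $\nuab$-stable at some $(\alpha,\beta)$ if and only if it is asymptotically $\nuab$-stable along a horizontal ray in that half-plane, and we can then invoke Theorem \ref{a nu sst}.

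First I would treat the case $\beta<0$. Suppose $B\in\cohb$ with $\ch(B)=v$ is $\nuab$-stable for some $\alpha>0$. By Lemma \ref{N1_no_tilt} there are no actual $\nu$-walls for $v$, and since actual $\nu$-walls are locally finite and nested semicircles crossing $\Theta_v$ at their maxima (as recalled in Section \ref{sec:nu-walls}), $B$ cannot be destabilized as we vary $(\alpha,\beta)$ with $\beta<\mu(v)=0$; hence $B$ remains $\nuab$-stable along the whole horizontal ray $\Lambda^-_{\oalpha}$, so $B$ is asymptotically $\nuab$-stable along $\Lambda^-_{\oalpha}$. Theorem \ref{a nu sst}(1) then forces $B$ to be a $\mu_{\le2}$-stable sheaf. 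By Proposition \ref{(2,0,-1,0)}, the only $\mu_{\le2}$-stable torsion free sheaves with $\ch=(2,0,-1,0)$ are the null correlation sheaves (the bundles in (1) and the properly torsion free sheaves in (2.1), both of which are $\mu_{\le2}$-stable, whereas (2.2) and (2.3) fail $\mu_{\le2}$-semistability). Conversely, if $B$ is a null correlation sheaf it is $\mu_{\le2}$-stable, hence asymptotically $\nuab$-stable along $\Lambda^-_{\oalpha}$ by Theorem \ref{a nu sst}(1); the absence of $\nu$-walls then propagates this stability to every $(\alpha,\beta)$ with $\beta<0$.

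For the case $\beta>0$ I would argue dually. If $B$ is $\nuab$-stable for some $\alpha>0$ with $\beta>0$, then exactly as above the absence of $\nu$-walls shows $B$ is asymptotically $\nuab$-stable along $\Lambda^+_{\oalpha}$, so Theorem \ref{a nu sst}(2) gives that $S:=B^\vee[-1]$ is a $\mu_{\le2}$-stable sheaf whose $S^{**}/S$ is either empty or pure of dimension $1$. Noting $\ch(S)=\ch(B^\vee[-1])=(2,0,-1,0)$ (since $q$ and the Chern character of this vector are dual-invariant, and a direct computation of $\ch(B^\vee)$ confirms it), Proposition \ref{(2,0,-1,0)} again identifies $S$ as a null correlation sheaf. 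The reflexivity/purity condition is automatic for null correlation sheaves. The converse follows by reversing the implication using Theorem \ref{a nu sst}(2) and once more propagating across the $\nu$-wall-free region $\{\beta>0\}$.

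The main obstacle I anticipate is the careful bookkeeping needed to justify that ``no $\nu$-walls'' really allows stability at a single point to be upgraded to stability along an entire horizontal ray, and thence to asymptotic stability in the precise sense of Definition \ref{asym nu s-st}; this requires invoking the local finiteness and nestedness of actual $\nu$-walls and checking that $B$ genuinely stays in the heart $\cohb$ as $\beta\to\mp\infty$ (so that Lemma \ref{asymp_B_is_sheaf} and the structural results apply). A secondary subtlety in the $\beta>0$ case is correctly matching $\ch(B^\vee[-1])$ with the hypotheses of Proposition \ref{(2,0,-1,0)} and confirming that the extra purity condition on $S^{**}/S$ in Theorem \ref{a nu sst}(2) is met precisely by the null correlation sheaves and excludes the degenerate families (2.2) and (2.3).
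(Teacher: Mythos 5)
Your proposal is correct and takes essentially the same route as the paper: the paper's own (very terse) proof combines Theorem \ref{a nu sst} with Proposition \ref{(2,0,-1,0)} to identify the asymptotically $\nuab$-stable objects on each side of $\Mu_v$ as the null correlation sheaves and their shifted duals, and then invokes the absence of actual $\nu$-walls (Lemma \ref{N1_no_tilt}) to propagate stability across each half-plane, exactly as you do. Your write-up simply makes explicit the wall-propagation and heart-membership bookkeeping that the paper leaves implicit (and note that, as in the paper's own statement, the sign of the Chern character for $\beta>0$ should really be $(-2,0,1,0)$ so that $B^\vee[-1]$ is a sheaf of positive rank).
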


\begin{proof}
Theorem~\ref{a nu sst} and Proposition~\ref{(2,0,-1,0)} tell us that the asymptotically $\nuab$-stable objects on each side of the $\alpha$-axis are precisely the ones described above. Since there are no actual $\nu$-walls, such objects are $\nuab$-stable everywhere.
\end{proof}

Note that it also follows from Proposition~\ref{no_mu2} that if $k>0$, then there are no $\nu_{\alpha,\beta}$-objects with Chern character $(2,0,-1,k)$. We can also deduce that from the fact that $Q_{\alpha,\beta}({2,0,-1,k})=4+4\alpha^2+4\beta^2+12\beta k$, which vanishes for $k>0$ in the region $\beta<0$; since there are no non-vertical $\nu$-walls, this is impossible.

\subsection{$\boldsymbol{\lambda}$-walls and stability for the Chern character $\mathbf{(2,0,-1,0)}$}
\label{sec:instantons}

We now turn to the classification of actual $\lambda$-walls corresponding to Gieseker stable
objects and the description of the $\labs$-semistable objects with Chern character
$v=(2,0,-1,0)$ near the curve $\Gamma^-_{v,s}$. 

Note that the fact that both $I_{L}$ and $I_{p}$ are $\lambda_{1/\sqrt{6s+1},-2,s}$-semistable for every $s>0$, checked in Sections~\ref{ideal line} and~\ref{ideal point}, implies that the sheaves of described in items~\eqref{P8.1-2}\eqref{P8.1-21} and~\eqref{P8.1-2}\eqref{P8.1-22} of Proposition~\ref{(2,0,-1,0)} are also $\lambda_{1/\sqrt{6s+1},-2,s}$-semistable for every $s>0$. For this case, we will set $s=1/3$ for simplicity.

We first observe that there is a vanishing wall through the point $P$ given by $\beta=-2$ and $\alpha^2=1/3$ on $\Gamma^-_{v,1/3}$ given by $F=\op3(-1)$. Theorem~\ref{one_lambda_wall} and Lemma~\ref{N1_no_tilt} then imply that this is the largest actual $\lambda$-wall for such Gieseker stable sheaves, and it suffices to find all of the walls through $P$. So in what follows, we set $\beta=-2$ and $\alpha^2=1/3$. One strategy to find all of the walls would be to guess some based on prior knowledge, and then check whether both the sub-object $F$ and the quotient object $G$ are semistable, thus guaranteeing that the suspected wall is an actual one. However, we shall try to deduce the walls by solving the inequalities to locate solutions directly to illustrate the techniques one might use to do this.

For $E$ with $\ch(E)=(2,0,-1,0)$, we consider a short exact sequence in $\cohab$
\[0\to F\to E\to G\to 0\]
with $\lambda_{\alpha,\beta}(F)=\lambda_{\alpha,\beta}(E)$ along $\Gamma^-_{v,s}$. We let $\ch(F)=(r,x,y/2,z/6)$ for integers $r,x,y,z$. Since $\chi(F)=r+11x/6+y+z/6$ is an integer, we must have $6|(z-x)$. In fact, at our point $P$, $\chi(F)\propto\ch_3^P(u)=0$, so this holds automatically.

We have $\ch^{\alpha,\beta}_2(E)=\frac{2}{3}\beta^2$,
\[\ch^{\alpha,\beta}_2(F)=\frac{y}{2}-\beta x+\frac{1}{3}\beta^2 r+\frac{1}{2} r\]
and
\[\ch^{\alpha,\beta}_2(G)=-\frac{y}{2}+\beta x-\frac{1}{3}\beta^2(r-2)-\frac{1}{2}r.\]
These provide the constraints
\begin{equation}0<\frac{y}{2}+2x+\frac{11}{6}r<\frac{8}{3}.\label{ch2_constraint}\end{equation}
Since $F\in\mathcal{T}_{\beta}$, we have $x\geq -2r$. In addition, we have 
$\lambda_{1/\sqrt{3},-2,1/3}(F)=0$, which gives 
\[z=-11x-6(r+y).\]
Finally, we also have
\begin{equation}\label{Q_constraint}
\begin{split}
Q_P(F)&=12 r^2+28 r x+\frac{23}{3} r y+\frac{46}{3} x^2+8 x y+y^2
\qquad\text{and}\\
Q_P(v-u)&=12 r^2+28 r x+\frac{23}{3} r y-\frac{98}{3} r+\frac{46}{3} x^2+8 x y-40 x+y^2-\frac{34}{3}
   y+\frac{64}{3}. 
\end{split}
\end{equation}
From Proposition~\ref{sheaf_subobjects}, there are some constraints on $F$ and $G$. We will focus on cases~\eqref{sheaf_subobjects1} and~\eqref{sheaf_subobjects2} of that proposition. We first eliminate case~\eqref{sheaf_subobjects2}.  

\begin{lemma}\label{F_G_sheaf}
Given $E$, $F$ and $G$ as above,  $F\not\in\coh(X)$ and $G\not\in\cohb[1]\cap\coh(X)[1]$.
\end{lemma}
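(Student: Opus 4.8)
The plan is to argue by contradiction, combining the structural dichotomy of Proposition~\ref{sheaf_subobjects} with the absence of actual $\nu$-walls. First I would record the numerical data at the point $P=(\alpha,\beta)=(1/\sqrt3,-2)$ (with $s=1/3$): here $\ch_1^\beta(E)=4$ and $\rho_E(\alpha,\beta)=8/3$, so $\nuab(E)=2/3>0$, and by Lemma~\ref{N1_no_tilt} there are no $\nu$-walls, whence $E$ is $\nuab$-stable at $P$. Moreover $P\in\Gamma^-_{v,1/3}$ gives the finite value $\labs(E)=0$, so in the destabilizing sequence we have $\labs(F)=\labs(E)=0$.

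To prove $G\notin\cohb[1]\cap\coh(X)[1]$, I would suppose the contrary, so that $G=G'[1]$ with $G'$ a torsion-free sheaf in $\free{\beta}$ (the form produced by Proposition~\ref{sheaf_subobjects}). Taking cohomology in $\cohb$ of the triangle $F\to E\to G'[1]$, and using that $E$ is a sheaf in $\tors{\beta}$, yields a short exact sequence $0\to F\to E\to G'[1]\to0$ in $\cohb$, so that $G'[1]$ is a proper nonzero $\cohb$-quotient of $E$. On one hand, $\nuab$-stability of $E$ forces $\nuab(G'[1])>\nuab(E)=2/3>0$. On the other hand, $G=G'[1]\in\cohab\cap\cohb[1]=\free{\alpha,\beta}[1]$ means $G'\in\free{\alpha,\beta}$, hence $\nuab(G')\le\nuab^+(G')\le0$; and since shifting by $[1]$ negates both the numerator and the denominator of the $\nu$-slope, $\nuab(G'[1])=\nuab(G')\le0$. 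These two inequalities are incompatible, so no such $G$ exists.

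The sub-object statement $F\notin\coh(X)$ I would handle by duality. Applying $(-)^\vee=\RHom(-,\ox)[2]$ to $0\to F\to E\to G\to0$ produces a triangle $G^\vee\to E^\vee\to F^\vee$ in which $F^\vee$ appears as a quotient of $E^\vee$; since $\ch(E^\vee)=(2,0,-1,0)=\ch(E)$, Lemma~\ref{N1_no_tilt} applies equally to $E^\vee$, and by Proposition~\ref{dual_in_A} the dual sequence lives in the second tilt along the reflected curve through $P^\ast=(1/\sqrt3,2)$. Proposition~\ref{a=dual} then translates the failure $F\notin\coh(X)$ into the statement that $F^\vee$ is a shifted-sheaf quotient of the $\nuab$-stable object $E^\vee$, which is impossible by exactly the argument of the previous paragraph. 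This disposes of both configurations, leaving only the case in which $F$ and $G$ are honest sheaves.

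The hard part will be the cohomological bookkeeping underlying the two reductions: I must verify that the $\cohab$-sequence really induces a short exact sequence in $\cohb$ after passing to heart cohomology, and that dualization correctly interchanges sub- and quotient-objects together with the half-planes $\{\beta<0\}$ and $\{\beta>0\}$ via Propositions~\ref{dual_in_A} and~\ref{a=dual}; the sign analysis of $\ch_1^\beta$ and $\rho_{G'}$ for a shifted torsion-free sheaf is delicate but becomes routine once membership in $\free{\alpha,\beta}$ is invoked. Should one wish to avoid duality for the sub-object, the same candidates can instead be cut down directly from the support-property inequalities \eqref{Q_constraint} together with \eqref{ch2_constraint}, the integrality of $(r,x,y,z)$ and the relation $z=-11x-6(r+y)$, and then eliminated using the nonexistence of $\nu$-walls.
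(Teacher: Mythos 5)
Your key step is not valid. You assert that applying $\cohb$-cohomology to the triangle $F\to E\to G$, with $G=G'[1]$, yields a short exact sequence $0\to F\to E\to G'[1]\to 0$ in $\cohb$. No such sequence can exist: a nonzero object of $\cohb[1]$ is never an object of $\cohb$, so $G'[1]$ cannot be a $\cohb$-quotient of anything. Running the long exact sequence of $\calh^{\bullet}_\beta$ correctly (using that $\calh^i_\beta(E)=0$ for $i\neq0$ and $\calh^i_\beta(G)=0$ for $i\neq-1$) gives $\calh^{-1}_\beta(F)=0$ and
\[
0\to G[-1]\to F\to E\to 0 \qquad\text{in } \cohb,
\]
so $G[-1]$ sits as a \emph{subobject of $F$}, not as a shifted quotient of $E$. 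From this configuration no slope contradiction follows: membership $F\in\tors{\alpha,\beta}$ constrains the $\nu$-slopes of quotients of $F$, not of its subobjects, and $\nuab$-stability of $E$ says nothing about subobjects of $F$; a torsion-class object in a tilt can perfectly well contain free-class subobjects. You have also misread Proposition \ref{sheaf_subobjects}: in its case (2), $G[-1]$ is a sheaf lying in $\coh(X)\cap\cohb$, hence in $\tors{\beta}$ --- not a torsion-free sheaf in $\free{\beta}$ (the torsion-free shifted sheaf occurs in the case $F\not\in\coh(X)$, where $G\in\coh(X)[2]$). The duality reduction for $F\not\in\coh(X)$ then fails twice over: it invokes ``exactly the argument of the previous paragraph,'' and it uses Proposition \ref{dual_in_A}, which is an asymptotic statement (valid for $t\gg0$ along an unbounded $\Theta^-$-curve) and does not allow you to dualize a destabilizing sequence at the single point $P$ into $\mathcal{A}^{1/\sqrt{3},\,2}$.

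The deeper problem is that no soft categorical argument of this kind can succeed, which is why the paper's proof is numerical. Assuming only $G[-1]\in\cohb$ (which covers both forbidden cases simultaneously), the paper extracts $r\geq2$ and bounds on $x,y$ from $\ch_1^\beta(G[-1])\geq0$ together with the constraints \eqref{ch2_constraint}, and then checks case by case in $r$ and $x$ that one of $Q_P(F)$, $Q_P(G)$ from \eqref{Q_constraint} is negative, contradicting the support property (Proposition \ref{support_property}) for the $\lambda_P$-semistable factors at the wall. This is exactly the computation your closing ``fallback'' paragraph alludes to but does not perform --- and performing it is the entire content of the lemma. The elimination is genuinely tight (for instance $(r,x,y)=(2,1,-6)$ gives $Q_P(v-u)=-2/3$), so it cannot be replaced by a slope or torsion-pair argument: one must carry out the arithmetic, including the integrality and parity conditions on $y$.
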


\begin{proof}
The assumption $G[-1]\in\cohb$ implies that $x\geq2(r-2)$, and $r\geq2$, so the constraints from $\ch_2^P(F)$ imply that $y/2<32/3-35r/6\leq-1$ and so $y\leq -3$.  We also have
\[2x<\frac{8}{3}-\frac{y}{2}-\frac{11}{6}r.\]
Then from \eqref{Q_constraint}, we have 
\[Q_P(F)<\frac{1601}{12}r^2-\frac{3440}{9} r+\frac{7648}{27}-3\left(y-\frac{32}{9}+\frac{7}{6}r\right)^2.\]

Now $y<64/3-35r/3$, and when $r>3$, it is easy to see that the above bound on $Q_P(F)$ is negative. When $r=2$, we have $-22<3y+12x<-6$ from the constrains, while $Q_P(u)<0$ in the range $-23-\sqrt{f(x)}<3y+12x<-23+\sqrt{f(x)}$ for some (quadratic) function $f(x)$ and $Q_P(v-u)<0$ in the range $-6-\sqrt{6}x<3y+12x<-6+\sqrt{6}x$; hence one of these must be negative so long as the ranges overlap. This happens when $x\geq 2$. When $x=0$, we have $G[-1]\in\cohab$, which gives a contradiction. If $x=1$, then the constraints give $-17/3<y/2<-2$. But $G$ is a sheaf, and so $y\cong 0\pmod 2$. Hence $y=-6,-8,-10$ are the only possibilities. For $y=-6$, $Q_P(v-u)=-2/3$. For $y=-8$, $Q_P(u)=-10/3$. For $y=-10$, $Q_P(u)=-14$.  When $r=3$, the constraints give $-66<6y+24x<-34$, while $Q_P(u)<0$ in the range $-69-\sqrt{g(x)}<6y+24x<-69+\sqrt{g(x)}$ and $Q_P(v-u)<0$ in the range $-35-\sqrt{h(x)}<6y+24x<-35+\sqrt{h(x)}$, and again one or the other must be negative for any value of $y$ or $x$, and this time the ranges overlap for all $x\geq0$.

In any case, $F$ or $G$ is not $\lambda_P$-semistable, which is impossible.
\end{proof}

Given Lemma~\ref{N1_no_tilt}, it is reasonable to assume   $\Delta_{21}(\ch(E),\ch(F))\geq0$ (with equality only if it holds for all $\beta$), which gives the inequality
\begin{equation}\label{tilt_inequal}
y+\frac{8}{3}x+r<0.
\end{equation}
We take this inequality as an ansatz to eliminate possibilities rather than a necessary condition for a numerical wall to be actual. 

\begin{proposition}\label{gamma+inst}
Consider $v=(2,0,-1,0)$. Then on $X=\p3$, there are five pseudo $\lambda$-walls for a Gieseker stable sheaf $E$ intersecting $\Gamma^-_{v,1/3}$, given by
\begin{align}
\label{ideal_wall}  0\to\mathcal{I}_L\to \hbox{}&E\to\mathcal{I}_p\to 0,\\
\label{line_wall}   0\to K\to \hbox{}&E\to \mathcal{O}_L(-1)\to 0,\\
  \label{killer_wall} 0\to\op3(-1)\to\hbox{}&E\to\mathcal{I}_{C}(1)\to0,\\
  \label{killer_wall2} 0\to A\to & E\to\op3(-3)[2]\to0,\\
\label{weird_wall}  0\to S_p(1)\to \hbox{}& E\to \mathcal{I}_{p/H}\to 0,
\end{align}
where $L$ is a line, $C$ is the union of two lines on a quadric surface in $\p3$ and $H$ is a hyperplane. Moreover, $K$ is a Gieseker semistable sheaf with $\ch(K)=(2,0,-2,2)$, as described in display \eqref{k-sqc}, and $A$ is quasi-isomorphic to a complex $\op3(-2)^{\oplus 4}\to\op3(-1)^{\oplus 5}$.  The first two are coincident, and only \eqref{killer_wall2} is an actual $\lambda$-wall below $\Gamma^-_{v,1/3}$, while only \eqref{killer_wall} is actual above $\Gamma^-_{v,1/3}$.
\end{proposition}

\begin{proof}
We assume $E$ is a Gieseker stable torsion-free
sheaf. These are classified by Proposition~\ref{(2,0,-1,0)}. 
We start by looking at an exceptional case not described in Proposition~\ref{sheaf_subobjects}. This is where $F$ is not a sheaf and the destabilizing sequence is of the form $F\to E\to F_{11}[2]$. This arises from the presentation of $E$ in \eqref{Epres}, 
\[0\to\op3(-3)\to\op3(-2)^{\oplus 4}\to\op3(-1)^{\oplus 5}\to E\to 0.\]
Viewing the middle map as a 2-step complex $A$ in $D^b(X)$, near $P$ we have that $A\to E\to A_{11}[2]$ is short exact in $\mathcal{A}^P$. It is easy to see that this is a wall and $A_{11}=\op3(-3)$, giving \eqref{killer_wall2}. From the definition of $A$, it follows that
$$ \Ext^1(A,\calo(-3)[2])=\Ext^3(A,\calo(-3))=0; $$
thus where this wall is actual, it is a vanishing wall. The presentation of $A$ shows that it has a vanishing $\lambda$-wall given by $\calo(-2)$ (as a sub-object) which goes through the point $P$; denote this wall by $\mathcal{W}$. One can check that $\mathcal{W}$ is above \eqref{killer_wall2} to the right of
$\Gamma^-_{v,1/3}$, but it is below \eqref{killer_wall2} to the left of
$\Gamma^-_{v,1/3}$. This shows that \eqref{killer_wall2} is actual to the left but not to the right of $\Gamma^-_{v,1/3}$.

Once we have found the walls for $\mu$-semistable torsion-free sheaves $E$, we need to also consider new objects which become $\lambda$-stable as we cross each wall. Lemma~\ref{F_G_sheaf} and Proposition~\ref{sheaf_subobjects} show that it is reasonable to assume our destabilizing objects $F$ and $G$ are in $\cohab\cap\cohb\cap\coh(X)$. Then we have additional constraints to add to \eqref{ch2_constraint} and \eqref{Q_constraint}:
\begin{equation}
\label{ch1_constraint}-2r <x\leq0\\
\end{equation}
since $E$ is $\mu$-semistable and $F\in\cohb$. We also add the ansatz \eqref{tilt_inequal} and consider various possible cases for the Chern character $(r,x,y/2,z/6)$ of $F$.

First consider the case $r=1$. 
\subsubsection*{Case $(1,0,y/2,z/6)$}
Suppose $x=0$. Then the $\ch_2^P$-constraint gives $5/3>y>-11/3$. Then $1\geq y\geq -3$. From \eqref{tilt_inequal}, we have $y\leq-1$, and so we have the possibilities $y=-1,-2,-3$. But $F$ is of rank $1$ and torsion-free, and so $y\cong0\pmod 2$; so $y=-2$ is the only possibility. This gives the first wall \eqref{ideal_wall}.

\subsubsection*{Case $(1,-1,y/2,z/6)$}
Now suppose $x=-1$. The constraints on $y$ give $17/6> y\geq 1$. But \eqref{tilt_inequal} implies $y<8/3-r=5/3$, and so $y=1$ is the only possibility. This gives wall \eqref{killer_wall}.

\medskip
Finally, we return to the case $r=2$. Then $G$ is a torsion sheaf. We have $-3\leq x\leq0$. We shall consider these, case by case as before.

\subsubsection*{Case $(2,0,y/2,z/6)$}
We now consider the special case $x=0$.  The constraints give $-6\leq y<-2$.  But $y$ must be even for $G$ to be a sheaf, and so $y\leq-4$. When $y=-4$, we have the same wall as \eqref{ideal_wall} but given by the sequence \eqref{line_wall}. When $y=-6$, we have $Q_P(F)=-8$, and so $F$ cannot be semistable.

\subsubsection*{Case $(2,-1,y/2,z/6)$}
Now we let $x=-1$. Then the constraints give $0\geq y>-10/3$, so $-3\leq y\leq0$.  But $2\ch_2(G)\cong 1\pmod 2$. Hence $y=-1$ or $y=-3$. In the former case, $\ch(F)=(2,-1,-1/2,5/6)$, which corresponds to sequence \eqref{weird_wall}.  But one can check that $Q_P(2,-1,-1/2,5/6)=1>0$, $Q_P(0,1,-1/2,-5/6)=25/3>0$ and $Q_P(v)-Q_P(2,-1,-1/2,5/6)-Q_P(0,1,-1/2,-5/6)=12>0$, so this is a pseudo-wall.  In the latter case, $Q_P(u)=-17/3$, so $F$ cannot be $\lambda$-semistable.

\subsubsection*{Case $(2,-2,y/2,z/6)$}
When $x=-2$, we have $1\leq y\leq 5$ from the $\ch_2^P$-constraints and $y+2\leq 4$ from \eqref{tilt_inequal}. So $1\leq y\leq 3$. But $2\ch_2(G)\cong 0\pmod 2$, and so $y=2$ is the only possible solution. Then $F=\op3(-1)^{\oplus2}$ and $G=\cali_C(1)/\op3(-1)$, but this gives the same wall as \eqref{killer_wall}.

\subsubsection*{Case $(2,-3,y/2,z/6)$}
When $x=-3$, the $\ch_2^P$-constraint gives $y>14/3$ and so $y\geq 5$. But \eqref{tilt_inequal} gives $y+2<8$ and so $y=5$. In this case, $z=-6$. But then $\chi(F)=5/2$, and so this is impossible.

\medskip
This completes all of the possible numerical cases satisfying our ansatzes.

Note that the walls for \eqref{ideal_wall} and \eqref{line_wall} coincide. For $s\geq1/3$, the walls below $\Gamma^-_{v,s}$ are in the order \eqref{killer_wall2}$>$\eqref{ideal_wall}=\eqref{line_wall}$>$\eqref{weird_wall}$>$\eqref{killer_wall}, and this is reversed above $\Gamma^-_{v,s}$ (see Figure~\ref{fig (2,0,-1,0)}).

Finally, we observe that $\cali_C(1)$ is destabilized by $\calo(-1)^{\oplus 4}$. In fact, the quotient is $\Omega^2(1)[1]$, and this is stable to the right of $\Gamma^-_{v,1/3}$ and to the left of $\Theta^-_{\calo(-2)}$. The wall for this is below \eqref{killer_wall} to the right of $P$ and so \eqref{killer_wall} is actual in this region.
\end{proof}

\begin{remark}
Note that the problem of eliminating walls when $x\leq -2$ and $r=1$ is actually a quadratic programming problem. To see this, note that
\[z= -2\beta^2 x+3\beta y+3\beta-3x,\]
and so
\[Q_P(F)=3 \beta ^2+\frac{10}{3} \beta ^2 x^2+2 x^2-2 \beta ^3 x-6 \beta  x-4 \beta  x y+y^2+\frac{5}{3} \beta ^2y+y,\]
which we can rewrite as
\[F_t(\mathbf{v})=\mathbf{v}A_t\mathbf{v}^T-\mathbf{c}\mathbf{v}^T\]
subject to the inequalities 
\begin{gather*}
4\leq\gamma<\lambda/2<\mu/4,\\ 
-2-\gamma\leq y\leq\gamma,\\
\lambda-\mu/3<({y+1})/2<\lambda+\mu/3,
\end{gather*}
where $\mathbf{v}=(\lambda,\mu,\gamma,y)=(\beta x,\beta^2,x^2,y)$, 
\[A_t=\begin{pmatrix}10(1-t)/3&-1&0&-2\\-1&0&5t/3&5/6\\0&5t/3&0&0\\-2&5/6&0&1\end{pmatrix}
\]
and $c=(6,-3,-2,-1)$. Then we need to show there is a real value of $t$ for which $F_t(\mathbf{v})<0$ given the constraints.  In fact, numerical methods show that the ``local'' maximum value of $F(\mathbf{v})$ when $t=1$ is negative even without the additional non-linear constraint $\gamma\mu=\lambda^2$. But $A_t$ is not definite, and so this need not be a global maximum.
\end{remark}

\begin{figure}[ht] \centering
\includegraphics{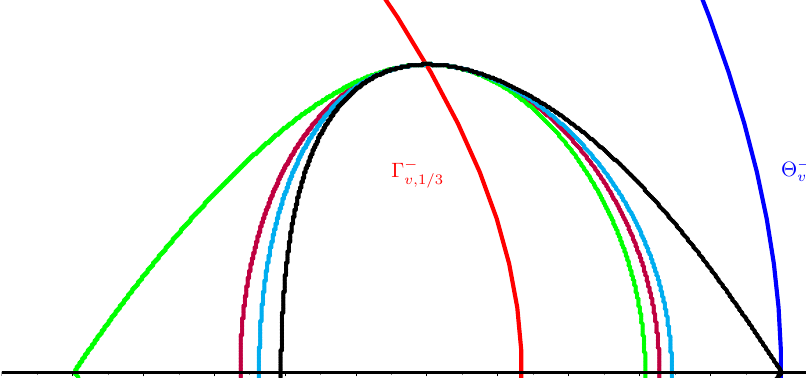}
\caption{This graph shows the actual and pseudo $\lambda$-walls for the Chern character $v=(2,0,-1,0)$ intersecting at the point $(\alpha=1/\sqrt{3},\beta=-2)$, as described in Proposition~\ref{gamma+inst}. The green curve is \eqref{killer_wall2}, the black one is \eqref{killer_wall}, the cyan one is \eqref{weird_wall}, and the purple one is \eqref{ideal_wall}. The outermost curve (black and green) is the actual $\lambda$-wall, while the remaining curves are only pseudo $\lambda$-walls. The stability chambers in the region $R^-_v$ are described in Theorem~\ref{chamber-}.}
\label{fig (2,0,-1,0)}
\end{figure}

It follows from Theorem~\ref{RLvs} that no other $\lambda$-walls in the region $R^-_v$ exist when $s=1/3$. This observation allows us to give a complete chamber decomposition of this region, summarized in the following statement.

  \begin{theorem}\label{chamber-}
{\samepage Let $X=\p3$, and consider the numerical Chern character $v=(2,0,-1,0)$; fix $s=1/3$.
The region $R^-_v$ is divided into two stability chambers $C_i$ within which the $\labs$-stable objects are described as follows:
\begin{enumerate}[\rm{(C\arabic*)},ref={C\arabic*}]
\item\label{c1} null correlation sheaves,
\item\label{c2} no stable objects.
\end{enumerate}}
\end{theorem}

The two stability chambers just described are pictured in Figure~\ref{fig (2,0,-1,0)} as follows: the chamber~\eqref{c1} lies above the green and black curves (which correspond to the walls \eqref{killer_wall2} and \eqref{killer_wall}, respectively).

\medskip

As a final remark, we observe that Proposition~\ref{gamma+inst} provides concrete examples of intersecting actual $\lambda$-walls for the Chern character $(2,0,-1,0)$.

\newcommand{\etalchar}[1]{$^{#1}$}


\begin{thebibliography}{BMS{\etalchar{+}}17+++}

\bibitem[AM16]{AM}
W.~Alagal and A.~Maciocia,
\emph{Critical $k$-very ampleness for abelian surfaces}, 
Kyoto J.\ Math.\ {\bf 56} (2016), 33--47.

\bibitem[AB13]{AB}
D.~Arcara and A.~Bertram,
\emph{Bridgeland-stable moduli spaces for K-trivial surfaces}, 
With an appendix by Max Lieblich.
J.\ Eur.\ Math.\ Soc.\ \textbf{15} (2013), 1--38. 

\bibitem[Bal11]{Balmer}
P.~Balmer,
\emph{Separability and triangulated categories},
Adv.\ Math.\ \textbf{226} (2011), 4352--4372.

\bibitem[BM14a]{BM14a}
A.~Bayer and E.~Macr\`i,
\emph{MMP for moduli of sheaves on K3s via wall-crossing: nef and movable cones, Lagrangian fibrations},
Inv.\ Math.\ \textbf{198} (2014), 505--590.

\bibitem[BM14b]{BM14b}
\bysame,
\emph{Projectivity and birational geometry of Bridgeland moduli spaces}, 
J.\ Amer.\ Math.\ Soc.\ \textbf{27} (2014), 707--752.

\bibitem[BMS16]{BMS}
A.~Bayer, E.~Macr\`i and P.~Stellari,
\emph{The space of stability conditions on abelian 3-folds, and on some Calabi--Yau threefolds}, 
Inv.\ Math.\ {\bf 206} (2016), 869--933.

\bibitem[BMT14]{BMT}
A.~Bayer, E.~Macr\`i and Y.~Toda,
\emph{Bridgeland stability conditions on threefolds I: Bogomolov--Gieseker type inequalities}, 
J.\ Algebraic Geom.\ {\bf 23} (2014), 117--163.

\bibitem[BMS{\etalchar{+}}17]{BMSZ}
M.~Bernardara, E.~Macr\`i, B.~Schmidt and X.~Zhao,
\emph{Bridgeland stability conditions on Fano threefolds}, 
\'Epijournal G\'eom.\ Alg\'ebrique  \textbf{1}  (2017), Art.~2, 24 pp.

\bibitem[Bri07]{B07}
T.~Bridgeland,
\emph{Stability conditions on triangulated categories}, 
Ann.\ of Math.\ \textbf{166} (2007), 317--345.

\bibitem[Bri08]{B08}
\bysame,
\emph{Stability conditions on K3 surfaces}, 
Duke Math.\ J.\ {\bf 141} (2008), 141--291.

\bibitem[Cha84]{Chang}
M.\,C.~Chang,
\emph{Stable rank $2$ reflexive sheaves on $\p3$ with small $c_2$ and applications}, 
Trans.\ Amer.\ Math.\ Soc.\ {\bf 284} (1984), 57--84.

\bibitem[Ein82]{Ein}
L.~Ein,
\emph{Some stable vector bundles on $\p4$ and $\p5$}, 
J.~reine angew.\ Math.\ \textbf{337} (1982), 142--153.

\bibitem[Fey16]{SF16}
S.~Feyzbakhsh,
\emph{Stability of restrictions of Lazarsfeld-Mukai bundles via wall-crossing, and Mercat's conjecture}, 
preprint, \arXiv{1608.07825}.

\bibitem[Fey17]{SF17}
\bysame,
\emph{Mukai's program (reconstructing a K3 surface from a curve) via wall-crossing}, J.~reine angew.\ Math.\ \textbf{765} (2020).


\bibitem[FL21]{FL}
S.~Feyzbakhsh and C.~Li,
\emph{Higher rank Clifford indices of curves on a K3 surface},  Selecta Math.\ Vol 27 (2021). 

\bibitem[GHS18]{GHS}
P.~Gallardo, C.\,L.~Huerta and B.~Schmidt,
\emph{Families of elliptic curves in $\p3$ and Bridgeland stability}, 
Michigan Math.~J.\ \textbf{67} (2018), 787--813.

\bibitem[HL10]{HL}
D.~Huybrechts and M.~Lehn, 
\emph{The geometry of moduli spaces of sheaves}, 2nd ed.,  
Cambridge University Press (2010).

\bibitem[Li19a]{Li2}
C.~Li,
\emph{On stability conditions for the quintic threefold}, Inv.\ Math.\ \textbf{218} (2019) 301--340.


\bibitem[Li19b]{Li}
\bysame,
\emph{Stability conditions on Fano threefolds of Picard number one}, 
J.\ Eur.\ Math.\ Soc.\ {\bf 21} (2019), 709--726.


\bibitem[Mac14]{M}
A.~Maciocia,
\emph{Computing the walls associated to Bridgeland stability conditions on projective surfaces}, 
Asian J.\ Math.\ {\bf 18} (2014), 263--279.

\bibitem[MM13]{MM}
A.~Maciocia and C.~Meachan,
\emph{Rank one Bridgeland stable moduli spaces on a principally polarized abelian surface}, 
Int.\ Math.\ Res.\ Not. \textbf{9} (2013), 2054--2077. 

\bibitem[MP16]{MP}
A.~Maciocia and D.~Piyaratne,
\emph{Fourier--Mukai transforms and Bridgeland stability conditions on abelian threefolds, II}, 
Internat.\ J.\ Math.\ {\bf 27} (2016), article id.~1650007.

\bibitem[Mac14]{M-p3}
E.~Macr\`i,
\emph{A generalized Bogomolov--Gieseker inequality for the three-dimensional projective space}, 
Algebra Number Theory {\bf 8} (2014), 173--190

\bibitem[MS17]{MS}
E.~Macr\`i and B.~Schmidt,
\emph{Lectures on Bridgeland stability}, 
In: \emph{Moduli of curves}, 139--211,
Lect.\ Notes Unione Mat.\ Ital.\ {\bf 21}, Springer, Cham, 2017.

\bibitem[MS20]{MS2}
\bysame,
\emph{Derived categories and the genus of space curves}, Alg.~Geom.\ {\bf 7} (2020), 153--191,

\bibitem[MS19]{MSD}
C.~Martinez and B.~Schmidt,
\emph{Bridgeland Stability on Blow Ups and Counterexamples}, Math.~Z.\ \textbf{292} (2019) 1495--1510. 



\bibitem[Mea12]{Mea}
  C.~Meachan,
\emph{Moduli of Bridgeland-Stable objects}, 
  PhD Thesis, Edinburgh, 2012. Available from \url{https://era.ed.ac.uk/handle/1842/6230}. 

\bibitem[MRT94]{MRT}
R.\,M.~Miro-Roig and G.~Trautmann,
\emph{The moduli scheme $M(0,2,4)$ over $\mathbb{P}_3$}, 
Math.\ Z.\ {\bf 216} (1994), 283--315.

\bibitem[OSS80]{OSS}
C.~Okonek, M.~Schneider and H.~Spindler, 
\emph{Vector bundles on complex projective spaces}, 
Progress in Math.\ \textbf{3}, Birkhauser, Boston, MA, 1980.

\bibitem[Piy17]{Piy}
D.~Piyaratne,
\emph{Stability conditions, Bogomolov--Gieseker type inequalities and Fano 3-folds}, 
preprint, \arXiv{1705.04011}. 

\bibitem[PT19]{PT}
D.~Piyaratne and Y.~Toda,
\emph{Moduli of Bridgeland semistable objects on 3-folds and Donaldson--Thomas invariants}, 
J.\ reine angew.\ Math.\ \textbf{3} (2019), 175--219.

\bibitem[Rud97]{R}
A.~Rudakov,
\emph{Stability for an abelian category}, 
J.~Algebra \textbf{197} (1997), 231--245.

\bibitem[Sch14]{S-q3}
B.~Schmidt,
\emph{A generalized Bogomolov--Gieseker inequality for the smooth quadric theefold}, 
Bull.\ London Math.\ Soc.\ \textbf{46} (2014), 915--923.

\bibitem[Sch17]{S17}
\bysame
\emph{Counterexample to the generalized Bogomolov--Gieseker inequality for threefolds}, 
Int.\ Math.\ Res.\ Not.\ \textbf{8} (2017), 2562--2566.

\bibitem[Sch20a]{S}
\bysame
\emph{Bridgeland stability conditions on threefolds - some wall crossings}, J.~Alg.\ Geom., \textbf{29} (2020), 247--283.

\bibitem[Schi20b]{S18}
\bysame
\emph{Rank two sheaves with maximal third Chern character in three-dimensional projective space},  Mat.\ Contemp.\ {\bf 47} (2020), 228--270.

\bibitem[SS21]{SS}
B.~Schmidt and B.~Sung,
\emph{Discriminants of stable rank two sheaves on some general type surfaces},  Res.\ Lett., {\bf 28} (2021), no.~1, 245--270.

\bibitem[Tod10]{Tod10}
Y.~Toda,
\emph{Curve counting theories via stable objects I. DT/PT correspondence}, 
J.~Amer.\ Math.\ Soc.\ {\bf 23} (2010), 1119--1157.

\bibitem[YY14]{YY12}
S.~Yanagida and K.~Yoshioka,
\emph{Bridgeland's stabilities on abelian surfaces}, 
Math.~Z.\ \textbf{276} (2014), 571--610.

\end{thebibliography}
\end{document}